\title{Matrix Denoising with Doubly Heteroscedastic Noise: Fundamental Limits and Optimal Spectral Methods}
\author{%
  Yihan Zhang \\
  Institute of Science and Technology Austria \\
  \texttt{zephyr.z798@gmail.com} \\
  \And
  Marco Mondelli \\
  Institute of Science and Technology Austria \\
  \texttt{marco.mondelli@ist.ac.at} 
}
\begin{document}

\maketitle

\begin{abstract}
We study the matrix denoising problem of estimating the singular vectors of a rank-$1$ signal corrupted by noise with both column and row correlations. Existing works are either unable to pinpoint the exact asymptotic estimation error or, when they do so, the resulting approaches (e.g., based on whitening or singular value shrinkage) remain vastly suboptimal. On top of this, most of the literature has focused on the special case of estimating the left singular vector of the signal when the noise only possesses row correlation (one-sided heteroscedasticity). In contrast, our work establishes the information-theoretic and algorithmic limits of matrix denoising with doubly heteroscedastic noise. We characterize the exact asymptotic minimum mean square error, and design a novel spectral estimator with rigorous optimality guarantees: under a technical condition, it attains positive correlation with the signals whenever information-theoretically possible and, for one-sided heteroscedasticity, it also achieves the Bayes-optimal error. Numerical experiments demonstrate the significant advantage of our theoretically principled method with the state of the art. The proofs draw connections with statistical physics and approximate message passing, departing drastically from standard random matrix theory techniques. 
\end{abstract}



\newcounter{asmpctr} 
\setcounter{asmpctr}{\value{enumi}}

\section{Introduction}
\label{sec:intro}

Matrix denoising is a central primitive in statistics and machine learning, and the problem is to recover a signal $X\in\bbR^{n\times d}$ from an observation $ A = X + W  $ corrupted by additive noise $W$.
This finds applications across multiple domains of sciences, e.g., imaging \cite{imaging1,
imaging3}, biology \cite{
biology2,bioilogy3} and astronomy \cite{astronomy1,astronomy2}. 
When $X$ has low rank and $W$ 
i.i.d.\ entries, $A$ is the standard model for principal component analysis, typically referred to as the Johnstone spiked covariance model \cite{Johnstone_spiked}. When $n, d$ are both large and proportional, which corresponds to the most sample-efficient regime, its Bayes-optimal limits are well understood \cite{Miolane_asymm}, and it has been established how to achieve them efficiently \cite{MV_AoS}. 
Minimax/non-asymptotic guarantees are also available in special cases, such as sparse PCA \cite{SparsePCA}, Gaussian mixtures \cite{wu-zhou-2019-em} and certain joint scalings of $(n,d)$ \cite{MW_diverging}. 

However, in most applications, noise is highly structured and correlated, thereby calling for more realistic assumptions on $W$ than having i.i.d.\ entries. 
A recent line of work addresses this concern by studying matrix denoising with heteroscedastic noise \cite{Agterberg_low-rank_hetero,shrinkage_sepcov,mtx_denois_partial,PCA_hetero_shrink,DLY_shrinkage}, resting 
on two basic ideas: whitening and singular value shrinkage. 
Whitening refers to 
multiplying the data matrix by the square root of the inverse covariance, 
in order to reduce the model to one with i.i.d.\ noise; and singular value shrinkage 
retains the singular vectors of the data while deflating the singular values to correct for the noise. 
Though the exact asymptotic performance of these algorithms has been derived \cite{shrinkage_sepcov,mtx_denois_partial,PCA_hetero_shrink,DLY_shrinkage}, their optimality is yet to be determined from a Bayesian standpoint. 
In fact, we will prove that whitening and shrinkage are \emph{not} the correct way to approach Bayes optimality. 

\paragraph{Main contributions.}
We focus on the prototypical model 
$ A = X + W $, where $ X = \frac{\lambda}{n} u^* {v^*}^\top $ is a rank-$1$ signal, $\lambda$ is the signal-to-noise ratio (SNR), and $ W = \Xi^{1/2} \wt{W} \Sigma^{1/2} $ is doubly heterogeneous noise.
Here $u^*, v^*$ follow i.i.d.\ priors; 
$\wt{W}$ contains i.i.d.\ Gaussian entries; the covariance matrices $\Xi, \Sigma$ capture column and row correlations; and we consider the typical high-dimensional regime in which 
$n,d$ are both large and proportional. 
Our main results are summarized below. 

\vspace{-.5em}

\begin{enumerate}

    \item We design an efficient spectral estimator to recover $u^*, v^*$, and we provide a precise asymptotic analysis of its performance, see \Cref{thm:spec}. This estimator is given by the top singular vectors of a matrix obtained by carefully pre-processing $A$, see \Cref{eqn:A*_main}. 
    
    \item When the priors of 
    $u^*, v^*$ are standard 
    Gaussian, we show in \Cref{cor:one_sided} that the spectral estimator above is optimal in the following sense: \emph{(i)} under a technical condition, it achieves the \emph{optimal weak recovery threshold}, namely its mean square error is non-trivial as soon as this is information-theoretically possible; \emph{(ii)} it achieves the \emph{Bayes-optimal error} for $ u^* $ (resp.\ $ v^* $) when $ \Xi$ (resp.\ $ \Sigma$) is the identity. These optimality guarantees follow from rigorously obtaining the asymptotic 
    minimum mean square error (MMSE) for the estimation of the whitened signals $ \Xi^{-1/2} u^*$ and $\Sigma^{-1/2} v^*$, see \Cref{thm:IT}.  
    
\end{enumerate}

\vspace{-.5em}

Our spectral estimator only involves matrix multiplication and computing principal singular vectors. 
Practically, this can be efficiently done using standard SVD algorithms or power iteration \cite{Householder}. 
For both one-sided and double heteroscedasticity, numerical experiments in \Cref{fig:1sided,fig:2sided} show significant advantage of our spectral estimator for moderate SNRs over HeteroPCA \cite{HeteroPCA} and shrinkage-based methods, i.e.,  Whiten-Shrink-reColor \cite{PCA_hetero_shrink,Leeb_mtx_denois}, OptShrink \cite{Nadakuditi_shrinkage_rotinv}, and ScreeNOT \cite{DGR_ScreeNOT}.

\vspace{-.5em}

\paragraph{Proof techniques.} We take a completely different route from classical approaches in statistics and random matrix theory (e.g., whitening and shrinkage), and instead exploit tools from statistical physics and the theory of approximate message passing. In particular, the MMSE for the whitened signals $ \Xi^{-1/2} u^*, \Sigma^{-1/2} v^*$ is obtained via an interpolation argument \cite{BM_interp,Miolane_asymm,miolane_thesis}. 
This result allows us to derive the weak recovery threshold for estimating the true signals $ u^*, v^* $. 
Moreover, for one-sided heteroscedasticity, this MMSE coincides with that for estimating the true signal on the homoscedastic side. 
Evaluating the Bayes-optimal estimators requires solving high-dimensional integrals that are computationally intractable. To circumvent this issue, we propose an efficient spectral method that still enjoys optimality guarantees. Its design and analysis draw connections with a family of iterative algorithms called approximate message passing (AMP) \cite{Bayati_Montanari,amp-tutorial}. 
All our results 
are mathematically rigorous, with the only technical condition being ``\Cref{eqn:thr_bayes} implies $\sigma_2^* < 1$'' in \Cref{thm:spec} that we only managed to verify numerically, but not analytically; see \Cref{rk:asmp_spec}.

\vspace{-.5em}


\section{Related work}
\label{sec:related_work}

\vspace{-.5em}

Research on matrix denoising in the homoscedastic case ($ \Xi = I_n, \Sigma = I_d $) has a rich history, and in random matrix theory properties of the spectrum and eigenspaces of $A$ have been studied exhaustively. 
Most prominently, the BBP phase transition phenomenon \cite{BBAP} (and its finite-sample counterpart \cite{Nadler_finite_sample}) unveils a threshold of the SNR $\lambda$ above which a pair of outlier singular value and singular vector emerge. Under i.i.d.\ priors, the asymptotic Bayes-optimal estimation error has been derived \cite{Miolane_asymm,miolane_thesis}, rigorously justifying predictions from statistical physics \cite{Bayes_conj}. 
The proof uses the interpolation method due to Guerra \cite{Guerra}, originally developed in the context of mean-field spin glasses. 
Besides low-rank matrix estimation, this method (including its adaptive variant \cite{BM_interp} and the Aizenman--Sims--Starr scheme \cite{ASS}) has also been applied to a range of 
problems, including spiked tensor estimation \cite{mutual_info_ten}, generalized linear models \cite{Barbier_GLM}, stochastic block models \cite{SBM_multi} and group synchronization \cite{YWF_group}. 

Moving to the heteroscedastic case, an active line of work concerns optimal singular value shrinkage methods \cite{PCA_hetero_shrink,mtx_denois_partial,Leeb_mtx_denois,shrinkage_sepcov,Nadakuditi_shrinkage_rotinv,DLY_shrinkage}. These methods can be regarded as a special family of rotationally invariant estimators, which apply a univariate function $ \eta\colon\bbR_{\ge0}\to\bbR $ to each empirical singular value. 
An example widely employed by practitioners is the thresholding function $ \eta_\theta(y) = y \indicator{y > \theta} $ \cite{DGR_ScreeNOT}. 
In the presence of noise heteroscedasticity, most of these results are based on 
whitening \cite{biwhitening}. 
%
Another model of noise heterogeneity common in the literature takes 
$ W = \wt{W} \circ \Delta^{\circ 1/2} $, where $ \wt{W} $ has i.i.d.\ Gaussian entries, $\Delta$ is a deterministic block matrix with fixed (i.e., constant with respect to $n,d$) number of blocks, and $ \circ $ denotes the element-wise product. This means that  the entries of the noise are independent but non-identically distributed, and they follow the variance profile $ \Delta $. 
The corresponding low-rank perturbation $A$, known as a spiked inhomogeneous matrix, has attracted attention from both the information-theoretic \cite{IT_inhomo,Reeves_mtx_ten,IT_inhomo_univ} and the algorithmic sides \cite{WPCA,spec_inhomo,AMP_inhomo}. 
Spiked inhomogeneous matrices have some connections with the model considered in this paper: if $\Delta$ has rank $1$, such $A$ can be realized by taking $\Xi, \Sigma$ to be diagonal with suitable block structures. 
Finally, non-asymptotic results for the heteroscedastic and the inhomogeneous models have been derived in varying generality in \cite{HeteroPCA,DeflatedHeteroPCA,CWC_small,Agterberg_low-rank_hetero,wishart_nonasymp_hetero}. We highlight that our paper is the \emph{first} to establish information-theoretic and algorithmic limits for doubly heteroscedastic noise.

Our 
characterization of the spectral estimator relies on an AMP algorithm that  converges to it by performing 
power iteration. 
AMP refers to a family of iterative procedures, whose 
performance in the high-dimensional limit is precisely characterized by a low-dimensional deterministic recursion called state evolution \cite{Bayati_Montanari,Bolthausen}. Originally introduced for compressed sensing \cite{DMM09}, AMP algorithms have been developed for various settings, including low-rank estimation \cite{MV_AoS, fan2020approximate,Barbier_PCA_rotinv} and inference in generalized linear models \cite{RanganGAMP,rangan2019vector,venkataramanan2022estimation}. 
Beyond statistical estimation, AMP proves its versatility as both an efficient algorithm and a proof technique for studying e.g.\
posterior sampling \cite{sampling_spiked}, 
spectral universality \cite{spec_univ}, 
first order methods with random data \cite{DMFT}, mismatched 
estimation \cite{barbier2022price}, spectral estimators for generalized linear models \cite{zhang2023spectral,mixed-zmv-arxiv} and their combination with linear estimators \cite{mondelli2021optimalcombination}.

\section{Problem setup}
\label{sec:prelim}

Consider the following rank-$1$ rectangular matrix estimation problem with doubly heteroscedastic noise where we observe
\vspace{-.5em}
\begin{align}
A &= \frac{\lambda}{n} u^* {v^*}^\top + W \in\bbR^{n\times d} , \label{eqn:model}
\end{align}
and aim to estimate $ u^*, v^* $. 
The following assumptions are imposed throughout the paper. 
    The dimensions $ n,d\to\infty $ obey the proportional scaling $ n/d\to\delta\in(0,\infty) $, where $ \delta $ is 
    the aspect ratio. 
    The SNR $ \lambda\in[0,\infty) $ is a known constant (relative to $n,d$). 
    The signals $ (u^*, v^*) \sim P^{\ot n} \ot Q^{\ot d} $ have i.i.d.\ priors, where $ P , Q $ are distributions on $\bbR$ with mean $0$ and variance $1$. 
    The unknown noise matrix has the form $ W = \Xi^{1/2} \wt{W} \Sigma^{1/2} \in \bbR^{n\times d} $, with 
    $ \wt{W}_{i,j}\iid\cN(0,1/n) $ independent of $ (u^*, v^*) $. 
    The covariances 
    $ \Xi\in\bbR^{n\times n}, \Sigma\in\bbR^{d\times d} $ are known, deterministic,\footnote{All our results hold verbatim if $ \Xi, \Sigma $ are random matrices independent of each other and of $u^*, v^*, \wt{W}$. } 
    strictly positive definite and satisfy
\vspace{-.5em}
    \begin{align}
    \lim_{n\to\infty} \frac{1}{n} \tr(\Xi) &= \lim_{d\to\infty} \frac{1}{d} \tr(\Sigma) = 1 . \label{eqn:trace}
    \end{align}
    Their empirical spectral distributions (ESD) converge (as $n,d\to\infty$ s.t.\ $n/d\to\infty$) weakly to the laws of the random variables $ \ol{\Xi}$ and $ \ol{\Sigma} $. 
    Furthermore, $ \normtwo{\Xi}, \normtwo{\Sigma} $ are uniformly bounded over $d$. 
    The supports of $ \ol{\Xi}, \ol{\Sigma} $ are compact subsets of $ (0,\infty) $. 
    For all $ \eps>0 $, there exists $ d_0\in\bbN $ s.t.\ for all $ d\ge d_0 $, 
    \begin{align}
    \supp(\ESD(\Xi)) \subset \supp(\ol{\Xi}) + [-\eps, \eps] , \quad 
    \supp(\ESD(\Sigma)) \subset \supp(\ol{\Sigma}) + [-\eps, \eps] . \label{eqn:supp} 
    \end{align}


The trace assumption \Cref{eqn:trace} on the covariances is 
for normalization purposes since the values of the traces, if not $1$, can be absorbed into $\lambda$. 
The support assumption \Cref{eqn:supp} excludes outliers in the spectra of covariances which may contribute to undesirable spikes in $A$ \cite{shrinkage_sepcov}. 





\section{Information-theoretic limits}
\label{sec:IT}

For mathematical convenience, 
in this section, we switch to an equivalent rescaled model 
\begin{align}
Y &\coloneqq \sqrt{n} A = \sqrt{\frac{\snr}{n}} u^* {v^*}^\top + \Xi^{1/2} Z \Sigma^{1/2} \in\bbR^{n\times d} , \label{eqn:Y_mtx}
\end{align}
where $ \snr \coloneqq \lambda^2 $ and $ Z = \sqrt{n} \wt{W} $ contains i.i.d.\ elements $ Z_{i,j} \iid \cN(0,1) $. 
Abusing terminology, we refer to $\snr$ as the SNR of $Y$. 
Define also $ \alpha \coloneqq 1/\delta \in (0,\infty) $ so that $ d/n\to\alpha $. 
The scaling of the parameters in \Cref{eqn:Y_mtx} turns out to be more convenient for presenting the results in this section. 
Results for $Y$ can be easily translated to $A$ by a change of variables.

Let $\wt{u}^* \coloneqq \Xi^{-1/2} u^*$ and $
\wt{v}^* \coloneqq \Sigma^{-1/2} v^*$ denote the whitened signals. 
The main result of this section is \Cref{thm:IT}, which characterizes the performance of the matrix minimum mean square error (MMSE) associated to the estimation of $\wt{u}^*(\wt{v}^*)^\top, \wt{u}^*(\wt{u}^*)^\top$ and $\wt{v}^*(\wt{v}^*)^\top,$ via the corresponding Bayes-optimal estimators: 
\begin{align}
    \mmse_n(\snr) &\coloneqq \frac{1}{nd} \expt{ \normf{ \wt{u}^* (\wt{v}^*)^\top - \expt{ \wt{u}^* (\wt{v}^*)^\top  \mid Y } }^2 } , \label{eqn:MMSE_Y} \\
    \mmse_n^u(\snr) &\coloneqq \frac{1}{n^2} \expt{ \normf{\wt{u}^* (\wt{u}^*)^\top - \expt{\wt{u} (\wt{u}^*)^\top \mid Y}}^2 } , \label{eqn:MMSE_u} \\
    \mmse_n^v(\snr) &\coloneqq \frac{1}{d^2} \expt{ \normf{\wt{v}^* (\wt{v}^*)^\top - \expt{\wt{v}^* (\wt{v}^*)^\top \mid Y}}^2 } .
    \label{eqn:MMSE_v}
\end{align} 
Our characterization 
involves a pair of parameters $ (q_u^*, q_v^*)\in\bbR_{\ge0}^2 $ defined as the largest solution to 
\vspace{-.25em}
\begin{align}
    q_u &= \expt{\frac{\alpha\snr q_v \ol{\Xi}^{-2}}{1 + \alpha\snr q_v \ol{\Xi}^{-1}}} , \qquad 
    q_v = \expt{\frac{\snr q_u \ol{\Sigma}^{-2}}{1 + \snr q_u \ol{\Sigma}^{-1}}} . 
    \label{eqn:fp_it}
\end{align}
Here and throughout the paper, all expectations involving $\ol{\Xi}, \ol{\Sigma}$ are computed as integrals against the limiting spectral distributions of $\Xi, \Sigma$. 

The proposition below, proved in \Cref{app:pf_prop:bayes_fp_sol}, justifies the existence of the solution to \Cref{eqn:fp_it} and identifies when a non-trivial solution emerges. 

\begin{proposition}
\label{prop:bayes_fp_sol}
The fixed point equation \Cref{eqn:fp_it} always has a trivial solution $ (0, 0) $. 
There exists a non-trivial solution $ (q_u^*, q_v^*)\in\bbR_{>0}^2 $ if and only if 
\vspace{-.25em}
\begin{align}
    \alpha \snr^2 \expt{\ol{\Sigma}^{-2}} \expt{\ol{\Xi}^{-2}} &> 1 , \label{eqn:thr_bayes_change} 
\end{align}
in which case the non-trivial solution is unique. 
\end{proposition}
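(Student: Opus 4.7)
The trivial solution is immediate: plugging $q_u = q_v = 0$ makes both right-hand sides of \Cref{eqn:fp_it} vanish. For the non-trivial case, my plan is to reduce the two-equation fixed point problem to a one-dimensional one by composition. Introduce
\[
f(q_v) \coloneqq \expt{\frac{\alpha\snr q_v \ol{\Xi}^{-2}}{1 + \alpha\snr q_v \ol{\Xi}^{-1}}}, \qquad g(q_u) \coloneqq \expt{\frac{\snr q_u \ol{\Sigma}^{-2}}{1 + \snr q_u \ol{\Sigma}^{-1}}},
\]
so that any solution of \Cref{eqn:fp_it} satisfies $q_u = f(g(q_u))$ and $q_v = g(q_u)$. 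Moreover, if one coordinate vanishes, the other must vanish too (since $f(0)=g(0)=0$), so any non-trivial solution has both coordinates in $\bbR_{>0}$. Thus it suffices to analyze the one-dimensional equation $q_u = h(q_u)$ with $h \coloneqq f \circ g$.

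The key properties I will extract by differentiation under the expectation (justified by the compactness of the supports of $\ol{\Xi},\ol{\Sigma}$ in $(0,\infty)$ from \Cref{eqn:supp}) are: \emph{(i)} $h(0)=0$; \emph{(ii)} $f$ and $g$ are strictly increasing and strictly concave on $[0,\infty)$, since a direct computation gives $f''(q_v) = -2(\alpha\snr)^2 \expt{\ol{\Xi}^{-3}(1+\alpha\snr q_v\ol{\Xi}^{-1})^{-3}} < 0$ and analogously for $g$; \emph{(iii)} $f,g$ are bounded (letting $q_v \to \infty$ by dominated convergence gives $f(\infty)=\expt{\ol{\Xi}^{-1}}<\infty$, and similarly for $g$); \emph{(iv)} since $f$ is increasing and concave and $g$ is concave, the composition $h$ is itself strictly concave, strictly increasing, bounded, with $h(0)=0$; \emph{(v)} by the chain rule,
\[
h'(0) = f'(0)\, g'(0) = \alpha\snr^2\, \expt{\ol{\Xi}^{-2}}\, \expt{\ol{\Sigma}^{-2}}.
\]

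The final step is the following standard dichotomy for a strictly concave, strictly increasing, bounded function $h\colon[0,\infty)\to[0,\infty)$ with $h(0)=0$: if $h'(0)\le 1$, then strict concavity yields $h(q)<h'(0)q\le q$ for all $q>0$, ruling out positive fixed points; if $h'(0)>1$, then $h(q)>q$ for small $q>0$, while boundedness forces $h(q)<q$ for large $q$, so by continuity there is at least one positive fixed point, and strict concavity of $h(q)-q$ makes it unique. This gives the non-trivial solution $q_u^*$, and setting $q_v^* \coloneqq g(q_u^*)>0$ recovers $f(q_v^*) = h(q_u^*) = q_u^* $, completing the uniqueness claim. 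The main (though mild) technical point will be justifying the differentiation/limit exchanges using the compact-support assumption on the limiting spectra, which ensures all integrands are uniformly bounded.
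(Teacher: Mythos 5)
Your proposal is correct and follows essentially the same route as the paper: the paper likewise eliminates one variable to obtain a scalar fixed-point equation and establishes existence/uniqueness of a positive solution from strict monotonicity, strict concavity, boundedness at infinity, and the slope at zero being $\alpha\snr^2\expt{\ol{\Sigma}^{-2}}\expt{\ol{\Xi}^{-2}}$. The only (cosmetic) differences are that the paper eliminates $q_u$ rather than $q_v$ and verifies concavity by directly differentiating the composed map twice, whereas you derive it from concavity and monotonicity of the two individual maps.
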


We are now ready to state our main result on the MMSE.

\begin{theorem}
\label{thm:IT}
Assume $ P = Q = \cN(0,1) $. 
For almost every $ \snr > 0 $, 
\vspace{-.25em}
\begin{gather}
\begin{split}
\lim_{n\to\infty} \mmse_n(\snr) &= \expt{\ol{\Xi}^{-1}} \expt{\ol{\Sigma}^{-1}} - q_u^* q_v^* , 
\end{split} \label{cor:MMSE_mtx} \\
\begin{split}
\lim_{n\to\infty} \mmse_n^u(\snr) &= \expt{\ol{\Xi}^{-1}}^2 - {q_u^*}^2 
, \qquad
\lim_{n\to\infty} \mmse_n^v(\snr) = \expt{\ol{\Sigma}^{-1}}^2 - {q_v^*}^2 . 
\end{split} \label{cor:MMSE_vec}
\end{gather}
\end{theorem}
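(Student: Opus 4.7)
The plan is to apply the interpolation method of Guerra, as extended to spiked matrix models by Miolane and to its adaptive form by Barbier--Macris. I first reduce to a model with i.i.d.\ Gaussian noise by whitening: multiplying $Y$ on the left by $\Xi^{-1/2}$ and on the right by $\Sigma^{-1/2}$ yields
\[
\wt{Y} \coloneqq \Xi^{-1/2} Y \Sigma^{-1/2} = \sqrt{\snr/n}\,\wt{u}^* (\wt{v}^*)^\top + Z ,
\]
where $Z$ has i.i.d.\ $\cN(0,1)$ entries and the whitened signals are conditionally Gaussian, $\wt{u}^* \sim \cN(0, \Xi^{-1})$, $\wt{v}^* \sim \cN(0, \Sigma^{-1})$. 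Since the whitening transformation is a bijection given $(\Xi, \Sigma)$, all three MMSEs coincide with those computed from $\wt{Y}$. Setting $I_n(\snr) \coloneqq I(\wt{u}^*(\wt{v}^*)^\top ; \wt{Y})$, the $I$-MMSE relation expresses $\mmse_n(\snr)$ as the $\snr$-derivative of $I_n(\snr)/n$ up to explicit dimension-dependent prefactors. By convexity of $\snr \mapsto I_n(\snr)/n$ together with Griffiths' lemma, it therefore suffices to compute the scalar limit $\lim_n I_n(\snr)/n$ and differentiate, which is where the ``almost every $\snr$'' qualifier comes from.

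To compute this limit I would run an adaptive interpolation between $\wt{Y}$ (at $t=1$) and two decoupled scalar denoising channels (at $t=0$),
\[
Y_u(t) = \sqrt{R_u(t)}\,\wt{u}^* + \eta_u , \qquad Y_v(t) = \sqrt{R_v(t)}\,\wt{v}^* + \eta_v ,
\]
with schedule $R_u(t) = \int_t^1 r_u(s)\,ds$ and analogously for $R_v$, and $\eta_u, \eta_v$ independent standard Gaussians. Writing $f_n(t)$ for the corresponding normalized free energy, Gaussian integration by parts together with the Nishimori identity yields
\[
f_n'(t) = -\frac{\snr}{2}\expt{\mathcal{Q}_u \mathcal{Q}_v} + \frac{r_u(t)}{2}\expt{\mathcal{Q}_u} + \frac{\alpha\, r_v(t)}{2}\expt{\mathcal{Q}_v} + o_n(1),
\]
where $\mathcal{Q}_u, \mathcal{Q}_v$ are the posterior overlaps of $\wt{u}^*, \wt{v}^*$. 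The scalar channels can be solved in closed form by Gaussian conjugacy: for $Y_u$ alone the posterior has precision $\Xi + r I_n$, giving free energy $\frac{1}{2n}\log\det(I_n + r\Xi^{-1}) \to \frac{1}{2}\expt{\log(1 + r\ol{\Xi}^{-1})}$ and MMSE converging to $\expt{(\ol{\Xi} + r)^{-1}}$, with symmetric formulas for $Y_v$. Substituting into the resulting variational representation and choosing the adaptive schedule $r_u(t) \equiv \alpha \snr q_v^*$, $r_v(t) \equiv \snr q_u^*$ produces matching upper and lower bounds whose stationarity conditions in $(q_u, q_v)$ are exactly \Cref{eqn:fp_it}.

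The matrix statement \Cref{cor:MMSE_mtx} then follows by differentiating the limiting mutual information in $\snr$. For the symmetric MMSEs in \Cref{cor:MMSE_vec}, I would expand the Frobenius-norm square and invoke the Nishimori identity: the cross term equals $\expt{\langle \wt{u}^*, \wt{u}\rangle^2}/n^2 \to (q_u^*)^2$, while $\frac{1}{n^2}\normf{\wt{u}^*(\wt{u}^*)^\top}^2 = \bigl(\frac{1}{n} u^{*\top} \Xi^{-1} u^*\bigr)^2 \to \expt{\ol{\Xi}^{-1}}^2$ by concentration of the Gaussian quadratic form (variance $O(\normtwo{\Xi^{-1}}^2/n) = o(1)$ thanks to \Cref{eqn:supp}); the $v$-case is analogous. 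The main technical obstacle is proving overlap concentration $\mathrm{Var}(\mathcal{Q}_u), \mathrm{Var}(\mathcal{Q}_v) = o(1)$ under the anisotropic priors $\cN(0, \Xi^{-1}), \cN(0, \Sigma^{-1})$: the standard Barbier--Macris perturbation (adding a small Gaussian side channel whose SNR is averaged over a short interval) must be adapted to exploit the uniform spectral bounds in \Cref{eqn:supp}, so that the anisotropy of the prior does not degrade concentration. A secondary, conceptual difficulty is that the replica-symmetric potential may admit several critical points; showing that the largest nontrivial fixed point $(q_u^*, q_v^*)$ from \Cref{prop:bayes_fp_sol} is the relevant extremizer requires a monotonicity analysis of the potential along the ray $s \mapsto (s q_u^*, s q_v^*)$.
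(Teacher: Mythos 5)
Your route to \Cref{cor:MMSE_mtx} is essentially the paper's: your ``whitening'' is the same device the paper uses of absorbing the covariances into the priors (the scalar channels you interpolate with are exactly the paper's auxiliary channels $Y_t^u, Y_t^v$ written in whitened coordinates), the adaptive interpolation with overlap concentration yields the $\sup\inf$ formula for the limiting free energy, and differentiating in $\snr$ via convexity/envelope arguments gives the matrix MMSE for almost every $\snr$. Up to that point the proposal matches the paper's proof in structure and in the key technical steps.

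The gap is in your treatment of \Cref{cor:MMSE_vec}. After expanding the Frobenius norm and applying Nishimori, the quantity you must control is $\lim_{n}\frac{1}{n^2}\expt{\bracket{(\wt{u}^\top\wt{u}^*)^2}_n}$, and you simply assert that it converges to $(q_u^*)^2$. Nothing in the machinery you set up delivers this: the $\snr$-derivative of the free energy identifies only the \emph{mixed} overlap, $\frac{1}{nd}\expt{\bracket{\wt{u}^\top\wt{u}^*\,\wt{v}^\top\wt{v}^*}_n}\to q_u^* q_v^*$, and the overlap concentration you plan to prove holds for the perturbed interpolating system averaged over the perturbation parameters, and in any case only says the $u$-overlap concentrates around its mean --- it does not identify that mean as $q_u^*$ in the original model (a product limit $q_u^* q_v^*$ is compatible with many values of the individual limits). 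The paper closes this with a genuinely additional argument: it augments the observation with a second, symmetric spike $Y'=\sqrt{\snr'/n}\,u^*{u^*}^\top+\Xi^{1/2}Z'\Xi^{1/2}$ carrying its own SNR $\snr'$, proves a limit for the joint free energy (another interpolation plus a monotone-conjugate/convex-duality computation, \Cref{lem:sF}), and differentiates in $\snr'$ as $\snr'\downarrow0$ to obtain the one-sided bound $\limsup_n\frac{1}{n^2}\expt{\bracket{(\wt{u}^\top\wt{u}^*)^2}_n}\le (q_u^*)^2$; the matching bound then comes from Cauchy--Schwarz against the known mixed-overlap limit $q_u^* q_v^*$ together with the analogous bound for $v$. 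Your proposal contains no analogue of this $u$-specific (or $v$-specific) perturbation, so the two limits in \Cref{cor:MMSE_vec} remain unproved as written.
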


We note that 
\vspace{-.25em}
\begin{equation}\label{eq:trivial}
\lim_{n\to\infty}\frac{1}{nd} \expt{ \normf{ \wt{u}^* (\wt{v}^*)^\top }^2 }=\lim_{n\to\infty}\frac{1}{nd} \expt{ \normtwo{ \wt{u}^*  }^2 }\expt{ \normtwo{ \wt{v}^* }^2 }=\expt{\ol{\Xi}^{-1}} \expt{\ol{\Sigma}^{-1}},
\end{equation}
where the last step follows from \Cref{prop:quadratic_form}. This quantity represents the trivial error in the estimation of $\wt{u}^*(\wt{v}^*)^\top$, which is achieved by the all-0 estimator. Analogous considerations hold for $\wt{u}^*(\wt{u}^*)^\top$ and $\wt{v}^*(\wt{v}^*)^\top$, for which the trivial estimation error is $\expt{\ol{\Xi}^{-1}}^2$ and $\expt{\ol{\Sigma}^{-1}}^2$, respectively. Thus, \Cref{prop:bayes_fp_sol} and \Cref{thm:IT} identify \Cref{eqn:thr_bayes_change} as the condition for non-trivial estimation, and the smallest $\gamma$ that satisfies \Cref{eqn:thr_bayes_change} gives the 
\emph{weak recovery threshold}.

We show below that the weak recovery threshold is the same for the estimation of the true signals $u^*{v^*}^\top, u^*{u^*}^\top$ and $v^*{v^*}^\top$. In this case, since the signal priors are Gaussian, using the same passages as in \Cref{eq:trivial} one has that the trivial estimation error for $u^*{v^*}^\top, u^*{u^*}^\top$ and $v^*{v^*}^\top$ is always equal to $1$.

\begin{corollary}
\label{cor:weak_rec_thr}
Assume $ P = Q = \cN(0,1) $. The MMSE associated to the estimation of $u^*{v^*}^\top$ is non-trivial, i.e, 
\vspace{-1em}
\begin{align}
\lim_{n\to\infty}\frac{1}{nd} \expt{ \normf{u^* {v^*}^\top - \expt{u^* {v^*}^\top \mid Y}}^2 }<1 \label{eqn:MMSE_uv_thr} 
\end{align}
if and only if \Cref{eqn:thr_bayes_change} holds. The same result holds for the MMSE 
of $u^*{u^*}^\top$ and $v^*{v^*}^\top$. 
\end{corollary}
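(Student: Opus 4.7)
The plan is to transfer the weak recovery threshold from the whitened signals (established by \Cref{thm:IT}) to the original signals, exploiting that $ \Xi^{1/2} $ and $ \Sigma^{1/2} $ are known, deterministic, and well-conditioned. Since $ u^* = \Xi^{1/2} \wt{u}^* $ and $ v^* = \Sigma^{1/2} \wt{v}^* $, the tower property yields
\[
\expt{u^* (v^*)^\top \mid Y} = \Xi^{1/2}\, \expt{\wt{u}^*(\wt{v}^*)^\top \mid Y}\, \Sigma^{1/2},
\]
and analogous identities hold with $\Sigma^{1/2}$ replaced by $\Xi^{1/2}$ for $u^*(u^*)^\top$, and with $\Xi^{1/2}$ replaced by $\Sigma^{1/2}$ for $v^*(v^*)^\top$. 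Combining this with the orthogonality principle and the fact that $ \tfrac{1}{nd}\expt{\normf{u^* (v^*)^\top}^2} \to 1 $ under the Gaussian prior (the computation is identical to \Cref{eq:trivial}, and similarly for $u^*(u^*)^\top$ and $v^*(v^*)^\top$), the MMSE in \Cref{eqn:MMSE_uv_thr} has a limit strictly smaller than $1$ iff $ \tfrac{1}{nd}\expt{\normf{\expt{u^*(v^*)^\top \mid Y}}^2} $ is bounded away from $0$. The same rephrasing, applied on the whitened side with trivial error $ \expt{\ol{\Xi}^{-1}}\expt{\ol{\Sigma}^{-1}} $, reduces the original MMSE claim to an analogous statement for $\expt{\wt{u}^*(\wt{v}^*)^\top \mid Y}$.

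The bridge between the two sides is the sandwich inequality: for any matrix $M$ and positive definite $A, B$,
\[
\lambda_{\min}(A)\,\lambda_{\min}(B)\,\normf{M}^2 \le \normf{A^{1/2} M B^{1/2}}^2 \le \normtwo{A}\,\normtwo{B}\,\normf{M}^2,
\]
which follows from $\normf{A^{1/2} M B^{1/2}}^2 = \tr(A M B M^\top)$ and the operator bounds $\lambda_{\min}(A) I \preceq A \preceq \normtwo{A} I$. Under our assumptions, the supports of $ \ol{\Xi}, \ol{\Sigma} $ are compact subsets of $(0,\infty)$, so the operator norms of $\Xi^{\pm 1}, \Sigma^{\pm 1}$ are uniformly bounded over $n$. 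Applying the sandwich to $M = \expt{\wt{u}^*(\wt{v}^*)^\top \mid Y}$ (and its counterparts in the two square cases), we conclude that $ \tfrac{1}{nd}\expt{\normf{\expt{u^*(v^*)^\top \mid Y}}^2} $ is bounded away from $0$ if and only if $ \tfrac{1}{nd}\expt{\normf{\expt{\wt{u}^*(\wt{v}^*)^\top \mid Y}}^2} $ is, with analogous equivalences for $u^*(u^*)^\top$ and $v^*(v^*)^\top$.

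To finish, by \Cref{thm:IT} and \Cref{eq:trivial} the latter quantity converges (for a.e.\ $\snr$) to $ q_u^* q_v^* $, which is positive iff \Cref{prop:bayes_fp_sol} produces a non-trivial fixed point, i.e., iff \Cref{eqn:thr_bayes_change} holds. For $u^*(u^*)^\top$ and $v^*(v^*)^\top$ the corresponding whitened quantities converge to $(q_u^*)^2$ and $(q_v^*)^2$; inspecting \Cref{eqn:fp_it} one sees immediately that $q_u^* > 0 \iff q_v^* > 0$, so all three weak recovery statements are controlled by the same threshold \Cref{eqn:thr_bayes_change}.

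The main technical nuisance I foresee is that \Cref{thm:IT} is stated only for almost every $\snr$, whereas the corollary should hold for every $\snr$ away from the critical SNR. This is handled by the monotonicity of $\snr \mapsto \mmse_n(\snr)$ (which is non-increasing and upper bounded by the trivial error): the ``a.e.'' statement combined with monotonicity propagates the conclusion to every $\snr$ at which \Cref{eqn:thr_bayes_change} is strict in either direction. Beyond this measure-theoretic bookkeeping, the argument is entirely elementary.
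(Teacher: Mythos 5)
Your proposal is correct and follows essentially the same route as the paper's proof: reduce both MMSEs to posterior-mean second moments via the orthogonality/Nishimori identity, relate the whitened and unwhitened posterior means through the deterministic factors $\Xi^{1/2},\Sigma^{1/2}$, sandwich $\normf{\expt{u^*{v^*}^\top\mid Y}}$ between multiples of $\normf{\Xi^{-1/2}\expt{u^*{v^*}^\top\mid Y}\Sigma^{-1/2}}$ using the compact support of $\ol{\Xi},\ol{\Sigma}$, and invoke \Cref{thm:IT} and \Cref{prop:bayes_fp_sol} to identify positivity of the limit $q_u^*q_v^*$ (resp.\ ${q_u^*}^2,{q_v^*}^2$) with \Cref{eqn:thr_bayes_change}. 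The only differences are cosmetic: the identity $\expt{u^*{v^*}^\top\mid Y}=\Xi^{1/2}\expt{\wt{u}^*(\wt{v}^*)^\top\mid Y}\Sigma^{1/2}$ is just linearity of conditional expectation (not the tower property), and your monotonicity remark handling the ``almost every $\snr$'' caveat is a harmless extra the paper does not include.
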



\paragraph{Proof strategy.}
To derive the characterizations in \Cref{thm:IT}, we 
write the posterior distribution of $ u^*, v^* $ given $Y$ %
in a Gibbs form, i.e., its density is the exponential of a Hamiltonian normalized by a partition function. 
The interpolation argument relates the log-partition function (also referred to as the `free energy') of the posterior to that of the posteriors of two Gaussian location models. 
Since i.i.d.\ Gaussianity is key to this approach, the challenge is to handle noise covariances. 
Our idea is to incorporate the covariances into the priors. 
In terms of the Hamiltonian, the model is equivalent to the estimation of the whitened signals $ \Xi^{-1/2} u^*, \Sigma^{-1/2} v^*$, whose priors have covariances, in the presence of i.i.d.\ Gaussian noise. 
We then manage to carry out the interpolation argument for the equivalent model and evaluate the free energy of the corresponding Gaussian location models. 

Specifically, let us starting by writing down the expression of the posterior distribution after setting up some notation. 
For $ u\in\bbR^n, v\in\bbR^d $, let $ \wt{u} \coloneqq \Xi^{-1/2} u , \wt{v} \coloneqq \Sigma^{-1/2} v $.
Define the densities
\begin{align}
\diff\wt{P}(\wt{u}) \coloneqq \sqrt{\det(\Xi)} \diff P^{\ot n}(\Xi^{1/2} \wt{u}) , \qquad
\diff\wt{Q}(\wt{v}) \coloneqq \sqrt{\det(\Sigma)} \diff Q^{\ot d}(\Sigma^{1/2} \wt{v}) , \notag 
\end{align}
where the determinant factors ensure that 
the integrals equal $1$. 
With $ P = Q = \cN(0,1) $, we have $ \wt{P} = \cN(0_n, \Xi^{-1}), \wt{Q} = \cN(0_d, \Sigma^{-1}) $, and 
%
from Bayes' rule the posterior 
of $ (u^*, v^*) $ given $Y$ is 
\begin{align}
\diff P(u, v \mid Y) &= \frac{1}{\cZ_n(\snr)} \exp\paren{ \cH_n(\Xi^{-1/2} u, \Sigma^{-1/2} v) } \diff P^{\ot n}(u) \diff Q^{\ot d}(v) , \label{eqn:posterior}
\end{align}
where the Hamiltonian and the partition function are given respectively by 
\begin{align}
&\cH_n(\wt{u}, \wt{v}) \coloneqq \sqrt{\frac{\snr}{n}} \wt{u}^\top Z \wt{v} + \frac{\snr}{n} \wt{u}^\top \wt{u}^* \wt{v}^\top \wt{v}^* - \frac{\snr}{2n} \normtwo{\wt{u}}^2 \normtwo{\wt{v}}^2 , \label{eqn:Hamil} \\
&\cZ_n(\snr) \coloneqq \hspace{-.4em}\int\hspace{-.4em} 
\int\hspace{-.2em} 
\exp\paren{ \cH_n(\Xi^{-1/2} u, \Sigma^{-1/2} v) } \diff P^{\ot n}(u) \diff Q^{\ot d}(v) 
=\hspace{-.4em}\int\hspace{-.4em} 
\int\hspace{-.2em} 
\exp\paren{ \cH_n(\wt{u}, \wt{v}) } \diff \wt{P}(\wt{u}) \diff \wt{Q}(\wt{v}) . \label{eqn:part_Y} 
\end{align}
Define the free energy as 
\vspace{-.5em}
\begin{align}
\cF_n(\snr) &\coloneqq \frac{1}{n} \expt{ \log\cZ_n(\snr) } . \label{eqn:energy_Y}
\end{align}

The major technical step 
is to characterize $\cF_n(\snr)$ in the large $n$ limit in terms of a bivariate functional $\cF$ introduced below. This is the core component to derive the MMSE characterization. 

For a positive random variable $ \ol{\Sigma} $ subject to the conditions in \Cref{sec:prelim}, let
\begin{align}
\psi_{\ol{\Sigma}}(\snr) &\coloneqq \frac{1}{2} \paren{ \snr \expt{\ol{\Sigma}^{-1}} - \expt{\log\paren{1 + \snr\ol{\Sigma}^{-1}}} } . \label{eqn:psi_Sigma}
\end{align}
As shown in \Cref{app:gauss_ch}, $\psi_{\ol{\Sigma}}(\snr)$ is the limiting free energy of a Gaussian channel, in which one wishes to estimate $x^*\in \bbR^n$ from the observation $Y = \sqrt{\snr} x^* + \Sigma^{1/2} Z$ corrupted by anisotropic Gaussian noise with covariance $\Sigma$. 
Using \Cref{eqn:psi_Sigma}, let us define the replica symmetric potential $\cF$:
\begin{align}
\cF(q_u, q_v) &\coloneqq \psi_{\ol{\Xi}}(\alpha \snr q_v) + \alpha \psi_{\ol{\Sigma}}(\snr q_u) - \frac{\alpha \snr}{2} q_u q_v , \notag 
\end{align}
and the set of critical points of $\cF$: 
\begin{align}
\cC(\snr, \alpha) &\coloneqq \brace{ (q_u, q_v)\in\bbR_{\ge0}^2 : \partial_1 \cF(q_u, q_v) = 0 , \partial_2 \cF(q_u, q_v) = 0 } \notag \\
&= \brace{ (q_u, q_v)\in\bbR_{\ge0}^2 : q_u = 2 \psi_{\ol{\Xi}}'(\alpha\snr q_v) , q_v = 2\psi_{\ol{\Sigma}}'(\snr q_u) } \label{eqn:cQ_solve} \\
&= \brace{ (q_u, q_v)\in\bbR_{\ge0}^2 : (q_u, q_v) \textnormal{ solves } \Cref{eqn:fp_it} } , 
\notag 
\end{align}
where the last equality is a direct calculation of $\psi_{\ol{\Xi}}', \psi_{\ol{\Sigma}}'$. The following result, proved in \Cref{app:pf}, shows that the limit of $ \cF_n(\snr) $ is given by a dimension-free variational problem involving $ \cF(q_u, q_v) $. 

\vspace{.25em}

\begin{theorem}[Free energy]
\label{thm:free_energy}
Assume $ P = Q = \cN(0,1) $. Then, we have
\begin{align}
\lim_{n\to\infty} \cF_n(\snr) &= \sup_{q_v\ge0} \inf_{q_u\ge0} \cF(q_u, q_v) 
= \sup_{(q_u, q_v)\in\cC(\snr, \alpha)} \cF(q_u, q_v)
, \notag 
\end{align}
and $ \sup_{q_v} \inf_{q_u} $ and $ \sup_{(q_u, q_v)} $ are achieved by the same $ (q_u^*, q_v^*) $ in \Cref{prop:bayes_fp_sol}. 
\end{theorem}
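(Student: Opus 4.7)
The plan is to adapt the (adaptive) Guerra--Toninelli interpolation method \cite{Guerra,BM_interp,Miolane_asymm} to the model rewritten in the whitened coordinates $\wt u^* = \Xi^{-1/2} u^* \sim \cN(0_n, \Xi^{-1})$ and $\wt v^* = \Sigma^{-1/2} v^* \sim \cN(0_d, \Sigma^{-1})$, in which the observation \Cref{eqn:Y_mtx} is driven by i.i.d.\ Gaussian noise and all the heteroscedasticity is absorbed into the anisotropic Gaussian priors---exactly the rewriting already exposed by the second integral in \Cref{eqn:part_Y}. I would then introduce an interpolating Hamiltonian on $[0,1]$ that couples, at time $t$, the bilinear matrix observation at SNR $t\snr$ with two decoupled anisotropic Gaussian side-channels whose SNRs follow paths $r_1(t),r_2(t)$ driving $\wt u^*$ and $\wt v^*$, so that the endpoint $t=1,\ r_1(1)=r_2(1)=0$ recovers $\cF_n(\snr)$, while at $t=0$ the problem decouples into two scalar Gaussian-channel problems with limiting free energies $\psi_{\ol\Xi}(r_1(0))$ and $\alpha\,\psi_{\ol\Sigma}(r_2(0))$ (cf.\ \Cref{app:gauss_ch}).

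Differentiating $\cF_{n,t}$ along the interpolation and applying the Nishimori identity would reduce $\partial_t \cF_{n,t}$ to an expectation involving the overlaps $Q_u = \tfrac{1}{n}\wt u^\top \wt u^*$ and $Q_v = \tfrac{1}{d}\wt v^\top \wt v^*$, plus a remainder depending on $r_1'(t),r_2'(t)$ and the centered cross-overlap $(Q_u - q_u)(Q_v - q_v)$. Choosing the constant path $r_1'\equiv\alpha\snr q_v,\ r_2'\equiv\snr q_u$ and bounding the sign-definite remainder by Cauchy--Schwarz yields Guerra's one-sided inequality $\limsup_n \cF_n(\snr) \le \inf_{q_u\ge 0}\sup_{q_v\ge 0}\cF(q_u,q_v)$. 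A direct computation gives $\psi''_{\ol\Xi}(y) = \tfrac{1}{2}\expt{\ol\Xi^{-2}/(1+y\ol\Xi^{-1})^2} \ge 0$ and analogously for $\ol\Sigma$, so $\cF(q_u,q_v)$ is convex in each argument with the other fixed; combined with the bilinear coupling $-\tfrac{\alpha\snr}{2}q_u q_v$, this lets us swap $\inf\sup$ with $\sup\inf$ at the optimizer via a standard minimax argument.

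For the matching lower bound I would deploy the Barbier--Macris adaptive interpolation, letting the path $(q_u(t),q_v(t))$ be driven by the conditional overlap expectations along a well-chosen ODE so that the remainder becomes sign-definite in the opposite direction. The key input---and the main obstacle of the proof---is the concentration of $Q_u,Q_v$ around their Gibbs averages under a small side-channel perturbation. In \cite{Miolane_asymm,miolane_thesis} this is obtained from bounded priors, which does not apply here because $\wt u^*,\wt v^*$ are Gaussian with unbounded support. Instead, I would lean on the Gaussian Poincar\'e inequality, exploiting the uniform operator-norm bounds $\sup_n\normtwo{\Xi},\normtwo{\Sigma},\normtwo{\Xi^{-1}},\normtwo{\Sigma^{-1}}<\infty$ (guaranteed by the compact support of $\ol\Xi,\ol\Sigma$ in $(0,\infty)$ together with \Cref{eqn:supp}), combined with differentiating $\cF_n$ twice in the auxiliary SNRs and averaging over a vanishing window to convert pointwise concentration into a bound on the overlap variance that is $o(1)$ uniformly in $\snr$.

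Finally, to match the $\sup\inf$ value with the restricted supremum over the critical set $\cC(\snr,\alpha)$ of \Cref{eqn:cQ_solve}, I would use that any $(q_u^*,q_v^*)$ achieving $\sup\inf$ is a joint stationary point of $\cF$, hence an element of $\cC$ by the equivalence in \Cref{eqn:cQ_solve}. Conversely, by \Cref{prop:bayes_fp_sol} the fixed-point iteration $(q_u,q_v)\mapsto(2\psi'_{\ol\Xi}(\alpha\snr q_v),2\psi'_{\ol\Sigma}(\snr q_u))$ is monotone and admits $(q_u^*,q_v^*)$ as its largest fixed point, which dominates every other element of $\cC$ coordinatewise and, by the componentwise monotonicity of $\cF$ at the partner's optimum, realizes $\sup_{\cC}\cF$ as well. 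This identifies both optimizers in the statement with the $(q_u^*,q_v^*)$ of \Cref{prop:bayes_fp_sol} and completes the sketch.
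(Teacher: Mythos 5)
Your overall framework (interpolation in the whitened coordinates, Gaussian-channel potentials $\psi_{\ol{\Xi}},\psi_{\ol{\Sigma}}$, overlap concentration via a perturbation plus a second-derivative/averaging argument) is the same as the paper's, but your upper bound has a genuine gap. First, with constant paths the time-derivative of the interpolating free energy is, up to constants, $\frac{\alpha}{2}q_1'q_2'-\frac12\expt{\bracket{(\frac1n\wt{u}^\top\wt{u}^*-q_2')(\frac1n\wt{v}^\top\wt{v}^*-\alpha q_1')}_{n,t}}$: in this rectangular (bipartite) model the remainder is a product of two \emph{different} centered overlaps, not a square, so it is \emph{not} sign-definite, and the classical Guerra argument you invoke does not produce a one-sided bound. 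Second, even granting $\limsup_n\cF_n\le\inf_{q_u}\sup_{q_v}\cF$, the swap to $\sup_{q_v}\inf_{q_u}$ is not a "standard minimax" step: $\cF$ is convex (not concave) in each argument, so Sion-type theorems do not apply; worse, since $\psi_{\ol{\Xi}}'(y)\uparrow\frac12\expt{\ol{\Xi}^{-1}}$, one has $\sup_{q_v}\cF(q_u,q_v)=+\infty$ whenever $q_u<\expt{\ol{\Xi}^{-1}}$, and the true optimizer satisfies $q_u^*<\expt{\ol{\Xi}^{-1}}$, so the point achieving $\sup\inf$ is not even in the effective domain of the $\inf\sup$ problem. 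Hence your two inequalities would not match.

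The paper sidesteps both issues by using the \emph{adaptive} path for both directions, always taking $q_2'(t)=\frac1n\expt{\bracket{\wt{u}^\top\wt{u}^*}_{n,t}}$ so that, after overlap concentration, the cross remainder vanishes (no sign argument is ever needed). The lower bound then follows from the constant choice $q_1'\equiv q_v$ and minimizing over the terminal $q_2$, giving $\liminf_n\cF_n\ge\inf_{q_u}\cF(q_u,q_v)$ for every $q_v$; the upper bound follows from the adaptive choice $q_1'(t)=2\alpha\psi_{\ol{\Sigma}}'(q_2'(t))$ together with Jensen's inequality for the convex $\psi$'s, which makes the integrand pointwise equal to $\inf_{q_u}\cF(q_u,q_1'(t))\le\sup_{q_v}\inf_{q_u}\cF$ — i.e.\ the upper bound is obtained directly in $\sup\inf$ form, with no minimax exchange. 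You should replace your Guerra-plus-minimax step by this (or an equivalent) construction. Two minor remarks: the unboundedness of the Gaussian priors is handled in the paper simply by truncating $P,Q$ at a level $K$ (the free energy is pseudo-Lipschitz in the prior), after which the whitened signals are bounded since $\normtwo{\Xi^{-1}},\normtwo{\Sigma^{-1}}$ are bounded — your Poincar\'e-based alternative is plausible but unnecessary; and the identification of $\sup_{\cC}\cF$ with $\sup\inf$ and of the common maximizer is exactly the convex-duality statement (\Cref{prop:dual}) rather than the monotone-iteration argument you sketch, though that part is not where the difficulty lies.
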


\vspace{.25em}

\begin{remark}[Equivalent models]
\label{rk:free_energy_equiv}
Informally, the above result says that the matrix model \Cref{eqn:Y_mtx} is equivalent at the level of Hamiltonian to the following two statistically uncorrelated vector models:
\begin{align}
    Y^u &\coloneqq \sqrt{\alpha \snr q_v^*}  u^* + \Xi^{1/2} Z^u \in\bbR^n , \quad 
    Y^v \coloneqq \sqrt{\snr q_u^*}  v^* + \Sigma^{1/2} Z^v \in \bbR^d , \label{eqn:Yuv} 
\end{align}
with $ q_u^*, q_v^* $ the largest solution to \Cref{eqn:fp_it} and 
 $   (u^*, v^*, Z^u, Z^v) \sim P^{\ot n} \ot Q^{\ot d} \ot \cN(0_n, I_n) \ot \cN(0_d, I_d) .$ 
\end{remark}

\vspace{.25em}

\begin{remark}[Gaussian priors]
\label{rk:free_energy_prior}
\Cref{thm:free_energy} crucially relies on having Gaussian priors $P, Q$. 
This assumption is mainly used to derive single-letter (i.e., dimension-free) expressions of the free energy of the vector models in \Cref{eqn:Yuv} which, under Gaussian priors, are nothing but Gaussian integrals. 
The free energy, and hence the MMSE, are expected to be sensitive to the priors. 
Indeed, this is already the case in the homoscedastic setting $\Xi = I_n, \Sigma = I_d$ \cite{Miolane_asymm}. 
An extension towards general i.i.d.\ priors is a challenging open problem and,  
in fact, without posing additional assumptions on $\Xi, \Sigma$, it is unclear whether a single-letter expression for free energy and MMSE is possible. 
\end{remark}

At this point, the MMSE can be derived from the above characterization of free energy. 
Indeed, let
\begin{align}
\cD(\alpha) &\coloneqq \brace{ \snr > 0 : \cF \textnormal{ has a unique maximizer } (q_u^*, q_v^*) \textnormal{ over } \cC(\snr, \alpha) } . \notag 
\end{align}
The envelope theorem \cite[Corollary 4]{envelope} ensures that $ \cD(\alpha) $ is equal to $ \bbR_{>0} $ up to a countable set. 
Using algebraic relations between free energy and MMSE, we prove \Cref{cor:MMSE_mtx,cor:MMSE_vec} for all $ \snr\in\cD(\alpha) $ (and, thus, for almost every $ \snr > 0 $). 
Then, using the Nishimori identity (\Cref{prop:nishimori}) and the fact that the ESDs of $\Xi, \Sigma$ are upper and lower bounded by constants independent of $n$ and $d$, \Cref{cor:weak_rec_thr} also follows. The formal arguments are contained in \Cref{app:pfmain}. 

\section{Spectral estimator}
\label{sec:spec}

This section introduces a spectral estimator that meets the weak recovery threshold and, for one-sided heteroscedasticity, attains the Bayes-optimal error. 
%
Suppose 
that the following condition holds
\begin{align}
\frac{\lambda^4}{\delta} \expt{\ol{\Sigma}^{-2}} \expt{\ol{\Xi}^{-2}} &> 1 , \label{eqn:thr_bayes} 
\end{align}
which is equivalent to \Cref{eqn:thr_bayes_change}. 
Under this condition, the fixed point equations \Cref{eqn:fp_it} have a unique pair of positive solutions $ (q_u^*, q_v^*) $. For convenience, we also define the rescalings $\mu^* \coloneqq \lambda q_v^* / \delta , \nu^* \coloneqq \lambda q_u^*$, and the auxiliary quantities 
\begin{equation}\label{eq:aux}
    b^* \coloneqq \frac{1}{\delta} \expt{ \frac{\lambda}{\lambda \nu^* + \ol{\Sigma}} }, \qquad 
    c^* \coloneqq \expt{\frac{\lambda}{\lambda \mu^* + \ol{\Xi}}}.
\end{equation}
Now, we pre-process the data matrix $A$ as 
\begin{align}
    A^* \coloneqq \lambda (\lambda (\mu^* + b^*) I_n + \Xi)^{-1/2} \Xi^{-1/2} A \Sigma^{-1/2} (\lambda (\nu^* + c^*) I_d + \Sigma)^{-1/2} , \label{eqn:A*_main} 
\end{align}
from which we obtain the spectral estimators 
\begin{subequations}
\begin{align}
\begin{split}
    \wh{u} &\coloneqq \eta_u \sqrt{n}  \frac{\Xi^{1/2} (\lambda (\mu^* + b^*) I_n + \Xi)^{-1/2} (\lambda \mu^* I_n + \Xi) u_1(A^*)}{\normtwo{\Xi^{1/2} (\lambda (\mu^* + b^*) I_n + \Xi)^{-1/2} (\lambda \mu^* I_n + \Xi) u_1(A^*)}} , 
\end{split} \label{eqn:uhat} \\
\begin{split}
    \wh{v} &\coloneqq \eta_v \sqrt{d}  \frac{\Sigma^{1/2} (\lambda(\nu^* + c^*) I_d + \Sigma)^{-1/2} (\lambda\nu^* I_d + \Sigma) v_1(A^*)}{\normtwo{\Sigma^{1/2} (\lambda(\nu^* + c^*) I_d + \Sigma)^{-1/2} (\lambda\nu^* I_d + \Sigma) v_1(A^*)}} ,  
\end{split} \label{eqn:vhat}
\end{align}
\label{eqn:def_spec}
\end{subequations}
where $ u_1(\cdot)/v_1(\cdot) $ denote the top left/right singular vectors 
and 
\begin{gather}
\begin{split}
    \eta_u &\coloneqq \sqrt{ \frac{\lambda\mu^* }{\lambda\mu^* + 1} } , \qquad 
    \eta_v \coloneqq \sqrt{ \frac{\lambda\nu^* }{\lambda\nu^* + 1} } . 
\end{split} \label{eqn:eta_uv} 
\end{gather} 
Note that $ \eta_u, \eta_v >0$, provided that \Cref{eqn:thr_bayes} holds. The pre-processing of $A$ in \Cref{eqn:A*_main} and the form of the spectral estimators in \Cref{eqn:def_spec} come from the derivation of a suitable AMP algorithm, and they are discussed at the end of the section. We finally defer to \Cref{sec:bulk} the definition of the scalar quantity $\sigma_2^*$ 
obtained via a fixed point equation depending only on $ \ol{\Xi}, \ol{\Sigma}, \lambda, \delta $, see  \Cref{eq:sigma2} for details.

Our main result, \Cref{thm:spec}, shows that, under the criticality condition \Cref{eqn:thr_bayes}, 
the matrix $A^*$ exhibits a spectral gap between the top two singular values, and it characterizes the performance of the spectral estimators in \Cref{eqn:def_spec}, proving that they  achieve weak recovery of $u^*$ and $v^*$, respectively. 

\begin{theorem}
\label{thm:spec}
Suppose that \Cref{eqn:thr_bayes} 
holds and that, for any $c>0$,
    \begin{align}\label{eq:cmild}
        \lim_{\beta\downarrow s} \expt{ \frac{\ol{\Sigma}^*}{\beta - c \ol{\Sigma}^*} }
        = \lim_{\beta\downarrow s} \expt{ \paren{\frac{\ol{\Sigma}^*}{ \beta - c \ol{\Sigma}^* }}^2 }
        = \infty ,\quad\lim_{\alpha\downarrow\sup\supp(\ol{\Xi}^*)} \expt{\frac{\ol{\Xi}^*}{\alpha - \ol{\Xi}^*}}
        = \infty, 
    \end{align}
    where $\ol{\Xi}^* \coloneqq \frac{\lambda}{\lambda(\mu^* + b^*) + \ol{\Xi}}$, $\ol{\Sigma}^* \coloneqq \frac{\lambda}{\lambda(\nu^* + c^*) + \ol{\Sigma}}$ and $ s \coloneqq c \cdot \sup\supp(\ol{\Sigma}^*) $. 
Let $ A^*,  \wh{u}, \wh{v}, \sigma_2^* $ be defined in \Cref{eqn:A*_main,eqn:def_spec,eq:sigma2}, and $ \sigma_i(A^*) $ denote the $i$-th largest singular value of $A^*$. 
Then, 
if $ \sigma_2^* < 1 $, the following limits hold in probability:
\begin{gather}
\begin{split}
    \lim_{n\to\infty} \sigma_1(A^*) = 1
    > \sigma_2^* = \lim_{n\to\infty} \sigma_2(A^*) , 
\end{split} \label{eqn:sing_char} \\
\begin{split}
    \lim_{n\to\infty} \frac{\abs{\inprod{\wh{u}}{u^*}}}{\normtwo{\wh{u}} \normtwo{u^*}}
    = \eta_u , \quad 
    \lim_{d\to\infty} \frac{\abs{\inprod{\wh{v}}{v^*}}}{\normtwo{\wh{v}} \normtwo{v^*}}
    = \eta_v  
\end{split} \label{eqn:ol_char} \\
\begin{split}
    \lim_{n\to\infty} \frac{1}{n^2} \normf{ u^* {u^*}^\top - \wh{u} \wh{u}^\top }^2
    = 1 - \eta_u^4 , 
\quad 
    \lim_{d\to\infty} \frac{1}{d^2} \normf{ v^* {v^*}^\top - \wh{v} \wh{v}^\top }^2
    = 1 - \eta_v^4 , 
\end{split} \label{eqn:mmse_v_char} \\
\begin{split}
    \lim_{n\to\infty} \frac{1}{nd} \normf{ u^* {v^*}^\top - \wh{u} \wh{v}^\top }^2
    = 1 - \eta_u^2 \eta_v^2 . 
\end{split} \label{eqn:mmse_uv_char}
\end{gather}
\end{theorem}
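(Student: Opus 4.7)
The plan is to prove \Cref{thm:spec} by constructing an approximate message passing (AMP) algorithm whose state evolution tracks the overlaps of its iterates with $(u^*, v^*)$, identifying the fixed point that matches the claimed overlaps $\eta_u, \eta_v$, and finally connecting this AMP to power iteration on $A^*$. This mirrors the design and analysis of AMP-based spectral estimators for generalized linear models in \cite{zhang2023spectral,mixed-zmv-arxiv}, adapted to the doubly heteroscedastic rectangular setting here.

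First, I would design a rectangular AMP with denoisers that act in the whitened coordinates, namely linear functions involving $(\lambda\mu^* I_n + \Xi)^{-1}$ and $(\lambda\nu^* I_d + \Sigma)^{-1}$, together with the necessary multiplicative factors so that the iterates live at the correct scale on both sides. The Onsager corrections can be read off from the derivatives of these denoisers against the Gaussian effective noise predicted by state evolution. Once the recursion is set up, the standard state evolution theorem reduces the asymptotic analysis to a scalar fixed point system that, under \Cref{eqn:thr_bayes}, admits a unique non-trivial solution; using the definition of $b^*, c^*$ in \Cref{eq:aux} and the critical point equations \Cref{eqn:fp_it}, one identifies this solution with $(q_u^*, q_v^*)$ and deduces that the asymptotic cosine between the iterates and $(u^*, v^*)$ is exactly $\eta_u, \eta_v$ from \Cref{eqn:eta_uv}.

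Second, I would perform a spectral analysis of the pre-processed matrix $A^*$. Writing $A^* = L_\Xi \Xi^{-1/2} A \Sigma^{-1/2} L_\Sigma$ with $L_\Xi, L_\Sigma$ the square-root factors in \Cref{eqn:A*_main}, and decomposing $A = \frac{\lambda}{n} u^*{v^*}^\top + W$, the signal part contributes a rank-one perturbation whose squared norm can be computed explicitly in terms of $\mu^*, \nu^*, b^*, c^*$; the noise part becomes a twisted Gaussian matrix whose singular value distribution follows the multiplicative free convolution of the laws $\ol{\Xi}^*$ and $\ol{\Sigma}^*$ introduced in the theorem. The bulk edge computation yields $\sigma_2^*$ through the fixed point equation \Cref{eq:sigma2}, and the condition \Cref{eq:cmild} together with $\sigma_2^* < 1$ is precisely what guarantees that the signal perturbation pushes exactly one singular value out of the bulk, landing asymptotically at $1$; this will give \Cref{eqn:sing_char}.

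Third, I would tie the AMP trajectory to the leading singular vectors of $A^*$ via a power-iteration argument: after linearization around the uninformative fixed point and the appropriate reparametrization dictated by \Cref{eqn:uhat,eqn:vhat}, a single AMP step reduces to applying $A^*$ and $(A^*)^\top$, so the normalized AMP iterates converge to $u_1(A^*), v_1(A^*)$. Composing this with the state-evolution characterization proves \Cref{eqn:ol_char}. The Frobenius identities \Cref{eqn:mmse_v_char,eqn:mmse_uv_char} then follow by expanding the squared norms, using $\normtwo{\wh{u}}^2 = \eta_u^2 n$, $\normtwo{\wh{v}}^2 = \eta_v^2 d$, the limits $\normtwo{u^*}^2/n, \normtwo{v^*}^2/d \to 1$, and the overlap limits just obtained.

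The main obstacle is twofold. First, one must verify that state evolution indeed selects the non-trivial fixed point rather than the trivial solution $(0,0)$ of \Cref{eqn:fp_it}; this typically requires an auxiliary warm-start construction correlated with the signal (together with a monotonicity argument showing the iteration contracts to the informative fixed point once inside its basin) and then a removal of the warm start via continuity. Second, rigorously transferring the overlap information from the AMP iterates to the actual top singular vectors of $A^*$ is delicate: one needs no other outlier of $A^*$ to escape the bulk, which is exactly where the technical integrability condition \Cref{eq:cmild} intervenes, and one needs a precise control of how the leading singular subspace of $A^*$ aligns with the AMP output. This AMP-to-spectrum bridge is the most demanding component of the argument and is where the approach departs most sharply from whitening- or shrinkage-based random matrix techniques.
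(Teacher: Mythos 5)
Your proposal follows essentially the same route as the paper: an AMP with linear (Gaussian posterior-mean) denoisers built from $(\lambda\mu^*\Xi^{-1}+I_n)^{-1}$ and $(\lambda\nu^*\Sigma^{-1}+I_d)^{-1}$, whose state-evolution fixed point is identified with $(q_u^*,q_v^*)$ and hence with $\eta_u,\eta_v$; a power-iteration argument showing the rescaled fixed-point iterates satisfy the singular-vector equation of $A^*$ and align with $u_1(A^*),v_1(A^*)$ under the spectral gap $\sigma_2^*<1$; and a separable-covariance random-matrix computation for the bulk edge $\sigma_2^*$, where \Cref{eq:cmild} enters. Two remarks on where your plan differs from (and would be harder than) the paper's execution. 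First, your stated obstacle about fixed-point selection is not an obstacle in this argument: since AMP is used purely as a proof device and the conclusions concern only $\sigma_i(A^*)$ and $u_1(A^*),v_1(A^*)$, which do not depend on how the AMP is initialized, the paper simply initializes with a signal-correlated (impractical) warm start $\wt{v}^0=F(\nu^*\Sigma^{-1}v^*+\tau^*\Sigma^{-1/2}w)$ so that state evolution sits exactly at the informative fixed point for all $t$; no monotonicity/basin argument and no ``removal of the warm start via continuity'' is needed, and attempting such a removal would be a genuine (and unnecessary) difficulty. Second, the paper does not compute the outlier location by a BBP-type analysis of the rank-one perturbation of $W^*$: the limit $\sigma_1(A^*)=1$ falls out of the same power-iteration identity (the norm of $(A^*{A^*}^\top)^s\wh{u}^t$ is asymptotically constant in $s$, forcing $\sigma_1(A^*)\to1$ together with the alignment), while random matrix theory is invoked only for $\plim\sigma_1(W^*)=\sigma_2^*$ and, via Weyl interlacing and ESD convergence, for $\plim\sigma_2(A^*)=\sigma_2^*$; your alternative of locating the spike by free-convolution/BBP machinery is plausible but would require additional RMT input that the AMP route avoids.
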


\begin{remark}[Assumptions]
\label{rk:asmp_spec}
To guarantee a spectral gap for $A^*$ and the weak recoverability of $u^*, v^*$ via the proposed spectral method, we also require the algebraic condition $ \sigma_2^* < 1 $. 
We conjecture that this condition is implied by \Cref{eqn:thr_bayes}, and we have verified that this is the case in all our numerical experiments (see \Cref{fig:sing} for two concrete examples). 
The additional assumption  \Cref{eq:cmild} is a mild regularity condition on the covariances. 
It ensures that the densities of $ \ol{\Xi}^*, \ol{\Sigma}^* $ decay sufficiently slowly at the edges of the support, so that $ \sigma_2^* $ is well-posed \cite{zhang2023spectral}. 
\end{remark}

\begin{figure}
    \centering
    \begin{subfigure}[t]{0.48\linewidth}        
        \centering
        \includegraphics[width=0.677\linewidth]{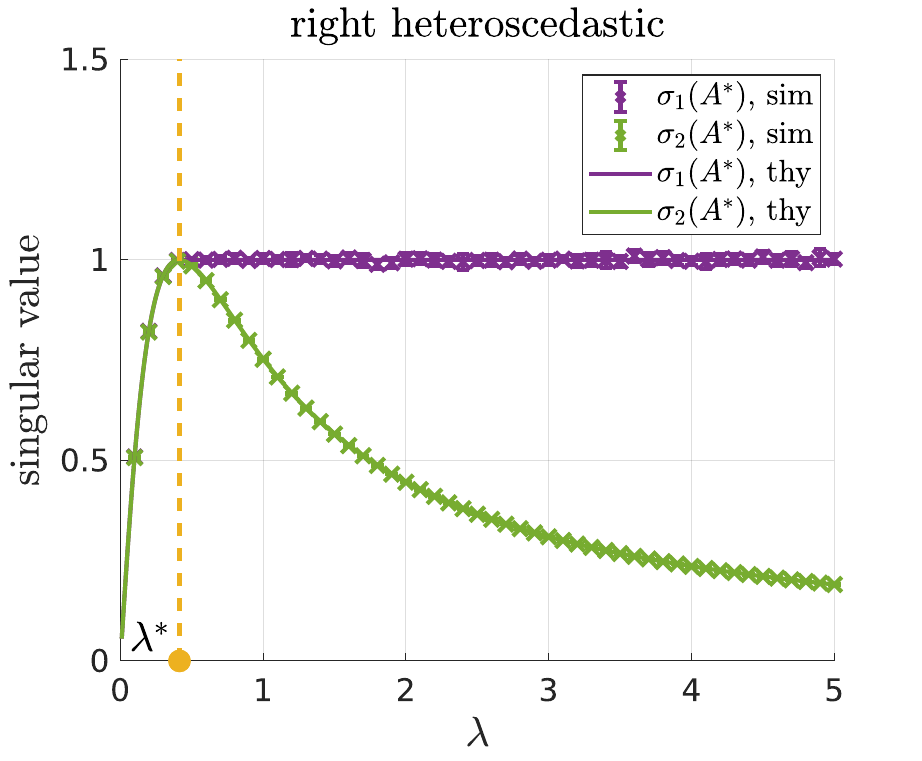}
        \caption{$ \Xi = I_n $ and $ \Sigma $ a Toeplitz matrix with $\rho = 0.9$.}
        \label{fig:fig_sing_spec_amp_bayes_1sided}
    \end{subfigure}
    \quad 
    \begin{subfigure}[t]{0.48\linewidth}
        \centering
        \includegraphics[width=0.677\linewidth]{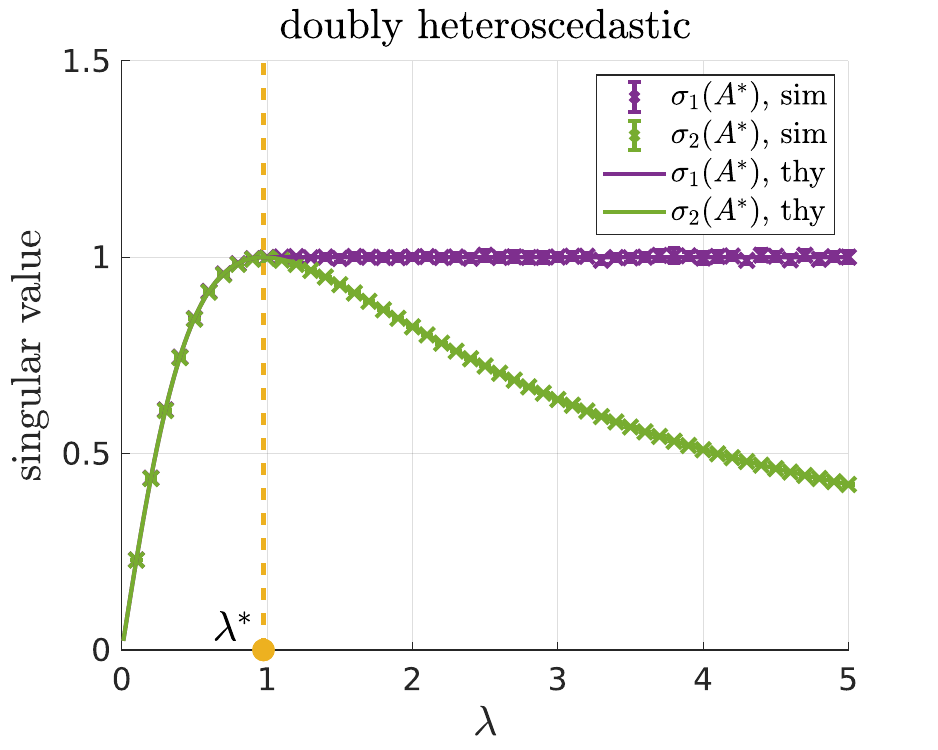}
        \caption{$ \Xi $ a circulant matrix with $ c = 0.1, \ell = 5 $ and $ \Sigma $ a Toeplitz matrix with $ \rho = 0.5 $. }
        \label{fig:fig_sing_spec_amp_bayes}
    \end{subfigure}
    \caption{Top two singular values of $ A^* $ in \Cref{eqn:A*_main}, where $ d=4000, \delta = 4 $ and each simulation is averaged over $10$ i.i.d.\ trials. The singular values computed experimentally (`sim' in the legends and $ \times $ in the plots) closely match our theoretical prediction in \Cref{eqn:sing_char} (`thy' in the legends and solid curves with the same color in the plots). The threshold $ \lambda^* $ is such that equality holds in \Cref{eqn:thr_bayes}. We note that the green curve corresponding to  $\sigma_2^*$ is smaller than $1$ for $\lambda>\lambda^*$, i.e., when \Cref{eqn:thr_bayes} holds.}
    \vspace{-.75em}
    \label{fig:sing}
\end{figure}

\begin{remark}[Signal priors]
\label{rk:spec_prior}
\Cref{thm:spec} does not require the prior distributions $P, Q$ to be Gaussian, and it is valid for any i.i.d.\ prior with mean $0$ and variance $1$. 
\end{remark}

On the one hand, \Cref{cor:weak_rec_thr} shows that, if \Cref{eqn:thr_bayes} is violated, the problem is information-theoretically impossible, i.e., no estimator achieves non-trivial error. 
On the other hand, \Cref{thm:spec} exhibits a pair of estimators that achieves non-trivial error as soon as \Cref{eqn:thr_bayes} holds -- under the additional assumption $\sigma_2^*<1$ which we conjecture to be implied by \Cref{eqn:thr_bayes}. Thus, the spectral method in \Cref{eqn:def_spec} is optimal in terms of weak recovery threshold. 
Though such estimators do not attain the optimal error, when both priors are Gaussian and $ \Xi = I_n $, $\wh{u}\wh{u}^\top$ is the Bayes-optimal estimate for $u^*{u^*}^\top$. 

\begin{corollary}
\label{cor:one_sided}
Assume $ P = Q = \cN(0,1) $, and consider the setting of \Cref{thm:spec} with the additional assumption $ \Xi = I_n $. 
Then, $ \eta_u = \sqrt{q_u^*} $, i.e., $\wh{u}\wh{u}^\top$ achieves the MMSE for $u^*{u^*}^\top$. 
\end{corollary}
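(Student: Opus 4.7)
The corollary asserts two things: the algebraic identity $\eta_u = \sqrt{q_u^*}$, and from this the Bayes optimality of $\widehat{u}\widehat{u}^\top$. The plan is to establish the identity by a direct computation with the fixed point equation \eqref{eqn:fp_it}, and then compare the error expressions coming from \Cref{thm:spec,thm:IT}.

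\emph{Step 1: the identity $\eta_u^2 = q_u^*$.}
I specialize the first equation of \eqref{eqn:fp_it} to $\Xi = I_n$, so that $\ol{\Xi}\equiv 1$ and therefore $\expt{\ol{\Xi}^{-1}}=\expt{\ol{\Xi}^{-2}}=1$. The fixed point equation collapses to
\[
q_u^* \;=\; \frac{\alpha\,\snr\,q_v^*}{1+\alpha\,\snr\,q_v^*} \;=\; \frac{\lambda^2 q_v^*/\delta}{1+\lambda^2 q_v^*/\delta} \;=\; \frac{\lambda\mu^*}{1+\lambda\mu^*},
\]
where the last equality uses $\alpha=1/\delta$, $\snr=\lambda^2$ and the definition $\mu^* = \lambda q_v^*/\delta$. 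Comparing with the definition $\eta_u^2 = \lambda\mu^*/(\lambda\mu^*+1)$ from \eqref{eqn:eta_uv}, we obtain $\eta_u^2 = q_u^*$, i.e.\ $\eta_u = \sqrt{q_u^*}$.

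\emph{Step 2: the Bayes-optimality claim.}
When $\Xi = I_n$ the whitened signal $\wt{u}^* = \Xi^{-1/2} u^* = u^*$, so the quantity $\mmse_n^u(\snr)$ defined in \eqref{eqn:MMSE_u} is exactly the MMSE for estimating $u^*(u^*)^\top$. Under Gaussian priors, \Cref{thm:IT} together with $\expt{\ol{\Xi}^{-1}}=1$ gives
\[
\lim_{n\to\infty}\mmse_n^u(\snr) \;=\; 1 - (q_u^*)^2.
\]
On the other hand, the performance of the spectral estimator is provided by \eqref{eqn:mmse_v_char} of \Cref{thm:spec}, which yields
\[
\lim_{n\to\infty}\frac{1}{n^2}\normf{u^*(u^*)^\top - \wh{u}\wh{u}^\top}^2 \;=\; 1-\eta_u^4.
\]
Substituting $\eta_u^2 = q_u^*$ from Step~1 shows that the spectral error matches the MMSE, which is precisely the statement that $\wh{u}\wh{u}^\top$ attains the Bayes-optimal error.

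\emph{Obstacles.}
I do not anticipate any real obstacle: the corollary is essentially a bookkeeping consequence of the two main theorems once the algebraic identity $\eta_u^2 = q_u^*$ is noted. The only subtlety is to confirm that the hypotheses of \Cref{thm:spec,thm:IT} continue to hold in the one-sided setting $\Xi=I_n$: criticality \eqref{eqn:thr_bayes} and the regularity condition \eqref{eq:cmild} (which for $\Xi=I_n$ reduces to a condition involving only $\ol{\Sigma}^*$ since $\ol{\Xi}^*$ is a point mass and the corresponding edge limit is trivially $+\infty$), together with $\sigma_2^*<1$, are inherited from the assumptions of the corollary. No additional condition on $\Sigma$ is needed beyond those already imposed in \Cref{sec:prelim}.
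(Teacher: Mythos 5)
Your proposal is correct and follows essentially the same route as the paper: the paper also derives $q_u^* = \frac{\alpha\snr q_v^*}{1+\alpha\snr q_v^*} = \frac{\lambda\mu^*}{1+\lambda\mu^*} = \eta_u^2$ directly from the first equation of \Cref{eqn:fp_it} with $\Xi = I_n$, and the Bayes-optimality then follows by comparing \Cref{eqn:mmse_v_char} with \Cref{cor:MMSE_vec} exactly as in your Step 2. Your Step 2 merely spells out the comparison that the paper leaves implicit.
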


The claim readily follows by noting that, when $ \Xi = I_n $, the first equation in \Cref{eqn:fp_it} becomes
\begin{align}
    q_u^* &= \frac{\alpha\snr q_v^*}{1 + \alpha\snr q_v^*}
    = \frac{(\lambda^2 / \delta) (\delta\mu^* / \lambda)}{1 + (\lambda^2 / \delta) (\delta\mu^* / \lambda)}
    = \frac{\lambda\mu^*}{1 + \lambda\mu^*}
    = \eta_u^2 , \notag 
\end{align}
where the last equality is by the definition \Cref{eqn:eta_uv} of $ \eta_u $. Let us highlight that, even if 
$ \Xi = I_n $, 
$\wh{u}$ still makes non-trivial use of the other covariance $\Sigma^{1/2}$. At the information-theoretic level, this is reflected by the fact that $\Sigma^{1/2}$ enters $q_u^*$ through the fixed point equations \Cref{eqn:fp_it}. Therefore, even though the matrix model in \Cref{eqn:Y_mtx} is equivalent to a pair of uncorrelated vector models in \Cref{eqn:Yuv} in the sense of the free energy, the tasks of estimating $u^*$ and $v^*$ cannot be decoupled. 

\begin{figure}[tb]
    \centering
    \begin{subfigure}{0.325\linewidth}
        \centering
        \includegraphics[width=\linewidth]{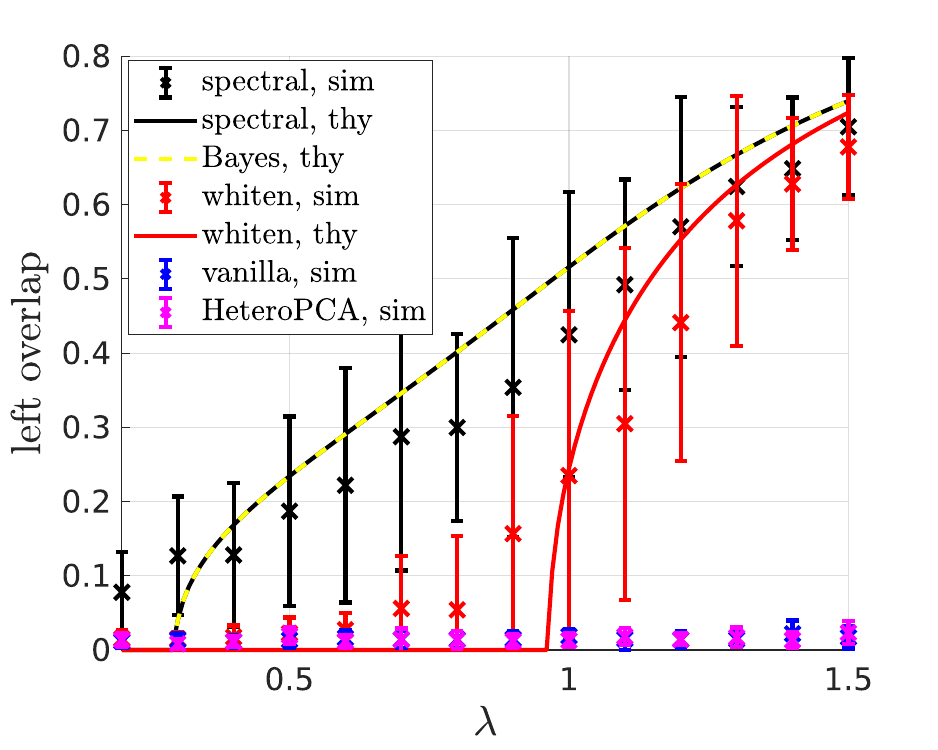}
        \caption{Normalized correlation with $u^*$}
        \label{fig:fig_compare_overlap_1sided_zoomin_left}
    \end{subfigure}
    \begin{subfigure}{0.325\linewidth}
        \centering
        \includegraphics[width=\linewidth]{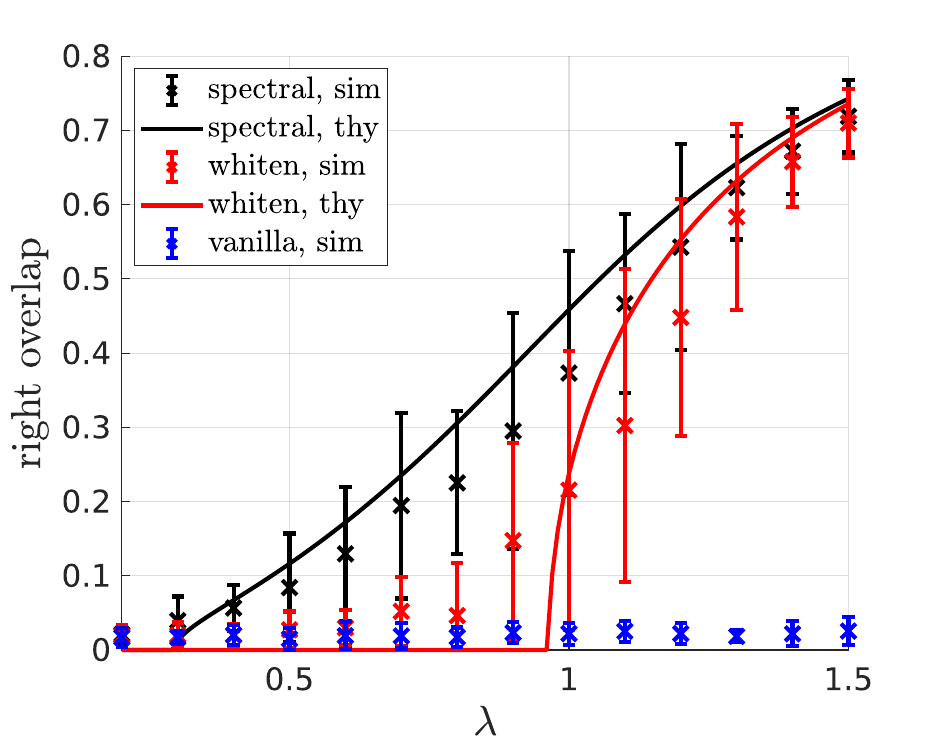}
        \caption{Normalized correlation with $v^*$}
        \label{fig:fig_compare_overlap_1sided_zoomin_right}
    \end{subfigure}
    \begin{subfigure}{0.325\linewidth}
        \centering
        \includegraphics[width=\linewidth]{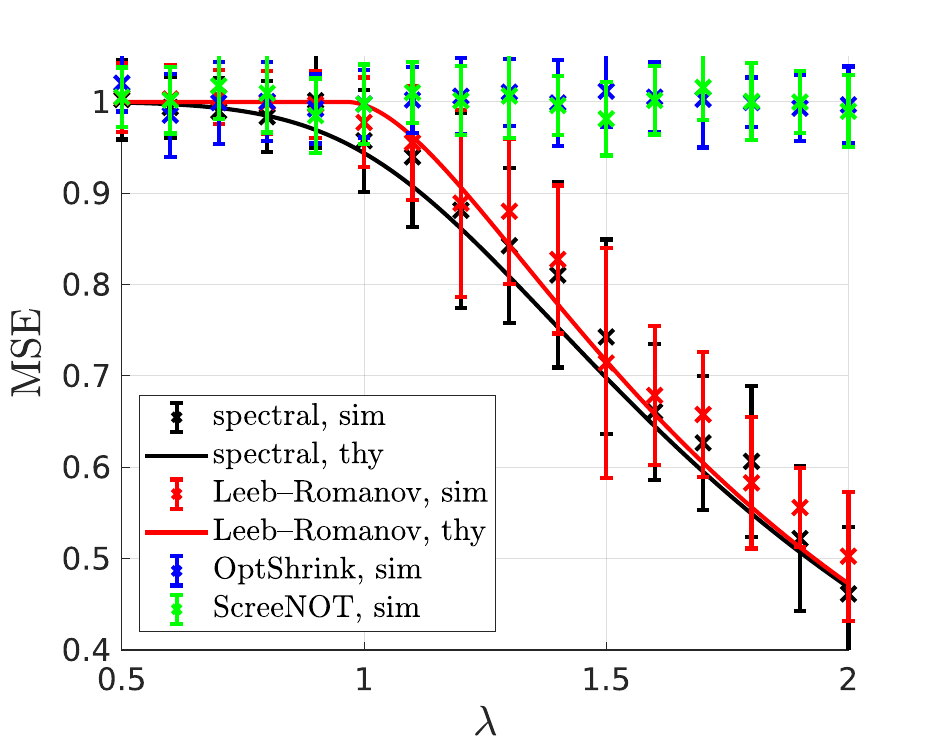}
        \caption{Matrix MSE for $u^*{v^*}^\top$}
            
\label{fig:fig_compare_1sided_zoomin}
    \end{subfigure}
   \vspace{-.25em} \caption{Performance comparison when $ \Xi = I_n $ and $ \Sigma $ is a circulant matrix. The numerical results closely follow the predictions of \Cref{thm:spec}, and our spectral estimators in \Cref{eqn:def_spec} outperform all other methods (Leeb--Romanov, OptShrink, ScreeNOT, and HeteroPCA), especially at low SNR.}
    \vspace{-1em}
    \label{fig:1sided}
\end{figure}

\begin{figure}[tb]
    \centering
    \begin{subfigure}{0.325\linewidth}
        \centering
        \includegraphics[width=\linewidth]{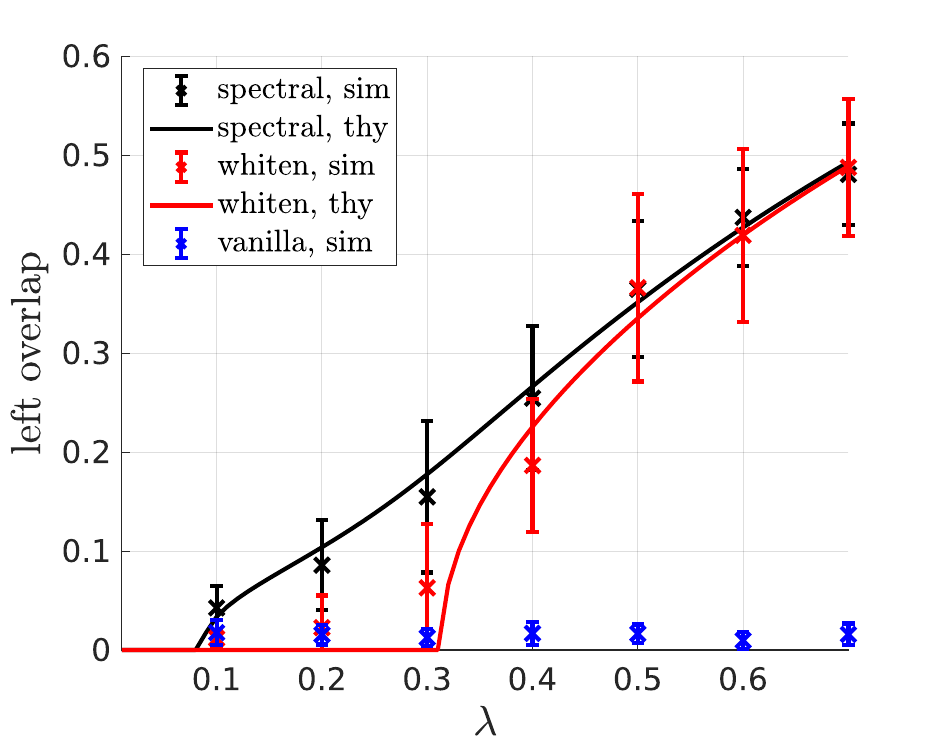}
        \caption{Normalized correlation with $u^*$}
        \label{fig:fig_compare_overlap_zoomin_left}
    \end{subfigure}
    \begin{subfigure}{0.325\linewidth}
        \centering
        \includegraphics[width=\linewidth]{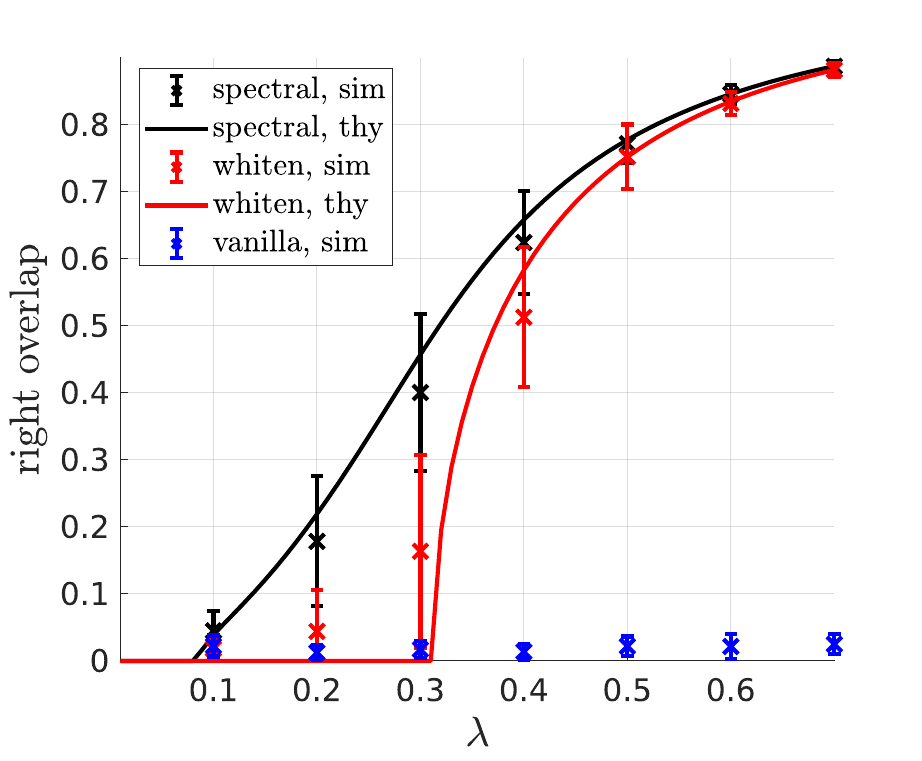}
        \caption{Normalized correlation with $v^*$}
        \label{fig:fig_compare_overlap_zoomin_right}
    \end{subfigure}
    \begin{subfigure}{0.325\linewidth}
        \centering
        \includegraphics[width=\linewidth]{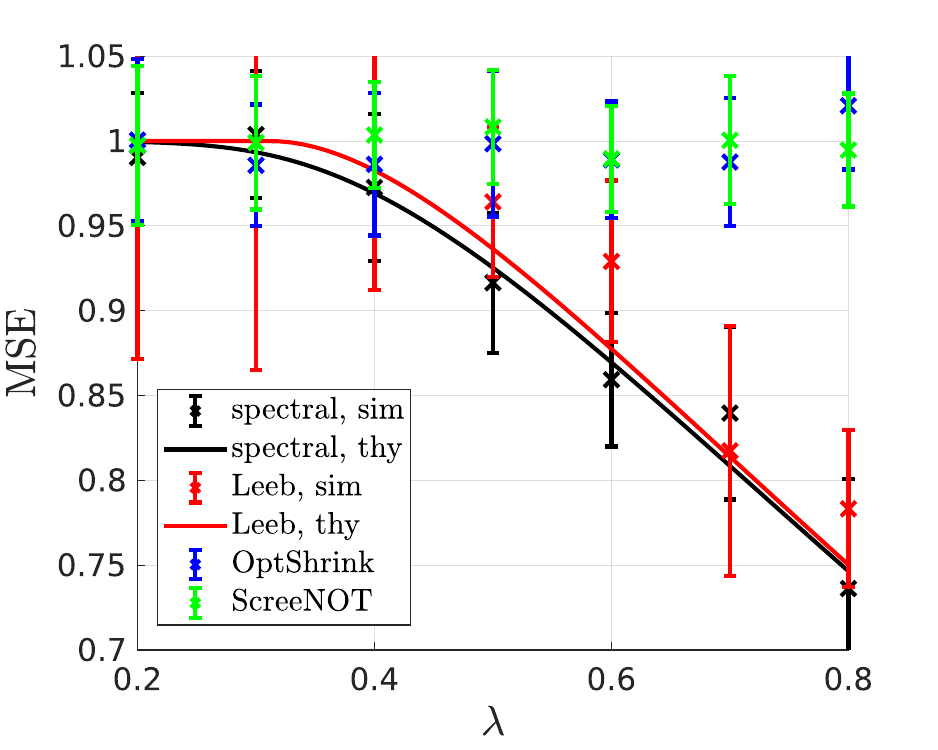}
        \caption{Matrix MSE for $u^*{v^*}^\top$}
        \label{fig:fig_compare_zoomin}
    \end{subfigure}
   \vspace{-.25em}     \caption{Performance comparison when $ \Xi $ is a Toeplitz matrix and $\Sigma$ is circulant. The numerical results closely follow the predictions of \Cref{thm:spec}, and our spectral estimators in \Cref{eqn:def_spec} outperform all other methods (Leeb, OptShrink, and ScreeNOT), especially at low SNR.}
    \vspace{-1em}
    \label{fig:2sided}
\end{figure}

\begin{figure}[htbp]
    \centering
    \begin{subfigure}[t]{\linewidth}        
        \centering
        \includegraphics[width=0.32\linewidth]{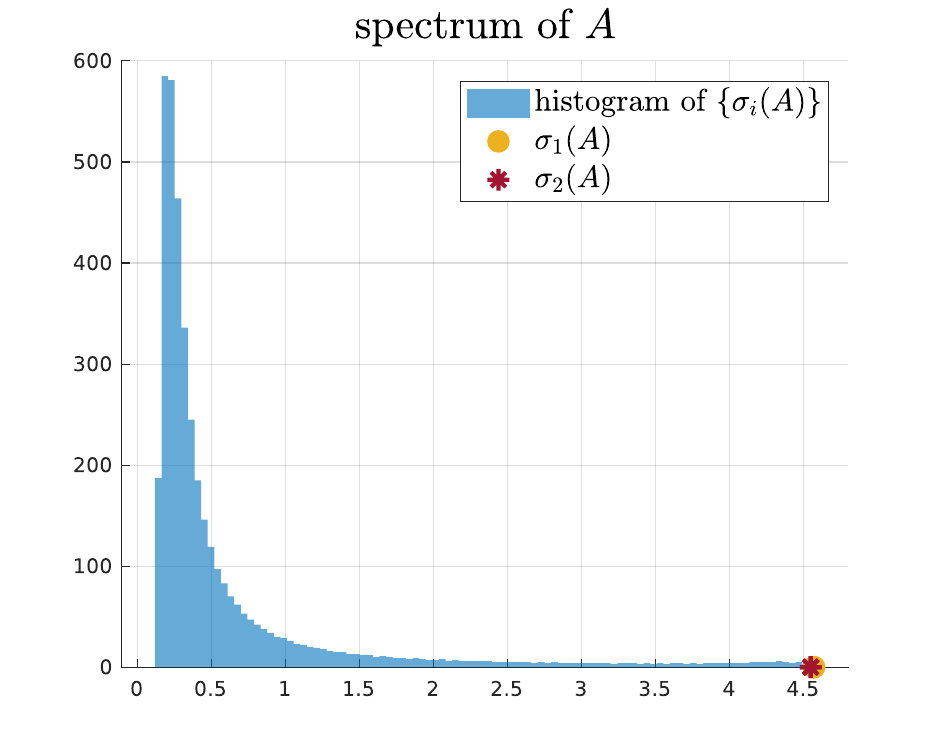}
        \qquad\qquad 
        \includegraphics[width=0.32\linewidth]{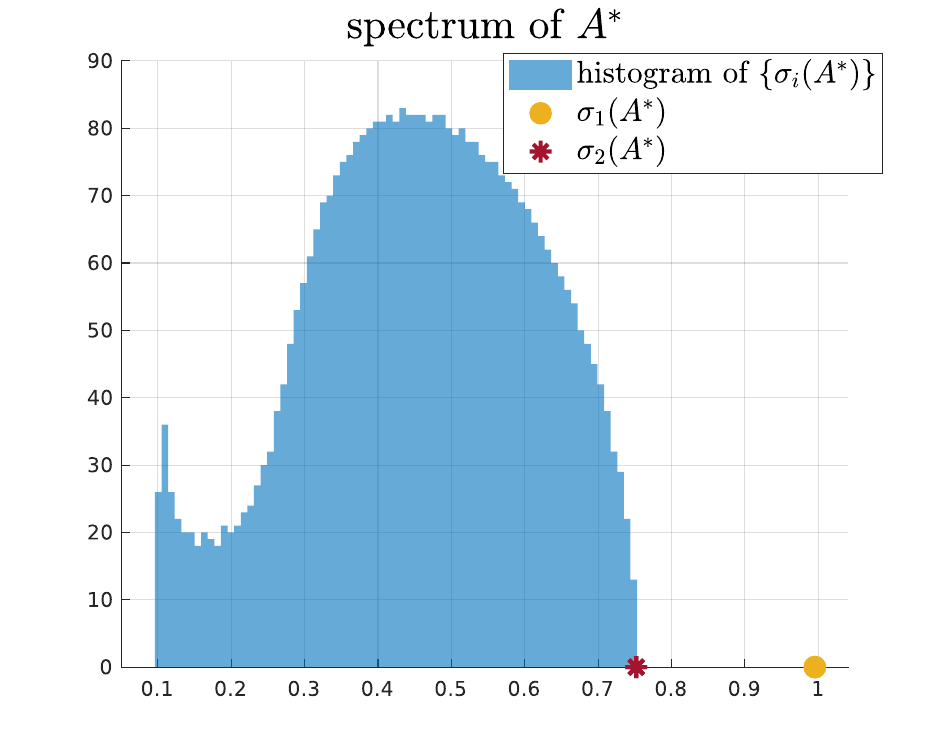}
        \caption{$\lambda = 1$, $ \Xi = I_n $ and $ \Sigma $ a Toeplitz matrix with $\rho = 0.9$.}
        \label{fig:spike_1sided_opt}
    \end{subfigure}
    \\
    \begin{subfigure}[t]{\linewidth}
        \centering
        \includegraphics[width=0.32\linewidth]{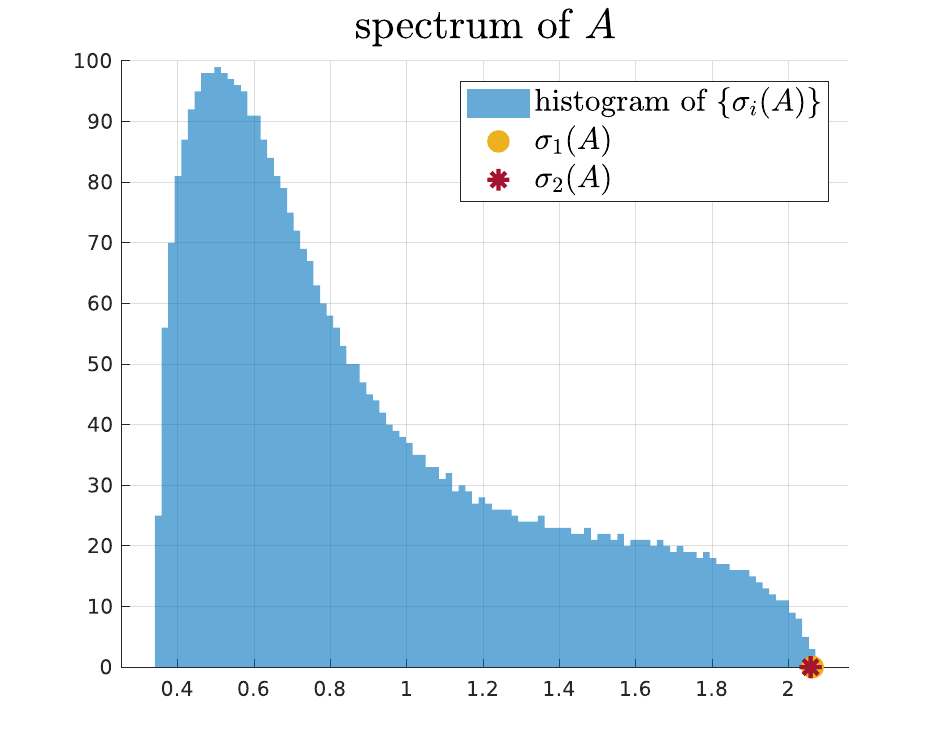}
        \qquad\qquad  
        \includegraphics[width=0.32\linewidth]{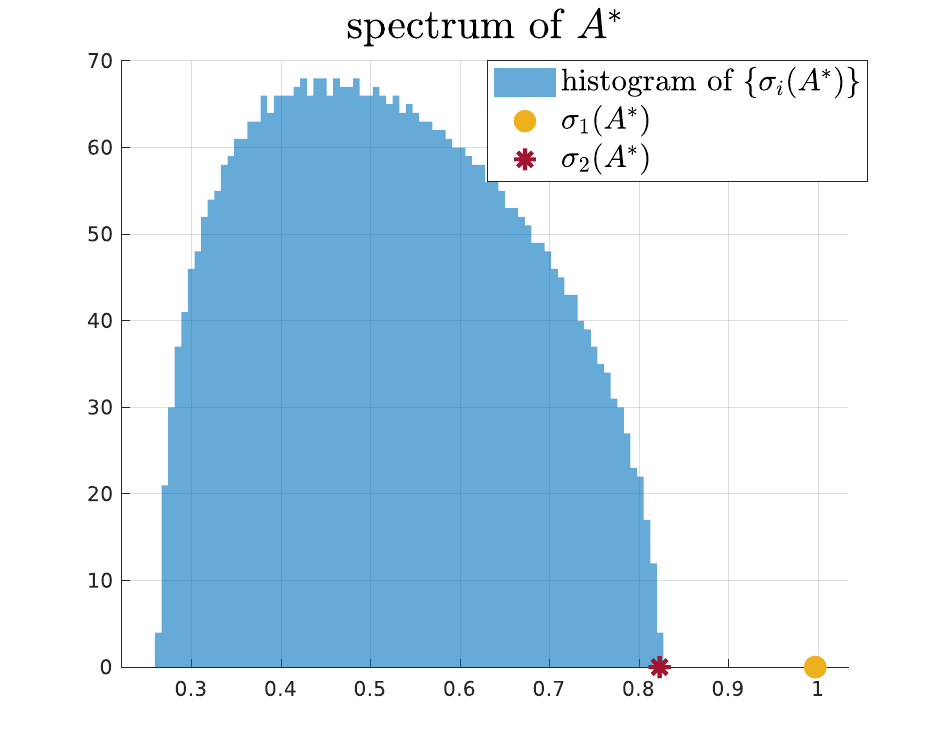}
        \caption{$\lambda = 2$, $ \Xi $ a circulant matrix with $ c = 0.1, \ell = 5 $ and $ \Sigma $ a Toeplitz matrix with $ \rho = 0.5 $. }
        \label{fig:spike_opt}
    \end{subfigure}
    \caption{Spectra of $A$ and $A^*$ averaged over $10$ i.i.d.\ trials, where $ d = 4000, \delta = 4 $. An outlier singular value emerges in the spectrum of $A^*$ due to the pre-processing on $A$. }
    \label{fig:spike}
\end{figure}

\vspace{-.5em}

\paragraph{Numerical experiments.} \Cref{fig:1sided,fig:2sided} demonstrate the advantage of our method over existing approaches, and they display an accurate agreement between simulations (`sim' in the legends and $ \times $ in the plots) 
and the theoretical predictions of \Cref{thm:spec} (`thy' in the legends and solid curves with the same color in the plots), both plotted as a function of $\lambda$. 
In both figures, $n = 4000, d = 2000$ (so $ \delta = 2 $), and $ P = Q = \cN(0,1) $. 
Each data point is computed from $20$ i.i.d.\ trials and error bars are reported at $1$ standard deviation. 
We let $ \Xi$ be either the identity or a Toeplitz matrix \cite{ZhangZhang,Javanmard_Montanari_18,Toeplitz_PTRF}, i.e., $ \Xi_{i,j} = \rho^{\abs{i - j}} $ with $\rho = 0.9$. We let $ \Sigma$ be a circulant matrix \cite{Javanmard_Montanari_14_TIT,Javanmard_Montanari_14_JMLR}: 
the first row has $1$ in the first position, $c = 0.0078$ in the second through $(\ell+1)$-st position and in the last $\ell$ positions ($\ell = 300$), with the remaining entries being  $0$; for $2\le i\le d$, the $i$-th row is a cyclic shift of the $(i-1)$-st row to the right by $1$ position. 
Both matrices satisfy \Cref{eq:cmild} and the conditions of \Cref{sec:prelim}. 
    
Our spectral estimator outperforms all other approaches: Leeb--Romanov \cite{PCA_hetero_shrink}, OptShrink \cite{Nadakuditi_shrinkage_rotinv}, ScreeNOT \cite{DGR_ScreeNOT}, and HeteroPCA \cite{HeteroPCA} in the one-sided heteroscedastic case (\Cref{fig:1sided}); Leeb \cite{Leeb_mtx_denois}, OptShrink, and ScreeNOT in the doubly heteroscedastic case (\Cref{fig:2sided}). 
When computing the normalized correlation with the signals (left/right overlap), the performance of Leeb--Romanov and Leeb is the same as the estimators $ \Xi^{1/2} u_1(\Xi^{-1/2} A \Sigma^{-1/2}), \Sigma^{1/2} v_1(\Xi^{-1/2} A \Sigma^{-1/2}) $ referred to as `whiten' in \Cref{fig:fig_compare_overlap_1sided_zoomin_left,fig:fig_compare_overlap_1sided_zoomin_right}; 
the performance of OptShrink and ScreeNOT is the same as the estimators $ u_1(A), v_1(A) $ referred to as `vanilla' in \Cref{fig:fig_compare_overlap_zoomin_left,fig:fig_compare_overlap_zoomin_right}. The advantage of our approach (in black) is especially significant at low SNR; as SNR increases, Leeb-Romanov and Leeb (in red) achieve similar performance; a much larger SNR ($>2$ and $>3$ in \Cref{fig:1sided,fig:2sided}) is required by HeteroPCA, OptShrink and ScreeNOT (in magenta, blue and green) to perform comparably.

Finally, \Cref{fig:spike} shows the presence of spectral outliers in $A^*$ and their absence in $A$ at a fixed $\lambda$. 

\vspace{-.5em}

\paragraph{Proof strategy.} 
The design and analysis of the spectral estimator in \Cref{eqn:def_spec} comprise two steps, detailed in \Cref{app:analysis_spec}. The \emph{first step} is to present an AMP algorithm dubbed Bayes-AMP for matrix denoising with doubly heteroscedastic noise. 
Specifically, 
its iterates are updated as 
\vspace{-.5em}
\begin{align}
u^t &= \Xi^{-1} A \Sigma^{-1} \wt{v}^t - b_t \Xi^{-1} \wt{u}^{t-1} , \quad 
\wt{u}^t = g_t^*(u^t) , \quad 
c_t = \frac{1}{n} \tr((\nabla g_t^*(u^t)) \Xi^{-1}) ,  \label{eqn:BAMPit} 
\\
v^{t+1} &= \Sigma^{-1} A^\top \Xi^{-1} \wt{u}^t - c_t \Sigma^{-1} \wt{v}^t , \quad
\wt{v}^{t+1} = f_{t+1}^*(v^{t+1}) , \quad
b_{t+1} = \frac{1}{n} \tr((\nabla f_{t+1}^*(v^{t+1})) \Sigma^{-1}) , \notag
\end{align}
where $\nabla$ denotes the Jacobian matrix, and the functions $ g_t^*, f_{t+1}^* $ are specified below in \Cref{eq:conddenoi}. As common in AMP algorithms, the iterates \Cref{eqn:BAMPit} are accompanied with a state evolution which accurately tracks their behavior via a simple deterministic recursion: the joint empirical distribution of $(u^*, v^*, u^t, v^{t+1})$ converges to the random variables $(U^*, V^*, U_t, V_{t+1})$, see  \Cref{prop:SE_bayes} for a formal statement and the recursive description of the laws of such random variables. Then, the name `Bayes-AMP' is motivated by the fact that $ g_t^*, f_{t+1}^* $ are the posterior-mean denoisers given by
\vspace{-.25em}
\begin{align}\label{eq:conddenoi}
g_t^*(u) &\coloneqq \expt{ U^* \mid U_t = u } , \qquad 
f_{t+1}^*(v) \coloneqq \expt{ V^* \mid V_{t+1} = v } . 
\end{align}
Remarkably, Bayes-AMP operates on $ \Xi^{-1} A \Sigma^{-1} $, as opposed to the widely adopted ansatz of considering the whitened matrix $ 
\Xi^{-1/2} A \Sigma^{-1/2} $. The advantage of operating on $ \Xi^{-1} A \Sigma^{-1} $ is that the fixed point of the corresponding state evolution matches the extremizers of the free energy in \Cref{eqn:fp_it}. This would \emph{not} be the case if Bayes-AMP used the whitening $ \Xi^{-1/2} A \Sigma^{-1/2} $. 
Indeed, one can repeat the analysis of an AMP that operates on $\Xi^{-1/2} A \Sigma^{-1/2}$. The fixed point equations of the resulting state evolution do not match the information-theoretically optimal one in \Cref{eqn:fp_it}. In particular, the weak recovery threshold coming out of this approach is strictly larger than the optimal one in \Cref{eqn:thr_bayes_change}, as long as at least one of $\Xi, \Sigma$ is not a multiple of the identity. Since these derivations led to suboptimal results, the details were left out from the paper.

The design of Bayes-AMP and the proof of its state evolution follow a two-step reduction detailed in \Cref{app:pf_BAMP_SE}. 
Using a change of variables, we show in \Cref{app:pf_SE_bayes}  that Bayes-AMP can be realized by an auxiliary AMP with non-separable denoising functions (meaning that $ g_t, f_{t+1} $ cannot be written as univariate functions applied component-wise) operating on $ \Xi^{-1/2} A \Sigma^{-1/2} = \frac{\lambda}{n} \wt{u}^* (\wt{v}^*)^\top + \wt{W} $. 
Then, in \Cref{sec:AMP} we simulate the auxiliary AMP using a standard AMP operating on the i.i.d.\ Gaussian matrix $ \wt{W} $, whose state evolution has been established in \cite{AMP_nonsep,GraphAMP}. 

However, Bayes-AMP by itself is not a practical algorithm since it needs a warm start, i.e., an initialization that achieves non-trivial error. Thus, the \emph{second step} is to design a spectral estimator that solves the fixed point equation of Bayes-AMP, which turns out to be an eigen-equation for $A^*$.

To offer the readers an intuition on how the spectral estimators arise from Bayes-AMP, 
we now heuristically derive the form \Cref{eqn:A*_main} of $A^*$ and the expression \Cref{eqn:def_spec} of the spectral estimator. To do so, we note that the large-$n$ limits of $ c_t, b_{t+1} $ coincide with the auxiliary quantities $c^*, b^*$ defined in \Cref{eq:aux}. Furthermore, when the priors of $u^*, v^*$ are Gaussian, \Cref{eq:conddenoi} reduces to 
$$ g_t^*(u) = \lambda (\lambda \mu^* \Xi^{-1} + I_n)^{-1} u , \qquad f_{t+1}^*(v) = \lambda (\lambda \nu^* \Sigma^{-1} + I_d)^{-1} v, $$
where we recall that $\mu^* = \lambda q_v^* / \delta$ and $\nu^* = \lambda q_u^*$ are rescalings of the non-trivial solution $(q_u^*, q_v^*)$ of \Cref{eqn:fp_it}. Denoting by $u,v$ the fixed points of the iteration \Cref{eqn:BAMPit}, after some manipulations we have
\vspace{-.5em}
\begin{equation*}
    \mathfrak{g}(\Xi) u = A^* \mathfrak{f}(\Sigma) v , \qquad
 \mathfrak{f}(\Sigma) v = {A^*}^\top \mathfrak{g}(\Xi) u,
\end{equation*}  
where $A^*$ is given in \Cref{eqn:A*_main} and
\vspace{-.5em}
 \begin{align}
     \mathfrak{g}(\Xi) &\coloneqq \sqrt{\lambda} (\lambda(\mu^* + b^*) I_n + \Xi)^{1/2} (\lambda\mu^* I_n + \Xi)^{-1} \Xi^{1/2} , \notag \\
     \mathfrak{f}(\Sigma) &\coloneqq \sqrt{\lambda} (\lambda(\nu^* + c^*) I_d + \Sigma)^{1/2} (\lambda\nu^* I_d + \Sigma)^{-1} \Sigma^{1/2} . 
     \notag 
 \end{align}
This suggests that $ A^* $ has top singular value equal to $1$ and $ (\mathfrak{g}(\Xi) u, \mathfrak{f}(\Sigma) v) $ are aligned with the corresponding singular vectors $(u_1(A^*), v_1(A^*))$. 
Moreover, state evolution implies that the distribution of the fixed point $(u,v)$ is close to that of $$  ( \mu^* \Xi^{-1} u^* + \sqrt{\mu^*/\lambda} w_u, \nu^* \Sigma^{-1} v^* + \sqrt{\nu^*/\lambda} w_v ) , $$ with $ (w_u, w_v) \sim \cN(0_n, \Xi^{-1}) \otimes \cN(0_d, \Sigma^{-1}) $ independent of $ u^*,v^* $. 
Thus, to obtain estimates of $(u^*, v^*)$, we take $ (\Xi \mathfrak{g}(\Xi)^{-1} u_1(A^*), \Sigma \mathfrak{f}(\Sigma)^{-1} v_1(A^*) )$ and suitably rescale their norm, which leads to the expressions in \Cref{eqn:def_spec}. 
More details on the above heuristics are discussed in \Cref{sec:AMP_spec}.

The most outstanding step remains to make the heuristics rigorous. This involves proving that $ \Xi u^t, \Sigma v^{t+1} $ are aligned with the proposed spectral estimator, which allows for a performance characterization via state evolution. The formal argument is carried out in  \Cref{app:pf_spec}. 


\section{Concluding remarks}
\label{sec:dis}

In this work, we establish  information-theoretic limits and propose an efficient spectral method with optimality guarantees, for matrix estimation with doubly heteroscedastic noise.
On the one hand, under Gaussian priors, we give a rigorous characterization of the MMSE; on the other hand, we present a spectral estimator that \emph{(i)} achieves the information-theoretic weak recovery threshold, and \emph{(ii)} is Bayes-optimal for the estimation of one of the signals, when the noise is heteroscedastic only on the other side. While our analysis focuses on rank-1 estimation, we expect that all results admit proper extensions to rank-$r$ signals, where $r$ is a constant independent on $n, d$.

The design and analysis of the spectral estimator draws connections with approximate message passing and, along the way, we introduce a Bayes-AMP algorithm which could be of independent interest. In this paper, we employ Bayes-AMP solely as a proof technique. However, one could use the spectral method designed here as an initialization of Bayes-AMP itself, after suitably correcting its iterates. This strategy has been successfully carried out for i.i.d.\ Gaussian noise in  \cite{MV_AoS} and for rotationally invariant noise  in \cite{mondelli2021pca,zhong2021approximate}. Bayes-AMP is well equipped to exploit signal priors  more informative than the Gaussian one, and AMP algorithms are known to achieve the information-theoretically optimal estimation error for low-rank matrix inference \cite{MV_AoS,Barbier_PCA_rotinv}. Nevertheless, we point out two obstacles towards doing so in the presence of doubly heteroscedastic noise. First, for general priors, establishing the information-theoretic limits remains a challenging open problem, and it is unclear whether a low-dimensional characterization of the free energy (and, hence, of the MMSE) is 
possible. Second, even for Gaussian priors, Bayes-AMP reduces to the proposed spectral estimator, which is not Bayes-optimal for the general case of doubly heteroscedastic noise. 




Finally, the proposed spectral estimator makes non-trivial use of the covariances $\Xi, \Sigma$, which are assumed to be known. 
When such matrices possess additional structure -- 
e.g., they are sparse \cite{CaiZhou_sparse}, their inverses are sparse \cite{Toeplitz_PTRF} or they are circulant or Toeplitz \cite{Yuan_cov} -- their consistent estimation is possible, see also the survey \cite{CaiRenZhou}. 
However, in general, $\Xi, \Sigma$ cannot be consistently estimated from the data when $n$ and $d$ grow proportionally. 
Thus, a challenging open problem is to construct estimators that retain comparable performance without knowing the noise covariances. 
The paper \cite{mtx_denois_partial} addresses the challenge of unknown covariances by considering a modified model where one additionally observes an independent copy of noise. The statistician can then estimate the covariance from the noise-only observation and use it as a surrogate of the true covariance for estimating the signals from the spiked model. It is possible to derive similar results in the doubly heteroskedastic setting considered in our paper.
If the covariances are completely unknown, then our model (with Gaussian priors) is equivalent to a spiked matrix model with a certain bi-rotationally invariant noise. This problem is expected to exhibit rather different behaviors than when covariances are known, see \cite{PCA_rotinv_IT,PCA_rotinv_OAMP} for recent progress on understanding the statistical and computational limits for such models.







\newpage

\begin{ack}
YZ thanks Shashank Vatedka for discussions at the early stage of this project. 
MM thanks Jean Barbier for sharing his insights into the interpolation argument. 
This research is partially supported by the 2019 Lopez-Loreta Prize and by the Interdisciplinary Projects Committee (IPC) at the Institute of Science and Technology Austria (ISTA). 
This work was done in part while the authors were visiting the Simons Institute for the Theory of Computing.


\end{ack}

\bibliographystyle{plain}
\bibliography{ref} 

\begin{thebibliography}{10}

\bibitem{Agterberg_low-rank_hetero}
Joshua Agterberg, Zachary Lubberts, and Carey~E. Priebe.
\newblock Entrywise estimation of singular vectors of low-rank matrices with
  heteroskedasticity and dependence.
\newblock {\em IEEE Trans. Inform. Theory}, 68(7):4618--4650, 2022.

\bibitem{ASS}
Michael Aizenman, Robert Sims, and Shannon~L. Starr.
\newblock Extended variational principle for the sherrington-kirkpatrick
  spin-glass model.
\newblock {\em Phys. Rev. B}, 68:214403, Dec 2003.

\bibitem{BaiYin}
Z.~D. Bai and Y.~Q. Yin.
\newblock Limit of the smallest eigenvalue of a large-dimensional sample
  covariance matrix.
\newblock {\em Ann. Probab.}, 21(3):1275--1294, 1993.

\bibitem{BBAP}
Jinho Baik, G\'{e}rard Ben~Arous, and Sandrine P\'{e}ch\'{e}.
\newblock Phase transition of the largest eigenvalue for nonnull complex sample
  covariance matrices.
\newblock {\em Ann. Probab.}, 33(5):1643--1697, 2005.

\bibitem{astronomy2}
Stephen Bailey.
\newblock Principal component analysis with noisy and/or missing data.
\newblock {\em Publications of the Astronomical Society of the Pacific},
  124(919):1015, sep 2012.

\bibitem{Barbier_PCA_rotinv}
Jean Barbier, Francesco Camilli, Marco Mondelli, and Manuel S\'{a}enz.
\newblock Fundamental limits in structured principal component analysis and how
  to reach them.
\newblock {\em Proc. Natl. Acad. Sci. USA}, 120(30):Paper No. e2302028120, 7,
  2023.

\bibitem{PCA_rotinv_IT}
Jean Barbier, Francesco Camilli, Marco Mondelli, and Yizhou Xu.
\newblock Information limits and thouless-anderson-palmer equations for spiked
  matrix models with structured noise.
\newblock {\em CoRR}, abs/2405.20993, 2024.

\bibitem{barbier2022price}
Jean Barbier, TianQi Hou, Marco Mondelli, and Manuel Saenz.
\newblock The price of ignorance: how much does it cost to forget noise
  structure in low-rank matrix estimation?
\newblock In {\em Advances in Neural Information Processing Systems},
  volume~35, pages 36733--36747, 2022.

\bibitem{Barbier_GLM}
Jean Barbier, Florent Krzakala, Nicolas Macris, L\'{e}o Miolane, and Lenka
  Zdeborov\'{a}.
\newblock Optimal errors and phase transitions in high-dimensional generalized
  linear models.
\newblock {\em Proc. Natl. Acad. Sci. USA}, 116(12):5451--5460, 2019.

\bibitem{BM_interp}
Jean Barbier and Nicolas Macris.
\newblock The adaptive interpolation method: a simple scheme to prove replica
  formulas in {B}ayesian inference.
\newblock {\em Probab. Theory Related Fields}, 174(3-4):1133--1185, 2019.

\bibitem{Bayati_Montanari}
Mohsen Bayati and Andrea Montanari.
\newblock The dynamics of message passing on dense graphs, with applications to
  compressed sensing.
\newblock {\em IEEE Trans. Inform. Theory}, 57(2):764--785, 2011.

\bibitem{IT_inhomo}
Joshua~K. Behne and Galen Reeves.
\newblock Fundamental limits for rank-one matrix estimation with groupwise
  heteroskedasticity.
\newblock In {\em International Conference on Artificial Intelligence and
  Statistics}, pages 8650--8672, 2022.

\bibitem{AMP_nonsep}
Rapha\"{e}l Berthier, Andrea Montanari, and Phan-Minh Nguyen.
\newblock State evolution for approximate message passing with non-separable
  functions.
\newblock {\em Inf. Inference}, 9(1):33--79, 2020.

\bibitem{biology2}
Tejal Bhamre, Teng Zhang, and Amit Singer.
\newblock Denoising and covariance estimation of single particle cryo-em
  images.
\newblock {\em Journal of Structural Biology}, 195(1):72--81, 2016.

\bibitem{Bolthausen}
Erwin Bolthausen.
\newblock An iterative construction of solutions of the {TAP} equations for the
  {S}herrington-{K}irkpatrick model.
\newblock {\em Comm. Math. Phys.}, 325(1):333--366, 2014.

\bibitem{BLM_book}
St\'{e}phane Boucheron, G\'{a}bor Lugosi, and Pascal Massart.
\newblock {\em Concentration inequalities}.
\newblock Oxford University Press, Oxford, 2013.

\bibitem{wishart_nonasymp_hetero}
T.~Tony Cai, Rungang Han, and Anru~R. Zhang.
\newblock On the non-asymptotic concentration of heteroskedastic {W}ishart-type
  matrix.
\newblock {\em Electron. J. Probab.}, 27:Paper No. 29, 40, 2022.

\bibitem{SparsePCA}
T.~Tony Cai, Zongming Ma, and Yihong Wu.
\newblock Sparse {PCA}: optimal rates and adaptive estimation.
\newblock {\em Ann. Statist.}, 41(6):3074--3110, 2013.

\bibitem{Toeplitz_PTRF}
T.~Tony Cai, Zhao Ren, and Harrison~H. Zhou.
\newblock Optimal rates of convergence for estimating {T}oeplitz covariance
  matrices.
\newblock {\em Probab. Theory Related Fields}, 156(1-2):101--143, 2013.

\bibitem{CaiRenZhou}
T.~Tony Cai, Zhao Ren, and Harrison~H. Zhou.
\newblock Estimating structured high-dimensional covariance and precision
  matrices: optimal rates and adaptive estimation.
\newblock {\em Electron. J. Stat.}, 10(1):1--59, 2016.

\bibitem{CaiZhou_sparse}
T.~Tony Cai and Harrison~H. Zhou.
\newblock Optimal rates of convergence for sparse covariance matrix estimation.
\newblock {\em Ann. Statist.}, 40(5):2389--2420, 2012.

\bibitem{DMFT}
Michael Celentano, Chen Cheng, and Andrea Montanari.
\newblock The high-dimensional asymptotics of first order methods with random
  data.
\newblock {\em arXiv preprint arXiv:2112.07572}, 2021.

\bibitem{CWC_small}
Chen Cheng, Yuting Wei, and Yuxin Chen.
\newblock Tackling small eigen-gaps: fine-grained eigenvector estimation and
  inference under heteroscedastic noise.
\newblock {\em IEEE Trans. Inform. Theory}, 67(11):7380--7419, 2021.

\bibitem{imaging1}
Lucilio Cordero-Grande, Daan Christiaens, Jana Hutter, Anthony~N. Price, and
  Jo~V. Hajnal.
\newblock Complex diffusion-weighted image estimation via matrix recovery under
  general noise models.
\newblock {\em NeuroImage}, 200:391--404, 2019.

\bibitem{CouilletHachem}
Romain Couillet and Walid Hachem.
\newblock Analysis of the limiting spectral measure of large random matrices of
  the separable covariance type.
\newblock {\em Random Matrices Theory Appl.}, 3(4):1450016, 23, 2014.

\bibitem{DLY_shrinkage}
Xiucai Ding, Yun Li, and Fan Yang.
\newblock Eigenvector distributions and optimal shrinkage estimators for large
  covariance and precision matrices.
\newblock {\em arXiv preprint arXiv:2404.14751}, 2024.

\bibitem{DGR_ScreeNOT}
David Donoho, Matan Gavish, and Elad Romanov.
\newblock {\it {S}cree{NOT}}: exact {MSE}-optimal singular value thresholding
  in correlated noise.
\newblock {\em Ann. Statist.}, 51(1):122--148, 2023.

\bibitem{DMM09}
David~L. Donoho, Arian Maleki, and Andrea Montanari.
\newblock {Message passing algorithms for compressed sensing}.
\newblock {\em Proceedings of the National Academy of Sciences},
  106:18914--18919, 2009.

\bibitem{PCA_rotinv_OAMP}
Rishabh Dudeja, Songbin Liu, and Junjie Ma.
\newblock Optimality of approximate message passing algorithms for spiked
  matrix models with rotationally invariant noise.
\newblock {\em CoRR}, abs/2405.18081, 2024.

\bibitem{spec_univ}
Rishabh Dudeja, Subhabrata Sen, and Yue~M Lu.
\newblock Spectral universality of regularized linear regression with nearly
  deterministic sensing matrices.
\newblock {\em IEEE Transactions on Information Theory}, 2024.

\bibitem{fan2020approximate}
Zhou Fan.
\newblock Approximate message passing algorithms for rotationally invariant
  matrices.
\newblock {\em The Annals of Statistics}, 50(1):197--224, 2022.

\bibitem{amp-tutorial}
Oliver~Y Feng, Ramji Venkataramanan, Cynthia Rush, Richard~J Samworth, et~al.
\newblock A unifying tutorial on approximate message passing.
\newblock {\em Foundations and Trends{\textregistered} in Machine Learning},
  15(4):335--536, 2022.

\bibitem{mtx_denois_partial}
Matan Gavish, William Leeb, and Elad Romanov.
\newblock Matrix denoising with partial noise statistics: optimal singular
  value shrinkage of spiked {F}-matrices.
\newblock {\em Inf. Inference}, 12(3):Paper No. iaad028, 46, 2023.

\bibitem{GraphAMP}
C\'{e}dric Gerbelot and Rapha\"{e}l Berthier.
\newblock Graph-based approximate message passing iterations.
\newblock {\em Inf. Inference}, 12(4):Paper No. iaad020, 67, 2023.

\bibitem{Guerra}
Francesco Guerra.
\newblock Broken replica symmetry bounds in the mean field spin glass model.
\newblock {\em Comm. Math. Phys.}, 233(1):1--12, 2003.

\bibitem{IT_inhomo_univ}
Alice Guionnet, Justin Ko, Florent Krzakala, and Lenka Zdeborov{\'a}.
\newblock Low-rank matrix estimation with inhomogeneous noise.
\newblock {\em arXiv preprint arXiv:2208.05918}, 2022.

\bibitem{ODE_book}
Philip Hartman.
\newblock {\em Ordinary differential equations}, volume~38.
\newblock Society for Industrial and Applied Mathematics (SIAM), Philadelphia,
  PA, 2002.

\bibitem{WPCA}
David Hong, Fan Yang, Jeffrey~A. Fessler, and Laura Balzano.
\newblock Optimally weighted {PCA} for high-dimensional heteroscedastic data.
\newblock {\em SIAM J. Math. Data Sci.}, 5(1):222--250, 2023.

\bibitem{Javanmard_Montanari_14_JMLR}
Adel Javanmard and Andrea Montanari.
\newblock Confidence intervals and hypothesis testing for high-dimensional
  regression.
\newblock {\em J. Mach. Learn. Res.}, 15:2869--2909, 2014.

\bibitem{Javanmard_Montanari_14_TIT}
Adel Javanmard and Andrea Montanari.
\newblock Hypothesis testing in high-dimensional regression under the
  {G}aussian random design model: asymptotic theory.
\newblock {\em IEEE Trans. Inform. Theory}, 60(10):6522--6554, 2014.

\bibitem{Javanmard_Montanari_18}
Adel Javanmard and Andrea Montanari.
\newblock Debiasing the {L}asso: optimal sample size for {G}aussian designs.
\newblock {\em Ann. Statist.}, 46(6A):2593--2622, 2018.

\bibitem{Johnstone_spiked}
Iain~M. Johnstone.
\newblock On the distribution of the largest eigenvalue in principal components
  analysis.
\newblock {\em Ann. Statist.}, 29(2):295--327, 2001.

\bibitem{biwhitening}
Boris Landa, Thomas T. C.~K. Zhang, and Yuval Kluger.
\newblock Biwhitening reveals the rank of a count matrix.
\newblock {\em SIAM J. Math. Data Sci.}, 4(4):1420--1446, 2022.

\bibitem{PCA_hetero_shrink}
William Leeb and Elad Romanov.
\newblock Optimal spectral shrinkage and {PCA} with heteroscedastic noise.
\newblock {\em IEEE Trans. Inform. Theory}, 67(5):3009--3037, 2021.

\bibitem{Leeb_mtx_denois}
William~E. Leeb.
\newblock Matrix denoising for weighted loss functions and heterogeneous
  signals.
\newblock {\em SIAM J. Math. Data Sci.}, 3(3):987--1012, 2021.

\bibitem{bioilogy3}
Jeffrey~T. Leek.
\newblock Asymptotic conditional singular value decomposition for
  high-dimensional genomic data.
\newblock {\em Biometrics}, 67(2):344--352, 2011.

\bibitem{Bayes_conj}
Thibault Lesieur, Florent Krzakala, and Lenka Zdeborová.
\newblock Mmse of probabilistic low-rank matrix estimation: Universality with
  respect to the output channel.
\newblock In {\em 2015 53rd Annual Allerton Conference on Communication,
  Control, and Computing (Allerton)}, pages 680--687, 2015.

\bibitem{Householder}
Yue~M. Lu.
\newblock Householder dice: a matrix-free algorithm for simulating dynamics on
  {G}aussian and random orthogonal ensembles.
\newblock {\em IEEE Trans. Inform. Theory}, 67(12):8264--8272, 2021.

\bibitem{mutual_info_ten}
Cl\'{e}ment Luneau, Jean Barbier, and Nicolas Macris.
\newblock Mutual information for low-rank even-order symmetric tensor
  estimation.
\newblock {\em Inf. Inference}, 10(4):1167--1207, 2021.

\bibitem{spec_inhomo}
Pierre Mergny, Justin Ko, and Florent Krzakala.
\newblock Spectral phase transition and optimal pca in block-structured spiked
  models.
\newblock {\em arXiv preprint arXiv:2403.03695}, 2024.

\bibitem{envelope}
Paul Milgrom and Ilya Segal.
\newblock Envelope theorems for arbitrary choice sets.
\newblock {\em Econometrica}, 70(2):583--601, 2002.

\bibitem{Miolane_asymm}
L{\'e}o Miolane.
\newblock Fundamental limits of low-rank matrix estimation: the non-symmetric
  case.
\newblock {\em arXiv preprint arXiv:1702.00473}, 2017.

\bibitem{miolane_thesis}
L{\'e}o Miolane.
\newblock {\em {Fundamental limits of inference: A statistical physics
  approach.}}
\newblock Theses, {Ecole normale sup{\'e}rieure - ENS PARIS ; Inria Paris},
  June 2019.

\bibitem{mondelli2021optimalcombination}
Marco Mondelli, Christos Thrampoulidis, and Ramji Venkataramanan.
\newblock Optimal combination of linear and spectral estimators for generalized
  linear models.
\newblock {\em Foundations of Computational Mathematics}, pages 1--54, 2021.

\bibitem{mondelli2021pca}
Marco Mondelli and Ramji Venkataramanan.
\newblock Pca initialization for approximate message passing in rotationally
  invariant models.
\newblock In {\em Advances in Neural Information Processing Systems},
  volume~34, pages 29616--29629, 2021.

\bibitem{MV_AoS}
Andrea Montanari and Ramji Venkataramanan.
\newblock Estimation of low-rank matrices via approximate message passing.
\newblock {\em Ann. Statist.}, 49(1):321--345, 2021.

\bibitem{MW_diverging}
Andrea Montanari and Yuchen Wu.
\newblock Fundamental limits of low-rank matrix estimation with diverging
  aspect ratios.
\newblock {\em arXiv preprint arXiv:2211.00488}, 2022.

\bibitem{sampling_spiked}
Andrea Montanari and Yuchen Wu.
\newblock Posterior sampling from the spiked models via diffusion processes.
\newblock {\em arXiv preprint arXiv:2304.11449}, 2023.

\bibitem{Nadakuditi_shrinkage_rotinv}
Raj~Rao Nadakuditi.
\newblock Opt{S}hrink: an algorithm for improved low-rank signal matrix
  denoising by optimal, data-driven singular value shrinkage.
\newblock {\em IEEE Trans. Inform. Theory}, 60(5):3002--3018, 2014.

\bibitem{Nadler_finite_sample}
Boaz Nadler.
\newblock Finite sample approximation results for principal component analysis:
  a matrix perturbation approach.
\newblock {\em Ann. Statist.}, 36(6):2791--2817, 2008.

\bibitem{AMP_inhomo}
Aleksandr Pak, Justin Ko, and Florent Krzakala.
\newblock Optimal algorithms for the inhomogeneous spiked wigner model.
\newblock In {\em Advances in Neural Information Processing Systems},
  volume~36, pages 76409--76424, 2023.

\bibitem{Panchenko_book}
Dmitry Panchenko.
\newblock {\em The {S}herrington-{K}irkpatrick model}.
\newblock Springer Monographs in Mathematics. Springer, New York, 2013.

\bibitem{imaging3}
Henrik Pedersen, Sebastian Kozerke, Steffen Ringgaard, Kay Nehrke, and Won~Yong
  Kim.
\newblock k-t pca: Temporally constrained k-t blast reconstruction using
  principal component analysis.
\newblock {\em Magnetic Resonance in Medicine}, 62(3):706--716, 2009.

\bibitem{RanganGAMP}
S.~Rangan.
\newblock Generalized approximate message passing for estimation with random
  linear mixing.
\newblock In {\em IEEE International Symposium on Information Theory (ISIT)},
  2011.

\bibitem{rangan2019vector}
Sundeep Rangan, Philip Schniter, and Alyson~K Fletcher.
\newblock Vector approximate message passing.
\newblock {\em IEEE Transactions on Information Theory}, 65(10):6664--6684,
  2019.

\bibitem{Reeves_mtx_ten}
Galen Reeves.
\newblock Information-theoretic limits for the matrix tensor product.
\newblock {\em IEEE Journal on Selected Areas in Information Theory},
  1(3):777--798, 2020.

\bibitem{Rockafellar_book}
R.~Tyrrell Rockafellar.
\newblock {\em Convex analysis}.
\newblock Princeton Landmarks in Mathematics. Princeton University Press,
  Princeton, NJ, 1997.

\bibitem{stein_proc}
Charles~M. Stein.
\newblock Estimation of the mean of a multivariate normal distribution.
\newblock {\em The Annals of Statistics}, 9(6):1135--1151, 1981.

\bibitem{shrinkage_sepcov}
Pei-Chun Su and Hau-Tieng Wu.
\newblock Data-driven optimal shrinkage of singular values under
  high-dimensional noise with separable covariance structure.
\newblock {\em arXiv preprint arXiv:2207.03466}, 2022.

\bibitem{astronomy1}
O.~Tamuz, T.~Mazeh, and S.~Zucker.
\newblock {Correcting systematic effects in a large set of photometric light
  curves}.
\newblock {\em Monthly Notices of the Royal Astronomical Society},
  356(4):1466--1470, 02 2005.

\bibitem{venkataramanan2022estimation}
Ramji Venkataramanan, Kevin K{\"o}gler, and Marco Mondelli.
\newblock Estimation in rotationally invariant generalized linear models via
  approximate message passing.
\newblock In {\em International Conference on Machine Learning (ICML)}, 2022.

\bibitem{wu-zhou-2019-em}
Yihong Wu and Harrison~H. Zhou.
\newblock Randomly initialized {EM} algorithm for two-component {G}aussian
  mixture achieves near optimality in {$O(\sqrt n)$} iterations.
\newblock {\em Math. Stat. Learn.}, 4(3-4):143--220, 2021.

\bibitem{YWF_group}
Kaylee~Y Yang, Timothy~LH Wee, and Zhou Fan.
\newblock Asymptotic mutual information in quadratic estimation problems over
  compact groups.
\newblock {\em arXiv preprint arXiv:2404.10169}, 2024.

\bibitem{SBM_multi}
Xiaodong Yang, Buyu Lin, and Subhabrata Sen.
\newblock Fundamental limits of community detection from multi-view data:
  multi-layer, dynamic and partially labeled block models.
\newblock {\em arXiv preprint arXiv:2401.08167}, 2024.

\bibitem{Yuan_cov}
Ming Yuan.
\newblock High dimensional inverse covariance matrix estimation via linear
  programming.
\newblock {\em J. Mach. Learn. Res.}, 11:2261--2286, 2010.

\bibitem{HeteroPCA}
Anru~R. Zhang, T.~Tony Cai, and Yihong Wu.
\newblock Heteroskedastic {PCA}: algorithm, optimality, and applications.
\newblock {\em Ann. Statist.}, 50(1):53--80, 2022.

\bibitem{ZhangZhang}
Cun-Hui Zhang and Stephanie~S. Zhang.
\newblock Confidence intervals for low dimensional parameters in high
  dimensional linear models.
\newblock {\em J. R. Stat. Soc. Ser. B. Stat. Methodol.}, 76(1):217--242, 2014.

\bibitem{Zhang_Thesis_RMT}
Lixin Zhang.
\newblock {\em Spectral analysis of large dimentional random matrices}.
\newblock PhD thesis, National University of Singapore, 2007.

\bibitem{zhang2023spectral}
Yihan Zhang, Hong~Chang Ji, Ramji Venkataramanan, and Marco Mondelli.
\newblock Spectral estimators for structured generalized linear models via
  approximate message passing.
\newblock {\em arXiv preprint arXiv:2308.14507}, 2023.

\bibitem{mixed-zmv-arxiv}
Yihan Zhang, Marco Mondelli, and Ramji Venkataramanan.
\newblock Precise asymptotics for spectral methods in mixed generalized linear
  models.
\newblock {\em arXiv preprint arXiv:2211.11368}, 2022.

\bibitem{zhong2021approximate}
Xinyi Zhong, Tianhao Wang, and Zhou Fan.
\newblock Approximate message passing for orthogonally invariant ensembles:
  Multivariate non-linearities and spectral initialization.
\newblock {\em arXiv preprint arXiv:2110.02318}, 2021.

\bibitem{DeflatedHeteroPCA}
Yuchen Zhou and Yuxin Chen.
\newblock Deflated heteropca: Overcoming the curse of ill-conditioning in
  heteroskedastic pca.
\newblock {\em arXiv preprint arXiv:2303.06198}, 2023.

\end{thebibliography}


\newpage
\appendix

\paragraph{Notation.} 
All vectors are column vectors. 
The singular values of a matrix $A\in\bbR^{n\times d}$ (where $n\ge d$ without loss of generality) are denoted by $ \sigma_1(A) \ge \cdots \ge \sigma_d(A) \ge 0 $ and the corresponding left/right singular vectors are denoted by $ u_1(A), \cdots, u_d(A) \in\bbS^{n-1} $ and $ v_1(A), \cdots, v_d(A)\in\bbS^{d-1} $. 
The (real) eigenvalues of a symmetric matrix $B\in\bbR^{d\times d}$ are denoted by $ \lambda_1(B) \ge \cdots \ge \lambda_d(B) $ and the corresponding eigenvectors are denoted by $ v_1(B), \cdots, v_d(B) \in \bbS^{d-1} $ (which will not be confused with the right singular vectors, whenever they are different, since we will never talk about both simultaneously for a square asymmetric matrix). 
We generally put overlines on capital letters to indicate a scalar random variable, e.g., $ \ol{X}\in\bbR $, whose support is denoted by $ \supp(\ol{X}) $. 
The limit/liminf/limsup in probability are denoted by $\plim, \pliminf, \plimsup$. 
The product distribution whose $i$-th ($i\in[k]$) marginal is given by $P_i$ is denoted by $ P_1 \ot \cdots \ot P_k $, with the shorthand $P^{\ot k}$ when all $P_i$'s are equal to $P$. 
The gradient of $f\colon\bbR^n\to\bbR$, or with abuse of notation, the Jacobian matrix of $ F\colon\bbR^n\to\bbR^d $ are denoted by $ \nabla f\in\bbR^n, \nabla F\in\bbR^{d\times n} $. 
The partial derivative of $ f(x_1, \cdots, x_n) $ with respect to $x_i$ is denoted by either $ \frac{\partial}{\partial x_i} f(x_1, \cdots, x_n) $ or $ \partial_i f(x_1, \cdots, x_n) $. 
All $\log$ and $\exp$ are to the base $e$. 
We use the standard notation of $\sup(S), \inf(S)$ for a subset $S\subset\bbR$. 
We generally use $C>0$ to denote a sufficiently large constant independent of $n,d$.
Its dependence on other parameters will be specified, though its value may change across passages. 
We use the standard big O notation.


\section{Proof of \Cref{prop:bayes_fp_sol}}
\label{app:pf_prop:bayes_fp_sol}

We eliminate $ q_u $ and write a fixed point equation only involving $ q_v $: 
\begin{align}
q_v &= \expt{ \frac{\snr \expt{ \frac{\alpha \snr q_v \ol{\Xi}^{-2}}{\alpha \snr q_v \ol{\Xi}^{-1} + 1} } \ol{\Sigma}^{-2}}{\snr \expt{ \frac{\alpha \snr q_v \ol{\Xi}^{-2}}{\alpha \snr q_v \ol{\Xi}^{-1} + 1} } \ol{\Sigma}^{-1} + 1} } . \notag 
\end{align}
Denote the RHS by $ f(q_v) $. 
Recall that we are only interested in non-negative solutions $ (q_u, q_v) $. 
So let us restrict attention on $f$ to the domain $ \bbR_{\ge0} $.
We have $ f(0) = 0 $ and
\begin{align}
f'(q_u) &= \expt{\frac{
    \snr \ol{\Sigma}^{-2}
}{
    \paren{ \snr \expt{ \frac{\alpha\snr q_v\ol{\Xi}^{-2}}{\alpha\snr q_v\ol{\Xi}^{-1} + 1} } \ol{\Sigma}^{-1} + 1 }^2
}} \expt{\frac{
    \alpha \snr \ol{\Xi}^{-2}
}{
    \paren{ \alpha\snr q_v\ol{\Xi}^{-1} + 1 }^2
}} > 0 , \notag \\
f'(0) &= \alpha \snr^2 \expt{\ol{\Sigma}^{-2}} \expt{\ol{\Xi}^{-2}} , \notag \\
f''(q_v) &= -2 \left(
    \expt{\frac{\alpha\snr\ol{\Xi}^{-2}}{\paren{ \alpha\snr q_v\ol{\Xi}^{-1} + 1 }^2}}^2
    \expt{\frac{\snr^2\ol{\Sigma}^{-3}}{\paren{\snr \expt{\frac{\alpha\snr q_v\ol{\Xi}^{-2}}{\alpha\snr q_v\ol{\Xi}^{-1} + 1}} \ol{\Sigma}^{-1} + 1}^3}} \right. \notag \\
&\qquad\qquad \left. 
    + \expt{\frac{\snr\ol{\Sigma}^{-2}}{\paren{\snr \expt{\frac{\alpha\snr q_v\ol{\Xi}^{-2}}{\alpha\snr q_v\ol{\Xi}^{-1} + 1}} \ol{\Sigma}^{-1} + 1}^2}}
    \expt{\frac{\alpha^2 \snr^2 \ol{\Xi}^{-3}}{\paren{\alpha\snr q_v\ol{\Xi}^{-1} + 1}^3}}
\right) < 0 , \notag \\
\lim_{q_v\to\infty} f(q_v) &= \expt{\frac{\snr \expt{\ol{\Xi}^{-1}} \ol{\Sigma}^{-2}}{\snr \expt{\ol{\Xi}^{-1}} \ol{\Sigma}^{-1} + 1}} \in (0,\infty) . \notag 
\end{align}
It then becomes evident that a non-trivial fixed point $ q_v > 0 $ exists if and only if $ f'(0) > 1 $ and in this case, the non-trivial fixed point is unique. 

Finally, by the first equation in \Cref{eqn:fp_it}, there is a non-trivial fixed point $ q_u $ if and only if there is a non-trivial fixed point $ q_v $, which completes the proof. 

\section{Auxiliary Gaussian channel}
\label{app:gauss_ch}

We formally introduce here the auxiliary model mentioned in \Cref{sec:IT}. 
Consider a Gaussian channel with blocklength $n$, input $ x^* $, output $ Y $, anisotropic Gaussian noise $ \Sigma^{1/2} Z $ and SNR $\snr$: 
\begin{align}
Y &= \sqrt{\snr} x^* + \Sigma^{1/2} Z \in \bbR^n , \label{eqn:gauss_ch}
\end{align}
where 
\begin{align}
(x^*, Z) &\sim P^{\ot n} \ot \cN(0_n, I_n) . \notag 
\end{align}
By similar derivations as in \Cref{sec:IT}, the posterior distribution of $ x^* $ given $ Y $ can be written as
\begin{align}
\diff P(x \mid Y) &= \frac{1}{Z_n(\snr)} \exp\paren{ H_n(x) } \diff P^{\ot n}(x) , \notag 
\end{align}
where the Hamiltonian and the partition function are
\begin{align}
H_n(x) &\coloneqq \snr {x^*}^\top \Sigma^{-1} x + \sqrt{\snr} Z^\top \Sigma^{-1/2} x - \frac{\snr}{2} x^\top \Sigma^{-1} x , \notag \\
Z_n(\snr) &\coloneqq \int_{\bbR^n} \exp\paren{ H_n(x) } \diff P^{\ot n}(x) . \notag 
\end{align}
Define the free energy as 
\begin{align}
F_n(\snr) &\coloneqq \frac{1}{n} \expt{ \log Z_n(\snr) } . \notag 
\end{align}

With $ P = \cN(0, 1) $, $ Z_n(\snr) $ becomes a Gaussian integral that can be computed as below using \Cref{prop:gauss_int}:
\begin{align}
Z_n(\snr) &= \frac{1}{\sqrt{\det(\snr \Sigma^{-1} + I_n)}} \exp\paren{ \frac{1}{2} \paren{\snr \Sigma^{-1} x^* + \sqrt{\snr} \Sigma^{-1/2} Z}^\top \paren{\snr \Sigma^{-1} + I_n}^{-1} \paren{\snr \Sigma^{-1} x^* + \sqrt{\snr} \Sigma^{-1/2} Z} } . \notag 
\end{align}
Therefore, by \Cref{prop:quadratic_form}, 
\begin{align}
\plim_{n\to\infty} F_n(\snr) &= -\frac{1}{2} \expt{\log\paren{ \snr \ol{\Sigma}^{-1} + 1 }} + \frac{1}{2} \snr^2 \expt{\ol{\Sigma}^{-2} \paren{\snr \ol{\Sigma}^{-1} + 1}^{-1}} + \frac{1}{2} \snr \expt{\ol{\Sigma}^{-1} \paren{\snr \ol{\Sigma}^{-1} + 1}^{-1}} \notag \\
&= \frac{1}{2} \paren{ \snr \expt{\ol{\Sigma}^{-1}} - \expt{\log\paren{1 + \snr\ol{\Sigma}^{-1}}} } . \label{eqn:lim_energy_gauss} 
\end{align}
The above functional is nothing but $\psi_{\ol{\Sigma}}(\snr)$ introduced in \Cref{eqn:psi_Sigma} which will play an important role in characterizing the free energy of the original model \Cref{eqn:Y_mtx}.

\section{Proof of \Cref{thm:free_energy}}\label{app:pf}

Before diving into the proof, we make further notation adjustments for the ease of applying the interpolation argument. 
Specifically, we will henceforth assume $ \snr = 1 $ by incorporating the actual value of $\snr$ into the prior distributions $ P , Q $, 
\begin{align}
\int_\bbR x^2 \diff P(x) &= \snr , \quad 
\int_\bbR x^2 \diff Q(x) = 1 . \notag
\end{align}
This is obviously equivalent to the previous setting. 
So we can drop the dependence on $\snr$ and write $ \mmse_n, \cZ_n, \cF_n $ for $ \mmse_n(\snr), \cZ_n(\snr), \cF_n(\snr) $ defined in \Cref{eqn:MMSE_Y,eqn:part_Y,eqn:energy_Y}. 

We will also assume that $ \Xi, \Sigma $ are diagonal. 
This is without loss of generality since the Gaussianity of $ P, Q, \wt{W} $ ensures that both the prior distributions and the noise matrix are rotationally invariant. 
Furthermore, we truncate $ P, Q $ so that they are supported on $[-K, K]$ for a constant $K>0$. 
The approximation error in the free energy due to truncation can be made arbitrarily small if $K$ is sufficiently large, since the free energy is pseudo-Lipschitz in the prior distribution with respect to the Wasserstein-$2$ metric.

The proof follows an interpolation argument \cite{BM_interp,Miolane_asymm,miolane_thesis} with suitable modifications to take care of the noise heteroscedasticity featured by the covariances $ \Xi, \Sigma $. 
To start with, define the interpolating models:
\begin{align}
Y_t &\coloneqq \sqrt{\frac{1 - t}{n}} {u}^* {v^*}^\top + \Xi^{1/2} Z \Sigma^{1/2} \in\bbR^{n\times d} , \notag \\
Y_t^u &\coloneqq \sqrt{\alpha q_1(t)} {u}^* + \Xi^{1/2} Z^u \in\bbR^n , \notag \\
Y_t^v &\coloneqq \sqrt{q_2(t)} {v}^* + \Sigma^{1/2} Z^v \in\bbR^d , \notag 
\end{align}
where $ q_1(t), q_2(t)\ge0 $ are to be determined and
\begin{align}
(u^*, v^*, Z, Z^u, Z^v) &\sim P^{\ot n} \ot Q^{\ot d} \ot \cN(0_{nd}, I_{nd}) \ot \cN(0_n, I_n) \ot \cN(0_d, I_d) . \label{eqn:interp_distr} 
\end{align}
By definition, $ Y_0 = Y $ is the model that we would like to understand, and $ Y_t^u, Y_t^v $ are instances of Gaussian channels in \Cref{eqn:gauss_ch} whose free energy we have already understood (see \Cref{eqn:lim_energy_gauss}). 
The idea is that $ Y_t $ serves as a path parametrized by $ t\in[0,1] $ from the original model $Y$ to the target models $ (Y_1^u, Y_1^v) $. 
The crux of the interpolation argument lies in showing that $ Y_t $ and $ (Y_t^u, Y_t^v) $ are equivalent (at the level of free energy) along the path. 

To study the interpolating models $ (Y_t, Y_t^u, Y_t^v) $, define the Hamiltonian
\begin{align}
\begin{split}
\cH_{n,t}(\wt{u}, \wt{v}; q_1, q_2) &\coloneqq \sqrt{\frac{1-t}{n}} \wt{u}^\top Z \wt{v} + \frac{1-t}{n} \wt{u}^\top \wt{u}^* \wt{v}^\top \wt{v}^* - \frac{1-t}{2n} \normtwo{\wt{u}}^2 \normtwo{\wt{v}}^2 \\
&\phantom{=}~ + \alpha q_1 \wt{u}^\top \wt{u}^* + \sqrt{\alpha q_1} \wt{u}^\top Z^u - \frac{\alpha q_1}{2} \normtwo{\wt{u}}^2 \\
&\phantom{=}~ + q_2 \wt{v}^\top \wt{v}^* + \sqrt{q_2} \wt{v}^\top Z^v - \frac{q_2}{2} \normtwo{\wt{v}}^2 . 
\end{split}
\label{eqn:H_nt}
\end{align}
Then the posterior distribution of $ ({u}^*, {v}^*) $ given $ (Y_t, Y_t^u, Y_t^v) $ is 
\begin{align}
\diff P({u}, {v} \mid Y_t, Y_t^u, Y_t^v) 
&= \frac{1}{\cZ_{n,t}} \exp\paren{ \cH_{n,t}(\Xi^{-1/2} u, \Sigma^{-1/2} v; q_1(t), q_2(t)) } \diff P^{\ot n}(u) \diff Q^{\ot d}(v) . \label{eqn:posterior_interp} 
\end{align}
Let the partition function be
\begin{align}
\cZ_{n,t} &\coloneqq \int_{\bbR^d} \int_{\bbR^n} \exp\paren{ \cH_{n,t}(\Xi^{-1/2} u, \Sigma^{-1/2} v; q_1(t), q_2(t)) } \diff P^{\ot n}(u) \diff Q^{\ot d}(v) \notag \\
&= \int_{\bbR^d} \int_{\bbR^n} \exp\paren{ \cH_{n,t}(\wt{u}, \wt{v}; q_1(t), q_2(t)) } \diff \wt{P}(\wt{u}) \diff \wt{Q}(\wt{v}) . \label{eqn:Znt}
\end{align}
Define the free energy as 
\begin{align}
f_n(t) &\coloneqq \frac{1}{n} \expt{\log\cZ_{n,t}} . \label{eqn:fnt}
\end{align}

The Gibbs bracket $ \bracket{\cdot}_{n,t} $ denotes the expectation with respect to the posterior distribution in \Cref{eqn:posterior_interp}:
\begin{align}
\bracket{g(\wt{u}, \wt{v})}_{n,t} &\coloneqq \frac{1}{\cZ_{n,t}} \int_{\bbR^d} \int_{\bbR^n} g(\wt{u}, \wt{v}) \exp\paren{ \cH_{n,t}(\wt{u}, \wt{v}; q_1(t), q_2(t)) } \diff \wt{P}(\wt{u}) \diff \wt{Q}(\wt{v}) , \label{eqn:gibbs_brack} 
\end{align}
for any $ g\colon\bbR^n\times\bbR^d \to \bbR $ such that the expectation exists. 
That is, 
\begin{align}
\bracket{g(\wt{u}, \wt{v})}_{n,t} &= \expt{ g(\wt{u}^*, \wt{v}^*) \mid Y_t, Y_t^u, Y_t^v } , \notag 
\end{align}
where we recall the notation
\begin{align}
\wt{u}^* \coloneqq \Xi^{-1/2} u^* , \qquad 
\wt{v}^* \coloneqq \Sigma^{-1/2} v^* . \label{eqn:uv*_tilde} 
\end{align}
We will also use the notation $ \bracket{\cdot}_n $ for the Gibbs bracket with respect to the original posterior $ \diff P(u, v \mid Y) $ in \Cref{eqn:posterior}. 

\begin{lemma}
\label{lem:f01}
Consider $ f_n(t) $ defined in \Cref{eqn:fnt} with $ t\in\{0,1\} $. 
Assume that $ q_1(0), q_2(0) $ satisfy
\begin{align}
    q_1(0) &\ge 0 , \quad q_2(0) \ge 0 , \quad 
    \lim_{n\to\infty} q_1(0) = \lim_{n\to\infty} q_2(0) = 0 . \notag 
\end{align}
Then we have
\begin{align}
f_n(0) &= \cF_n + \cO(q_1(0) + q_2(0)) , \label{eqn:f=F} \\
\lim_{n\to\infty} f_n(1) &= \psi_{\ol{\Xi}}(\alpha q_1(1) \snr) + \alpha \psi_{\ol{\Sigma}}(q_2(1)) + o_K , \label{eqn:f1} 
\end{align}
where $ \lim_{K\to\infty} o_K = 0 $. 
\end{lemma}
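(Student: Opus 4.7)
My plan is to handle the two limits separately: the $t=0$ statement is a short Lipschitz argument, while the $t=1$ statement follows from the fact that $\cH_{n,1}$ decouples across $\wt u$ and $\wt v$ into two independent anisotropic Gaussian channels of the form (\ref{eqn:gauss_ch}), whose free energies have already been computed in \Cref{app:gauss_ch}.

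For \Cref{eqn:f=F}, I would note that when $q_1(0)=q_2(0)=0$ the Hamiltonian $\cH_{n,0}$ in \Cref{eqn:H_nt} reduces exactly to $\cH_n$ of \Cref{eqn:Hamil} (recall that in \Cref{app:pf} the SNR $\snr$ has been absorbed into $P,Q$), so $\cZ_{n,0}\vert_{q_1=q_2=0}=\cZ_n$ and $f_n(0)\vert_{q_1=q_2=0}=\cF_n$. It then suffices to show that $(q_1,q_2)\mapsto f_n(0)$ is Lipschitz uniformly in $n$ on a neighbourhood of the origin. Differentiating $f_n(0)$ in $q_1$, one gets $\partial_{q_1}f_n(0)=\frac{1}{n}\expt{\bracket{\alpha\wt u^\top\wt u^*-\tfrac{\alpha}{2}\normtwo{\wt u}^2+\tfrac{1}{2\sqrt{q_1}}\sqrt{\alpha}\wt u^\top Z^u}_{n,0}}$. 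Since the priors $P,Q$ are truncated to $[-K,K]$ and the eigenvalues of $\Xi,\Sigma$ are bounded below by a positive constant (by the support assumption \Cref{eqn:supp}), the coordinates of $\wt u^*,\wt v^*$ are bounded; Gaussian integration by parts on $Z^u$ transforms the singular term into $\sqrt{\alpha}\sum_i\expt{\bracket{\wt u_i^2}_{n,0}-\bracket{\wt u_i}_{n,0}^2}$, which together with the Nishimori identity (\Cref{prop:nishimori}) controls it by $\cO(n)$. After dividing by $n$, $\partial_{q_1}f_n(0)$ is bounded by a constant independent of $n$, and the analogous bound holds for $\partial_{q_2}f_n(0)$. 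Integrating from $(0,0)$ to $(q_1(0),q_2(0))$ yields \Cref{eqn:f=F}.

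For \Cref{eqn:f1}, I would use the crucial observation that at $t=1$ the first three terms of $\cH_{n,t}$ (the ones carrying the factor $1-t$) drop out, so the Hamiltonian additively splits as $\cH_{n,1}(\wt u,\wt v;q_1(1),q_2(1))=\cH^u(\wt u;q_1(1))+\cH^v(\wt v;q_2(1))$, and the partition function factorises as $\cZ_{n,1}=\cZ_{n,1}^u\cdot\cZ_{n,1}^v$. Each factor is exactly the partition function of a Gaussian location model: $\cZ_{n,1}^u$ corresponds to observing $Y_1^u=\sqrt{\alpha q_1(1)}\wt u^*+Z^u$ with $\wt u^*\sim\wt P$, and similarly for $\wt v^*$ in dimension $d$. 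For the untruncated Gaussian priors $P=Q=\cN(0,\snr)$, $\wt P=\cN(0,\snr\Xi^{-1})$ is exactly Gaussian, so the integral defining $\cZ_{n,1}^u$ is a Gaussian integral that can be evaluated in closed form via \Cref{prop:gauss_int}: it splits into a $-\tfrac{1}{2n}\log\det$ term and a quadratic form in $\wt u^*,Z^u$. Taking expectations, dividing by $n$, and using \Cref{prop:quadratic_form} together with the support/ESD assumptions to convert normalized traces into expectations against $\ol{\Xi}$, one obtains exactly the computation carried out in \Cref{app:gauss_ch}, with limit $\psi_{\ol{\Xi}}(\alpha q_1(1)\snr)$. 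The $\wt v$ part is identical up to the $d/n\to\alpha$ change of normalization and gives $\alpha\psi_{\ol{\Sigma}}(q_2(1))$.

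The remaining issue is that the priors are truncated to $[-K,K]$, so $\wt P,\wt Q$ are only \emph{approximately} Gaussian, and the exact integral computation does not apply verbatim. Here I would invoke the pseudo-Lipschitz dependence of the free energy on the prior in Wasserstein-$2$, mentioned in the preamble to \Cref{app:pf}: replacing the truncated priors by genuine Gaussians shifts $f_n(1)$ by an amount that vanishes as $K\to\infty$ uniformly in $n$ (using again the uniform spectral bounds on $\Xi,\Sigma$ to control the Wasserstein distance after the whitening change of variables). This accounts for the $o_K$ term. I expect the main technical obstacle to be exactly this step, namely obtaining the truncation bound \emph{uniformly in $n$}: one has to ensure the Gibbs average does not blow up when the prior's tails are reinstated, which requires sub-Gaussian concentration of $\wt u^\top\wt u^*$ and $\normtwo{\wt u}^2$ under the posterior. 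The Lipschitz estimate in the first paragraph, applied at general $(q_1,q_2)$, provides the necessary moment control once coupled with standard truncation arguments.
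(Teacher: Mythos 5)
Your proposal is correct. The $t=1$ part is essentially the paper's own argument: the paper likewise invokes pseudo-Lipschitzness of the free energy in the prior (Wasserstein-$2$) to dispose of the truncation up to an $o_K$ term, then evaluates $\cZ_{n,1}$ as a product of Gaussian integrals via \Cref{prop:gauss_int} and passes to the limit with \Cref{prop:quadratic_form}, exactly as you describe. Where you diverge is \Cref{eqn:f=F}: the paper does not differentiate in $(q_1,q_2)$; it writes $f_n(0)-\cF_n=\frac{1}{n}\expt{\log\bracket{\exp(\cH'_{n,0})}_n}$ and sandwiches this with Jensen applied in both directions (once to the partial expectation over $Z^u,Z^v$, once to the Gibbs bracket), then bounds the resulting expressions by a crude maximum over the truncated support, giving $\cO(q_1(0)+q_2(0))$. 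Your route — a uniform-in-$n$ bound on $\partial_{q_1}f_n(0)$, with the $q_1^{-1/2}$ singularity removed by Stein's lemma and the remainder controlled by Nishimori and the boundedness of the truncated priors and of $\Xi^{-1},\Sigma^{-1}$ — is equally valid and is in fact the same Gaussian-integration-by-parts computation the paper performs elsewhere (cf.\ \Cref{eqn:u'Zv} and \Cref{eqn:Upsilon}); after cancellation one gets $\partial_{q_1}f_n(0)=\frac{\alpha}{2n}\expt{\bracket{\wt{u}^\top\wt{u}^*}_{n,0}}$, which is bounded by a constant depending only on $\alpha,K,\ol{\Xi}$, and integrating over $[0,q_1(0)]$ gives the claim (your stated constant $\sqrt{\alpha}$ for the Stein term should be $\alpha/2$, but this is immaterial). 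The paper's sandwich avoids differentiating through the $\sqrt{q_1}$ factor altogether, while your derivative bound yields the slightly sharper statement that $f_n(0)$ is Lipschitz in $(q_1,q_2)$ uniformly in $n$; both deliver \Cref{eqn:f=F}.
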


\begin{proof}
To show the first statement \Cref{eqn:f=F}, let us control $ f_n(0) - \cF_n $. 
Denoting 
\begin{align}
    \cH_{n,t}'(\wt{u}, \wt{v}; q_1, q_2) &\coloneqq \alpha q_1 \wt{u}^\top \wt{u}^* + \sqrt{\alpha q_1} \wt{u}^\top Z^u - \frac{\alpha q_1}{2} \normtwo{\wt{u}}^2 
    + q_2 \wt{v}^\top \wt{v}^* + \sqrt{q_2} \wt{v}^\top Z^v - \frac{q_2}{2} \normtwo{\wt{v}}^2 \notag 
\end{align}
and recalling the Gibbs bracket notation $ \bracket{\cdot}_n $, we have
\begin{align}
    f_n(0) - \cF_n &= \frac{1}{n} \expt{ \log\frac{\cZ_{n,0}}{\cZ_n} }
    = \frac{1}{n} \expt{\log\bracket{ \exp\paren{\cH_{n,0}'(\wt{u}, \wt{v}; q_1(0), q_2(0))} }_{n}} , \label{eqn:f-F} 
\end{align}
where the outer expectation is over all randomness in $ {u}^*, {v}^*, Z^u, Z^v $. 
The second equality above follows since
\begin{align}
    \cH_{n,0}(\wt{u}, \wt{v}; q_1(0), q_2(0))
    &= \cH_n(\wt{u}, \wt{v}) + \cH_{n,0}'(\wt{u}, \wt{v}; q_1(0), q_2(0)) . 
    \notag 
\end{align}
We will derive double-sided bounds on $ f_n(0) - \cF_n $. 

To upper bound it, use Jensen's inequality $ 
\expt{\log(\cdot)} \le \log\expt{\cdot} $ on the partial expectation over $ Z^u, Z^v $ in \Cref{eqn:f-F}: 
\begin{align}
    f_n(0) - \cF_n
    &\le \frac{1}{n} \exptover{{u}^*, {v}^*}{ \log\exptover{Z^u, Z^v}{ \bracket{ \exp\paren{\cH_{n,0}'(\wt{u}, \wt{v}; q_1(0), q_2(0))} }_{n} } } \notag \\
    &= \frac{1}{n} \exptover{{u}^*, {v}^*}{ \log \bracket{ \exptover{Z^u, Z^v}{ \exp\paren{\cH_{n,0}'(\wt{u}, \wt{v}; q_1(0), q_2(0))} } }_{n} } , \notag 
\end{align}
where the equality is legit since $ \bracket{\cdot}_n $ does not depend on $ Z^u, Z^v $. 
By the Gaussian integral formula (\Cref{prop:gauss_int}), the inner expectation equals
\begin{align}
    \exptover{Z^u, Z^v}{ \exp\paren{\cH_{n,0}'(\wt{u}, \wt{v}; q_1(0), q_2(0))} }
    &= \exp\paren{ \alpha q_1(0) \wt{u}^\top \wt{u}^* + q_2(0) \wt{v}^\top \wt{v}^* } . \notag 
\end{align}
Replacing the Gibbs bracket with $\max$, we obtain an upper bound:
\begin{align}
    f_n(0) - \cF_n 
    &\le \frac{1}{n} \exptover{{u}^*, {v}^*}{
    \log  
    \max_{(\wt{u}, \wt{v})\in\Xi^{-1/2}[-K,K]^n\times\Sigma^{-1/2}[-K,K]^d} \exp\paren{ \alpha q_1(0) \wt{u}^\top \wt{u}^* + q_2(0) \wt{v}^\top \wt{v}^* }
    } \notag \\
    &\le \alpha q_1(0) \normtwo{\Xi}^{-1} K^2 + q_2(0) \normtwo{\Sigma}^{-1} K^2 \frac{d}{n} \notag \\
    &\le \frac{2 \alpha q_1(0) K^2}{\inf\supp(\ol{\Xi})} + \frac{2\alpha q_2(0) K^2}{\inf\supp(\ol{\Sigma})} , \label{eqn:f-F_ub} 
\end{align}
where the last inequality holds for all sufficiently large $n$ by \Cref{eqn:supp} and $n\asymp d$. 

To lower bound $ f_n - \cF_n $, we use Jensen's inequality again but this time on the Gibbs bracket in \Cref{eqn:f-F}: 
\begin{align}
    f_n(0) - \cF_n &\ge \frac{1}{n} \expt{ \bracket{ \cH_{n,0}'(\wt{u}, \wt{v}; q_1(0), q_2(0)) }_{n} } 
    = \frac{1}{n} \exptover{{u}^*, {v}^*}{
    \bracket{
    \exptover{Z^u, Z^v}{
    \cH_{n,0}'(\wt{u}, \wt{v}; q_1(0), q_2(0))
    }
    }_{n}
    }
    . \label{eqn:to_lb} 
\end{align}
Since $ (Z^u, Z^v) \sim \cN(0_n, I_n) \ot \cN(0_d, I_d) $, the inner expectation equals 
\begin{align}
    \exptover{Z^u, Z^v}{
    \cH_{n,0}'(\wt{u}, \wt{v}; q_1(0), q_2(0))
    }
    &= \alpha q_1(0) \wt{u}^\top \wt{u}^* - \frac{\alpha q_1(0)}{2} \normtwo{\wt{u}}^2
    + q_2(0) \wt{v}^\top \wt{v}^* - \frac{q_2(0)}{2} \normtwo{\wt{v}}^2 . \notag 
\end{align}
So 
\begin{multline}
    \max_{(\wt{u}, \wt{v})\in\Xi^{-1/2}[-K,K]^n\times\Sigma^{-1/2}[-K,K]^d} \abs{ \exptover{Z^u, Z^v}{
    \cH_{n,0}'(\wt{u}, \wt{v}; q_1(0), q_2(0))
    } } \notag \\
    \le \frac{3}{2} \alpha q_1(0) \normtwo{\Xi}^{-1} n K^2 
    + \frac{3}{2} q_2(0) \normtwo{\Sigma}^{-1} d K^2 . \notag 
\end{multline}
Using this, we obtain a lower bound on $ f_n - \cF_n $ by replacing the Gibbs bracket on the RHS of \Cref{eqn:to_lb} with $ - \max \abs{\cdot} $:
\begin{align}
    f_n(0) - \cF_n &\ge - \frac{1}{n} \exptover{{u}^*, {v}^*}{
    \max_{(\wt{u}, \wt{v})\in\Xi^{-1/2}[-K,K]^n\times\Sigma^{-1/2}[-K,K]^d} \abs{ \exptover{Z^u, Z^v}{
    \cH_{n,0}'(\wt{u}, \wt{v}; q_1(0), q_2(0))
    } }
    } \notag \\
    &\ge - \frac{3}{2} \alpha q_1(0) \normtwo{\Xi}^{-1} K^2 - \frac{3}{2} q_2(0) \normtwo{\Sigma}^{-1} K^2 \frac{d}{n} \notag \\
    &\ge - \frac{3\alpha q_1(0) K^2}{\inf\supp(\ol{\Xi})} - \frac{3 \alpha q_2(0) K^2}{\inf\supp(\ol{\Sigma})} , \label{eqn:f-F_lb} 
\end{align}
where the last inequality holds for all sufficiently large $n$ by \Cref{eqn:supp} and $n\asymp d$. 
Combining \Cref{eqn:f-F_lb,eqn:f-F_ub} gives the first result \Cref{eqn:f=F}. 

We then prove the second statement \Cref{eqn:f1}.
Since $ f_n $ is pseudo-Lipschitz as a function of the priors, up to a term $ o_K $ that vanishes as $K\to\infty$ uniformly over $n$, it suffices to ignore the truncation at $K$ and assume $ P = \cN(0,\snr), Q = \cN(0,1) $. 
From the definition \Cref{eqn:Znt} of $ \cZ_{n,t} $, we have
\begin{align}
& \cZ_{n,1} = \int_{\bbR^d} \int_{\bbR^n} \exp\paren{ \cH_{n,1}'(\Xi^{-1/2} u, \Sigma^{-1/2} v; q_1(1), q_2(1)) } \diff P^{\ot n}(u) \diff Q^{\ot d}(v) \notag \\
&= \sqrt{\frac{\det(\Xi)}{\snr^n \det(\Xi / \snr + \alpha q_1(1) I_n)}} \notag \\
&\times\exp\paren{ \frac{1}{2} \paren{ \alpha q_1(1) \wt{u}^* + \sqrt{\alpha q_1(1)} Z^u }^\top \paren{ \Xi / \snr + \alpha q_1(1) I_n }^{-1} \paren{ \alpha q_1(1) \wt{u}^* + \sqrt{\alpha q_1(1)} Z^u } } \notag \\
&\times \sqrt{\frac{\det(\Sigma)}{\det(\Sigma + q_2(1) I_d)}} \exp\paren{ \frac{1}{2} \paren{ q_2(1) \wt{v}^* + \sqrt{q_2(1)} Z^v }^\top \paren{ \Sigma + q_2(1) I_d }^{-1} \paren{ q_2(1) \wt{v}^* + \sqrt{q_2(1)} Z^v } } , \notag 
\end{align}
where in the second equality, we use the Gaussian integral formula (\Cref{prop:gauss_int}). 
Therefore
\begin{align}
\lim_{n\to\infty} f_n(1) 
&= \lim_{n\to\infty} \frac{1}{n} \expt{\log\cZ_{n,1}} \notag \\
&= \frac{1}{2} \alpha q_1(1) \snr \expt{\ol{\Xi}^{-1}} - \frac{1}{2} \expt{ \log\paren{ 1 + \alpha q_1(1) \snr \ol{\Xi}^{-1} } } \notag \\
&\quad 
+ \frac{\alpha}{2} q_2(1) \expt{\ol{\Sigma}^{-1}} 
- \frac{\alpha}{2} \expt{ \log\paren{ 1 + q_2(1) \ol{\Sigma}^{-1} } } , \notag 
\end{align}
verifying the identity \Cref{eqn:f1}. 
In the second equality, we have used \Cref{prop:quadratic_form}. 
\end{proof}

\begin{lemma}[Free energy variation]
\label{lem:free_energy_var}
For all $ t\in(0,1) $, 
\begin{align}
f_n'(t) &= \frac{\alpha}{2} q_1'(t) q_2'(t) - \frac{1}{2} \expt{ \bracket{ \paren{ \frac{\wt{u}^\top \wt{u}^*}{n} - q_2'(t) } \paren{ \frac{\wt{v}^\top \wt{v}^*}{n} - \alpha q_1'(t) } }_{n,t} } . \label{eqn:free_energy_var} 
\end{align}
\end{lemma}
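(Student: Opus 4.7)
The plan is to differentiate $f_n(t)$ directly, apply Gaussian integration by parts to handle the noise-dependent contributions, and invoke the Nishimori identity to rewrite the resulting Gibbs brackets as overlaps with the planted signals. Since the truncated priors are compactly supported and $\cH_{n,t}$ is smooth in $t$, differentiation passes through the integral, giving
\begin{align*}
    f_n'(t) = \frac{1}{n}\expt{\bracket{\partial_t \cH_{n,t}(\wt{u}, \wt{v}; q_1(t), q_2(t))}_{n,t}},
\end{align*}
where $\partial_t$ acts on all explicit $t$-dependence (including through $q_1(t), q_2(t)$) while $\wt u, \wt v$ are treated as fixed integration variables.

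I would then compute $\partial_t \cH_{n,t}$ by differentiating each of the three blocks in \Cref{eqn:H_nt}: this produces nine terms, organized into a noise-matrix part, two Gaussian side-channel parts, rank-one signal contributions, and quadratic counter-terms. Gaussian integration by parts is then applied to each of the three noise pieces $\wt u^\top Z \wt v$, $\wt u^\top Z^u$, $\wt v^\top Z^v$. For example, using $\partial_{Z_{ij}}\cH_{n,t} = \sqrt{(1-t)/n}\,\wt u_i \wt v_j$, Stein's formula yields
\begin{align*}
    \expt{\bracket{\wt u^\top Z \wt v}_{n,t}} = \sqrt{\frac{1-t}{n}}\paren{\expt{\bracket{\normtwo{\wt u}^2 \normtwo{\wt v}^2}_{n,t}} - \sum_{i,j}\expt{\bracket{\wt u_i \wt v_j}_{n,t}^2}},
\end{align*}
and the Nishimori identity (\Cref{prop:nishimori}) converts the squared-bracket sum into $\expt{\bracket{(\wt u^\top \wt u^*)(\wt v^\top \wt v^*)}_{n,t}}$. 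The tower property further replaces $\expt{\bracket{\normtwo{\wt u}^2 \normtwo{\wt v}^2}_{n,t}}$ by $\expt{\normtwo{\wt u^*}^2 \normtwo{\wt v^*}^2}$, and analogous manipulations apply to the $Z^u, Z^v$ pieces (via e.g.\ $\sum_i \expt{\bracket{\wt u_i}_{n,t}^2} = \expt{\bracket{\wt u^\top \wt u^*}_{n,t}}$).

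After these substitutions, the Nishimori-rewritten norm-square contributions generated by the counter-terms $-\normtwo{\wt u}^2\normtwo{\wt v}^2/(2n)$, $-\alpha q_1(t)\normtwo{\wt u}^2/2$, and $-q_2(t)\normtwo{\wt v}^2/2$ inside $\cH_{n,t}$ cancel exactly with those produced by Stein's formula on the corresponding noise terms. What survives is
\begin{align*}
    f_n'(t) = -\frac{1}{2n^2}\expt{\bracket{(\wt u^\top \wt u^*)(\wt v^\top \wt v^*)}_{n,t}} + \frac{\alpha q_1'(t)}{2n}\expt{\bracket{\wt u^\top \wt u^*}_{n,t}} + \frac{q_2'(t)}{2n}\expt{\bracket{\wt v^\top \wt v^*}_{n,t}},
\end{align*}
and completing the square against $\frac{\alpha}{2}q_1'(t) q_2'(t)$ reproduces \Cref{eqn:free_energy_var}.

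The main obstacle lies in the careful bookkeeping of the prefactors $\sqrt{(1-t)/n}$, $\sqrt{\alpha q_1(t)}$, and $\sqrt{q_2(t)}$ produced by $\partial_t$ and by the IBP, which must conspire so that the Gaussian variance terms are matched precisely by the It\^o-type counter-terms in the Hamiltonian; once this cancellation pattern is verified (and is ultimately a reflection of the Bayes-optimality of the posterior), no further difficulty arises, and the entire argument takes place at the level of the posterior Gibbs measure without invoking any random-matrix input.
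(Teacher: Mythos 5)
Your proposal is correct and follows essentially the same route as the paper: differentiate $f_n(t)$ through the Gibbs average, compute $\partial_t\cH_{n,t}$ term by term, apply Stein's lemma to the three Gaussian noise pieces, use the Nishimori identity to turn the squared-bracket terms into overlaps with the planted signal, and observe that the variance terms cancel against the counter-terms, leaving exactly the expression that regroups into \Cref{eqn:free_energy_var}. The only cosmetic difference is your extra Nishimori rewriting of $\expt{\bracket{\normtwo{\wt u}^2\normtwo{\wt v}^2}_{n,t}}$, which is harmless but unnecessary since that term cancels directly with the counter-term.
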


\begin{proof}
From the definitions \Cref{eqn:fnt,eqn:Znt}, we compute 
\begin{align}
f_n'(t) &= \frac{1}{n} \expt{ \frac{1}{\cZ_{n,t}} \frac{\partial}{\partial t} \cZ_{n,t} } \notag \\
&= \frac{1}{n} \expt{ \frac{1}{\cZ_{n,t}} \int_{\bbR^d} \int_{\bbR^n} \paren{ \frac{\partial}{\partial t} \cH_{n,t}(\wt{u}, \wt{v}; q_1(t), q_2(t)) } \exp\paren{ \cH_{n,t}(\wt{u}, \wt{v}; q_1(t), q_2(t)) } \diff \wt{P}(\wt{u}) \diff \wt{Q}(\wt{v}) } \notag \\
&= \frac{1}{n} \expt{ \bracket{\frac{\partial}{\partial t} \cH_{n,t}(\wt{u}, \wt{v}; q_1(t), q_2(t))}_{n,t} } . \notag 
\end{align}
By the definition \Cref{eqn:H_nt}, the time derivative of the Hamiltonian is 
\begin{align}
\begin{split}
\frac{\partial}{\partial t} \cH_{n,t}(\wt{u}, \wt{v}; q_1(t), q_2(t))
&= -\frac{1}{2\sqrt{(1-t)n}} \wt{u}^\top Z \wt{v} - \frac{1}{n} \wt{u}^\top \wt{u}^* \wt{v}^\top \wt{v}^* + \frac{1}{2n} \normtwo{\wt{u}}^2 \normtwo{\wt{v}}^2 \\
&\quad + \alpha q_1'(t) \wt{u}^\top \wt{u}^* + \frac{\sqrt{\alpha}}{2\sqrt{q_1(t)}} q_1'(t) \wt{u}^\top Z^u - \frac{\alpha}{2} q_1'(t) \normtwo{\wt{u}}^2 \\
&\quad + q_2'(t) \wt{v}^\top \wt{v}^* + \frac{1}{2\sqrt{q_2(t)}} q_2'(t) \wt{v}^\top Z^v - \frac{1}{2} q_2'(t) \normtwo{\wt{v}}^2 . 
\end{split}
\label{eqn:H_t_deriv}
\end{align}
The expectation of $ \bracket{\partial_t \cH_{n,t}}_{n,t} $ can be computed using the Stein's lemma (\Cref{prop:stein}). 
Indeed, let us consider the term
\begin{align}
\begin{split}
    \expt{\bracket{\wt{u}^\top Z \wt{v}}_{n,t}}
    &= \sum_{i = 1}^n \sum_{j = 1}^d \expt{\bracket{\wt{u}_i \wt{v}_j}_{n,t} Z_{i,j}}
    = \sum_{i = 1}^n \sum_{j = 1}^d \expt{ \frac{\partial \bracket{\wt{u}_i \wt{v}_j}_{n,t}}{\partial Z_{i,j}}} \\
    &= \sum_{i = 1}^n \sum_{j = 1}^d \sqrt{\frac{1-t}{n}} \expt{ \bracket{ \wt{u}_i^2 \wt{v}_j^2 }_{n,t} }
    - \sum_{i = 1}^n \sum_{j = 1}^d \sqrt{\frac{1-t}{n}} \expt{ \bracket{ \wt{u}_i \wt{v}_j }_{n,t}^2 } \\
    &= \sqrt{\frac{1-t}{n}} \expt{ \bracket{\normtwo{\wt{u}}^2 \normtwo{\wt{v}}^2}_{n,t} }
    - \sqrt{\frac{1-t}{n}} \expt{ \bracket{ \wt{u}^\top \wt{u}^* \wt{v}^\top \wt{v}^* }_{n,t} } ,  
\end{split}
\label{eqn:u'Zv}
\end{align}
where the last step is by the Nishimori identity (\Cref{prop:nishimori}). 
So the first line of \Cref{eqn:H_t_deriv} upon taken the Gibbs bracket and the expectation becomes
\begin{align}
    -\frac{1}{2n} \expt{ \bracket{ \wt{u}^\top \wt{u}^* \wt{v}^\top \wt{v}^* }_{n,t} } . \notag 
\end{align}
Similar cancellations happen for the second and third lines of \Cref{eqn:H_t_deriv}. 
Putting them together, we obtain
\begin{align}
f'_n(t) &= -\frac{1}{2n^2} \expt{\bracket{\wt{u}^\top \wt{u}^* \wt{v}^\top \wt{v}^*}_{n,t}}
+ \frac{\alpha}{2n} q_1'(t) \expt{\bracket{\wt{u}^\top \wt{u}^*}_{n,t}}
+ \frac{1}{2n} q_2'(t) \expt{\bracket{\wt{v}^\top \wt{v}^*}_{n,t}} , \notag 
\end{align}
which is the same as \Cref{eqn:free_energy_var} with the parentheses opened up. 
\end{proof}

In what follows, our strategy is:
\begin{enumerate}
    \item\label{itm:strategy1} Show that $ \bracket{\wt{u}^\top \wt{u}^*}_{n,t} $ is concentrated around its mean $ \expt{ \bracket{\wt{u}^\top \wt{u}^*}_{n,t} } $; 

    \item\label{itm:strategy2} Choose $ q_2(t) $ to be the solution to 
    \begin{align}
    q_2'(t) &= \frac{1}{n} \expt{ \bracket{\wt{u}^\top \wt{u}^*}_{n,t} } . \notag 
    \end{align}
\end{enumerate}
Once \Cref{itm:strategy1,itm:strategy2} are done, we then have 
\begin{align}
\cF_n &\approx f_n(0) = f_n(1) - \int_0^1 f_n'(t) \diff t \notag \\
&\approx \psi_{\ol{\Xi}}(\alpha q_1(1) \snr) + \alpha \psi_{\ol{\Sigma}}(q_2(1)) 
- \int_0^1 \frac{\alpha}{2} q_1'(t) q_2'(t) \diff t
\notag \\
&\quad + \int_0^1 \frac{1}{2} \expt{ \bracket{ \paren{ \frac{\wt{u}^\top \wt{u}^*}{n} - q_2'(t) } \paren{ \frac{\wt{v}^\top \wt{v}^*}{n} - \alpha q_1'(t) } }_{n,t} } \diff t \notag \\
&\approx \psi_{\ol{\Xi}}(\alpha q_1(1) \snr) + \alpha \psi_{\ol{\Sigma}}(q_2(1)) 
- \int_0^1 \frac{\alpha}{2} q_1'(t) q_2'(t) \diff t , \notag 
\end{align}
where the first line above uses \Cref{eqn:f=F} in \Cref{lem:f01};
the second line uses \Cref{eqn:f1} in \Cref{lem:f01} and \Cref{lem:free_energy_var}; 
the third line uses \Cref{itm:strategy1,itm:strategy2}. 
This will almost lead to the desired characterization of the free energy $ \cF_n $ in \Cref{thm:free_energy}: 
\begin{align}
    \sup_{q_v\ge0} \inf_{q_u\ge0} \psi_{\ol{\Xi}}(\alpha \snr q_v) + \alpha \psi_{\ol{\Sigma}}(q_u) - \frac{\alpha}{2} q_u q_v . \notag 
\end{align}

Consider the function 
\begin{align}
    \phi_t(q_1, q_2) &= \frac{1}{n} \log \cZ_{n,t} . 
    \notag 
\end{align}
Note that $\phi_t(q_1, q_2)$ also depends on $ n, u^*, v^*, Z, Z^u, Z^v $, and $ \expt{\phi_t(q_1, q_2)} = f_n(t) $ where the expectation is over $ (u^*, v^*, Z, Z^u, Z^v) $ distributed according to \Cref{eqn:interp_distr}. 

\begin{lemma}[Free energy concentration]
\label{lem:free_energy_conc}
Fix a constant $M>0$. 
There exists a constant $C>0$ depending only on $K,M,\alpha, \ol{\Xi}, \ol{\Sigma}$ such that for any $ t\in[0,1] $, $ 0\le q_1(t),q_2(t)\le M $ and sufficiently large $n$, 
\begin{align}
    \expt{ \abs{ \phi_t(q_1, q_2) - \expt{ \phi_t(q_1, q_2) } } }
    &\le \frac{C}{\sqrt{n}} . \notag 
\end{align}
\end{lemma}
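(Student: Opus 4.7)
The plan is to decompose the variance of $\phi_t(q_1,q_2)$ via the law of total variance, controlling separately the fluctuations arising from the Gaussian noise $(Z,Z^u,Z^v)$ and from the bounded signals $(u^*,v^*)$, then converting the $L^2$ bound into an $L^1$ bound by Jensen's inequality. Throughout, I rely on the truncation $P,Q$ supported on $[-K,K]$ (already made at the start of the section) and the support assumption \Cref{eqn:supp}, which guarantees $\normtwo{\Xi^{-1}}$ and $\normtwo{\Sigma^{-1}}$ are uniformly bounded by a constant depending only on $\ol\Xi,\ol\Sigma$.

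\textbf{Step 1 (Gaussian Poincar\'e).} Conditionally on $(u^*,v^*)$, the only randomness in $\phi_t$ comes from the i.i.d.\ standard Gaussians $(Z,Z^u,Z^v)$. Differentiating $\cH_{n,t}$ with respect to these Gaussians and applying the Gibbs bracket yields
\begin{align}
\frac{\partial \phi_t}{\partial Z_{i,j}} &= \frac{1}{n}\sqrt{\frac{1-t}{n}} \bracket{\wt u_i \wt v_j}_{n,t}, \quad \frac{\partial \phi_t}{\partial Z^u_i} = \frac{\sqrt{\alpha q_1}}{n}\bracket{\wt u_i}_{n,t}, \quad \frac{\partial \phi_t}{\partial Z^v_j} = \frac{\sqrt{q_2}}{n}\bracket{\wt v_j}_{n,t}. \notag
\end{align}
Since $\normtwo{\wt u}^2 \le \normtwo{\Xi^{-1}} \normtwo{u}^2 \le C n K^2$ and likewise $\normtwo{\wt v}^2 \le C d K^2$, a Jensen/Cauchy--Schwarz argument gives $\sum_{i,j}\bracket{\wt u_i \wt v_j}_{n,t}^2 \le \bracket{\normtwo{\wt u}^2 \normtwo{\wt v}^2}_{n,t} \le C n d$, and similarly for the other two blocks. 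Summing, I obtain a uniform bound on the gradient squared of order $C/n$, and the Gaussian Poincar\'e inequality gives $\mathrm{Var}(\phi_t \mid u^*,v^*) \le C/n$.

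\textbf{Step 2 (Bounded differences on signals).} Set $g(u^*,v^*) \coloneqq \expt{\phi_t \mid u^*,v^*}$. Let $\phi_t^{(i)}$ denote the value obtained by replacing $u^*_i$ by an independent copy $(u^*_i)'$. Since $|\log \cZ - \log \cZ'|$ is bounded by the sup-norm change in the Hamiltonian, and because $\Xi$ is diagonal so only $\wt u^*_i = u^*_i/\sqrt{\Xi_{i,i}}$ is modified, the Hamiltonian increment reduces to
\begin{align}
\frac{1-t}{n}\, \wt u_i (\wt u^*_i - (\wt u^*_i)')\, \wt v^\top \wt v^* + \alpha q_1 \, \wt u_i (\wt u^*_i - (\wt u^*_i)') . \notag
\end{align}
Using $|\wt u^*_i - (\wt u^*_i)'| \le 2K/\sqrt{\inf\supp(\ol\Xi)}$, $|\wt u_i| \le C K$, and $|\wt v^\top \wt v^*| \le \normtwo{\wt v}\,\normtwo{\wt v^*} \le C d$, the sup change in $\cH_{n,t}$ is $O(1)$ uniformly in $(\wt u,\wt v)\in \Xi^{-1/2}[-K,K]^n\times\Sigma^{-1/2}[-K,K]^d$, giving $|\phi_t - \phi_t^{(i)}| \le C/n$, and the same for each of the $d$ coordinates of $v^*$. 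The Efron--Stein inequality then yields $\mathrm{Var}(g(u^*,v^*)) \le \tfrac{1}{2}(n+d)(C/n)^2 \le C/n$.

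\textbf{Step 3 (Conclude).} The law of total variance gives $\mathrm{Var}(\phi_t) = \expt{\mathrm{Var}(\phi_t\mid u^*,v^*)} + \mathrm{Var}(g(u^*,v^*)) \le C/n$, and Jensen yields $\expt{|\phi_t - \expt{\phi_t}|} \le \sqrt{\mathrm{Var}(\phi_t)} \le C/\sqrt{n}$, with $C$ depending only on $K, M, \alpha, \ol\Xi, \ol\Sigma$ as required.

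The main technical obstacle is the bounded-differences step: the coupling $\tfrac{1-t}{n}\wt u^\top \wt u^* \wt v^\top \wt v^*$ involves a factor $\wt v^\top \wt v^*$ of magnitude $O(d)$, which naively threatens to produce a sensitivity too large for the variance to be $O(1/n)$. The assumption that the priors are truncated to $[-K,K]$ together with $\normtwo{\Xi^{-1}}, \normtwo{\Sigma^{-1}} \le C$ is precisely what tames this term down to $O(1/n)$ per coordinate; without the truncation one would instead have to invoke concentration of the overlaps under the Gibbs measure to close the argument.
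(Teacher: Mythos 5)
Your proof is correct and follows essentially the same route as the paper: Gaussian Poincar\'e conditionally on the signals, a bounded-differences/Efron--Stein bound in the signal coordinates (the paper instead bounds $\partial_{u_i^*}\expt{\phi_t\mid u^*,v^*}$ and invokes the bounded-difference inequality, which is the same estimate), and then a combination step (your law of total variance plus Jensen versus the paper's $L^1$ triangle inequality). The key computations — gradient norm of order $C/n$ and per-coordinate sensitivity of order $C/n$ thanks to the truncation at $K$ and the spectral bounds on $\Xi^{-1},\Sigma^{-1}$ — match the paper's.
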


\begin{proof}
Fix $ u^*, v^* $. 
Consider $ \phi_t(q_1, q_2) $ as a function of $ (Z, Z^u, Z^v) $. 
Then 
\begin{align}
    & \normtwo{ \nabla_{(Z, Z^u, Z^v)} \phi_t(q_1, q_2) }^2 \notag \\
    &= \normtwo{ \nabla_Z \phi_t(q_1, q_2) }^2 + \normtwo{ \nabla_{Z^u} \phi_t(q_1, q_2) }^2 + \normtwo{ \nabla_{Z^v} \phi_t(q_1, q_2) }^2 \notag \\
    &= \sum_{i = 1}^n \sum_{j = 1}^d \paren{ \frac{1}{n} \expt{\bracket{\sqrt{\frac{1-t}{n}} \wt{u}_i \wt{v}_j}_{n,t}} }^2
    + \sum_{i = 1}^n \paren{ \frac{1}{n} \expt{\bracket{\sqrt{\alpha q_1} \wt{u}_i}_{n,t}} }^2
    + \sum_{j=1}^d \paren{ \frac{1}{n} \expt{\bracket{\sqrt{q_2} \wt{v}_j}_{n,t}} }^2 \notag \\
    &\le \frac{1}{n^3} \expt{ \bracket{ \normtwo{\wt{u}}^2 \normtwo{\wt{v}}^2 }_{n,t} }
    + \frac{\alpha q_1}{n^2} \expt{\bracket{\normtwo{\wt{u}}^2}_{n,t}}
    + \frac{q_2}{n^2} \expt{\bracket{\normtwo{\wt{v}}^2}_{n,t}}
    \notag \\
    &\le \frac{\alpha}{n} \normtwo{\Xi}^{-1} \normtwo{\Sigma}^{-1} K^4 + \frac{\alpha}{n} M \normtwo{\Xi}^{-1} K^2 + \frac{\alpha}{n} M \normtwo{\Sigma}^{-1} K^2 
    \le \frac{C}{n} , \notag 
\end{align}
where $C>0$ is a constant depending only on $\alpha, M, \ol{\Xi}, \ol{\Sigma}, K$. 
The penultimate line is by Cauchy--Schwarz and the last line holds for all sufficiently large $n$ by \Cref{eqn:supp} and $n\asymp d$. 
Then by the Gaussian \poincare inequality (\Cref{prop:gauss_poincare}), 
\begin{align}
    \exptover{Z, Z^u, Z^v}{\abs{ \phi_t(q_1, q_2) - \exptover{Z, Z^u, Z^v}{\phi_t(q_1, q_2)} }}
    &\le \sqrt{\varover{Z, Z^u, Z^v}{\phi_t(q_1, q_2)}}
    \le \frac{C}{\sqrt{n}} . \label{eqn:tri1} 
\end{align}

The above result holds for any fixed $u^*, v^*$. 
We then verify that $ \exptover{Z, Z^u, Z^v}{\phi_t(q_1, q_2)} $ has bounded difference as a function of $ u^*, v^* $. 
We do so by bounding the derivatives of $\exptover{Z, Z^u, Z^v}{\phi_t(q_1, q_2)}$ with respect to $ u_i^*, v_j^* $ for any $ i\in[n], j\in[d] $. 
We have
\begin{align}
    \frac{\partial}{\partial u_i^*} \exptover{Z, Z^u, Z^v}{\phi_t(q_1, q_2)}
    &= \frac{1}{n} \exptover{Z, Z^u, Z^v}{ \bracket{ \frac{\partial}{\partial u_i^*} \cH_{n,t}(\wt{u}, \wt{v}; q_1, q_2) }_{n,t} } , \label{eqn:phi_deriv} 
\end{align}
and a similar expression holds for the derivative with respect to $ v_j^* $. 
Recall the definition of $ \cH_{n,t} $ from \Cref{eqn:H_nt}. 
We have
\begin{align}
    \frac{\partial}{\partial u^*_i} \cH_{n,t}(\Xi^{-1/2} u, \Sigma^{-1/2} v; q_1, q_2)
    &= \frac{1-t}{n} v^\top \Sigma^{-1} v^* (\Xi^{-1})_{i,i} u_i
    + \alpha q_1 (\Xi^{-1})_{i,i} u_i , \notag 
\end{align}
where we have used the fact that $ \Xi $ is diagonal. 
Therefore,
\begin{align}
    \abs{ \frac{\partial}{\partial u^*_i} \cH_{n,t}(\Xi^{-1/2} u, \Sigma^{-1/2} v; q_1, q_2) }
    &\le \frac{1}{n} \cdot d K^2 \normtwo{\Sigma}^{-1} \cdot \normtwo{\Xi}^{-1} K + \alpha q_1 \cdot \normtwo{\Xi}^{-1} K
    \le C , \notag 
\end{align}
where $C>0$ is a constant depending only on $ \alpha, M, K, \ol{\Xi}, \ol{\Sigma} $. 
The last inequality holds for all sufficiently large $n$.
A similar bound holds for $ \frac{\partial}{\partial v^*_j} \cH_{n,t}(\Xi^{-1/2} u, \Sigma^{-1/2} v; q_1, q_2) $. 
This, by \Cref{eqn:phi_deriv}, implies that $ \exptover{Z, Z^u, Z^v}{\phi_t(q_1, q_2)} $ as a function of $ (u^*, v^*) $ satisfies the bounded difference property with $ c_i = C/n $ (see \Cref{prop:bdd_diff}).
So by \Cref{prop:bdd_diff}, 
\begin{align}
    \exptover{u^*, v^*}{ \abs{ \exptover{Z, Z^u, Z^v}{\phi_t(q_1, q_2)} - \exptover{u^*, v^*}{\exptover{Z, Z^u, Z^v}{\phi_t(q_1, q_2)}} } }
    &\le \sqrt{\varover{u^*, v^*}{\exptover{Z, Z^u, Z^v}{\phi_t(q_1, q_2)}}}
    \le \frac{C}{\sqrt{n}} . \label{eqn:tri2} 
\end{align}

Finally using \Cref{eqn:tri1,eqn:tri2} and the triangle inequality, 
\begin{align}
    & \expt{\abs{ \phi_t(q_1, q_2) - \expt{\phi_t(q_1, q_2)} }} \notag \\
    &\le \exptover{ u^*, v^* }{
    \exptover{ Z, Z^u, Z^v }{
        \abs{ \phi_t(q_1, q_2) - \exptover{Z,Z^u,Z^v}{\phi_t(q_1, q_2)} }
    }
    + \abs{ \exptover{Z,Z^u,Z^v}{\phi_t(q_1, q_2)} - \exptover{u^*, v^*}{\exptover{Z,Z^u,Z^v}{\phi_t(q_1, q_2)}} }
    } \notag \\
    &\le \frac{C}{\sqrt{n}} , \notag 
\end{align}
concluding the proof.
\end{proof}

Suppose $ a,b\ge0 $ are constants. 
With $ q_1 = s_n a^2, q_2 = s_n b^2 $, we can write $ \cH_{n,t} $ in \Cref{eqn:H_nt} as
\begin{align}
    \cH_{n,t}(\wt{u}, \wt{v}; s_n a^2, s_n b^2)
    &= \cH_{n,t}(\wt{u}, \wt{v}) + \cH_{n,a}^u (\wt{u}) + \cH_{n,b}^v (\wt{v}) , \notag 
\end{align}
where
\begin{align}
    \cH_{n,t}(\wt{u}, \wt{v}) &\coloneqq \sqrt{\frac{1-t}{n}} \wt{u}^\top Z \wt{v} + \frac{1-t}{n} \wt{u}^\top \wt{u}^* \wt{v}^\top \wt{v}^* - \frac{1-t}{2n} \normtwo{\wt{u}}^2 \normtwo{\wt{vv}}^2 , \notag \\
    \cH_{n,a}^u (\wt{u}) &\coloneqq \alpha s_n a^2 \wt{u}^\top \wt{u}^* + a \sqrt{\alpha s_n} \wt{u}^\top Z^u - \frac{\alpha s_n a^2}{2} \normtwo{\wt{u}}^2 , \notag \\
    \cH_{n,b}^v (\wt{v}) &\coloneqq s_n b^2 \wt{v}^\top \wt{v}^* + b \sqrt{s_n} \wt{v}^\top Z^v - \frac{s_n b^2}{2} \normtwo{\wt{v}}^2 . \notag 
\end{align}

Fix $ A\ge2 $. 
Define
\begin{align}
    &&
    \phi^u(a) &= \frac{1}{n s_n} \log \int_{\bbR^n} \exp\paren{ \cH_{n,a}^u(\wt{u}) } \diff \wt{P}(\wt{u}) , 
    &
    \phi^v(b) &= \frac{1}{n s_n} \log \int_{\bbR^d} \exp\paren{ \cH_{n,b}^v(\wt{v}) } \diff \wt{Q}(\wt{v}) , 
    &&
    \notag \\
    &&
    \xi_n^u(s_n) &\coloneqq \sup_{1/2\le a\le A + 1/2} \expt{ \abs{ \phi^u(a) - \expt{\phi^u(a)} } } , 
    &
    \xi_n^v(s_n) &\coloneqq \sup_{1/2\le b\le A + 1/2} \expt{ \abs{ \phi^v(b) - \expt{\phi^v(b)} } } , 
    &&
    \label{eqn:xi^uv} \\
    &&
    \Upsilon^u(\wt{u}) &\coloneqq \frac{1}{n s_n} \frac{\partial}{\partial a} \cH_{n,a}^u(\wt{u}) , 
    &
    \Upsilon^v(\wt{v}) &\coloneqq \frac{1}{n s_n} \frac{\partial}{\partial b} \cH_{n,b}^b(\wt{v}) . 
    &&
    \notag 
\end{align}
Denote by $ \bracket{g(u^*, v^*)}_{n,a,b} $ the conditional expectation of $ g(u^*, v^*) $ given 
\begin{align}
    & Y_t, \quad 
    Y_t^u(a) \coloneqq a \sqrt{\alpha s_n} u^* + \Xi^{1/2} Z^u , \quad 
    Y_t^v(b) \coloneqq b \sqrt{s_n} v^* + \Sigma^{1/2} Z^v , 
    \label{eqn:gibbs_nab}
\end{align}
where the expectation is with respect to the distribution in \Cref{eqn:interp_distr}.

\begin{corollary}
\label{cor:xi^uv}
Let $ s_n = n^{-1/32} $ and $ A\le\sqrt{M/s_n} - 1/2 $ for a constant $M>0$ independent of $n$. 
Then there exists a constant $ C>0 $ depending only on $K,\alpha, \ol{\Xi}$ such that for all sufficiently large $n$, 
\begin{align}
    \xi_n^u(s_n) &\le \frac{C}{s_n \sqrt{n}} . \notag 
\end{align}
\end{corollary}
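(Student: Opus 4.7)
The plan is to reduce \Cref{cor:xi^uv} to \Cref{lem:free_energy_conc} via a factorization argument rather than redoing the Gaussian Poincar\'e plus bounded-differences calculation from scratch. The key observation I would exploit is that at $t = 1$ with $q_2 = 0$, the Hamiltonian $\cH_{n,1}(\wt{u},\wt{v}; s_n a^2, 0)$ defined in \Cref{eqn:H_nt} contains no $\wt{v}$-dependent contribution: the $\sqrt{(1-t)/n}$-prefactored coupling term vanishes at $t = 1$, and the three $q_2$-terms vanish at $q_2 = 0$. Consequently, the $\wt{Q}$-integral factors out as $\int d\wt{Q}(\wt{v}) = 1$, giving $\cZ_{n,1}(q_1 = s_n a^2, q_2 = 0) = \int \exp(\cH_{n,a}^u(\wt{u})) \, d\wt{P}(\wt{u})$, and hence the identity $\phi_1(s_n a^2, 0) = s_n \, \phi^u(a)$.

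With this reduction in hand, the next step is to check the hypotheses of \Cref{lem:free_energy_conc}. Under the stated constraint $A \le \sqrt{M/s_n} - 1/2$, for every $a \in [1/2, A + 1/2]$ one has $s_n a^2 \le s_n(A + 1/2)^2 \le M$, while $q_2 = 0$ is trivially in $[0, M]$. The lemma therefore applies with a constant $C = C(K, M, \alpha, \ol{\Xi}, \ol{\Sigma})$ that is independent of $a$, yielding $\expt{|\phi_1(s_n a^2, 0) - \expt{\phi_1(s_n a^2, 0)}|} \le C/\sqrt{n}$ uniformly over the admissible range of $a$.

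The proof then concludes by dividing through by $s_n$, using the identity from the first paragraph, to obtain $\expt{|\phi^u(a) - \expt{\phi^u(a)}|} \le C/(s_n \sqrt{n})$ for every admissible $a$; taking the supremum over $a \in [1/2, A + 1/2]$ preserves the bound because $C$ does not depend on $a$. I do not anticipate any genuine obstacle, since the work has already been done in \Cref{lem:free_energy_conc}: the Gaussian Poincar\'e step absorbs the $Z^u$ fluctuations and the bounded-differences step absorbs the $u^*$ fluctuations. The only subtlety worth flagging is scaling bookkeeping: the extra factor $1/s_n$ in the definition of $\phi^u(a)$ is exactly what converts the $1/\sqrt{n}$ rate of \Cref{lem:free_energy_conc} into the weaker $1/(s_n\sqrt{n})$ rate claimed here, and the upper cap on $A$ has been chosen precisely so that $q_1 = s_n a^2$ remains in the bounded regime where \Cref{lem:free_energy_conc} holds with a constant uniform in $n$.
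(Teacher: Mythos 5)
Your proposal is correct and follows essentially the same route as the paper: both establish the identity $\phi_1(s_n a^2, 0) = s_n\,\phi^u(a)$ from the vanishing of the coupling and $q_2$-terms in $\cH_{n,1}$, then invoke \Cref{lem:free_energy_conc} (valid since $q_1 = s_n a^2 \le M$ under the cap on $A$) and divide by $s_n$. The only remark is that, as in the paper's own statement, the constant actually inherits dependence on $M$ and $\ol{\Sigma}$ from \Cref{lem:free_energy_conc}, which you correctly track.
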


\begin{proof}
Note that if $ b = 0, t = 1 $, it holds that $ \cH_{n,1}(\wt{u}, \wt{v}; s_n a^2, 0) = \cH_{n,a}^u(\wt{u}) $ and $ \phi_1(s_n a^2, 0) = s_n \phi^u(a) $. 
The conclusion then follows immediately from \Cref{lem:free_energy_conc} since by the assumption on $A$, $ q_1 = s_n a^2 \in [0, M] $ for any $ 1/2\le a\le A+1/2 $. 
\end{proof}

\begin{lemma}
\label{lem:Upsilon}
Let $ s_n = n^{-1/32} $. 
For all $ A\ge2 $, 
\begin{align}
    \frac{1}{A - 1} \int_1^A \expt{ \bracket{ \abs{ \Upsilon^u(\wt{u}) - \expt{\bracket{ \Upsilon^u(\wt{u}) }_{n,a,b}} } }_{n,a,b} } \diff a
    &\le C\paren{ \frac{1}{\sqrt{n s_n}} + \sqrt{\xi_n^u(s_n)} } , \notag 
\end{align}
where $C>0$ only depends on $ \alpha, \ol{\Xi}, K $. 
\end{lemma}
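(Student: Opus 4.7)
My strategy is to split
\begin{align*}
\mathbb{E}\langle|\Upsilon^u-\mathbb{E}\langle\Upsilon^u\rangle_{n,a,b}|\rangle_{n,a,b}\le \mathbb{E}\langle|\Upsilon^u-\langle\Upsilon^u\rangle_{n,a,b}|\rangle_{n,a,b}+\mathbb{E}|\langle\Upsilon^u\rangle_{n,a,b}-\mathbb{E}\langle\Upsilon^u\rangle_{n,a,b}|
\end{align*}
into a thermal fluctuation and a quenched fluctuation, bound each after integrating $a\in[1,A]$, and divide by $A-1$. The workhorse is $\widetilde{\Phi}(a):=\phi_t(s_na^2, s_nb^2)/s_n$, for which direct differentiation yields $\widetilde{\Phi}'(a)=\langle\Upsilon^u\rangle_{n,a,b}$.

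\textbf{Thermal part.} A second differentiation gives
\begin{align*}
\widetilde{\Phi}''(a)=ns_n\bigl[\langle(\Upsilon^u)^2\rangle_{n,a,b}-\langle\Upsilon^u\rangle_{n,a,b}^2\bigr]+\tfrac{1}{ns_n}\langle\partial_a^2\mathcal{H}^u_{n,a}\rangle_{n,a,b},
\end{align*}
with the last term $O(1/(ns_n))$ on the truncated support. Integrating telescopes and, using that $\mathbb{E}\langle\Upsilon^u\rangle_{n,a,b}$ is bounded by a constant (via the Nishimori identity $\mathbb{E}\langle\Upsilon^u(\widetilde u)\rangle_{n,a,b}=\mathbb{E}\Upsilon^u(\widetilde u^*)$ together with the bounded priors), gives $\int_1^A\mathbb{E}\langle(\Upsilon^u-\langle\Upsilon^u\rangle)^2\rangle\,da=O(A/(ns_n))$. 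Cauchy--Schwarz in both the Gibbs bracket and the $a$-average converts this into $O(\sqrt{(A-1)A/(ns_n)})$, so after normalizing by $A-1$ the thermal piece contributes the $1/\sqrt{ns_n}$ term for any $A\ge 2$.

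\textbf{Quenched part.} For fixed $b$ and $t$, the map $a\mapsto\widetilde{\Phi}(a)$ is convex: the $a$-dependence of $\mathcal{H}_{n,t}$ enters only through the Gaussian side channel $Y^u_t(a)=a\sqrt{\alpha s_n}u^*+\Xi^{1/2}Z^u$ of SNR $\rho=\alpha s_na^2$, whose Bayesian free energy is convex in $\rho$ by I-MMSE, and the chain rule $\partial_a^2=2\alpha s_n\partial_\rho+4\alpha^2 s_n^2 a^2\partial_\rho^2$ transfers convexity to $a$ since both summands act as non-negative operators on convex functions. The standard convexity-concentration lemma then gives, for convex $f$ with $g=\mathbb{E}f$,
\begin{align*}
\int_1^A|f'-g'|(a)\,da\le 4\varepsilon\|g'\|_\infty+\frac{3(A-1)\|f-g\|_\infty}{\varepsilon},
\end{align*}
and optimization in $\varepsilon$, followed by taking $\mathbb{E}$, yields $C\sqrt{(A-1)\|g'\|_\infty\,\mathbb{E}\|f-g\|_\infty}$. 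Applied to $f=\widetilde{\Phi}$, the prefactor $\|\mathbb{E}\widetilde{\Phi}'\|_\infty=O(1)$ again by Nishimori, while $\mathbb{E}\|\widetilde{\Phi}-\mathbb{E}\widetilde{\Phi}\|_\infty$ is to be controlled by $\xi_n^u(s_n)$, producing the $\sqrt{\xi_n^u(s_n)}$ contribution after dividing by $A-1$.

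\textbf{Main obstacle.} The hardest step will be relating $\mathbb{E}\|\widetilde{\Phi}-\mathbb{E}\widetilde{\Phi}\|_\infty$ to $\xi_n^u(s_n)$ as defined in \Cref{eqn:xi^uv}, rather than to the coarser rate one would read off from \Cref{lem:free_energy_conc} applied to $\phi_t$ directly. My plan is to marginalize $\widetilde v$ out of $\mathcal{Z}_{n,t}$ by writing $\int e^{\mathcal{H}_{n,t}}\,d\wt Q(\widetilde v)=e^{h(\widetilde u)}$ for a $\widetilde v$-integrated weight $h$ independent of $a$, so that $\widetilde{\Phi}$ becomes the free energy of the single side channel $Y^u_t(a)$ under the tilted prior $d\wt P_h\propto e^h\,d\wt P$. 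A Gaussian Poincar\'e inequality on $(Z^u,\widetilde u^*)$, with constants uniform in the tilting $h$, should then reproduce the same rate as in \Cref{cor:xi^uv}. Carrying out this tilted-prior concentration rigorously, and ensuring that convexity of $\widetilde\Phi$ in $a$ (which only materializes after Gibbs-averaging, not pointwise in $Z^u$) is compatible with the convexity lemma applied above, constitutes the main technical content of the proof.
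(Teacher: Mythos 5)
Your skeleton coincides with the paper's: the same triangle-inequality split into a thermal and a quenched fluctuation, the thermal part handled through the second derivative of the free energy in $a$ (telescoping plus the bound $\mathbb{E}\langle\Upsilon^u(\widetilde u)\rangle_{n,a,b}=\frac{\alpha a}{n}\mathbb{E}\langle\widetilde u^\top\widetilde u^*\rangle_{n,a,b}\le Ca$, which, note, is $O(a)$ rather than $O(1)$ -- harmless, since only the endpoint values $a\in\{1,A\}$ enter and the factor $A$ cancels after dividing by $A-1$), and the quenched part handled through a convexity-comparison lemma plus concentration of the free energy. So the thermal half of your argument is essentially the paper's.

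The gap is in the quenched half, and it is twofold. First, the comparison lemma you invoke (\Cref{prop:panchenko}) requires \emph{both} the random function $a\mapsto\widetilde\Phi(a)$ and its expectation to be convex and differentiable. As you yourself concede, $\widetilde\Phi$ is not convex in $a$ pointwise in the disorder; your I-MMSE argument only gives convexity of the quenched average, and "Gibbs-averaging" cannot rescue this, since $\widetilde\Phi$ is already a Gibbs-averaged (log-partition) quantity for fixed disorder. The missing idea is the deterministic convexification used in the paper: because the noise term of $\mathcal{H}^u_{n,a}$ is linear in $a$, the non-convex part of $\partial_a^2\widetilde\Phi$ is bounded by an absolute constant on the truncated support (\Cref{eqn:phi''}, \Cref{eqn:err_C}), so one adds the quadratic $2\alpha K^2a^2/\inf\mathrm{supp}(\overline\Xi)$ to both $\widetilde\Phi$ and $\mathbb{E}\widetilde\Phi$; both become convex, and the extra derivative only contributes an $O(a')$ term that is absorbed when optimizing $a'=\sqrt{\xi_n^u(s_n)}$. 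Second, your plan to bound $\mathbb{E}\sup_a|\widetilde\Phi-\mathbb{E}\widetilde\Phi|$ via a new "tilted-prior" Poincar\'e argument is both unnecessary and fragile (the tilt depends on the disorder, so the constants are not obviously uniform). The comparison lemma is applied pointwise in $a$ at the three points $a$, $a\pm a'$, so after taking expectations one only needs $\sup_a\mathbb{E}|\widetilde\Phi(a)-\mathbb{E}\widetilde\Phi(a)|$, i.e.\ exactly $\xi_n^u(s_n)$ as defined in \Cref{eqn:xi^uv}, and this is already supplied by the uniform free-energy concentration of \Cref{lem:free_energy_conc} and \Cref{cor:xi^uv}; no expectation-of-supremum bound, and hence no new concentration argument, is required. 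With these two corrections your outline collapses to the paper's proof.
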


\begin{proof}
By the triangle inequality, 
\begin{align}
\begin{split}
    \expt{ \bracket{ \abs{ \Upsilon^u(\wt{u}) - \expt{\bracket{ \Upsilon^u(\wt{u}) }_{n,a,b}} } }_{n,a,b} } 
    &\le \expt{ \bracket{ \abs{ \Upsilon^u(\wt{u}) - \bracket{\Upsilon^u(\wt{u})}_{n,a,b} } }_{n,a,b} } \\
    &\quad + \expt{ \bracket{ \abs{ \bracket{\Upsilon^u(\wt{u})}_{n,a,b} - \expt{ \bracket{\Upsilon^u(\wt{u})}_{n,a,b} } } }_{n,a,b} } . 
\end{split} \label{eqn:Upsilon12}
\end{align}
We will bound the two terms on the RHS separately. 
We first bound 
\begin{align}
    & \frac{1}{A - 1} \int_1^A \expt{ \bracket{ \abs{ \Upsilon^u(\wt{u}) - \bracket{\Upsilon^u(\wt{u})}_{n,a,b} } }_{n,a,b} } \diff a \notag \\
    &\le \paren{ \frac{1}{A-1} \int_1^A \expt{ \bracket{ \paren{ \Upsilon^u(\wt{u}) - \bracket{\Upsilon^u(\wt{u})}_{n,a,b} }^2 }_{n,a,b} } \diff a }^{1/2} . \label{eqn:Upsilon1}
\end{align}
The first two derivatives of $\phi^u$ are
\begin{subequations}
\begin{align}
\begin{split}
    (\phi^u)'(a) &= \frac{1}{n s_n} \frac{\int_{\bbR^n} \exp\paren{ \cH_{n,a}^u(\wt{u}) } \frac{\partial}{\partial a} \cH_{n,a}^u(\wt{u}) \diff \wt{P}(\wt{u})}{\int_{\bbR^n} \exp\paren{ \cH_{n,a}^u(\wt{u}) } \diff \wt{P}(\wt{u})}
    = \bracket{\Upsilon^u(\wt{u})}_{n,a,b} , 
\end{split} \label{eqn:phi'} \\
\begin{split}
    (\phi^u)''(a) &= \frac{1}{n s_n} \brack{ \bracket{\paren{\frac{\partial}{\partial a} \cH_{n,a}^u(\wt{u})}^2}_{n,a,b} - \bracket{\frac{\partial}{\partial a} \cH_{n,a}^u(\wt{u})}_{n,a,b}^2 + \bracket{\frac{\partial^2}{\partial a^2} \cH_{n,a}^u(\wt{u})}_{n,a,b} } \\
    &= n s_n \brack{ \bracket{\Upsilon^u(\wt{u})^2}_{n,a,b} - \bracket{\Upsilon^u(\wt{u})}_{n,a,b}^2 }
    + \frac{\alpha}{n} \brack{ 2 \bracket{\wt{u}^\top \wt{u}^*}_{n,a,b} - \bracket{\normtwo{\wt{u}}^2}_{n,a,b} } . 
\end{split} \label{eqn:phi''}
\end{align}
\end{subequations}
Since there exists $C>0$ depending only on $ \alpha, \ol{\Xi}, K $ such that for all sufficiently large $n$, 
\begin{align}
    \abs{ \frac{\alpha}{n} \brack{ 2 \bracket{\wt{u}^\top \wt{u}^*}_{n,a,b} - \bracket{\normtwo{\wt{u}}^2}_{n,a,b} } } &\le \frac{4 \alpha K^2}{\inf\supp(\ol{\Xi})}
    \eqqcolon C , \label{eqn:err_C}
\end{align}
the second result \Cref{eqn:phi''} above implies that for all sufficiently large $n$,
\begin{align}
    \bracket{ \paren{ \Upsilon^u(\wt{u}) - \bracket{\Upsilon^u(\wt{u})}_{n,a,b} }^2 }_{n,a,b}
    &\le \frac{1}{n s_n} \paren{ (\phi^u)''(a) + C } . \notag 
\end{align}
Consequently, 
\begin{align}
    \int_1^A \expt{ \bracket{ \paren{ \Upsilon^u(\wt{u}) - \bracket{\Upsilon^u(\wt{u})}_{n,a,b} }^2 }_{n,a,b} } \diff a
    &\le \frac{1}{n s_n} \paren{ \expt{(\phi^u)'(A)} - \expt{(\phi^u)'(1)} + C(A-1) } . \label{eqn:phi'_ub} 
\end{align}
To proceed, we compute for any $a$, 
\begin{align}
    \expt{ (\phi^u)'(a) }
    &= \expt{ \bracket{\Upsilon^u(\wt{u})}_{n,a,b} } , \label{eqn:phi'_comp} 
\end{align}
which is by \Cref{eqn:phi'}.
By definition, 
\begin{align}
    \Upsilon^u(\wt{u}) &= \frac{1}{n} \paren{ 2\alpha a \wt{u}^\top \wt{u}^* + \sqrt{\alpha / s_n} \wt{u}^\top Z^u - \alpha a \normtwo{\wt{u}}^2 } , \notag 
\end{align}
whose expectation is therefore given by
\begin{align}
    \expt{\bracket{\Upsilon^u(\wt{u})}_{n,a,b}}
    &= \frac{2\alpha a}{n} \expt{ \bracket{\wt{u}^\top \wt{u}^*}_{n,a,b} } 
    + \frac{\sqrt{\alpha}}{n\sqrt{s_n}} \expt{\bracket{\wt{u}^\top Z^u}_{n,a,b}} 
    - \frac{\alpha a}{n} \expt{\bracket{\normtwo{\wt{u}}^2}_{n,a,b}}
    . \label{eqn:Upsilon_expand} 
\end{align}
Using Stein's lemma (\Cref{prop:stein}), the middle term is equal to 
\begin{align}
    \frac{\sqrt{\alpha}}{n\sqrt{s_n}} \expt{\bracket{\wt{u}^\top Z^u}_{n,a,b}}
    &= \frac{\sqrt{\alpha}}{n\sqrt{s_n}} \sum_{i = 1}^n \expt{ Z^u_i \bracket{\wt{u}_i}_{n,a,b} }
    = \frac{\sqrt{\alpha}}{n\sqrt{s_n}} \sum_{i = 1}^n \expt{ \frac{\partial}{\partial Z^u_i} \bracket{\wt{u}_i}_{n,a,b} } \notag \\
    &= \frac{a \alpha}{n} \expt{\bracket{\normtwo{\wt{u}}^2}_{n,a,b}} - \frac{a\alpha}{n} \expt{\bracket{\wt{u}^\top \wt{u}^*}_{n,a,b}} . \notag 
\end{align}
Therefore, 
\begin{align}
    \expt{\bracket{\Upsilon^u(\wt{u})}_{n,a,b}}
    &= \frac{\alpha a}{n} \expt{ \bracket{\wt{u}^\top \wt{u}^*}_{n,a,b} } , \label{eqn:Upsilon} 
\end{align}
and 
\begin{align}
    \abs{ \expt{\bracket{\Upsilon^u(\wt{u})}_{n,a,b}} }
    &\le a C , \label{eqn:phi'_bd} 
\end{align}
where $C$ depends only on $ \alpha, \ol{\Xi}, K $. 

Using the last inequality, we can further upper bound the RHS of \Cref{eqn:phi'_ub} by $ \frac{C A}{n s_n} $ for some $C$ depending only on $ \alpha, \ol{\Xi}, K $. 
Putting this back to \Cref{eqn:Upsilon1}, we get
\begin{align}
    \frac{1}{A - 1} \int_1^A \expt{ \bracket{ \abs{ \Upsilon^u(\wt{u}) - \bracket{\Upsilon^u(\wt{u})}_{n,a,b} } }_{n,a,b} } \diff a
    &\le \sqrt{\frac{C A}{n s_n (A - 1)}}
    \le \sqrt{\frac{2C}{n s_n}} , \label{eqn:Upsilon1_done} 
\end{align}
since $ A\ge2 $. 

Now it remains to bound the second term on the RHS of \Cref{eqn:Upsilon12} which can be written as 
\begin{align}
    \expt{ \bracket{ \abs{ \bracket{\Upsilon^u(\wt{u})}_{n,a,b} - \expt{ \bracket{\Upsilon^u(\wt{u})}_{n,a,b} } } }_{n,a,b} }
    &= \expt{ \abs{ (\phi^u)'(a) - \expt{(\phi^u)'(a)} } } \label{eqn:Upsilon2_todo}
\end{align}
using \Cref{eqn:phi'}. 
To further bound the RHS, consider the following two functions 
\begin{align}
    a &\mapsto \phi^u(a) + \frac{2 \alpha K^2 a^2}{\inf\supp(\ol{\Xi})} , \quad 
    a \mapsto \expt{\phi^u(a)} + \frac{2 \alpha K^2 a^2}{\inf\supp(\ol{\Xi})} . \notag 
\end{align}
They are both differentiable and convex for all sufficiently large $n$ since their second derivatives are non-negative by \Cref{eqn:phi'',eqn:err_C}. 
Applying \Cref{prop:panchenko} with the above two functions, taking the expectation and using the triangle inequality, we have that for any $ 1\le a\le A , 0<a'\le1/2 $,
\begin{align}
    \expt{ \abs{ (\phi^u)'(a) - \expt{(\phi^u)'(a)} } }
    &\le \expt{(\phi^u)'(a + a')} - \expt{(\phi^u)'(a - a')}
    + 3 \xi_n^u(s_n) / a' + \frac{8 \alpha K^2 a'}{\inf\supp(\ol{\Xi})} . \label{eqn:ub2} 
\end{align}
Then
\begin{align}
    & \int_1^A \expt{(\phi^u)'(a + a')} - \expt{(\phi^u)'(a - a')} \diff a \notag \\
    &= \paren{ \expt{\phi^u(A + a')} - \expt{\phi^u(1 + a')} }
    - \paren{ \expt{\phi^u(A - a')} - \expt{\phi^u(1 - a')} } \notag \\
    &= \paren{ \expt{\phi^u(A + a')} - \expt{\phi^u(A - a')} }
    - \paren{ \expt{\phi^u(1 + a')} - \expt{\phi^u(1 - a')} } \notag \\
    &= \int_{-a'}^{a'} \expt{(\phi^u)'(A + a)} \diff a
    - \int_{-a'}^{a'} \expt{(\phi^u)'(1 + a)} \diff a \notag \\
    &\le 4a'(A + a') C
    \le 8 a' A C , \notag 
\end{align}
where the last step is by \Cref{eqn:phi'_comp,eqn:phi'_bd}, and $C$ depends only on $\alpha, \ol{\Xi}, K$. 
Using this in \Cref{eqn:ub2}, we obtain
\begin{align}
    \int_1^A \expt{ \abs{ (\phi^u)'(a) - \expt{(\phi^u)'(a)} } } \diff a
    &\le C A \paren{ a' + \xi_n^u(s_n) / a' } , \notag 
\end{align}
and the RHS is minimized by $ a' = \sqrt{ \xi_n^u(s_n) } $ which lies in the interval $ (0, 1/2] $ for all sufficiently large $n$ due to \Cref{cor:xi^uv}. 
Using this result in \Cref{eqn:Upsilon2_todo} and integrating over $a$, we have
\begin{align}
    \frac{1}{A-1} \int_1^A \expt{ \bracket{ \abs{ \bracket{\Upsilon^u(\wt{u})}_{n,a,b} - \expt{ \bracket{\Upsilon^u(\wt{u})}_{n,a,b} } } }_{n,a,b} } \diff a
    &\le \frac{C A}{A-1} \cdot 2 \sqrt{\xi_n^u(s_n)}
    \le {4 C} \sqrt{\xi_n^u(s_n)} , \label{eqn:Upsilon2_done} 
\end{align} 
since $A\ge2$.  

Finally, combining \Cref{eqn:Upsilon1_done,eqn:Upsilon2_done,eqn:Upsilon12} proves the lemma. 
\end{proof}

\begin{lemma}
\label{lem:u12_conc}
There exists $C>0$ depending only on $\alpha, K, \ol{\Xi}$ such that for any $A\ge2$, 
\begin{align}
    \frac{1}{A - 1} \int_1^A \frac{1}{n^2} \expt{ \bracket{ \paren{ (\wt{u}^{(1)})^\top \wt{u}^{(2)} - \expt{\bracket{ (\wt{u}^{(1)})^\top \wt{u}^{(2)} }_{n,a,b}} }^2 }_{n,a,b} } \diff a
    &\le C\paren{ \frac{1}{\sqrt{n s_n}} + \sqrt{\xi_n^u(s_n)} } , \notag 
\end{align}
where $ \wt{u}^{(1)} = \Xi^{-1/2} u^{(1)} , \wt{u}^{(2)} = \Xi^{-1/2} u^{(2)} $ with $ u^{(1)}, u^{(2)} $ being two i.i.d.\ copies from the conditional law of $ u^*, v^* $ given \Cref{eqn:gibbs_nab}. 
\end{lemma}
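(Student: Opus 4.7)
The plan is to deduce the lemma from \Cref{lem:Upsilon} by (i) reducing the two-replica overlap to the signal overlap via Nishimori, (ii) using $L^\infty$-boundedness to upgrade an $L^1$ bound to an $L^2$ bound, and (iii) decomposing the fluctuation of the signal overlap into the fluctuation of $\Upsilon^u$ and two auxiliary terms involving $\|\wt{u}\|^2$ and $\wt{u}^\top Z^u$.

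First, applying Nishimori (\Cref{prop:nishimori}) to the test function $(x,y)\mapsto(x^\top y/n-c)^2$ with $c=\expt\bracket{(\wt{u}^{(1)})^\top\wt{u}^{(2)}/n}_{n,a,b}$, and noting (by a second Nishimori application) that this constant equals $\expt\bracket{\wt{u}^\top\wt{u}^*/n}_{n,a,b}$, the left-hand side of the lemma becomes the joint-measure variance of the signal overlap $O\coloneqq\wt{u}^\top\wt{u}^*/n$. The truncation of the priors to $[-K,K]$ forces $|O|\le M$ for some $M$ depending only on $K$ and $\ol{\Xi}$ (for all large $n$ by \Cref{eqn:supp}), so $\expt\bracket{(O-\expt\bracket{O})^2}_{n,a,b}\le 2M\,\expt\bracket{|O-\expt\bracket{O}|}_{n,a,b}$, and it suffices to establish the $L^1$ bound with the claimed rate.

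Second, combining the explicit form of $\Upsilon^u$ in \Cref{eqn:xi^uv} with the Stein identity $\expt\bracket{\Upsilon^u}_{n,a,b}=(\alpha a/n)\expt\bracket{\wt{u}^\top\wt{u}^*}_{n,a,b}$ from \Cref{eqn:Upsilon} (and its corollary $\expt\bracket{\wt{u}^\top Z^u}=\sqrt{\alpha s_n}\,a(\expt\bracket{\|\wt{u}\|^2}-\expt\bracket{\wt{u}^\top\wt{u}^*})$), a direct calculation gives the pointwise decomposition
\begin{align*}
2\alpha a\,(O-\expt\bracket{O}) = (\Upsilon^u-\expt\bracket{\Upsilon^u}) - \tfrac{\alpha a}{n}\bigl(\|\wt{u}\|^2-\expt\bracket{\|\wt{u}\|^2}\bigr) + \tfrac{\sqrt{\alpha/s_n}}{n}\bigl(\wt{u}^\top Z^u-\expt\bracket{\wt{u}^\top Z^u}\bigr).
\end{align*}
The first summand is bounded in $L^1$ by \Cref{lem:Upsilon}. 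The remaining two summands are handled by mimicking that proof: their thermal parts are controlled via the convexity identity $(\phi^u)''(a)=ns_n\bigl[\bracket{(\Upsilon^u)^2}-\bracket{\Upsilon^u}^2\bigr]+O(1/n)$ combined with Gaussian integration by parts on $Z^u$ (which turns $\wt{u}^\top Z^u$ into a thermal variance functional of $\wt{u}$), so that integrating in $a$ and telescoping $(\phi^u)'(A)-(\phi^u)'(1)\le CA$ (cf.\ \Cref{eqn:phi'_ub}) produces the $1/\sqrt{ns_n}$ rate; their quenched parts are bounded via the Panchenko convexity argument (\Cref{prop:panchenko}) applied to $a\mapsto\phi^u(a)+Ca^2$, producing the $\sqrt{\xi_n^u(s_n)}$ rate, exactly as in \Cref{eqn:Upsilon2_done}.

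The main obstacle is taming the $\wt{u}^\top Z^u$ term, whose prefactor $\sqrt{\alpha/s_n}/n$ grows like $1/\sqrt{s_n}$, a slowly increasing power of $n$. A brute Cauchy--Schwarz bound on $|\wt{u}^\top Z^u|$ is insufficient; only a Gaussian integration by parts on $Z^u$ converts this fluctuation into a thermal variance that is quantitatively comparable to that of $\Upsilon^u$, ensuring that the three contributions combine to match the rate of \Cref{lem:Upsilon}.
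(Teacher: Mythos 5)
There is a genuine gap, and it sits exactly where you flag "the main obstacle." Your decomposition of $2\alpha a\,(O-\expt{\bracket{O}_{n,a,b}})$ into the fluctuations of $\Upsilon^u$, of $\normtwo{\wt{u}}^2/n$, and of $\wt{u}^\top Z^u/(n\sqrt{s_n})$ is algebraically correct (up to signs), and the $\Upsilon^u$ piece is indeed handled by \Cref{lem:Upsilon}. But the other two pieces require, after accounting for their prefactors, essentially CLT-rate concentration of the self-overlap $\normtwo{\wt{u}}^2/n$ and of the Gaussian-field term $\wt{u}^\top Z^u$ under the quenched Gibbs average — statements that are nontrivial in their own right and are \emph{not} consequences of \Cref{lem:Upsilon}. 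Your plan to obtain them "by mimicking that proof" does not go through: the mechanism of \Cref{lem:Upsilon} works only because $\Upsilon^u$ is (proportional to) $\partial_a\cH^u_{n,a}$, so its thermal variance is tied to $(\phi^u)''(a)$ and telescopes upon integration in $a$, while its quenched fluctuation is controlled by convexity of $a\mapsto\phi^u(a)+Ca^2$ via \Cref{prop:panchenko} and the concentration bound \Cref{cor:xi^uv}. Neither $\normtwo{\wt{u}}^2$ nor $\wt{u}^\top Z^u$ alone is the derivative of $\phi^u$ in an available parameter; introducing separate parameters for each term of $\cH^u_{n,a}$ would destroy the posterior (Nishimori) structure and the convexity the argument needs. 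Likewise, Gaussian integration by parts on $Z^u$ converts \emph{expectations} of products with $Z^u$ into other Gibbs averages; it does not act on the absolute-value ($L^1$) fluctuation you need to bound, so the claimed conversion "into a thermal variance functional" is not an argument.

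The paper's proof avoids this entirely by a different and shorter route, which you may want to internalize: by Stein's lemma and the Nishimori identity, the covariance between $\Upsilon^u(\wt{u}^{(1)})$ and the replica overlap $(\wt{u}^{(1)})^\top\wt{u}^{(2)}$ under $\expt{\bracket{\cdot}_{n,a,b}}$ is \emph{exactly} $\tfrac{\alpha a}{n}$ times the variance $\expt{\bracket{((\wt{u}^{(1)})^\top\wt{u}^{(2)}-\expt{\bracket{(\wt{u}^{(1)})^\top\wt{u}^{(2)}}_{n,a,b}})^2}_{n,a,b}}$ (see \Cref{eqn:Upsilon_again,eqn:Upsilon_u12}); on the other hand this covariance is trivially bounded by $\normtwo{\Xi}^{-1}K^2$ times the $L^1$ fluctuation of $\Upsilon^u$, since the normalized overlap is uniformly bounded. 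Dividing by $\alpha a$ (with $a\ge1$), integrating over $a\in[1,A]$ and invoking \Cref{lem:Upsilon} gives the claimed $L^2$ bound directly — no separate control of $\normtwo{\wt{u}}^2$ or $\wt{u}^\top Z^u$ is ever needed. To make your approach rigorous you would have to supply the two missing concentration lemmas, which is substantially harder than the identity-based argument above; as written, the proposal does not constitute a proof.
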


\begin{proof}
Since $ \supp(P), \supp(Q), \normtwo{\Xi}^{-1} $ are all bounded, by the triangle inequality, 
\begin{align}
    & \frac{1}{n} \abs{ \expt{\bracket{ \Upsilon^u(\wt{u}^{(1)}) (\wt{u}^{(1)})^\top \wt{u}^{(2)} }_{n,a,b}} - \expt{\bracket{\Upsilon^u(\wt{u})}_{n,a,b}} \expt{\bracket{(\wt{u}^{(1)})^\top \wt{u}^{(2)}}_{n,a,b}} } \label{eqn:abs_comp} \\
    &\le \normtwo{\Xi}^{-1} K^2 \expt{ \bracket{\abs{ \Upsilon^u(\wt{u}) - \expt{\bracket{\Upsilon^u(\wt{u})}_{n,a,b}} }}_{n,a,b} } . \label{eqn:ineq} 
\end{align}

On the other hand, let us compute \Cref{eqn:abs_comp} on the LHS of the above inequality. 
Recall from \Cref{eqn:Upsilon} in the proof of \Cref{lem:Upsilon} that 
\begin{align}
    \expt{\bracket{\Upsilon^u(\wt{u}^{(1)})}_{n,a,b}} = \frac{\alpha a}{n} \expt{ \bracket{ (\wt{u}^{(1)})^\top \wt{u}^{(2)} }_{n,a,b} } . 
    \label{eqn:Upsilon_again}
\end{align}
Similar to \Cref{eqn:Upsilon_expand}, we have
\begin{multline}
    \expt{\bracket{ \Upsilon^u(\wt{u}^{(1)}) (\wt{u}^{(1)})^\top \wt{u}^{(2)} }_{n,a,b}} 
    = \frac{2\alpha a}{n} \expt{\bracket{ (\wt{u}^{(1)})^\top \wt{u}^* (\wt{u}^{(1)})^\top \wt{u}^{(2)} }_{n,a,b}} \notag \\
    + \frac{\sqrt{\alpha}}{n\sqrt{s_n}} \expt{\bracket{ (\wt{u}^{(1)})^\top Z^u (\wt{u}^{(1)})^\top \wt{u}^{(2)} }_{n,a,b}} 
    - \frac{\alpha a}{n} \expt{\bracket{\normtwo{\wt{u}^{(1)}}^2 (\wt{u}^{(1)})^\top \wt{u}^{(2)}}_{n,a,b}} . \notag 
\end{multline}
Using Stein's lemma and following similar derivations leading to \Cref{eqn:Upsilon}, the second term on the RHS equals
\begin{multline}
    \frac{\sqrt{\alpha}}{n\sqrt{s_n}} \expt{\bracket{ (\wt{u}^{(1)})^\top Z^u (\wt{u}^{(1)})^\top \wt{u}^{(2)} }_{n,a,b}}
    = \frac{a \alpha}{n} \expt{\bracket{ \normtwo{\wt{u}^{(1)}}^2 (\wt{u}^{(1)})^\top \wt{u}^{(2)}}_{n,a,b}} \notag \\
    - \frac{2 a \alpha}{n} \expt{ \bracket{ (\wt{u}^{(1)})^\top \wt{u}^{(3)} (\wt{u}^{(1)})^\top \wt{u}^{(2)} }_{n,a,b} } 
    + \frac{a \alpha}{n} \expt{ \bracket{\paren{ (\wt{u}^{(1)})^\top \wt{u}^{(2)} }^2}_{n,a,b} } . \notag 
\end{multline}
Therefore, 
\begin{align}
    \expt{\bracket{ \Upsilon^u(\wt{u}^{(1)}) (\wt{u}^{(1)})^\top \wt{u}^{(2)} }_{n,a,b}} 
    &= \frac{\alpha a}{n} \expt{\bracket{ \paren{(\wt{u}^{(1)})^\top \wt{u}^{(2)}}^2 }_{n,a,b}} . \label{eqn:Upsilon_u12} 
\end{align}

Putting \Cref{eqn:Upsilon_again,eqn:Upsilon_u12} together and using Nishimori identity (\Cref{prop:nishimori}), we have that \Cref{eqn:abs_comp} equals
\begin{multline}
    \frac{\alpha a}{n^2} \abs{ \expt{\bracket{ \paren{(\wt{u}^{(1)})^\top \wt{u}^{(2)}}^2 }_{n,a,b}} - \expt{ \bracket{(\wt{u}^{(1)})^\top \wt{u}^{(2)}}_{n,a,b} }^2 } \notag \\
    = \frac{\alpha a}{n^2} \abs{ \expt{\bracket{\paren{ (\wt{u}^{(1)})^\top \wt{u}^{(2)} - \expt{\bracket{(\wt{u}^{(1)})^\top \wt{u}^{(2)}}_{n,a,b}} }^2}_{n,a,b}} } . 
    \notag 
\end{multline}
So by the inequality \Cref{eqn:ineq}, for any $a\ge1$, 
\begin{align}
    \frac{1}{n^2} \expt{\bracket{\paren{ (\wt{u}^{(1)})^\top \wt{u}^{(2)} - \expt{\bracket{(\wt{u}^{(1)})^\top \wt{u}^{(2)}}_{n,a,b}} }^2}_{n,a,b}} 
    &\le \frac{\normtwo{\Xi}^{-1} K^2}{\alpha} \expt{ \bracket{\abs{ \Upsilon^u(\wt{u}) - \expt{\bracket{\Upsilon^u(\wt{u})}_{n,a,b}} }}_{n,a,b} } . \notag 
\end{align}
Integrating over $a\in[1,A]$ and invoking \Cref{lem:Upsilon} concludes the proof.
\end{proof}

\begin{lemma}[Overlap concentration]
\label{lem:ol_conc}
Let $ R_1, R_2\colon[0,1]\times\bbR_{>0}^2\to\bbR_{\ge0} $ be two continuous bounded functions such that their partial derivatives with respect to the second and third arguments are continuous and non-negative. 
Let $ s_n = n^{-1/32} $. 
For $ (\eps_1, \eps_2)\in[1,2]^2 $, slightly abusing notation, let $ q_1(\cdot, \eps_1, \eps_2), q_2(\cdot, \eps_1, \eps_2) $ be the unique solution to 
\begin{align}
\begin{cases}
q_1(0) = s_n \eps_1 \\
q_2(0) = s_n \eps_2
\end{cases} , \qquad 
\begin{cases}
q_1'(t) = R_1(t, q_1(t), q_2(t)) \\
q_2'(t) = R_2(t, q_1(t), q_2(t)) 
\end{cases} . \label{eqn:cauchy_problem} 
\end{align}
Then there exists a constant $  C>0$ depending only on $K$, $ \alpha $, $ \norminf{R_1} $, $ \norminf{R_2} $, $ \ol{\Xi} $ such that for every $ t\in[0,1] $, 
\begin{align}
\int_1^2 \int_1^2 \frac{1}{n^2} \expt{\bracket{\paren{\wt{u}^\top \wt{u}^* - \expt{\bracket{\wt{u}^\top \wt{u}^*}_{n,t}}}^2}_{n,t}} \diff\eps_1 \diff\eps_2 \le C n^{-1/8} . \notag 
\end{align}
\end{lemma}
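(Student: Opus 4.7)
The plan is to use the Nishimori identity to convert the variance of $\wt{u}^\top\wt{u}^*$ into the variance of the replica overlap $(\wt{u}^{(1)})^\top\wt{u}^{(2)}$, and then transport the per-$(a,b)$ concentration already proved in \Cref{lem:u12_conc} back to the $\eps$-parameterization via a change of variables along the flow \Cref{eqn:cauchy_problem}. The underlying observation is that the Gibbs bracket $\bracket{\cdot}_{n,t}$ evaluated at parameters $(q_1(t,\eps), q_2(t,\eps))$ coincides with $\bracket{\cdot}_{n,a,b}$ evaluated at $(a,b) = (\sqrt{q_1(t,\eps)/s_n}, \sqrt{q_2(t,\eps)/s_n})$, so $\eps$-integrals can be converted into $(a,b)$-integrals provided the map $\Psi_t \colon (\eps_1,\eps_2) \mapsto (q_1(t,\eps), q_2(t,\eps))$ is a diffeomorphism whose Jacobian is quantitatively non-degenerate.

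To make this rigorous I would analyze the Jacobian $J_t = \partial\Psi_t/\partial\eps$, which obeys $J_0 = s_n I_2$ and $\dot{J}_t = (DR)(t,q_1,q_2) J_t$, where $DR$ is the Jacobian of $(R_1, R_2)$ in the last two arguments. The monotonicity hypothesis forces $DR$ to have non-negative entries and in particular $\tr(DR)\ge 0$, so Jacobi's formula gives $\det(J_t) = s_n^2 \exp\paren{\int_0^t \tr(DR) ds} \ge s_n^2$ uniformly in $(t,\eps)$. Boundedness of $R_1,R_2$ further confines $q_i(t,\eps) \in [s_n,\, 2s_n + \norminf{R_i}]$. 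Composing $\Psi_t$ with the substitution $(q_1,q_2)\mapsto(a,b)$ (Jacobian $4 s_n^2 ab$), the $s_n^2$ factors cancel and one obtains
\begin{align}
\int_{[1,2]^2} \frac{1}{n^2}\expt{\bracket{\paren{\wt{u}^\top\wt{u}^* - \expt{\bracket{\wt{u}^\top\wt{u}^*}_{n,t}}}^2}_{n,t}} d\eps_1 d\eps_2 \le 4 \int_1^A \int_1^A ab\, W_n(a,b)\, da\, db , \notag
\end{align}
where $A = O(1/\sqrt{s_n})$ and $W_n(a,b)$ denotes the replica-overlap variance under $\bracket{\cdot}_{n,a,b}$ (the original signal-overlap variance having been Nishimori-identified with it along the way).

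To finish, bounding $ab \le A^2$ and invoking \Cref{lem:u12_conc} (whose RHS is uniform in $b$) together with \Cref{cor:xi^uv} yields $\int_1^A W_n(a,b) da \le C A\paren{1/\sqrt{n s_n} + \sqrt{\xi_n^u(s_n)}}$, so that the total contribution is of order $A^4 \cdot n^{-15/64} = n^{1/16 - 15/64} = n^{-11/64} \le C n^{-1/8}$ upon substituting $s_n = n^{-1/32}$. The main obstacle is the Jacobian lower bound in the second step: without a deterministic estimate of the form $\det(J_t) \ge s_n^2$, the change of variables would leak an uncontrolled multiplicative factor and spoil the final rate, and it is precisely there that the monotonicity hypothesis on $R_1, R_2$ is decisive. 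Once this geometric control is secured, the remaining power-counting of $n$ and $s_n$ is routine.
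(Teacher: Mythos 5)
Your proposal is correct and follows essentially the same route as the paper's proof: Nishimori identity to pass to the replica overlap, a Liouville/Jacobi determinant bound $\det J_t \ge s_n^2$ from the monotonicity of $R_1,R_2$, a change of variables into the $(a,b)$ (equivalently $q_i = s_n a^2$, $s_n b^2$) parameterization, and then \Cref{lem:u12_conc} together with \Cref{cor:xi^uv} and the same power counting in $s_n = n^{-1/32}$. The only cosmetic difference is that you substitute both coordinates to $(a,b)$ at once (picking up the factor $4s_n^2 ab \le 4s_n^2 A^2$) whereas the paper substitutes only $r_1 = a^2$ and keeps $r_2$, which yields the identical $s_n^{-2}$ prefactor and the same final rate $n^{-11/64} \le C n^{-1/8}$.
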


\begin{proof}
The existence and uniqueness of the solution to the Cauchy problem \Cref{eqn:cauchy_problem} is a direct consequence of the Cauchy--Lipschitz theorem \cite[Theorem 3.1, Chapter V]{ODE_book}.
For any $t\in[0,1]$, the function $ Q_t(\eps_1, \eps_2) = (q_1(t, \eps_1, \eps_2), q_2(t, \eps_1, \eps_2)) $ is a $C^1$-diffeomorphism. 
Its Jacobian determinant is given by the Liouville formula \cite[Corollary 3.1, Chapter V]{ODE_book} and can be lower bounded as 
\begin{align}
    J(\eps_1, \eps_2) &\coloneqq \det(\nabla Q_t(\eps_1, \eps_2)) \notag \\
    &= s_n^2 \exp\paren{ \int_0^t \partial_2 R_1(s, Q_s(\eps_1, \eps_2)) \diff s + \int_0^t \partial_3 R_2(s, Q_s(\eps_1, \eps_2)) \diff s }
    \ge s_n^2 , \label{eqn:jacobian_det} 
\end{align}
since the partial derivatives are non-negative by assumptions. 

We then view the RHS below as a function of $ q_1, q_2 $ and denote it by
\begin{align}
    V(q_1, q_2) &= \expt{\bracket{\paren{\wt{u}^\top \wt{u}^* - \expt{\bracket{\wt{u}^\top \wt{u}^*}_{n,t}}}^2}_{n,t}}. \label{eqn:V} 
\end{align}
Denote $ \Omega_t = Q_t([1,2]^2)/s_n $ and $ M \coloneqq \max\brace{ \norminf{R_1}, \norminf{R_2} } + 2 $. 
Since $ R_1, R_2\ge0 $ by assumptions, $ q_1, q_2 $ are non-decreasing in $t$ by \Cref{eqn:cauchy_problem}. 
So for $ i\in\{1,2\} $, and any $t\in[0,1]$ and $ (\eps_1, \eps_2)\in[1,2]^2 $, 
\begin{align}
    q_i(t, \eps_1, \eps_2) / s_n 
    &\ge q_i(0, \eps_1, \eps_2) / s_n = \eps_i \ge 1 , \notag \\ 
    q_i(t, \eps_1, \eps_2) / s_n
    &\le q_i(1, \eps_1, \eps_2) / s_n
    = s_n^{-1} \int_0^1 q_i'(t, \eps_1, \eps_2) \diff t + s_n^{-1} q_i(0, \eps_1, \eps_2)
    \le s_n^{-1} (\norminf{R_i} + 2) . \notag 
\end{align}
We obtain the relation $ \Omega_t\subset[1, M/s_n]^2 $ for any $t\in[0,1]$. 

Next, using the change of variable $ (r_1, r_2) = Q_t(\eps_1, \eps_2)/s_n $, we have
\begin{align}
    & \int_1^2 \int_1^2 \frac{1}{n^2} \expt{\bracket{\paren{\wt{u}^\top \wt{u}^* - \expt{\bracket{\wt{u}^\top \wt{u}^*}_{n,t}}}^2}_{n,t}} \diff\eps_1 \diff\eps_2 \notag \\
    &= \frac{1}{n^2} \int_1^2 \int_1^2 V(q_1(t, \eps_1, \eps_2), q_2(t, \eps_1, \eps_2)) \diff\eps_1\diff\eps_2 \notag \\
    &= \frac{1}{n^2} \int_{\Omega_t} \frac{V(s_n r_1, s_n r_2) s_n^2}{J(Q_t^{-1}(s_n r_1, s_n r_2))} \diff(r_1, r_2) \notag \\
    &\le \frac{1}{n^2} \int_1^{M/s_n} \int_1^{M/s_n} V(s_n r_1, s_n r_2) \diff r_1 \diff r_2 , \label{eqn:back2} 
\end{align}
where the last step is by \Cref{eqn:jacobian_det}. 
Further applying the change of variable $ r_1 = a^2 $, we have that for all $ r_2\ge1 $, 
\begin{align}
    \frac{1}{n^2} \int_1^{M/s_n} V(s_n r_1, s_n r_2) \diff r_1
    &= \frac{1}{n^2} \int_1^{\sqrt{M/s_n}} V(s_n a^2, s_n r_2) 2a \diff a
    \le \frac{2\sqrt{M/s_n}}{n^2} \int_1^{\sqrt{M/s_n}} V(s_n a^2, s_n r_2) \diff a . \label{eqn:back1} 
\end{align}
Recalling $ V $ from \Cref{eqn:V}, we recognize that 
\begin{align}
    V(s_n a^2, s_n r_2) &= 
    \expt{ \bracket{ \paren{ \wt{u}^\top \wt{u}^* - \expt{\bracket{\wt{u}^\top \wt{u}^*}_{n,a,\sqrt{r_2}}} }^2 }_{n,a,\sqrt{r_2}} } \notag \\
    &= \expt{ \bracket{ \paren{ (\wt{u}^{(1)})^\top \wt{u}^{(2)} - \expt{\bracket{(\wt{u}^{(1)})^\top \wt{u}^{(2)}}_{n,a,\sqrt{r_2}}} }^2 }_{n,a,\sqrt{r_2}} } , 
    \notag 
\end{align}
where $ \bracket{\cdot}_{n,a,b} $ is defined in \Cref{eqn:gibbs_nab} and the second equality is by Nishimori identity (\Cref{prop:nishimori}).
Since $\sqrt{M/s_n} \ge 2$ for all sufficiently large $n$, applying \Cref{lem:u12_conc}, we get
\begin{align}
    \frac{1}{\sqrt{M/s_n} - 1} \int_1^{\sqrt{M/s_n}} \frac{1}{n^2} V(s_n a^2, s_n r_2) \diff a
    &\le C \paren{ \frac{1}{\sqrt{n s_n}} + \sqrt{\xi_n^u(s_n)} } , \notag 
\end{align}
where $C>0$ depends only on $ \alpha, K, \ol{\Xi} $ and $ \xi_n^u(s_n) $ is given in \Cref{eqn:xi^uv} with $A = \sqrt{M/s_n}$. 
Using this back in \Cref{eqn:back1} and then in \Cref{eqn:back2}, we obtain
\begin{align}
    \int_1^2 \int_1^2 \frac{1}{n^2} \expt{\bracket{\paren{\wt{u}^\top \wt{u}^* - \expt{\bracket{\wt{u}^\top \wt{u}^*}_{n,t}}}^2}_{n,t}} \diff\eps_1 \diff\eps_2
    &\le 2 C (M/s_n)^2 (1/\sqrt{n s_n} + \sqrt{\xi_n^u(s_n)}) . \notag 
\end{align}
\Cref{cor:xi^uv} guarantees that $ \xi_n^u(s_n) \le \frac{C}{s_n \sqrt{n}} $ for some $C>0$ depending only on $\alpha, K, \ol{\Xi}$. 
By the choice of $s_n$, we can finally upper bound (up to a positive constant depending only on $ \alpha, K, \ol{\Xi}, M $) the RHS above by
\begin{align}
    \frac{1}{s_n^2} \paren{ \frac{1}{\sqrt{n s_n}} + \frac{1}{\sqrt{s_n \sqrt{n}}} }
    \le \frac{1}{s_n^{2.5}} \cdot \frac{2}{n^{1/4}}
    = 2 \cdot n^{-11/64}
    \le 2 \cdot n^{-1/8} , \notag 
\end{align}
for all sufficiently large $n$, which completes the proof. 
\end{proof}

Recalling $ \bracket{\cdot}_{n,t} $ defined in \Cref{eqn:gibbs_brack}, let us identify $ \expt{\bracket{\wt{u}^\top \wt{u}^*}_{n,t}} $ as a function of $ (t, q_1, q_2) $: 
\begin{align}
\frac{1}{n} \expt{\bracket{\wt{u}^\top \wt{u}^*}_{n,t}} &= \Delta(t, q_1, q_2) . \label{eqn:Delta}
\end{align}
Note that $ \Delta $ is continuous, non-negative (by Nishimori identity) on $ [0,1] \times \bbR_{\ge0}^2 $ and bounded by $ K^2 $.
Its partial derivatives with respect to the second and third arguments are continuous and non-negative, since the correlation between $ \wt{u}^* $ and $ \bracket{\wt{u}}_{n,t} $ is a non-decreasing function of the SNRs $ q_1, q_2 $. 

\begin{lemma}[Fundamental sum rule]
\label{lem:sum_rule}
In the setting of \Cref{lem:ol_conc}, for $ (\eps_1, \eps_2)\in[1,2]^2 $, let $ q_1(t, \eps_1, \eps_2), q_2(t, \eps_1, \eps_2) $ be the solution to \Cref{eqn:cauchy_problem} with $ R_2 = \Delta $ defined in \Cref{eqn:Delta}.
Then we have
\begin{align}
\cF_n &= \int_1^2 \int_1^2 \int_0^1 \psi_{\ol{\Xi}}(\alpha \snr q_1(1, \eps_1, \eps_2)) + \alpha \psi_{\ol{\Sigma}}(q_2(1, \eps_1, \eps_2)) 
- \frac{\alpha}{2} q_1'(t, \eps_1, \eps_2) q_2'(t, \eps_1, \eps_2) \diff t \diff\eps_1\diff\eps_2 + o(1) . \notag 
\end{align} 
\end{lemma}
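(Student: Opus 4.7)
The plan is to start from the telescoping identity $\cF_n = f_n(0) + (f_n(1) - f_n(0)) - f_n(1) + \cF_n$, or rather to use
\begin{align*}
\cF_n \; = \; f_n(0) + O(s_n) \; = \; f_n(1) - \int_0^1 f_n'(t)\,\diff t + O(s_n),
\end{align*}
where the first equality is \Cref{eqn:f=F} from \Cref{lem:f01} applied with $q_1(0) = s_n\eps_1,\,q_2(0) = s_n\eps_2$, both of which tend to $0$. Note that $s_n = n^{-1/32} \to 0$, and the constant hidden in $O(\cdot)$ is uniform in $(\eps_1,\eps_2) \in [1,2]^2$ since these lie in a compact set. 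By \Cref{eqn:f1} in \Cref{lem:f01} (and boundedness of $f_n$) one then takes the limit inside to replace $f_n(1)$ by $\psi_{\ol{\Xi}}(\alpha \snr q_1(1,\eps_1,\eps_2)) + \alpha \psi_{\ol{\Sigma}}(q_2(1,\eps_1,\eps_2))$ up to $o_K + o(1)$, where $o_K$ can be absorbed by eventually sending $K \to \infty$.

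The next step is to insert the variational formula from \Cref{lem:free_energy_var}:
\begin{align*}
f_n'(t) = \frac{\alpha}{2} q_1'(t) q_2'(t) - \frac{1}{2}\expt{\bracket{\Bigl(\tfrac{\wt{u}^\top \wt{u}^*}{n} - q_2'(t)\Bigr)\Bigl(\tfrac{\wt{v}^\top \wt{v}^*}{n} - \alpha q_1'(t)\Bigr)}_{n,t}}.
\end{align*}
The crucial choice is $R_2 = \Delta$, which by construction \Cref{eqn:Delta} gives $q_2'(t) = \frac{1}{n}\expt{\bracket{\wt{u}^\top \wt{u}^*}_{n,t}}$. So the first factor in the bracket is precisely the centering of $\wt{u}^\top \wt{u}^*/n$ about its own expectation, which is controlled by the overlap concentration in \Cref{lem:ol_conc}. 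Plugging this into the identity above yields, after integration over $t \in [0,1]$ and over $(\eps_1,\eps_2) \in [1,2]^2$,
\begin{align*}
\cF_n = \int_1^2\!\!\int_1^2\!\!\int_0^1 \Bigl[\psi_{\ol{\Xi}}(\alpha \snr q_1(1)) + \alpha \psi_{\ol{\Sigma}}(q_2(1)) - \tfrac{\alpha}{2} q_1'(t) q_2'(t)\Bigr] \diff t\, \diff\eps_1\, \diff\eps_2 + \mathcal{E}_n + o_K + o(1),
\end{align*}
with the error
\begin{align*}
\mathcal{E}_n = \frac{1}{2}\int_1^2\!\!\int_1^2\!\!\int_0^1 \expt{\bracket{\Bigl(\tfrac{\wt{u}^\top \wt{u}^*}{n} - q_2'(t)\Bigr)\Bigl(\tfrac{\wt{v}^\top \wt{v}^*}{n} - \alpha q_1'(t)\Bigr)}_{n,t}} \diff t\, \diff\eps_1\, \diff\eps_2.
\end{align*}

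The main task is to show $\mathcal{E}_n = o(1)$. By Cauchy--Schwarz, the integrand is bounded by the product of the $L^2$ norms of the two factors. The first factor $(\wt{u}^\top \wt{u}^*/n - q_2'(t))$ has its second moment under $\expt{\bracket{\cdot}_{n,t}}$ controlled by \Cref{lem:ol_conc} once averaged over $(\eps_1,\eps_2)$, giving a bound of order $n^{-1/16}$ in $L^2$. The second factor $(\wt{v}^\top \wt{v}^*/n - \alpha q_1'(t))$ only needs to be uniformly bounded in $L^2$, which follows from the compact support of $P, Q$, the boundedness of $\normtwo{\Xi}^{-1}, \normtwo{\Sigma}^{-1}$ and $\norminf{R_1}$. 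Applying Cauchy--Schwarz under the triple integral in $(t,\eps_1,\eps_2)$ and using that $[1,2]^2 \times [0,1]$ has finite measure then gives $\mathcal{E}_n = o(1)$.

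The main obstacle is ensuring that \Cref{lem:ol_conc} applies to the chosen $R_2 = \Delta$: one must check that $\Delta$ is bounded (true, since the overlap lies in a compact set by support truncation), continuous, and has non-negative partial derivatives in its second and third arguments (this monotonicity follows from the Nishimori-type interpretation: increasing an effective SNR can only increase the posterior correlation). Granted this, the Cauchy--Lipschitz solution in \Cref{eqn:cauchy_problem} is well-defined and \Cref{lem:ol_conc} is directly applicable. Finally, letting $K \to \infty$ after $n \to \infty$ removes the $o_K$ term and yields the claim.
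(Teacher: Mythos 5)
Your proposal is correct and follows essentially the same route as the paper: choose $R_2=\Delta$ so that $q_2'(t)$ equals the expected overlap, plug into \Cref{lem:free_energy_var}, kill the cross term via \Cref{lem:ol_conc} after averaging over $(\eps_1,\eps_2)$, and convert $f_n(0)$ and $f_n(1)$ using \Cref{lem:f01}. You merely spell out details the paper leaves implicit (the Cauchy--Schwarz bound on the error term using boundedness of the second factor, and the check that $\Delta$ satisfies the hypotheses of \Cref{lem:ol_conc}, which the paper verifies in the paragraph defining $\Delta$), so no gap.
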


\begin{proof}
Fix $ (\eps_1, \eps_2)\in[1,2]^2 $.
By the choice $ R_2 = \Delta $, 
\begin{align}
    q_2'(t, \eps_1, \eps_2) &= \Delta(t, q_1(t, \eps_1, \eps_2), q_2(t, \eps_1, \eps_2))
    = \frac{1}{n} \expt{\bracket{\wt{u}^\top \wt{u}^*}_{n,t}} . \notag 
\end{align}
Plugging this into \Cref{eqn:free_energy_var}, integrating over $(\eps_1, \eps_2)\in[1,2]^2$ and applying \Cref{lem:ol_conc}, we have 
\begin{align}
    f_n'(t) &= \frac{\alpha}{2} q_1'(t) q_2'(t) + o(1) , \notag 
\end{align}
where $ o(1) \to 0 $ as $n\to\infty$, uniformly over $ t $. 
By \Cref{lem:f01}, we conclude
\begin{align}
    \cF_n &= \int_1^2 \int_1^2 f_n(0) \diff\eps_1 \diff\eps_2 + o(1)
    = \int_1^2 \int_1^2 \paren{ f_n(1) - \int_0^1 f_n'(t) \diff t } \diff\eps_1 \diff\eps_2 + o(1) \notag \\
    &= \int_1^2 \int_1^2 \int_0^1 \psi_{\ol{\Xi}}(\alpha \snr q_1(1, \eps_1, \eps_2)) + \alpha \psi_{\ol{\Sigma}}(q_2(1, \eps_1, \eps_2)) 
    - \frac{\alpha}{2} q_1'(t, \eps_1, \eps_2) q_2'(t, \eps_1, \eps_2) \diff t \diff\eps_1\diff\eps_2 + o(1) , \notag 
\end{align}
as desired. 
Here the first and last equalities are by \Cref{eqn:f=F,eqn:f1}, respectively. 
\end{proof}

Finally, we prove a pair of matching upper and lower bounds, completing the proof of \Cref{thm:free_energy}. 

\begin{lemma}[Lower bound]
\label{lem:lb}
Let $ s_n = n^{-1/32} $ and $ R_2 = \Delta $. 
Then 
\begin{align}
\liminf_{n\to\infty} \cF_n &\ge \sup_{q_v\ge0} \inf_{q_u\ge0} \cF( q_u , q_v ) . \notag 
\end{align}
\end{lemma}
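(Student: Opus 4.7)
The plan is to apply the fundamental sum rule (\Cref{lem:sum_rule}) with a carefully chosen constant $R_1$, so that the resulting bound separates into a function of $q_v$ and $q_2(1)$, the latter of which can be discarded by taking the infimum.

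More concretely, fix an arbitrary $q_v \ge 0$ and set $R_1(t, q_1, q_2) \equiv q_v$ (independent of all its arguments) and $R_2 = \Delta$. This $R_1$ is continuous, bounded, and has zero (hence non-negative) partial derivatives in its second and third arguments, so the hypotheses of \Cref{lem:ol_conc,lem:sum_rule} are met. The Cauchy problem \Cref{eqn:cauchy_problem} admits the explicit solution $q_1(t, \eps_1, \eps_2) = s_n \eps_1 + q_v t$, so in particular $q_1(1, \eps_1, \eps_2) = s_n \eps_1 + q_v \to q_v$ uniformly in $(\eps_1, \eps_2) \in [1,2]^2$. Moreover,
\begin{align*}
\int_0^1 q_1'(t, \eps_1, \eps_2) \, q_2'(t, \eps_1, \eps_2) \diff t
&= q_v \int_0^1 q_2'(t, \eps_1, \eps_2) \diff t
= q_v \bigl( q_2(1, \eps_1, \eps_2) - s_n \eps_2 \bigr).
\end{align*}

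Substituting into \Cref{lem:sum_rule}, the sum rule becomes
\begin{align*}
\cF_n &= \int_1^2 \int_1^2 \Bigl[ \psi_{\ol{\Xi}}(\alpha (s_n\eps_1 + q_v)) + \alpha \psi_{\ol{\Sigma}}(q_2(1,\eps_1,\eps_2)) - \tfrac{\alpha}{2} q_v \, q_2(1,\eps_1,\eps_2) + \tfrac{\alpha}{2} q_v s_n \eps_2 \Bigr] \diff \eps_1 \diff \eps_2 + o(1).
\end{align*}
Since $\Delta$ is bounded by $K^2$, $q_2(1, \eps_1, \eps_2) \le s_n \eps_2 + K^2$ stays in a compact set, and $\psi_{\ol\Xi}$ is continuous on a neighborhood of $\alpha q_v$ (the support of $\ol{\Xi}$ is bounded away from $0$). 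Therefore $\psi_{\ol{\Xi}}(\alpha(s_n \eps_1 + q_v)) \to \psi_{\ol{\Xi}}(\alpha q_v)$ uniformly in $\eps_1$, and the term $\tfrac{\alpha}{2} q_v s_n \eps_2$ vanishes uniformly as $n \to \infty$. The bracketed integrand is thus equal to $\cF(q_2(1, \eps_1, \eps_2), q_v) + o(1)$ uniformly in $(\eps_1, \eps_2)$.

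Finally, I use the trivial pointwise inequality $\cF(q_2(1, \eps_1, \eps_2), q_v) \ge \inf_{q_u \ge 0} \cF(q_u, q_v)$, which gives
\begin{align*}
\cF_n \ge \inf_{q_u \ge 0} \cF(q_u, q_v) + o(1),
\end{align*}
so that $\liminf_{n \to \infty} \cF_n \ge \inf_{q_u \ge 0} \cF(q_u, q_v)$. Since $q_v \ge 0$ was arbitrary, taking the supremum over $q_v$ yields the claimed lower bound. The only delicate part is the uniformity of the $o(1)$ error as a function of $(\eps_1, \eps_2)$, which reduces to the continuity of $\psi_{\ol{\Xi}}$ and the boundedness of $q_2(1, \cdot, \cdot)$; both are immediate from our standing assumptions on the spectra and the bound $\Delta \le K^2$.
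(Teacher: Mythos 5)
Your proposal is correct and follows essentially the same route as the paper's proof: both fix an arbitrary $q_v\ge0$, take $R_1\equiv q_v$ and $R_2=\Delta$ in the fundamental sum rule, solve the Cauchy problem explicitly as $q_1(t)=s_n\eps_1+tq_v$, absorb the $o(1)$ terms coming from $s_n\eps_1,s_n\eps_2$ (via Lipschitz continuity of $\psi_{\ol{\Xi}}$), and then bound the integrand below by $\inf_{q_u\ge0}\cF(q_u,q_v)$ before taking the supremum over $q_v$. The extra care you take with uniformity of the error in $(\eps_1,\eps_2)$ is a fine addition but not a departure from the paper's argument.
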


\begin{proof}
Fix an arbitrary $ q_v\ge0 $. 
Let $ R_1 = q_v $. 
Then $ q_1(t, \eps_1, \eps_2) = s_n \eps_1 + t q_v $ and \Cref{lem:sum_rule} gives
\begin{align}
    \cF_n &= \int_1^2 \int_1^2 \int_0^1 \psi_{\ol{\Xi}}(\alpha \snr (s_n\eps_1 + q_v)) + \alpha \psi_{\ol{\Sigma}}(q_2(1, \eps_1, \eps_2)) - \frac{\alpha}{2} q_v q_2'(t, \eps_1, \eps_2) \diff t \diff\eps_1 \diff\eps_2 + o(1) \notag \\
    &= \int_1^2 \int_1^2 \psi_{\ol{\Xi}}(\alpha \snr q_v) + \alpha\psi_{\ol{\Sigma}}(q_2(1, \eps_1, \eps_2)) - \frac{\alpha}{2} q_v q_2(1, \eps_1, \eps_2) \diff\eps_1 \diff\eps_2 + o(1) \notag \\
    &\ge \inf_{q_2\ge0} \psi_{\ol{\Xi}} (\alpha \snr q_v) + \alpha \psi_{\ol{\Sigma}}(q_2) - \frac{\alpha}{2} q_v q_2 + o(1)
    = \inf_{q_u\ge0} \cF(q_u, q_v) + o(1) , \notag 
\end{align}
where the second line holds since $\psi_{\ol{\Xi}}$ is Lipschitz and $ q_2(0, \eps_1, \eps_2) = s_n \eps_2 = o(1) $. 
This completes the proof since the above lower bound holds for all $q_v\ge0$. 
\end{proof}

\begin{lemma}[Upper bound]
\label{lem:ub}
Let $ s_n = n^{-1/32} $ and $ R_2 = \Delta $. 
Then
\begin{align}
\limsup_{n\to\infty} \cF_n &\le \sup_{q_v\ge0} \inf_{q_u\ge0} \cF( q_u , q_v ) . \notag 
\end{align}
\end{lemma}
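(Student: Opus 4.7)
The plan is to invoke the fundamental sum rule \Cref{lem:sum_rule} with the adaptive Guerra-type choice $R_1(t, q_1, q_2) = 2 \psi'_{\ol{\Sigma}}(q_2)$, complementary to the constant choice $R_1 = q_v$ used to establish the lower bound in \Cref{lem:lb}. With this selection, $q_1'(t) = 2 \psi'_{\ol{\Sigma}}(q_2(t))$, so that the chain rule produces the exact identity
\begin{align*}
\int_0^1 q_1'(t)\, q_2'(t)\, \diff t
&= 2 \int_0^1 \psi'_{\ol{\Sigma}}(q_2(t))\, q_2'(t)\, \diff t
= 2 \bigl(\psi_{\ol{\Sigma}}(q_2(1)) - \psi_{\ol{\Sigma}}(q_2(0))\bigr).
\end{align*}
Since $q_2(0) = s_n \eps_2 \to 0$ and $\psi_{\ol{\Sigma}}$ is continuous with $\psi_{\ol{\Sigma}}(0) = 0$, the boundary contribution is $o(1)$. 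Substituting into the conclusion of \Cref{lem:sum_rule}, the $\alpha \psi_{\ol{\Sigma}}(q_2(1))$ term in the sum rule is exactly canceled by $-\tfrac{\alpha}{2} \cdot 2 \psi_{\ol{\Sigma}}(q_2(1))$, collapsing the expression to
\begin{align*}
\cF_n = \int_1^2 \int_1^2 \psi_{\ol{\Xi}}\bigl(\alpha\, q_1(1, \eps_1, \eps_2)\bigr)\, \diff \eps_1 \, \diff \eps_2 + o(1).
\end{align*}

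The next step is to upper bound each $\psi_{\ol{\Xi}}(\alpha\, q_1(1, \eps_1, \eps_2))$ by the target $\sup_{q_v \ge 0}\inf_{q_u \ge 0}\cF(q_u, q_v)$. The key tool is the algebraic representation
\begin{align*}
\psi_{\ol{\Xi}}(\alpha q_v) = \cF(q_u, q_v) + \alpha\,\bigl[q_u\, \psi'_{\ol{\Sigma}}(q_u) - \psi_{\ol{\Sigma}}(q_u)\bigr],
\end{align*}
valid whenever $q_v = 2\psi'_{\ol{\Sigma}}(q_u)$, applied at $q_v = q_1(1, \eps_1, \eps_2)$ together with its unique conjugate $q_u = (\psi'_{\ol{\Sigma}})^{-1}(q_v/2)$. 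Since by construction $q_1'(t) = 2 \psi'_{\ol{\Sigma}}(q_2(t))$ throughout the interpolation, the endpoint pair $(q_2(1), q_1(1))$ naturally sits on the curve $\{q_v = 2\psi'_{\ol{\Sigma}}(q_u)\}$ of first-order stationarity of $\cF(\cdot, q_v)$, so the first term on the right-hand side is exactly $\inf_{q_u' \ge 0}\cF(q_u', q_v)$, which is bounded above by $\sup_{q_v}\inf_{q_u}\cF(q_u, q_v)$. To absorb the non-negative excess $\alpha [q_u \psi'_{\ol{\Sigma}}(q_u) - \psi_{\ol{\Sigma}}(q_u)]$, I would appeal to \Cref{prop:bayes_fp_sol}, which guarantees that the only non-trivial point satisfying both stationarity conditions simultaneously is $(q_u^*, q_v^*)$, and argue that the Nishimori identity (\Cref{prop:nishimori}) forces the endpoint $(q_2(1), q_1(1))$ to also satisfy the second stationarity relation $q_u^* = 2\psi'_{\ol{\Xi}}(\alpha q_v^*)$ in the limit, thereby pinning the endpoint to the saddle and reducing the excess to $o(1)$ by a subsequent application of the convexity of $\psi_{\ol{\Sigma}}$.

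The main obstacle is to rigorously identify the trajectory's endpoint $(q_1(1, \eps_1, \eps_2), q_2(1, \eps_1, \eps_2))$ with $(q_v^*, q_u^*)$ uniformly over $(\eps_1, \eps_2) \in [1,2]^2$. This requires controlling the coupled stochastic ODE $(q_1'(t), q_2'(t)) = (2\psi'_{\ol{\Sigma}}(q_2(t)), \Delta(t, q_1(t), q_2(t)))$ and showing that $q_2(t)$ concentrates around the Bayes-optimal overlap value predicted by the replica-symmetric fixed point equations \Cref{eqn:fp_it}. The uniqueness of the non-trivial fixed point granted by \Cref{prop:bayes_fp_sol}, combined with the overlap concentration of \Cref{lem:ol_conc} and the monotonicity of the Bayes MMSE in the SNR, should rule out escape to the trivial fixed point or to any spurious critical point, thereby completing the identification and yielding $\limsup_{n\to\infty}\cF_n \le \cF(q_u^*, q_v^*) = \sup_{q_v}\inf_{q_u}\cF(q_u, q_v)$.
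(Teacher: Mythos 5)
Your choice of interpolation path differs from the paper's in a way that breaks the argument. You take $R_1(t,q_1,q_2)=2\psi_{\ol{\Sigma}}'(q_2)$, i.e.\ evaluated at the \emph{value} $q_2(t)$, so the ODE gives $q_1'(t)=2\psi_{\ol{\Sigma}}'(q_2(t))$ and the chain-rule collapse $\cF_n=\int\int\psi_{\ol{\Xi}}(\alpha q_1(1,\eps_1,\eps_2))\,\diff\eps_1\diff\eps_2+o(1)$ is indeed exact. But the next step fails: the relation $q_1'(t)=2\psi_{\ol{\Sigma}}'(q_2(t))$ ties the \emph{derivative} of $q_1$ to the \emph{value} of $q_2$, so the endpoint $q_1(1)$ is a time average of $2\psi_{\ol{\Sigma}}'(q_2(t))$ plus $o(1)$, not $2\psi_{\ol{\Sigma}}'(q_2(1))$; the pair $(q_2(1),q_1(1))$ does not sit on the stationarity curve $\{q_v=2\psi_{\ol{\Sigma}}'(q_u)\}$, so your representation $\psi_{\ol{\Xi}}(\alpha q_v)=\cF(q_u,q_v)+\alpha[q_u\psi_{\ol{\Sigma}}'(q_u)-\psi_{\ol{\Sigma}}(q_u)]$ does not apply at the endpoint. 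Worse, the strategy of "pinning the endpoint to the saddle" is self-defeating: if one had $q_1(1)\to q_v^*$ and $q_2(1)\to q_u^*$, the collapsed identity would force $\lim\cF_n=\psi_{\ol{\Xi}}(\alpha q_v^*)=\cF(q_u^*,q_v^*)+\alpha[q_u^*\psi_{\ol{\Sigma}}'(q_u^*)-\psi_{\ol{\Sigma}}(q_u^*)]$, and that excess is strictly positive (and certainly not $o(1)$) whenever $q_u^*>0$, contradicting the claimed upper bound. Neither the Nishimori identity nor convexity of $\psi_{\ol{\Sigma}}$ can make this excess vanish, and no argument is given that controls $q_1(1)$ at all when the endpoint is not at the fixed point.

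The paper's proof avoids this entirely by choosing $R_1(t,q_1,q_2)=2\alpha\psi_{\ol{\Sigma}}'(\Delta(t,q_1,q_2))$, i.e.\ evaluated at the overlap $\Delta$, which with $R_2=\Delta$ equals $q_2'(t)$. This ties the two \emph{derivatives} at the same time, $q_1'(t)=2\alpha\psi_{\ol{\Sigma}}'(q_2'(t))$. One then does not collapse the sum rule exactly; instead one keeps both potential terms and uses convexity (Jensen) of $\psi_{\ol{\Xi}}$ and $\psi_{\ol{\Sigma}}$ to bound $\psi_{\ol{\Xi}}(\alpha\snr q_1(1))$ and $\psi_{\ol{\Sigma}}(q_2(1))$ by $\int_0^1\psi_{\ol{\Xi}}(\alpha\snr q_1'(t))\diff t$ and $\int_0^1\psi_{\ol{\Sigma}}(q_2'(t))\diff t$ up to $o(1)$. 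The integrand $\cG(q_2'(t),q_1'(t))$ then satisfies the first-order condition $\partial_1\cG=0$ pointwise in $t$, and since $\cG$ is convex in its first argument this equals $\inf_{q_u\ge0}\cF(q_u,q_1'(t))\le\sup_{q_v\ge0}\inf_{q_u\ge0}\cF(q_u,q_v)$ — with no need to identify any trajectory endpoint with $(q_u^*,q_v^*)$. To repair your proof you would need to either switch to this choice of $R_1$ (and add the Jensen step), or supply a genuinely new argument bounding $\psi_{\ol{\Xi}}(\alpha q_1(1,\eps_1,\eps_2))$ by the sup-inf, which your current outline does not provide.
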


\begin{proof}
We apply \Cref{lem:sum_rule} with 
\begin{align}
    R_1(t, q_1, q_2) &= 2\alpha \psi_{\ol{\Sigma}}' \paren{ \Delta(t, q_1, q_2) } . \label{eqn:R1} 
\end{align}

Since $ \psi_{\ol{\Xi}} $ is Lipschitz and convex, 
\begin{align}
    \psi_{\ol{\Xi}}(\alpha \snr q_1(1, \eps_1, \eps_2))
    &= \psi_{\ol{\Xi}}\paren{ \alpha \snr (q_1(1, \eps_1, \eps_2) - q_1(0, \eps_1, \eps_2)) } + o(1)
    = \psi_{\ol{\Xi}} \paren{ \alpha \snr \int_0^1 q_1'(t, \eps_1, \eps_2) \diff t } + o(1) \notag \\
    &\le \int_0^1 \psi_{\ol{\Xi}}(\alpha \snr q_1'(t, \eps_1, \eps_2)) \diff t + o(1) , \notag 
\end{align}
and similarly 
\begin{align}
    \psi_{\ol{\Sigma}}(q_2(1, \eps_1, \eps_2)) &\le \int_0^1 \psi_{\ol{\Sigma}}(q_2'(t, \eps_1, \eps_2)) \diff t + o(1) . \notag 
\end{align}
Now \Cref{lem:sum_rule} implies
\begin{align}
    \cF_n &\le \int_1^2 \int_1^2 \int_0^1 \psi_{\ol{\Xi}}(\alpha \snr q_1'(t, \eps_1, \eps_2)) + \alpha \psi_{\ol{\Sigma}}(q_2'(t, \eps_1, \eps_2)) - \frac{\alpha}{2} q_1'(t, \eps_1, \eps_2) q_2'(t, \eps_1, \eps_2) \diff t \diff\eps_1 \diff\eps_2 + o(1) \notag \\
    &= \int_1^2 \int_1^2 \int_0^1 \cG(q_2'(t, \eps_1, \eps_2), q_1'(t, \eps_1, \eps_2)) \diff t \diff\eps_1 \diff\eps_2 + o(1) , \notag 
\end{align}
where
\begin{align}
    \cG(q_u, q_v) &\coloneqq \psi_{\ol{\Xi}}(\alpha \snr q_v) + \alpha \psi_{\ol{\Sigma}}(q_u) - \frac{\alpha}{2} q_u q_v . \notag 
\end{align}
With the choice of $R_1$ in \Cref{eqn:R1} and $ R_2 = \Delta $, the ODE in \Cref{lem:ol_conc} gives 
\begin{align}
    q_1'(t, \eps_1, \eps_2) &= 2\alpha \psi_{\ol{\Sigma}}'(q_2'(t, \eps_1, \eps_2)) , \notag 
\end{align}
which corresponds to the criticality condition of $ \cG $ with respect to $q_u$: 
\begin{align}
    \partial_1 \cG(q_2'(t, \eps_1, \eps_2), q_1'(t, \eps_1, \eps_2)) &= 0 . \notag 
\end{align}
Since  $ \psi_{\ol{\Sigma}} $ is convex and $ -\frac{\alpha}{2} q_u q_v $ is linear in $ q_2 $, we have that $ \cG $ is convex in $ q_u $. 
Therefore
\begin{align}
    \cG(q_2'(t, \eps_1, \eps_2), q_1'(t, \eps_1, \eps_2))
    &= \inf_{q_u\ge0} \cG(q_u, q_1'(t, \eps_1, \eps_2))
    = \inf_{q_u\ge0} \cF(q_u, q_1'(t, \eps_1, \eps_2))
    \le \sup_{q_v\ge0} \inf_{q_u\ge0} \cF(q_u, q_v) , \notag
\end{align}
which completes the proof. 
\end{proof}

\section{Proofs of \Cref{thm:IT} and \Cref{cor:weak_rec_thr}}\label{app:pfmain}

\subsection{Proof of \Cref{cor:MMSE_mtx}}
\label{app:pf_cor_MMSE_mtx}

We compute the derivative of $ \cF_n(\snr) $: 
\begin{align}
    \cF_n'(\snr) &= \frac{1}{n} \expt{\frac{\cZ_n'(\snr)}{\cZ_n(\snr)}}
    = \frac{1}{n} \expt{ \bracket{
    \frac{1}{2\sqrt{\snr n}} \wt{u}^\top Z \wt{v}
    + \frac{1}{n} \wt{u}^\top \wt{u}^* \wt{v}^\top \wt{v}^*
    - \frac{1}{2n} \wt{u}^\top \wt{u} \wt{v}^\top \wt{v}
    }_n } \notag \\
    &= \frac{1}{2n^2} \expt{ \bracket{ \wt{u}^\top \wt{u}^* \wt{v}^\top \wt{v}^* }_n } , \label{eqn:F'} 
\end{align}
where the last step follows similar calculations in \Cref{eqn:u'Zv}. 
Since $ \cF_n(\snr) \to \sup_{q_v} \inf_{q_u} \cF(q_u, q_v) $ as $n\to\infty$, we have $ \cF_n'(\snr) \to \frac{\partial}{\partial\snr} \sup_{q_v} \inf_{q_u} \cF(q_u, q_v) $.
To compute the RHS, note that 
\begin{align}
    \sup_{q_v\ge0} \inf_{q_u\ge0} \cF(q_u, q_v)
    &= \sup_{q_v\ge0} \brace{
        \psi_{\ol{\Xi}}(\alpha \snr q_v)
        + \inf_{q_u\ge0} \brace{
            \alpha \psi_{\ol{\Sigma}}(\snr q_u) - \frac{\alpha}{2} \snr q_u q_v
        }
    } , \notag 
\end{align}
and the value of the infimum does not depend on $\snr$. 
Therefore, we have 
\begin{align}
    \frac{\partial}{\partial\snr} \sup_{q_v\ge0} \inf_{q_u\ge0} \cF(q_u, q_v)
    &= \psi_{\ol{\Xi}}'(\alpha \snr q_v^*) \alpha q_v^*
    = \frac{\alpha q_u^* q_v^*}{2} , 
    \label{eqn:F'_lim} 
\end{align}
where first equality is by the envelope theorem from \cite[Corollary 4]{envelope} and the last equality follows since the extremizers $ q_u^*, q_v^* $ solve a pair of equations in \Cref{eqn:cQ_solve}. 

On the other hand, we relate $ \cF_n'(\snr) $ to the MMSE \Cref{eqn:MMSE_Y} as follows: 
\begin{align}
    \mmse_n(\snr) &= \frac{1}{nd} \expt{ \normf{ {\wt{u}^*} (\wt{v}^*)^\top - \bracket{ \wt{u} \wt{v}^\top }_n }^2 }
    = \frac{1}{nd} \expt{ \normtwo{\wt{u}^*}^2 \normtwo{\wt{v}^*}^2 + \normf{\bracket{\wt{u} \wt{v}^\top}_n}^2 - 2 (\wt{u}^*)^\top \bracket{\wt{u} \wt{v}^\top}_n \wt{v}^* } \notag \\
    &= \frac{\tr(\Xi^{-1})}{n} \frac{\tr(\Sigma^{-1})}{d} - \frac{1}{nd} \expt{ \bracket{ \wt{u}^\top \wt{u}^* \wt{v}^\top \wt{v}^* }_n } , \label{eqn:MMSE_expand} 
\end{align}
where the last step is by Nishimori identity (\Cref{prop:nishimori}). 
Combining the above with \Cref{eqn:F',eqn:F'_lim}, we conclude
\begin{align}
    \mmse_n(\snr)
    &\to \expt{\ol{\Xi}^{-1}} \expt{\ol{\Sigma}^{-1}} - q_u^* q_v^* , \notag 
\end{align}
as claimed.

\subsection{Proof of \Cref{cor:MMSE_vec}}
\label{app:pf_cor_MMSE_vec}

Recall $Y$ from \Cref{eqn:Y_mtx} and define for some $\snr'\ge0$,
\begin{align}
    Y' \coloneqq \sqrt{\frac{\snr'}{n}} u^* {u^*}^\top + \Xi^{1/2} Z' \Xi^{1/2} , 
    \label{eqn:Y'}
\end{align}
where $ Z' \in \bbR^{n\times n} $ is a symmetric random matrix independent of $ u^*, v^* $ with $ Z'_{i,i} \iid \cN(0,2) $ and $ Z'_{i,j} \iid \cN(0,1) $ for all $ 1\le i<j \le n $. 
By similar derivations as before, the free energy associated with $ (Y, Y') $ is given by 
\begin{align}
    \sfF_n(\snr, \snr') &= \frac{1}{n} \bbE\Bigg[ \log \int_{\bbR^d} \int_{\bbR^n} 
        \exp\bigg( \cH_n(\Xi^{-1/2} u, \Sigma^{-1/2} v) \notag \\
        &\qquad\qquad\qquad\qquad\qquad + \frac{1}{2} \sqrt{\frac{\snr'}{n}} u^\top \Xi^{-1} Y' \Xi^{-1} u - \frac{\snr'}{4n} (u^\top \Xi^{-1} u)^2 \bigg)
    \diff P^{\ot n}(u) \diff Q^{\ot n}(v) \Bigg] , \notag 
\end{align}
where $\cH_n$ is given in \Cref{eqn:Hamil}. 
Denote by $ \bbracket{\cdot}_n $ the Gibbs bracket with respect to the corresponding Hamiltonian. 
Let
\begin{align}
    \sfF(\snr, \snr') &\coloneqq \sup_{q_u, q_v\ge0} \frac{\snr'}{4} q_u^2 + \frac{\alpha \snr}{2} q_u q_v - \psi_{\ol{\Xi}}^* \paren{ \frac{q_u}{2} } - \alpha \psi_{\ol{\Sigma}}^* \paren{ \frac{q_v}{2} } , \label{eqn:sF_lim}
\end{align}
where $ f^* $ denotes the monotone conjugate of a convex non-decreasing function $ f\colon\bbR_{\ge0} \to \bbR $; see \Cref{def:conjugate}. 
Basic properties of monotone conjugate can be found in \cite[\S12]{Rockafellar_book}. 

The following lemma, proved in \Cref{app:pf_lem_sF}, characterizes the high-dimensional limit of $ \sfF_n(\snr, \snr') $. 

\begin{lemma}
\label{lem:sF}
For all $ \snr, \snr' \ge0 $, 
\begin{align}
    \lim_{n\to\infty} \sfF_n(\snr, \snr') &= \sfF(\snr, \snr') , \notag 
\end{align}
\end{lemma}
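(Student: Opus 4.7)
The plan is to extend the adaptive interpolation of \Cref{thm:free_energy} to handle the additional symmetric spike $Y'$. As in that proof, we may take $\Xi, \Sigma$ diagonal and truncate $P, Q$ to $[-K,K]$ since the free energy is Wasserstein-$2$ pseudo-Lipschitz in the priors. I will introduce non-negative paths $r_1, r_2\colon [0,1]\to\bbR_{\ge 0}$ with small initial perturbations $r_i(0)=s_n\eps_i$, $\eps_i\in[1,2]$, $s_n=n^{-1/32}$, and consider
\begin{align*}
Y_t &= \sqrt{(1-t)\snr/n}\,u^*(v^*)^\top+\Xi^{1/2}Z\Sigma^{1/2}, \quad Y'_t = \sqrt{(1-t)\snr'/n}\,u^*(u^*)^\top+\Xi^{1/2}Z'\Xi^{1/2}, \\
Y_t^u &= \sqrt{r_1(t)}\,u^*+\Xi^{1/2}Z^u, \qquad Y_t^v = \sqrt{r_2(t)}\,v^*+\Sigma^{1/2}Z^v,
\end{align*}
with independent Gaussian noise. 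The associated Hamiltonian augments \Cref{eqn:H_nt} with the symmetric-spike contribution and two Gaussian side-channel terms. At $t=0$, the argument of \Cref{lem:f01} gives $f_n(0)=\sfF_n(\snr,\snr')+o(1)$; at $t=1$, the spike terms vanish and the model decouples into two independent anisotropic Gaussian channels whose asymptotic free energies are $\psi_{\ol{\Xi}}(r_1(1))$ and $\alpha\psi_{\ol{\Sigma}}(r_2(1))$ by \Cref{app:gauss_ch}.

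Next, differentiating $f_n(t)$ and applying Stein together with the Nishimori identity (\Cref{prop:nishimori}) cancels the Gaussian-noise cross terms and yields the sum rule
$$
f_n'(t) = -\tfrac{1}{2n^2}\bbE\bracket{\wt u^\top\wt u^*\cdot\wt v^\top\wt v^*}_{n,t} - \tfrac{\snr'}{4n^2}\bbE\bracket{(\wt u^\top\wt u^*)^2}_{n,t} + \tfrac{r_1'(t)}{2n}\bbE\bracket{\wt u^\top\wt u^*}_{n,t} + \tfrac{r_2'(t)}{2n}\bbE\bracket{\wt v^\top\wt v^*}_{n,t}.
$$
I will then extend the concentration chain of \Cref{lem:free_energy_conc,lem:Upsilon,lem:u12_conc,lem:ol_conc} to the augmented model; the new quadratic-in-$\wt u^*$ term does not break the Gaussian Poincar\'e and bounded-difference estimates, because $\wt u^\top\wt u^*/n$ is deterministically $O(1)$ under truncation. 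This yields concentration of both overlaps around their averages $Q_u(t), Q_v(t)$, and moreover $\bbE\bracket{(\wt u^\top\wt u^*)^2}_{n,t}/n^2 = Q_u(t)^2+o(1)$ after averaging over the perturbations $(\eps_1,\eps_2)$.

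For the lower bound I will mimic \Cref{lem:lb}, taking $r_1'(t), r_2'(t)$ constant, integrating the concentrated sum rule from $0$ to $1$, and obtaining
$$
\liminf_{n\to\infty} \sfF_n(\snr,\snr') \ge \sup_{Q_u,Q_v\ge 0}\inf_{\rho_u,\rho_v\ge 0}\Bigl\{\psi_{\ol\Xi}(\rho_u)+\alpha\psi_{\ol\Sigma}(\rho_v)-\tfrac{\rho_u Q_u}{2}-\tfrac{\alpha\rho_v Q_v}{2}+\tfrac{\snr'}{4}Q_u^2+\tfrac{\alpha\snr}{2}Q_u Q_v\Bigr\},
$$
whose inner infima collapse, by definition of the monotone conjugate, to $-\psi^*_{\ol\Xi}(Q_u/2)-\alpha\psi^*_{\ol\Sigma}(Q_v/2)$, reproducing $\sfF(\snr,\snr')$. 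The matching upper bound comes from \Cref{lem:ub}-style choices of $r_i'(t)$ that enforce the stationarity conditions of $\sfF$, combined with convexity of $\psi_{\ol\Xi}, \psi_{\ol\Sigma}$ and Jensen's inequality applied to the integrated sum rule.

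The principal new difficulty beyond \Cref{thm:free_energy} is the squared-overlap term $(\wt u^\top\wt u^*)^2$ produced by the symmetric spike: it makes the reduced variational problem a sup-sup rather than the sup-inf of \Cref{thm:free_energy}, which is exactly why the limit is expressed through the monotone conjugates $\psi^*$ rather than $\psi$ themselves. Handling this term rigorously requires a mild strengthening of \Cref{lem:u12_conc} to show that $\bbE\bracket{(\wt u^\top\wt u^*)^2/n^2}$ agrees with $(\bbE\bracket{\wt u^\top\wt u^*/n})^2$ up to $o(1)$ -- again via perturbation of the side-channel SNRs along the $(\eps_1,\eps_2)$ directions and differentiation of the corresponding auxiliary free energy -- after which the Fenchel dualization is entirely standard.
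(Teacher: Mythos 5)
Your overall plan (extend the adaptive interpolation to the model with the extra symmetric spike, prove overlap concentration including for the squared overlap, then Fenchel-dualize) is in the right spirit, but the lower bound as you have sketched it has a genuine gap, and it is exactly the step where the $+\tfrac{\snr'}{4}q_u^2$ term must be produced. With $r_1'(t),r_2'(t)$ constant, the interpolation only lets you choose the channel rates, while the overlaps $Q_u(t),Q_v(t)$ remain adversarial quantities determined by the Gibbs measure; so what comes out is a bound of the form $\sup_{\rho}\inf_{Q}\{\psi_{\ol{\Xi}}(\rho_1)+\alpha\psi_{\ol{\Sigma}}(\rho_2)+\tfrac{\snr'}{4}Q_u^2+\tfrac{\alpha\snr}{2}Q_uQ_v-\tfrac{\rho_1Q_u}{2}-\tfrac{\rho_2Q_v}{2}\}$, not the $\sup_{Q}\inf_{\rho}$ expression you display (in which the sup over the overlaps sits outside). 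These do not coincide here: the quadratic $\tfrac{\snr'}{4}Q_u^2+\tfrac{\alpha\snr}{2}Q_uQ_v$ is indefinite, so the inner infimum over the unknown overlaps collapses (it is $-\infty$ or trivial), and the constant-path bound falls strictly below $\sfF(\snr,\snr')$. The alternative of discarding the negative squared-overlap remainder (its sign is favorable for a lower bound) is also lossy: it erases the $Q_u^2$ contribution altogether and only yields the $\snr'=0$ formula. Note also that even in \Cref{lem:lb} one of the two rates is \emph{adaptive} ($R_2=\Delta$), not constant, because the bipartite cross-remainder has no sign; your ``both constant'' choice loses control of that term as well. The upper-bound half (stationarity-enforcing adaptive rates for both channels plus Jensen) is more plausible but is asserted rather than checked, and the comparison target there must again be $\sup_Q\inf_\rho$, which needs an argument.

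The missing idea is a linearization of the ferromagnetic square: $\tfrac{\snr'}{4}Q_u^2=\sup_{r\ge0}\{\tfrac{\snr'}{2}rQ_u-\tfrac{\snr'}{4}r^2\}$, i.e.\ the $Q_u^2$ term must be generated by optimizing the SNR $r$ of an auxiliary rank-one channel observing $u^*$, with a $-r^2/4$ penalty, rather than read off directly from $\expt{\bbracket{(\wt u^\top\wt u^*)^2}}$. This is precisely how the paper proceeds, in two nested stages: first it proves \Cref{lem:sfF_hat}, the limit $\wt{\sfF}(r)$ of the free energy of the rectangular model augmented by a \emph{fixed} side channel $Y''=\sqrt{r}u^*+\Xi^{1/2}Z''$, by rerunning the interpolation of \Cref{thm:free_energy}; then it interpolates \emph{only} the symmetric spike against that channel, so the derivative \Cref{eqn:fnt'} is a perfect square, making the lower bound free for constant $r$ and the upper bound a single-overlap concentration argument, which yields $\lim_n\sfF_n=\sup_{r\ge0}\{\wt{\sfF}(r)-r^2/4\}$; finally \Cref{lem:supinf} (with \Cref{prop:dual}) converts this into the monotone-conjugate formula \Cref{eqn:sF_lim}. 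Your single-stage scheme could likely be repaired by inserting this auxiliary channel and its $\sup_r$ explicitly (at which point it essentially becomes the paper's argument), but as written the lower bound does not go through.
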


Let us show how \Cref{cor:MMSE_vec} can be derived from \Cref{lem:sF}. 

\begin{proof}[Proof of \Cref{cor:MMSE_vec}]
Let 
\begin{align}
    \mmse_n^u(\snr, \snr') &\coloneqq \frac{1}{n^2} \expt{ \normf{ \wt{u}^* (\wt{u}^*)^\top - \expt{ \wt{u}^* (\wt{u}^*)^\top \mid Y, Y' } }^2 } . \notag 
\end{align}
Following similar derivations as in \Cref{app:pf_cor_MMSE_mtx}, one can verify the following two identities:
\begin{align}
    \partial_2 \sfF_n(\snr, \snr')
    &= \frac{1}{4n^2} \expt{ \bbracket{ \paren{\wt{u}^\top \wt{u}^*}^2 }_n } , \quad 
    \mmse^u_n(\snr, \snr') = \frac{\tr(\Xi^{-1})^2}{n^2} - \frac{1}{n^2} \expt{ \bbracket{ \paren{\wt{u}^\top \wt{u}^*}^2 }_n } . \label{eqn:MMSE_ol} 
\end{align}
Therefore, the MMSE in \Cref{eqn:MMSE_u} can be written as
\begin{align}
    \mmse_n^u(\snr) &= \lim_{\snr'\downarrow0} \mmse_n^u(\snr, \snr')
    = \frac{\tr(\Xi^{-1})^2}{n^2} - 4 \lim_{\snr'\downarrow0} \partial_2 \sfF_n(\snr, \snr') . \notag 
\end{align}
By \Cref{lem:sF} and \Cref{prop:deriv}, 
\begin{align}
    \limsup_{n\to\infty} \lim_{\snr'\downarrow0} \partial_2 \sfF_n(\snr, \snr')
    &\le \lim_{\snr'\downarrow0} \partial_2 \sfF(\snr, \snr') . \notag 
\end{align}
The envelope theorem from \cite[Corollary 4]{envelope} allows us to compute the RHS:
\begin{align}
    \lim_{\snr'\downarrow0} \partial_2 \sfF(\snr, \snr')
    &= \frac{{q_u^*}^2}{4} , \notag
\end{align}
where $ (q_u^*, q_v^*) $ are the maximizer of 
\begin{align}
    \sup_{q_u, q_v\ge0} \frac{\alpha}{2} \snr q_u q_v - \psi_{\ol{\Xi}}^* \paren{ \frac{q_u}{2} } - \alpha \psi_{\ol{\Sigma}}^* \paren{ \frac{q_v}{2} }
    &= 
    \sup_{(q_u, q_v)\in\cC(\snr, \alpha)} \psi_{\ol{\Xi}}(\alpha\snr q_v) + \alpha\psi_{\ol{\Sigma}}(\snr q_u) - \frac{\alpha\snr}{2} q_uq_v
    , \notag 
\end{align}
where the equality is by \Cref{prop:dual}. 
Putting the above together, we have
\begin{align}
\begin{split}
    \limsup_{n\to\infty} \frac{1}{n^2} \expt{ \bracket{ \paren{\wt{u}^\top \wt{u}^*}^2 }_n }
    &= \limsup_{n\to\infty} \lim_{\snr'\downarrow0} \frac{1}{n^2} \expt{ \bbracket{ \paren{\wt{u}^\top \wt{u}^*}^2 }_n }
    \le {q_u^*}^2 , \\
    \liminf_{n\to\infty} \mmse_n^u(\snr)
    &\ge \expt{\ol{\Xi}^{-1}}^2 - {q_u^*}^2 . 
\end{split}    
\label{eqn:MMSE_lb} 
\end{align}
By a symmetric argument, we also have
\begin{align}
    \limsup_{d\to\infty} \lim_{\snr'\downarrow0} \frac{1}{d^2} \expt{ \bracket{ \paren{\wt{v}^\top \wt{v}^*}^2 }_n }
    \le {q_v^*}^2 . \label{eqn:ol_ub_v}
\end{align}

To find an upper bound on $ \mmse_n^u(\snr) $, note that by \Cref{cor:MMSE_mtx} and \Cref{eqn:MMSE_expand}: 
\begin{align}
    (q_u^* q_v^*)^2 &= \lim_{n\to\infty} \frac{1}{(nd)^2} \expt{\bracket{ \wt{u}^\top \wt{u}^* \wt{v}^\top \wt{v}^* }_n}^2
    \le \liminf_{n\to\infty} \paren{ \frac{1}{n^2} \expt{\bracket{\paren{\wt{u}^\top \wt{u}^*}^2}_n} }
    \paren{ \frac{1}{d^2} \expt{\bracket{\paren{\wt{v}^\top \wt{v}^*}^2}_n} } . \notag 
\end{align}
This combined with \Cref{eqn:MMSE_lb,eqn:ol_ub_v} implies
\begin{align}
    \lim_{n\to\infty} \frac{1}{n^2} \expt{ \bracket{ \paren{\wt{u}^\top \wt{u}^*}^2 }_n }
    &= {q_u^*}^2 , \quad 
    \lim_{d\to\infty} \frac{1}{d^2} \expt{ \bracket{ \paren{\wt{v}^\top \wt{v}^*}^2 }_n }
    = {q_v^*}^2 , \notag 
\end{align}
which concludes the proof in view of the relation
\begin{align}
    \mmse_n^u(\snr) &= \frac{\tr(\Xi^{-1})^2}{n^2} - \frac{1}{n^2} \expt{ \bracket{ \paren{\wt{u}^\top \wt{u}^*}^2 }_n } . \qedhere
\end{align}
\end{proof}

\subsection{Proof of \Cref{lem:sF}}
\label{app:pf_lem_sF}

We assume $ \snr = \snr' = 1 $ by formally absorbing them into $P,Q$:
\begin{align}
    \int_{\bbR} x^2 \diff P(x) &= \sqrt{\snr'} , \quad 
    \int_{\bbR} x^2 \diff Q(x) = \frac{\snr}{\sqrt{\snr'}} , \notag 
\end{align}
so that we can drop the dependence on $ \snr, \snr' $ in notation such as $ \sfF_n, \sfF $. 
We then truncate $P,Q$ at a sufficiently large constant $K>0$ so that they have bounded supports. 

Recall $Y$ from \Cref{eqn:Y_mtx} and define for $ r\ge0 $, $ Y'' \coloneqq \sqrt{r}  u^* + \Xi^{1/2} Z'' $ where $ Z''\sim\cN(0_n, I_n) $ is independent of everything else. 
The free energy $ \wt{\sfF}_n $ associated with $ (Y, Y'') $ is 
\begin{align}
    \wt{\sfF}_n &= \frac{1}{n} \expt{\log \int_{\bbR^d} \int_{\bbR^n} \exp\paren{ \cH_n(\Xi^{-1/2} u, \Sigma^{-1/2} v) + r u^\top \Xi^{-1} u^* + \sqrt{r} u^\top \Xi^{-1/2} Z'' - \frac{r}{2} u^\top \Xi^{-1} u } \diff P^{\ot n}(u) \diff Q^{\ot d}(v) } , \notag 
\end{align}
where $ \cH_n $ is given in \Cref{eqn:Hamil}. 
A straightforward adaptation of the proof of \Cref{thm:free_energy} yields a characterization of the limit of $ \wt{\sfF}_n $. 
Let
\begin{align}
    \wt{\sfF}(r) &\coloneqq \sup_{q_v\ge0} \inf_{q_u\ge0} \psi_{\ol{\Xi}}\paren{ \sqrt{\snr'} (\alpha q_v + r) } + \alpha \psi_{\ol{\Sigma}}\paren{ \frac{\snr}{\sqrt{\snr'}} q_u } - \frac{\alpha}{2} q_u q_v . \label{eqn:Fr}
\end{align}

\begin{lemma}
\label{lem:sfF_hat}
For all $r\ge0$, 
\begin{align}
    \lim_{n\to\infty} \wt{\sfF}_n(r) &= \wt{\sfF}(r) . \notag 
\end{align}
\end{lemma}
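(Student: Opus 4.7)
The plan is to rerun the interpolation argument of \Cref{app:pf}, treating the side observation $Y''$ as a $t$-independent Gaussian channel layered on top of the existing interpolating Hamiltonian. Substituting $u = \Xi^{1/2}\wt{u}$ converts the new Hamiltonian term into $r\,\wt{u}^\top\wt{u}^* + \sqrt{r}\,\wt{u}^\top Z'' - \tfrac{r}{2}\normtwo{\wt{u}}^2$, which is exactly the log-likelihood of a Gaussian channel on $\wt{u}^*$ with SNR $r$ and identity noise covariance. So at $t=1$ the model decouples into two anisotropic Gaussian channels, and the only change from \Cref{thm:free_energy} is that the channel on $\wt{u}^*$ carries the combined SNR $\alpha q_1(1) + r$ instead of $\alpha q_1(1)$.

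Concretely, I would introduce $\wt{\cH}_{n,t}(\wt{u},\wt{v};q_1,q_2) \coloneqq \cH_{n,t}(\wt{u},\wt{v};q_1,q_2) + r\,\wt{u}^\top\wt{u}^* + \sqrt{r}\,\wt{u}^\top Z'' - \tfrac{r}{2}\normtwo{\wt{u}}^2$, its partition function $\wt{\cZ}_{n,t}$, and the corresponding free energy $\wt{f}_n(t) \coloneqq \tfrac{1}{n}\bbE[\log\wt{\cZ}_{n,t}]$. The analogue of \Cref{lem:f01} is then immediate: at $t=0$ the extra piece is already part of $\wt{\sfF}_n(r)$, so the same Jensen sandwich as in \Cref{eqn:f-F_ub,eqn:f-F_lb} yields $\wt{f}_n(0) = \wt{\sfF}_n(r) + \cO(q_1(0)+q_2(0))$; at $t=1$ the matrix term disappears, and combining the two independent Gaussian observations of $\wt{u}^*$ (with SNRs $\alpha q_1(1)$ and $r$, both noisy on the whitened scale with $I_n$ covariance) produces a single Gaussian channel of SNR $\alpha q_1(1)+r$. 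The Gaussian integral computation of \Cref{app:gauss_ch} with truncated priors of variance $\sqrt{\snr'}$ and $\snr/\sqrt{\snr'}$ then gives $\lim_n \wt{f}_n(1) = \psi_{\ol{\Xi}}(\sqrt{\snr'}(\alpha q_1(1)+r)) + \alpha\psi_{\ol{\Sigma}}(\tfrac{\snr}{\sqrt{\snr'}}q_2(1)) + o_K$.

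Because the added piece does not depend on $t$, the time derivative $\wt{f}_n'(t)$ coincides with the expression in \Cref{lem:free_energy_var}. The concentration toolkit (\Cref{lem:free_energy_conc,lem:Upsilon,lem:u12_conc,lem:ol_conc}) therefore transfers with only cosmetic changes: under truncation of $P,Q$ at scale $K$ the extra Hamiltonian derivatives are uniformly bounded, the Gaussian \poincare estimate absorbs the additional $\normtwo{\nabla_{Z''}\wt{f}_n}^2$ contribution of order $r/n$, and the bounded-difference property in $u^*$ picks up an extra term of order $r/n$ which does not affect the overall $\cO(1/\sqrt{n})$ rate. Reassembling through the sum rule of \Cref{lem:sum_rule} and running the matching upper and lower bounds of \Cref{lem:lb,lem:ub} with $\psi_{\ol{\Xi}}(\sqrt{\snr'}\cdot)$ evaluated at $\alpha q_v + r$ instead of $\alpha q_v$ identifies $\lim_n \wt{\sfF}_n(r)$ with $\wt{\sfF}(r)$ in \Cref{eqn:Fr}.

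The step I expect to be most delicate is the upper bound analogue of \Cref{lem:ub}: the convexity-based majorization must be rewritten as $\alpha q_1(1) + r = \int_0^1 (\alpha q_1'(t)+r)\,\diff t$ (legitimate because $q_1(0) = o(1)$) before applying Jensen to the convex $\psi_{\ol{\Xi}}$, yielding $\psi_{\ol{\Xi}}(\sqrt{\snr'}(\alpha q_1(1)+r)) \le \int_0^1 \psi_{\ol{\Xi}}(\sqrt{\snr'}(\alpha q_1'(t)+r))\,\diff t + o(1)$. The driver $R_1$ in \Cref{lem:ub} then has to be taken as $R_1 = 2\alpha\psi_{\ol{\Sigma}}'(\Delta(t,q_1,q_2))$ so that the integrand collapses to $\inf_{q_u\ge 0}\cG_r(q_u, q_1'(t))$ with $\cG_r(q_u,q_v) \coloneqq \psi_{\ol{\Xi}}(\sqrt{\snr'}(\alpha q_v+r)) + \alpha\psi_{\ol{\Sigma}}(\tfrac{\snr}{\sqrt{\snr'}}q_u) - \tfrac{\alpha}{2}q_u q_v$, whose convexity in $q_u$ is preserved by the affine shift. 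With this adjustment the envelope-theorem identification of the optimizer extends verbatim and completes the proof.
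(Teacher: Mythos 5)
Your proposal is correct and matches the paper's own proof, which simply reruns the interpolation argument of \Cref{thm:free_energy} with $Y'' = \sqrt{r}\,u^* + \Xi^{1/2}Z''$ adjoined as a fourth, $t$-independent observation and notes that all steps carry over. Your additional details (the combined SNR $\alpha q_1(1)+r$ at $t=1$, the unchanged time derivative, and the $r$-shifted Jensen step in the upper bound) are exactly the "cosmetic changes" the paper leaves implicit.
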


\begin{proof}
To obtain the result, we execute the interpolation argument as in the proof of \Cref{thm:free_energy} on the Hamiltonian of the following interpolating models: 
\begin{align}
    Y_t &= \sqrt{\frac{1-t}{n}}  {u^*} {v^*}^\top + \Xi^{1/2} Z \Sigma^{1/2} , \notag \\
    Y_t^u &= \sqrt{\alpha q_1(t)}  u^* + \Xi^{1/2} Z^u , \notag \\
    Y_t^v &= \sqrt{q_2(t)}  v^* + \Sigma^{1/2} Z^v , \notag \\
    Y'' &= \sqrt{r}  u^* + \Xi^{1/2} Z'' . \notag 
\end{align}
All steps in the proof of \Cref{thm:free_energy} carry over. 
\end{proof}

The above lemma allows us to derive the following auxiliary characterization of $ \sfF_n $. 

\begin{lemma}
\begin{align}
    \lim_{n\to\infty} \sfF_n &= \sup_{r\ge0} \wt{\sfF}(r) - \frac{r^2}{4} . \label{eqn:sF_ub} 
\end{align}
\end{lemma}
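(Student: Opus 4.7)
The plan is to run a Guerra-type interpolation connecting the symmetric spiked observation $Y'$ to the vector Gaussian channel $Y''$ at effective SNR $r$, with the observation $Y$ in \Cref{eqn:Y_mtx} kept fixed throughout. For $t\in[0,1]$, consider the interpolated model in which $Y'$ carries SNR $t$ and $Y''$ carries SNR $(1-t)r$; the corresponding free energy $F_n(t)$ satisfies $F_n(0)=\wt{\sfF}_n(r)$ and $F_n(1)=\sfF_n$. Let $\bbracket{\cdot}_t$ denote the associated Gibbs bracket and write $Q\coloneqq \tfrac{1}{n}u^\top \Xi^{-1}u^*$ for the scalar overlap.

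The first step is to differentiate $F_n(t)$ using Gaussian integration by parts (Stein's lemma) on $Z'$ and $Z''$ together with the Nishimori identity. Paralleling \Cref{lem:free_energy_var} and \Cref{eqn:MMSE_ol}, after the usual cancellations one finds
\begin{align}
F_n'(t) &= \frac{1}{4n^2}\,\expt{\bbracket{(u^\top \Xi^{-1}u^*)^2}_t} - \frac{r}{2n}\,\expt{\bbracket{u^\top \Xi^{-1}u^*}_t}
= \frac{1}{4}\,\expt{\bbracket{(Q-r)^2}_t} - \frac{r^2}{4}. \notag
\end{align}
Integrating over $[0,1]$ and discarding the non-negative squared-overlap term yields the Guerra lower bound $\sfF_n\ge\wt{\sfF}_n(r)-\tfrac{r^2}{4}$ for every $r\ge 0$. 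Combining with \Cref{lem:sfF_hat} and taking a supremum in $r$ gives $\liminf_{n\to\infty}\sfF_n\ge \sup_{r\ge 0}\wt{\sfF}(r)-\tfrac{r^2}{4}$.

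For the matching upper bound, the plan is an adaptive interpolation in the spirit of \cite{BM_interp}: replace the constant $r$ by a non-increasing path $R(t)$ solving the ODE $R'(t)=-\expt{\bbracket{Q}_t}$, so that the effective SNR of $Y''$ tracks the mean overlap at each $t$. With $r(t)\coloneqq -R'(t)\ge 0$, the derivative simplifies to $F_n'(t)=\tfrac{1}{4}\,\expt{\bbracket{(Q-r(t))^2}_t}-\tfrac{r(t)^2}{4}$, in which the first term is precisely the variance of $Q$ under the Gibbs measure averaged over the data. Integrating over $t\in[0,1]$, choosing $R(0)$ so that the trajectory realizes a prescribed limit overlap, and noting that the variance term vanishes in the limit by overlap concentration yields $\limsup_{n\to\infty}\sfF_n\le \sup_{r\ge 0}\wt{\sfF}(r)-\tfrac{r^2}{4}$, closing the proof.

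The main obstacle, and the only substantial technical input, is establishing overlap concentration for the Gibbs measure driven jointly by $Y$ and the symmetric spike $Y'$ along the interpolation. The plan is to rerun the small-perturbation machinery of \Cref{lem:free_energy_conc}--\Cref{lem:ol_conc}: augment the Hamiltonian with a pair of vanishing Gaussian side-channels indexed by $(\eps_1,\eps_2)\in[1,2]^2$, use the Gaussian \poincare inequality and a bounded-differences estimate in $(u^*,v^*)$ to control free-energy fluctuations analogous to $\xi_n^u(s_n)$, and then invoke \Cref{prop:panchenko} to turn a smoothed second-derivative bound into concentration of $Q$ around its mean. The additional quadratic term $-\tfrac{1}{4n}(u^\top \Xi^{-1}u)^2$ contributed by $Y'$ is uniformly bounded once $P,Q$ are truncated at $K$ and the spectrum of $\Xi^{-1}$ is controlled, so the Lipschitz and bounded-difference estimates of \Cref{lem:free_energy_conc,lem:u12_conc} carry over with only cosmetic modifications.
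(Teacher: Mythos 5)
Your Guerra lower bound is correct and is the same argument as the paper's (just with the interpolation time reversed): a linear path, Stein's lemma plus Nishimori to get the derivative, dropping the non-negative square, and invoking \Cref{lem:sfF_hat}.

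The upper bound, however, has a genuine gap, and it comes from the orientation you chose. With the spike at SNR $t$ and the channel at SNR $R(t)$, the identification $F_n(1)=\sfF_n$ (and the subsequent bound $\wt{\sfF}_n(R(0))-\int_0^1 \tfrac{r(t)^2}{4}\,\mathrm{d}t \le \sup_{r}\{\wt{\sfF}_n(r)-\tfrac{r^2}{4}\}$, which needs $\int_0^1 r(t)\,\mathrm{d}t = R(0)$ plus Cauchy--Schwarz) requires the terminal condition $R(1)\approx 0$. Your ODE $R'(t)=-\expt{\bbracket{Q}_t}$ is driven by Gibbs averages that themselves depend on the whole path, so this is a two-point boundary-value problem: the phrase ``choosing $R(0)$ so that the trajectory realizes a prescribed limit overlap'' is exactly the step that is not justified, and no shooting/continuity argument is supplied. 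Worse, it conflicts with the concentration machinery you plan to import from \Cref{lem:free_energy_conc}--\Cref{lem:ol_conc}: there the small perturbations $s_n\eps$ \emph{are} the initial data of the ODE, and the Liouville/Jacobian lower bound $J\ge s_n^2$ needs them to be free parameters averaged over $\eps\in[1,2]$; you cannot simultaneously average over the initial condition and pin it down to hit $R(1)=0$. Finally, even granting the terminal condition, your sketch never states the inequality (Cauchy--Schwarz in your orientation) that converts $\wt{\sfF}_n(R(0))-\int_0^1\tfrac{r(t)^2}{4}\,\mathrm{d}t$ into the claimed supremum.

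The paper's proof avoids all of this by running the adaptive path from the full-model end: it sets the channel SNR to $r(0)=s_n\eps\approx 0$ there and solves forward with $r'(t)=\tfrac{1}{n}\expt{\bbracket{\wt{u}^\top\wt{u}^*}_{n,t}}\ge 0$, so no terminal constraint is needed and the $\eps$-smoothing sits exactly where the concentration argument wants it. The price is paid at the other end, where $r(1)$ is whatever the path reaches, and this is handled by the convexity and monotonicity of $\wt{\sfF}_n$ via Jensen, $\wt{\sfF}_n(r(1))\le\int_0^1\wt{\sfF}_n(r'(t))\,\mathrm{d}t+o(1)$ --- a step your orientation replaces but which your write-up also omits. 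To repair your proposal, reverse the time direction of the adaptive interpolation (or equivalently reparametrize so the near-zero channel SNR is the initial datum) and add the Jensen step; as written, the upper bound does not go through.
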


\begin{proof}
Let $ r\colon[0,1] \to \bbR_{\ge0} $ be a differentiable function. 
For $ t\in[0,1] $, consider $ (Y, Y_t', Y_t'') $ where $ Y $ is given in \Cref{eqn:Y_mtx} and $Y_t' , Y_t''$ are defined as
\begin{align}
    Y_t' &= \sqrt{\frac{1-t}{n}} u^* {u^*}^\top + \Xi^{1/2} Z' \Xi^{1/2} , \quad 
    Y_t'' = \sqrt{r(t)}  u^* + \Xi^{1/2} Z'' , \notag 
\end{align}
with $ Z''\sim\cN(0_n, I_n) $ independent of everything else. 

Similar to \Cref{eqn:fnt,eqn:gibbs_brack}, denote by 
\begin{align}
    \sff_n(t) &\coloneqq \frac{1}{n} \bbE\Bigg[
        \int_{\bbR^d} \int_{\bbR^n}
        \exp\bigg(
            \sqrt{\frac{1}{n}} \wt{u}^\top Z \wt{v} + \frac{1}{n} \wt{u}^\top \wt{u}^* \wt{v}^\top \wt{v}^* - \frac{1}{2n} \normtwo{\wt{u}}^2 \normtwo{\wt{v}}^2  \notag \\
            &\qquad\qquad\qquad\qquad\quad + \frac{1}{2} \sqrt{\frac{1-t}{n}} \wt{u}^\top Z' \wt{u} + \frac{1-t}{2n} (\wt{u}^\top \wt{u}^*)^2 - \frac{1-t}{4n} \normtwo{\wt{u}}^4 \notag \\
            &\qquad\qquad\qquad\qquad\quad + r(t) \wt{u}^\top \wt{u}^* + \sqrt{r(t)} \wt{u}^\top Z'' - \frac{r(t)}{2} \normtwo{\wt{u}}^2
        \bigg)
        \diff\wt{P}(\wt{u}) \diff\wt{Q}(\wt{v})
    \Bigg] \notag 
\end{align}
the free energy associated with $ (Y, Y_t', Y_t'') $ and by $ \bbracket{\cdot}_{n,t} $ the conditional expectation with respect to the corresponding Gibbs measure.
The rest of the proof follows the skeleton of \Cref{thm:free_energy} and we only highlight the differences. 

Parallel to \Cref{lem:f01}, we have that if $ r(0)\ge0 $ and $ \lim_{n\to\infty} r(0) = 0 $, then 
\begin{align}
    \sff_n(0) &= \sfF_n + \cO(r(0)) , \quad 
    \sff_n(1) = \wt{\sfF}_n(r(1)) . \notag 
\end{align}
The analogue of \Cref{lem:free_energy_var} gives 
\begin{align}
    \sff_n'(t) &= - \frac{1}{4} \expt{\bbracket{\paren{ \frac{\wt{u}^\top \wt{u}^*}{n} - r'(t) }^2}_{n,t}} + \frac{r'(t)^2}{4} . \label{eqn:fnt'} 
\end{align}

We now show a pair of matching lower and upper bounds on $ \sfF_n $. 
First comes the lower bound. 
Using \Cref{eqn:fnt'} with $ r(t) = r t $ for a constant $r\ge0$, we have 
\begin{align}
    \sfF_n &= \sff_n(0)
    = \sff_n(1) - \int_0^1 \sff_n'(t) \diff t
    \ge \wt{\sfF}_n(r) - \frac{r^2}{4} . \notag 
\end{align}
By \Cref{lem:sfF_hat}, this implies the lower bound
\begin{align}
    \liminf_{n\to\infty} \sfF_n &\ge \sup_{r\ge0} \wt{\sfF}(r) - \frac{r^2}{4} . \notag 
\end{align}

Next we show a matching upper bound. 
Let $ \eps\in[1,2] $ and slightly abusing notation, denote by $ r(t; \eps) $ the solution to 
\begin{align}
    r'(t) &= \frac{1}{n} \expt{\bbracket{\wt{u}^\top \wt{u}^*}_{n,t}} , \quad 
    r(0) = s_n \eps , \notag 
\end{align}
where $ s_n = n^{-1/32} $. 
The analogue of \Cref{lem:ol_conc} gives 
\begin{align}
    \int_1^2 \frac{1}{n^2} \expt{\bbracket{\paren{ \wt{u}^\top \wt{u}^* - \expt{\bracket{\wt{u}^\top \wt{u}^*}_{n,t}} }^2}_{n,t}} \diff\eps &\le \frac{C}{n^{1/8}} , \label{eqn:ol_conc} 
\end{align}
for a constant $C>0$ independent of $n$. 
Using \Cref{eqn:fnt'} with $ r(t; \eps) $, we have
\begin{align}
    \sfF_n &= \int_1^2 \sff_n(0) \diff\eps + o(1)
    = \int_1^2 \paren{ \wt{\sfF}_n(r(1, \eps)) - \int_0^1 \frac{r'(t,\eps)^2}{4} \diff t } \diff\eps + o(1) \notag \\
    &\le \int_1^2 \int_0^1 \wt{\sfF}_n(r'(t,\eps)) - \frac{r'(t,\eps)^2}{4} \diff t \diff \eps + o(1)
    \le \sup_{r\ge0} \wt{\sfF}_n(r) - \frac{r^2}{4} + o(1) , \notag 
\end{align}
where the second equality is by \Cref{eqn:ol_conc} and the penultimate inequality holds since $\wt{\sfF}_n(\cdot)$ is convex and non-decreasing. 
Passing to the limit, we obtain the upper bound
\begin{align}
    \limsup_{n\to\infty} \sfF_n &\le \sup_{r\ge0} \wt{\sfF}(r) - \frac{r^2}{4} , \notag 
\end{align}
as desired. 
\end{proof}

To establish \Cref{lem:sF}, it remains to verify that the RHSs of \Cref{eqn:sF_lim,eqn:sF_ub} are equal. 
We need the following lemma. 

\begin{lemma}
\label{lem:supinf}
Let $ f,g\colon\bbR_{\ge0}\to\bbR $ be non-decreasing, lower semi-continuous, convex functions with finite $ f(0), g(0) $ and monotone conjugates $ f^*, g^* $ (see \Cref{def:conjugate}). 
Then
\begin{align}
    \sup_{r\ge0} \sup_{q_1\ge0} \inf_{q_2\ge0} f(q_1 + r) + g(q_2) - q_1 q_2 - \frac{r^2}{2}
    &= \sup_{q_1,q_2\ge0} \frac{q_2^2}{2} + q_1q_2 - f^*(q_2) - g^*(q_1) . \notag 
\end{align}
\end{lemma}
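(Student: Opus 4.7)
The plan is to peel off the variables of the LHS one at a time, using the Fenchel--Moreau biconjugate identity for monotone conjugates. First, by the very definition of the monotone conjugate of $g$, the inner infimum equals
\[
\inf_{q_2 \ge 0} \bigl( g(q_2) - q_1 q_2 \bigr) \;=\; -\sup_{q_2 \ge 0} \bigl( q_1 q_2 - g(q_2) \bigr) \;=\; -g^*(q_1),
\]
so the LHS simplifies to $\sup_{r \ge 0} \sup_{q_1 \ge 0} \bigl\{ f(q_1 + r) - g^*(q_1) - r^2/2 \bigr\}$.

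Next, I would invoke biconjugation: since $f$ is non-decreasing, convex, lower semicontinuous on $\bbR_{\ge 0}$ with finite $f(0)$, we have $f = f^{**}$ by \cite[\S12]{Rockafellar_book}, i.e., $f(x) = \sup_{q_2 \ge 0} \bigl( q_2 x - f^*(q_2) \bigr)$ for every $x \ge 0$. Applying this with $x = q_1 + r$ and interchanging the (always commuting) suprema, the LHS becomes
\[
\sup_{q_1, q_2 \ge 0} \Bigl\{ q_1 q_2 - f^*(q_2) - g^*(q_1) + \sup_{r \ge 0} \bigl( q_2 r - r^2/2 \bigr) \Bigr\}.
\]
Since the outer supremum restricts to $q_2 \ge 0$, the inner scalar maximization is attained at $r = q_2 \ge 0$ and contributes exactly $q_2^2/2$, which produces the RHS.

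The only delicate point is the validity of the biconjugate identity $f = f^{**}$ on $\bbR_{\ge 0}$; this is a standard consequence of $f$ being non-decreasing, convex, lower semicontinuous, and finite at $0$, so that $f^*$ belongs to the same class and is proper (see \cite[\S12]{Rockafellar_book}). The rest of the argument consists only of algebraic manipulation and the trivially valid exchange of suprema, so no further obstacles arise; the one thing I would double-check in a clean write-up is that the one-sided (monotone) conjugate used in the paper agrees with the standard definition $f^*(q) = \sup_{x \ge 0}(qx - f(x))$ on which the biconjugate statement relies.
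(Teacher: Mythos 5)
Your proof is correct and follows essentially the same route as the paper's: both arguments come down to the monotone biconjugate identity $f = f^{**}$ and the scalar maximization $\sup_{r\ge0}\left(q_2 r - r^2/2\right) = q_2^2/2$. The only minor difference is that you evaluate the inner infimum directly as $-g^*(q_1)$ from the definition of the monotone conjugate, whereas the paper reaches the same intermediate expression by invoking its sup--inf duality proposition and then re-expanding $f_r$ via biconjugation; your shortcut is, if anything, slightly cleaner.
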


\begin{proof}
Writing $ f_r(x) = f(x+r) $ and using \Cref{prop:dual}, we have
\begin{align}
    \sup_{q_1\ge0} \inf_{q_2\ge0} f(q_1 + r) + g(q_2) - q_1 q_2
    &= \sup_{q_1, q_2\ge0} q_1 q_2 - f_r^*(q_2) - g^*(q_1) \notag \\
    &= \sup_{q_1\ge0} f_r(q_1) - g^*(q_1) 
    = \sup_{q_1, q_2\ge0} q_2 (q_1+r) - f^*(q_2) - g^*(q_1) , \notag 
\end{align}
where we have used the fact that $ f^{**} = f $. 
Therefore, 
\begin{align}
    \sup_{r\ge0} \sup_{q_1\ge0} \inf_{q_2\ge0} f(q_1 + r) + g(q_2) - q_1 q_2 - \frac{r^2}{2}
    &= \sup_{q_1, q_2\ge0} \brace{ \sup_{r\ge0} \brace{ q_2r - \frac{r^2}{2} } + q_1q_2 - f^*(q_2) - g^*(q_1) } \notag \\
    &= \sup_{q_1,q_2\ge0} \frac{q_2^2}{2} + q_1q_2 - f^*(q_2) - g^*(q_1) , \notag 
\end{align}
as claimed.
\end{proof}

Applying \Cref{lem:supinf} immediately finishes the proof of \Cref{lem:sF}. 

\begin{proof}[Proof of \Cref{lem:sF}]
By \Cref{lem:supinf} and the definition \Cref{eqn:Fr}, 
\begin{align}
    \sup_{r\ge0} \wt{\sfF}(r) - \frac{r^2}{4}
    &= \sup_{q_u, q_v\ge0} \frac{\snr'}{4} q_u^2 + \frac{\alpha}{2} \snr q_u q_v - \psi_{\ol{\Xi}}^* \paren{ \frac{q_u}{2} } - \alpha \psi_{\ol{\Sigma}}^* \paren{ \frac{q_v}{2} } , \notag
\end{align}
where we have used the fact that for $ a,b\ge0 $, the monotone conjugate of $ g(x) \coloneqq b f(ax) $ is $ g^*(x) = b f^*(x/(ab)) $. 
\end{proof}

\subsection{Proof of \Cref{cor:weak_rec_thr}}
\label{app:pf_cor_weak_rec_thr}

Denote $ M \coloneqq \expt{ u^* {v^*}^\top \mid Y } $. 
By the Nishimori identity (\Cref{prop:nishimori}), 
\begin{align}
    \lim_{n\to\infty} \frac{1}{nd} \expt{ \normf{ \wt{u}^* (\wt{v}^*)^\top - \expt{ \wt{u}^* (\wt{v}^*)^\top \mid Y } }^2 }
    &= \expt{\ol{\Xi}^{-1}} \expt{\ol{\Sigma}^{-1}} - \lim_{n\to\infty} \frac{1}{nd} \expt{ \normf{ \Xi^{-1/2} M \Sigma^{-1/2} }^2 } , \label{eqn:MMSE_tilde_nish} \\
    \lim_{n\to\infty} \frac{1}{nd} \expt{ \normf{ u^* {v^*}^\top - \expt{ u^* {v^*}^\top \mid Y } }^2 }
    &= 1 - \lim_{n\to\infty} \frac{1}{nd} \expt{ \normf{ M }^2 } . \label{eqn:eqn:MMSE_nish}
\end{align}
\Cref{thm:IT} and \Cref{eqn:MMSE_tilde_nish} imply that
\begin{align}
    \lim_{n\to\infty} \frac{1}{nd} \expt{ \normf{ \Xi^{-1/2} M \Sigma^{-1/2} }^2 } &= q_u^* q_v^* , \label{eqn:quqv_pos} 
\end{align}
which, by \Cref{prop:bayes_fp_sol}, is positive if and only if \Cref{eqn:thr_bayes_change} holds. 

We first show \Cref{eqn:MMSE_uv_thr} assuming \Cref{eqn:thr_bayes_change}. 
Using assumption \Cref{eqn:supp}, super-multiplicativity of $ \sigma_{\min}(\cdot) $, and the fact that $ \normf{B C} \ge \normf{B} \sigma_{\min}(C) $, we have
\begin{align}
    \lim_{n\to\infty} \frac{1}{nd} \expt{ \normf{M}^2 }
    &\ge \lim_{n\to\infty} \frac{1}{nd} \sigma_n(\Xi^{1/2})^2 \expt{\normf{ \Xi^{-1/2} M \Sigma^{-1/2} }^2} \sigma_d(\Sigma^{1/2})^2 \notag \\
    &= (\inf\supp(\ol{\Xi})) (\inf\supp(\ol{\Sigma})) \lim_{n\to\infty} \frac{1}{nd} \expt{\normf{ \Xi^{-1/2} M \Sigma^{-1/2} }^2} , \notag 
\end{align}
which is positive by \Cref{eqn:quqv_pos}. 
This combined with \Cref{eqn:eqn:MMSE_nish} implies \Cref{eqn:MMSE_uv_thr}. 

We then show \Cref{eqn:MMSE_uv_thr} with $<$ replaced with $=$, assuming that \Cref{eqn:thr_bayes_change} is reversed. 
Using assumption \Cref{eqn:supp}, sub-multiplicativity of spectral norm, and the fact that $ \normf{BC} \le \normf{B} \normtwo{C} $, we have
\begin{align}
    \lim_{n\to\infty} \frac{1}{nd} \expt{\normf{M}^2}
    &\le \lim_{n\to\infty} \frac{1}{nd} \normtwo{\Xi^{1/2}}^2 \expt{\normf{\Xi^{-1/2} M \Sigma^{-1/2}}^2} \normtwo{\Sigma^{1/2}}^2 \notag \\
    &= (\sup\supp(\ol{\Xi})) (\sup\supp(\ol{\Sigma})) 
    \lim_{n\to\infty} \frac{1}{nd} \expt{\normf{\Xi^{-1/2} M \Sigma^{-1/2}}^2} , \notag 
\end{align}
which is $0$ by \Cref{eqn:quqv_pos}. 
This combined with \Cref{eqn:eqn:MMSE_nish} finishes the proof of the corollary for estimating $ u^* {v^*}^\top $. 
The proofs for estimating $ u^* {u^*}^\top, v^* {v^*}^\top $ are similar and omitted. 



\section{Analysis of the spectral estimator}
\label{app:analysis_spec}

\subsection{Bayes-AMP}
\label{sec:BAMP}

We propose an AMP algorithm that operates on $ \Xi^{-1} A \Sigma^{-1} $ and maintains a pair of iterates $ u^t\in\bbR^n, v^{t+1}\in\bbR^d $ for every $ t\ge0 $. 
Specifically, given for every $t\ge0$ a pair of denoising functions\footnote{Strictly speaking, for every $t\ge0$, we are given two sequences of functions $ g_t, f_{t+1} $ indexed by $n,d$ respectively. See \Cref{def:PL_fn} for a formal treatment of function sequences. } $ g_t\colon\bbR^n\to\bbR^n, f_{t+1}\colon\bbR^d\to\bbR^d $, the iterates are initialized at $ \wt{u}^{-1} = 0_n $ and some $ \wt{v}^0\in\bbR^d $ of user's choice, and are updated for every $ t\ge0 $ according to the following rules:
\begin{align}
\begin{split}
u^t &= \Xi^{-1} A \Sigma^{-1} \wt{v}^t - b_t \Xi^{-1} \wt{u}^{t-1} , \quad 
\wt{u}^t = g_t(u^t) , \quad 
c_t = \frac{1}{n} \tr((\nabla g_t(u^t)) \Xi^{-1}) , \\
v^{t+1} &= \Sigma^{-1} A^\top \Xi^{-1} \wt{u}^t - c_t \Sigma^{-1} \wt{v}^t , \quad
\wt{v}^{t+1} = f_{t+1}(v^{t+1}) , \quad
b_{t+1} = \frac{1}{n} \tr((\nabla f_{t+1}(v^{t+1})) \Sigma^{-1}) , 
\end{split}
\label{eqn:BAMP} 
\end{align}
where $ \nabla g_t(u^t)\in\bbR^{n\times n}, \nabla f_{t+1}(v^{t+1})\in\bbR^{d\times d} $ denote the Jacobians of $ g_t, f_{t+1} $ at $ u^t, v^{t+1} $, respectively. 
For any fixed $ t\ge0 $, the $n,d\to\infty$ limit of the iterates $ u^t, v^{t+1} $ can be described by a deterministic recursion known as the state evolution. 
To define the latter, we need a sequence of preliminary definitions. 

First, define random vectors $ (U^*, W_{U,0}, W_{U,1}, \cdots, W_{U,t})\in(\bbR^n)^{t+2} $ and $ (V^*, W_{V,1}, W_{V,2}, \cdots, W_{V,t+1})\in(\bbR^d)^{t+2} $ with joint distributions specified below:
\begin{align}
\matrix{ U^* \\ \sigma_0 W_{U,0} \\ \sigma_1 W_{U,1} \\ \vdots \\ \sigma_t W_{U,t} }
&\sim P^{\ot n} \ot \cN(0_{n(t+1)}, \Phi_t \ot I_n) , \qquad 
\matrix{ V^* \\ \tau_1 W_{V,1} \\ \tau_2 W_{V,2} \\ \vdots \\ \tau_{t+1} W_{V,t+1} }
\sim Q^{\ot d} \ot \cN(0_{d(t+1)}, \Psi_{t+1} \ot I_d) , 
\label{eqn:all_joint}
\end{align}
where we recall that for $ A\in\bbR^{m\times n}, B\in\bbR^{p\times q} $, their Kronecker product 
is 
\begin{align}
    A \ot B
    &= \matrix{
    A_{1,1} B & \cdots & A_{1,n} B \\
    \vdots & \ddots & \vdots \\
    A_{m,1} B & \cdots & A_{m,n} B
    } \in\bbR^{mp \times nq} . \notag 
\end{align}
The covariance matrices $ \Phi_t = (\Phi_{r,s})_{0\le r,s\le t} , \Psi_{t+1} = (\Psi_{r+1,s+1})_{0\le r,s\le t} \in\bbR^{(t+1)\times(t+1)} $ are given by the $ (t+1)\times(t+1) $ principal minors of two infinite-dimensional symmetric matrices $ \Phi \coloneqq (\Phi_{r,s})_{r,s\ge0}, \Psi \coloneqq (\Psi_{r+1,s+1})_{r,s\ge0} $ whose elements are in turn given recursively below:
\begin{align}
\begin{split}
\Phi_{0,0} &= \plim_{n\to\infty} \frac{1}{n} (\wt{v}^0)^\top \Sigma^{-1} \wt{v}^0 , \\
\Phi_{0,s} &= \lim_{n\to\infty} \frac{1}{n} \expt{ f_0(V^*)^\top \Sigma^{-1} f_s(V_s) } , \quad s\ge1 , \\
\Phi_{r,s} &= \lim_{n\to\infty} \frac{1}{n} \expt{ f_r(V_r)^\top \Sigma^{-1} f_s(V_s) } , \quad r,s\ge1 , \\
\Psi_{r+1,s+1} &= \lim_{n\to\infty} \frac{1}{n} \expt{ g_r(U_r)^\top \Xi^{-1} g_s(U_s) } , \quad r,s\ge0 . 
\end{split}
\label{eqn:Phi_Psi}
\end{align}
Furthermore, for $ t\ge0 $, $ \sigma_t , \tau_{t+1} > 0 $ are defined as
\begin{align}
\begin{split}
\sigma_0^2 &\coloneqq \Phi_{0, 0} = \plim\limits_{n\to\infty} \frac{1}{n} (\wt{v}^0)^\top \Sigma^{-1} \wt{v}^0 , \\
\sigma_t^2 &\coloneqq \Phi_{t, t} = \lim\limits_{n\to\infty} \frac{1}{n} \expt{ f_t(V_t)^\top \Sigma^{-1} f_t(V_t) } , \quad t\ge1 , \\
\tau_{t+1}^2 &\coloneqq \Psi_{t+1,t+1} = \lim_{n\to\infty} \frac{1}{n} \expt{ g_t(U_t)^\top \Xi^{-1} g_t(U_t) } , \quad t\ge0 . 
\end{split}
\label{eqn:sigma_tau}
\end{align}
With the above definitions, note that each $ W_{U,t}, W_{V,t+1} $ is marginally distributed as $ \cN(0_n, I_n), \cN(0_d, I_d) $, respectively.

Next, define two sequences of deterministic scalars $ (\mu_t, \nu_{t+1})_{t\ge0} $:
\begin{align}
\begin{split}
\mu_0 &= \lim\limits_{n\to\infty} \frac{\lambda}{n} \expt{ \inprod{\Sigma^{-1} V^*}{f_0(V^*)} } , \\
\mu_t &= \lim\limits_{n\to\infty} \frac{\lambda}{n} \expt{ \inprod{\Sigma^{-1} V^*}{f_t(V_t)} } , \quad t\ge1, \\
\nu_{t+1} &= \lim_{n\to\infty} \frac{\lambda}{n} \expt{ \inprod{\Xi^{-1} U^*}{g_t(U_t)} } , \quad t\ge0 ,
\end{split}
\label{eqn:mu_nu}
\end{align}
where $ f_0 $ is determined by the initializer $ \wt{v}^0 $; see \Cref{asmp:init_bayes} below. 

With these, for $ t\ge0 $, define random vectors
\begin{align}
U_t &= \mu_t \Xi^{-1} U^* + \sigma_t \Xi^{-1/2} W_{U,t} , \qquad
V_{t+1} = \nu_{t+1} \Sigma^{-1} V^* + \tau_{t+1} \Sigma^{-1/2} W_{V,t+1} . 
\label{eqn:UV_bayes}
\end{align}

Finally, we need the notion of (uniformly) pseudo-Lipschitz functions. 

\begin{definition}[Pseudo-Lipschitz functions]
\label{def:PL_fn}
A function $ \phi\colon\bbR^{k\times m}\to\bbR^{\ell\times m} $ is called pseudo-Lipschitz of order $j\ge1$ if there exists $L>0$ such that 
\begin{align}
    \frac{1}{\sqrt{\ell}} \normf{\phi(x) - \phi(y)} &\le \frac{L}{\sqrt{k}} \normf{x - y} \brack{ 1 + \paren{\frac{1}{\sqrt{k}} \normf{x}}^{j-1} + \paren{\frac{1}{\sqrt{k}} \normf{y}}^{j-1} } , \label{eqn:def_PL} 
\end{align}
for every $ x, y\in\bbR^{k\times m} $. 
\end{definition}

We will consider sequences of functions $\phi_i\colon\bbR^{k_i\times m} \to \bbR^{\ell_i\times m}$ indexed by $ i\to\infty $ though the index $i$ is often suppressed. 
A sequence of functions $ (\phi_i\colon\bbR^{k_i\times m} \to \bbR^{\ell_i\times m})_{i\ge1} $ (with increasing dimensions $ (k_i)_{i\ge1}, (\ell_i)_{i\ge1} $) is called uniformly pseudo-Lipschitz of order $j$ if there exists a constant $L$ such that for every $i\ge1$, \Cref{eqn:def_PL} holds. 

The assumptions below are imposed on the initializer $ \wt{v}^0 $ and denoising function $ (g_t, f_{t+1})_{t\ge0} $. 

\begin{enumerate}[label=(A\arabic*)]
\setcounter{enumi}{\value{asmpctr}}

    \item \label[asmp]{asmp:init_bayes} $ \wt{v}^0 $ is independent of $ \wt{W} $ but may depend on $ v^* $.\footnote{Practically one can think of the dependence of $ \wt{v}^0 $ on $ v^* $ being given by some side information. However, here AMP is used solely as a proof technique, and we can consider initializers with impractical access to $ v^* $. } Assume that 
    \begin{align}
    \plim_{d\to\infty} \frac{1}{d} \normtwo{\wt{v}^0}^2 , \qquad 
    \plim_{d\to\infty} \frac{1}{n} (\wt{v}^0)^\top \Sigma^{-1} \wt{v}^0 \notag 
    \end{align}
    exist and are finite. 
    There exists a uniformly pseudo-Lipschitz function $ f_0\colon\bbR^d\to\bbR^d $ of order $1$ such that 
    \begin{align}
    \lim_{d\to\infty} \frac{1}{d} \expt{ \inprod{f_0(V^*)}{f_0(V^*)} } &\le \plim_{d\to\infty} \frac{1}{d} \normtwo{\wt{v}^0}^2 \notag 
    \end{align}
    and for every uniformly pseudo-Lipschitz function $ \phi\colon\bbR^d\to\bbR^d $ of finite order, the following two limits exist, are finite and equal:
    \begin{align}
    \plim_{d\to\infty} \frac{1}{d} \inprod{\wt{v}^0}{\phi(V^*)}
    &= \lim_{d\to\infty} \frac{1}{d} \expt{ \inprod{f_0(V^*)}{\phi(V^*)} } . \notag 
    \end{align}
    Let $ \wt{\nu}\in\bbR, \wt{\tau}\in\bbR_{\ge0} $. 
    For any $s\ge1$, 
    \begin{align}
    \lim_{d\to\infty} \frac{1}{d} \expt{ f_0(V^*)^\top \Sigma^{-1} f_s(\wt{\nu} \Sigma^{-1} V^* + \wt{\tau} \Sigma^{-1/2} W_V) } \notag 
    \end{align}
    exists and is finite, where $ W_V\sim\cN(0_d, I_d) $ is independent of $ V^* $. 

    \item \label[asmp]{asmp:denois_fn_bayes}
    Let $ \wt{\nu}\in\bbR $, and $ T\in\bbR^{2\times2} $ be positive definite. 
    For any $ r,s\ge1 $, 
    \begin{align}
    &\lim_{n\to\infty} \frac{\lambda}{n} \expt{ \inprod{\Sigma^{-1} V^*}{f_r(\wt{\nu} \Sigma^{-1} V^* + \Sigma^{-1/2} N)} } , \notag \\
    &\lim_{d\to\infty} \frac{1}{d} \expt{ f_r(\wt{\nu} \Sigma^{-1} V^* + \Sigma^{-1/2} N)^\top \Sigma^{-1} f_s(\wt{\nu} \Sigma^{-1} V^* + \Sigma^{-1/2} N') } \notag 
    \end{align}
    exist and are finite, where $ (N,N') \sim \cN(0_{2d}, T \ot I_d) $ is independent of $ V^* $. 

    Let $ \wt{\mu}\in\bbR $, and $ S\in\bbR^{2\times2} $ be positive definite. 
    For any $ r,s\ge0 $, 
    \begin{align}
    &\lim_{n\to\infty} \frac{\lambda}{n} \expt{ \inprod{\Xi^{-1} U^*}{g_r(\wt{\mu} \Xi^{-1} U^* + \Xi^{-1/2} M)} } , \notag \\
    &\lim_{d\to\infty} \frac{1}{d} \expt{ g_r(\wt{\mu} \Xi^{-1} U^* + \Xi^{-1/2} M)^\top \Xi^{-1} g_s(\wt{\mu} \Xi^{-1} U^* + \Xi^{-1/2} M') } \notag 
    \end{align}
    exist and are finite, where $ (M,M') \sim \cN(0_{2n}, S \ot I_n) $ is independent of $ U^* $. 

\setcounter{asmpctr}{\value{enumi}}
\end{enumerate}

We now give the state evolution result for the AMP in \Cref{eqn:BAMP}, which is proved in \Cref{app:pf_BAMP_SE}. 

\begin{proposition}[State evolution for AMP in \Cref{eqn:BAMP}]
\label{prop:SE_bayes}
For every $ t\ge0 $, let $ (g_t\colon\bbR^n\to\bbR^n)_{n\ge1} $ and $ (f_{t+1}\colon\bbR^d\to\bbR^d)_{d\ge1} $ be uniformly pseudo-Lipschitz of finite order subject to \Cref{asmp:denois_fn_bayes}. 
Consider the AMP iteration in \Cref{eqn:BAMP} defined by $ (g_t, f_{t+1})_{t\ge0} $ and initialized at $ \wt{u}^{-1} = 0_n $ and some $ \wt{v}^0\in\bbR^d $ subject to \Cref{asmp:init_bayes}. 
For any fixed $ t\ge0 $, let $ (\phi\colon\bbR^{(t+2)n} \to \bbR)_{n\ge1} $ and $ (\psi\colon\bbR^{(t+2)d} \to \bbR)_{d\ge1} $ be uniformly pseudo-Lipschitz of finite order. 
Then, 
\begin{subequations}
\begin{align}
\begin{split}
\plim_{n\to\infty} \phi(u^*, u^0, u^1, \cdots, u^t) - \expt{\phi(U^*, U_0, U_1, \cdots, U_t)} &= 0 , 
\end{split} \label{eqn:SE_result_u_bayes} \\
\begin{split}
\plim_{d\to\infty} \psi(v^*, v^1, v^2, \cdots, v^{t+1}) - \expt{\psi(V^*, V_1, V_2, \cdots, V_{t+1})} &= 0 ,     
\end{split} \label{eqn:SE_result_v_bayes}
\end{align}
\label{eqn:SE_result_uv}
\end{subequations}
where $ (U_s, V_{s+1})_{0\le s\le t} $ are defined in \Cref{eqn:UV_bayes}. 
\end{proposition}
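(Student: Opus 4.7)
My plan is to follow the two-step reduction flagged at the end of \Cref{sec:spec}: first recast Bayes-AMP as an auxiliary AMP operating on the whitened observation $B \coloneqq \Xi^{-1/2} A \Sigma^{-1/2} = \frac{\lambda}{n} \wt{u}^* (\wt{v}^*)^\top + \wt{W}$ with non-separable denoisers, and then reduce this auxiliary iteration to a standard non-separable AMP driven only by the i.i.d.\ Gaussian matrix $\wt{W}$, to which the state evolution established in \cite{AMP_nonsep,GraphAMP} applies directly.

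For the first step, I will perform the change of variables $\bar{u}^t \coloneqq \Xi^{1/2} u^t$, $\bar{v}^{t+1} \coloneqq \Sigma^{1/2} v^{t+1}$, $\check{u}^t \coloneqq \Xi^{-1/2} \wt{u}^t$, $\check{v}^{t+1} \coloneqq \Sigma^{-1/2} \wt{v}^{t+1}$. A direct algebraic manipulation of \Cref{eqn:BAMP} then yields the auxiliary iteration
\[
\bar{u}^t = B \check{v}^t - b_t \check{u}^{t-1}, \quad \check{u}^t = \bar{g}_t(\bar{u}^t), \qquad \bar{v}^{t+1} = B^\top \check{u}^t - c_t \check{v}^t, \quad \check{v}^{t+1} = \bar{f}_{t+1}(\bar{v}^{t+1}),
\]
with non-separable denoisers $\bar{g}_t(x) \coloneqq \Xi^{-1/2} g_t(\Xi^{-1/2} x)$ and $\bar{f}_{t+1}(x) \coloneqq \Sigma^{-1/2} f_{t+1}(\Sigma^{-1/2} x)$; the chain rule shows that the Onsager coefficients $b_t, c_t$ from \Cref{eqn:BAMP} coincide with the normalized divergences $\frac{1}{n}\tr(\nabla \bar{g}_t(\bar{u}^t))$ and $\frac{1}{n}\tr(\nabla \bar{f}_{t+1}(\bar{v}^{t+1}))$ demanded by the auxiliary AMP. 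The spectral bounds on $\Xi, \Sigma$ from \Cref{sec:prelim} ensure that $\bar{g}_t, \bar{f}_{t+1}$ inherit the uniform pseudo-Lipschitz property from $g_t, f_{t+1}$.

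For the second step, since $B$ is a rank-$1$ perturbation of the i.i.d.\ Gaussian matrix $\wt{W}$, I will apply the standard signal-absorbing trick (as in \cite{Bayati_Montanari,MV_AoS}): split $B \check{v}^t = \wt{W} \check{v}^t + \frac{\lambda}{n} \wt{u}^* \inprod{\wt{v}^*}{\check{v}^t}$ and fold the rank-$1$ contribution into enlarged non-separable denoisers that take $\wt{u}^*, \wt{v}^*$ as side-information inputs. This produces a standard non-separable AMP driven purely by $\wt{W}$; invoking the master state evolution theorem of \cite{AMP_nonsep,GraphAMP} yields Gaussian limits for the auxiliary iterates together with an explicit deterministic variance/covariance recursion. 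Translating back via $u^t = \Xi^{-1/2} \bar{u}^t$, a limit of the auxiliary iterate of the form $\bar{u}^t \approx \mu_t \wt{u}^* + \sigma_t W$ with $W \sim \cN(0_n, I_n)$ becomes $U_t = \mu_t \Xi^{-1} U^* + \sigma_t \Xi^{-1/2} W_{U,t}$, precisely matching \Cref{eqn:UV_bayes}; identifying the auxiliary covariance recursion with \Cref{eqn:Phi_Psi,eqn:sigma_tau,eqn:mu_nu} is then a routine computation in which \Cref{asmp:init_bayes,asmp:denois_fn_bayes} guarantee that each limit exists. The test-function statements \Cref{eqn:SE_result_u_bayes,eqn:SE_result_v_bayes} follow because composing a uniformly pseudo-Lipschitz $\phi, \psi$ with the linear maps $\Xi^{-1/2}, \Sigma^{-1/2}$ (whose norms are uniformly bounded by \Cref{eqn:supp}) produces test functions of the same class for the auxiliary AMP.

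The main technical obstacle I anticipate is a careful bookkeeping verification that every denoiser produced by the two reductions remains uniformly pseudo-Lipschitz of finite order, that the Onsager coefficients transform correctly through the change of variables, and that the absorbed signal terms $\frac{\lambda}{n} \inprod{\wt{v}^*}{\check{v}^t}$ concentrate to precisely the scalars $\mu_t = \lim_{n\to\infty}\frac{\lambda}{n}\expt{\inprod{\Sigma^{-1} V^*}{f_t(V_t)}}$ in \Cref{eqn:mu_nu} (and symmetrically for $\nu_{t+1}$). All these facts hinge on the operator-norm bounds $\normtwo{\Xi^{\pm 1/2}}, \normtwo{\Sigma^{\pm 1/2}} = O(1)$ provided by \Cref{eqn:supp}, which keep all multiplicative constants uniform in $n,d$ and allow a straightforward induction over $t$.
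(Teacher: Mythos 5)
Your proposal is correct and follows essentially the same route as the paper: a change of variables that turns the iteration in \Cref{eqn:BAMP} into an auxiliary AMP on $\Xi^{-1/2}A\Sigma^{-1/2}$ with non-separable denoisers $\Xi^{-1/2}g_t(\Xi^{-1/2}\cdot)$, $\Sigma^{-1/2}f_{t+1}(\Sigma^{-1/2}\cdot)$ and matching Onsager terms, followed by handling the rank-$1$ spike via a comparison iteration driven purely by $\wt{W}$ (with the signal contribution absorbed through its concentrating overlap, proved by induction over $t$) and invoking the non-separable state evolution of \cite{AMP_nonsep,GraphAMP}, then translating the Gaussian limits back through $\Xi^{-1/2},\Sigma^{-1/2}$ to obtain \Cref{eqn:UV_bayes,eqn:mu_nu,eqn:Phi_Psi,eqn:sigma_tau}. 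This matches the paper's two-step argument, with only a cosmetic difference in whether the spike term is described as folded into the denoisers or compared against an idealized iteration with deterministic means.
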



Given $ U_t, V_{t+1} $, the Bayes-optimal (in terms of mean square error) choice of $ (g_t, f_{t+1})_{t\ge0} $ is given by the conditional expectations. 
Specifically, for any $ t\ge0 $ and $ u\in\bbR^n, v\in\bbR^d $,
\begin{align}
g_t^*(u) &\coloneqq \expt{ U^* \mid U_t = u } , \qquad 
f_{t+1}^*(v) \coloneqq \expt{ V^* \mid V_{t+1} = v } . \label{eqn:gf*}
\end{align}
We call \Cref{eqn:BAMP} with $ g_t = g_t^*, f_{t+1} = f_{t+1}^* $ the Bayes-AMP. 

If $ P = Q = \cN(0,1) $, by \Cref{eqn:all_joint,eqn:UV_bayes}, $ (U^*, U_t) $ and $ (V^*, V_{t+1}) $ are jointly Gaussian with mean zero and covariance:
\begin{align}
\matrix{
    I_n & \mu_t \Xi^{-1} \\
    \mu_t \Xi^{-1} & \mu_t^2 \Xi^{-2} + \sigma_t^2 \Xi^{-1}
} \in\bbR^{2n\times2n} , \qquad 
\matrix{
    I_d & \nu_{t+1} \Sigma^{-1} \\
    \nu_{t+1} \Sigma^{-1} & \nu_{t+1}^2 \Sigma^{-2} + \tau_{t+1}^2 \Sigma^{-1}
} \in\bbR^{2d\times2d} , \notag 
\end{align}
respectively. 
Therefore using \Cref{lem:cond-distr-gauss}, $ g_t^*, f_{t+1}^* $ admit the following explicit formulas: 
\begin{align}
\begin{split}
g_t^*(u) &= \mu_t \Xi^{-1} (\mu_t^2 \Xi^{-2} + \sigma_t^2 \Xi^{-1})^{-1} u 
= \mu_t (\mu_t^2 \Xi^{-1} + \sigma_t^2 I_n)^{-1} u , \\
f_{t+1}^*(v) &= \tau_{t+1} \Sigma^{-1} (\tau_{t+1}^2 \Sigma^{-2} + \tau_{t+1}^2 \Sigma^{-1})^{-1} v 
= \nu_{t+1} (\nu_{t+1}^2 \Sigma^{-1} + \tau_{t+1}^2 I_d)^{-1} v . 
\end{split}
\label{eqn:g*_f*}
\end{align}

Under the above choice, the state evolution recursion for $ \mu_t, \sigma_t, \nu_{t+1}, \tau_{t+1} $ in \Cref{eqn:mu_nu,eqn:sigma_tau} becomes: for all $ t\ge1 $, 
\begin{subequations}
\begin{align}
\begin{split}
\mu_t &= \lim_{n\to\infty} \frac{\lambda}{n} \expt{ \inprod{\Sigma^{-1} V^*}{ \nu_t (\nu_t^2 \Sigma^{-1} + \tau_t^2 I_d)^{-1} V_t } } 
= \frac{\lambda}{\delta} \expt{ \frac{\nu_t^2 \ol{\Sigma}^{-2}}{\nu_t^2 \ol{\Sigma}^{-1} + \tau_t^2} } , 
\end{split}
\label{eqn:SE_bayes_mu} \\
\begin{split}
\nu_{t+1} &= \lim_{n\to\infty} \frac{\lambda}{n} \expt{ \inprod{\Xi^{-1} U^*}{\mu_t (\mu_t^2 \Xi^{-1} + \sigma_t^2 I_n)^{-1} U_t} } 
= \lambda \expt{ \frac{\mu_t^2 \ol{\Xi}^{-2}}{\mu_t^2 \ol{\Xi}^{-1} + \sigma_t^2} } , 
\end{split}
\label{eqn:SE_bayes_nu} \\
\begin{split}
\sigma_t^2 &= \lim_{n\to\infty} \frac{1}{n} \expt{ V_t^\top (\nu_t^2 \Sigma^{-1} + \tau_t^2 I_d)^{-1} \nu_t  \Sigma^{-1} \nu_t (\nu_t^2 \Sigma^{-1} + \tau_t^2 I_d)^{-1} V_t } \\
&= \frac{1}{\delta} \expt{ \frac{\nu_t^4 \ol{\Sigma}^{-3}}{(\nu_t^2 \ol{\Sigma}^{-1} + \tau_t^2)^2} } 
+ \frac{1}{\delta} \expt{ \frac{\nu_t^2 \tau_t^2 \ol{\Sigma}^{-2}}{(\nu_t^2 \ol{\Sigma}^{-1} + \tau_t^2)^2} } \\
&= \frac{1}{\delta} \expt{ \frac{\nu_t^2 \ol{\Sigma}^{-2}}{\nu_t^2 \ol{\Sigma}^{-1} + \tau_t^2} } , 
\end{split}
\label{eqn:SE_bayes_sigma} \\
\begin{split}
\tau_{t+1}^2 &= \lim_{n\to\infty} \frac{1}{n} \expt{ U_t^\top (\mu_t^2 \Xi^{-1} + \sigma_t^2 I_n)^{-1} \mu_t  \Xi^{-1} \mu_t (\mu_t^2 \Xi^{-1} + \sigma_t^2 I_n)^{-1} U_t } \\
&= \expt{ \frac{\mu_t^4 \ol{\Xi}^{-3}}{(\mu_t^2 \ol{\Xi}^{-1} + \sigma_t^2)^2} } 
+ \expt{ \frac{\mu_t^2 \sigma_t^2 \ol{\Xi}^{-2}}{(\mu_t^2 \ol{\Xi}^{-1} + \sigma_t^2)^2} } \\
&= \expt{ \frac{\mu_t^2 \ol{\Xi}^{-2}}{\mu_t^2 \ol{\Xi}^{-1} + \sigma_t^2} } , 
\end{split}
\label{eqn:SE_bayes_tau} 
\end{align}
\label{eqn:SE_bayes}
\end{subequations}
where we have used the definitions \Cref{eqn:UV_bayes} of $ U_t , V_{t+1} $, the joint distribution \Cref{eqn:all_joint} of $ (U^*, W_{U,t}), (V^*, V_{t+1}) $, the convergence of the empirical spectral distributions of $ \Sigma, \Xi $, and \Cref{prop:quadratic_form}. 

Inspecting the expressions, we realize that
\begin{align}
\mu_t &= \lambda \sigma_t^2 , \qquad 
\nu_{t+1} = \lambda \tau_t^2 . \label{eqn:SE_relation}
\end{align}
This allows us to only track the recursion of $ \mu_t, \nu_{t+1} $:
\begin{align}
\mu_t &= \frac{\lambda}{\delta} \expt{ \frac{\lambda\nu_t \ol{\Sigma}^{-2}}{\lambda\nu_t \ol{\Sigma}^{-1} + 1} } , \qquad 
\nu_{t+1} = \lambda \expt{ \frac{\lambda \mu_t \ol{\Xi}^{-2}}{\lambda \mu_t \ol{\Xi}^{-1} + 1} } . \notag 
\end{align}
Thus, the fixed point $ (\mu^*, \nu^*) $ of the above recursion must satisfy:
\begin{align}
\begin{split}
\mu^* &= \frac{\lambda}{\delta} \expt{ \frac{\lambda\nu^* \ol{\Sigma}^{-2}}{\lambda\nu^* \ol{\Sigma}^{-1} + 1} } , \qquad 
\nu^* = \lambda \expt{ \frac{\lambda \mu^* \ol{\Xi}^{-2}}{\lambda \mu^* \ol{\Xi}^{-1} + 1} } . 
\end{split}
\label{eqn:BAMP_SE_FP}
\end{align}
Note that upon a change of variable
\begin{align}
q_u &\coloneqq \frac{\nu^*}{\lambda} , \qquad 
q_v \coloneqq \frac{\delta \mu^*}{\lambda} , \label{eqn:quv_to_mu_nu} 
\end{align}
the fixed point equation \Cref{eqn:BAMP_SE_FP} coincides with that in the characterization of the free energy; see \Cref{eqn:fp_it}.

\subsection{Spectral estimator from Bayes-AMP}
\label{sec:AMP_spec}

Under Gaussian priors, the Bayes-AMP algorithm specified by \Cref{eqn:BAMP,eqn:g*_f*} naturally suggests a spectral estimator with respect to a matrix which is a non-trivial transformation of $A$. 
In what follows, we provide a heuristic derivation of this spectral estimator. 
Its performance guarantee (\Cref{thm:spec}) will be proved in \Cref{app:pf_spec}. 

Suppose, informally, that $ \mu_t$, $\sigma_t$, $\nu_{t+1}$, $\tau_{t+1}$, $u^t$, $v^{t+1}$, $c_t$, $b_{t+1} $ converge (under the sequential limits $ n\to\infty, t\to\infty $) to $ \mu^*, \sigma^*, \nu^*, \tau^*, u, v, c^*, b^* $, respectively, in the sense that, e.g.,
\begin{align}
\lim_{t\to\infty} \plim_{n\to\infty} \frac{1}{\sqrt{n}} \normtwo{u^t - u} &= 0 . \notag 
\end{align}
Recall that $ (\mu^*, \nu^*) $ solves the fixed point equation \Cref{eqn:BAMP_SE_FP}, and from \Cref{eqn:SE_relation}, the following relation holds:
\begin{align}
\mu^* = \lambda {\sigma^*}^2, \qquad 
\nu^* = \lambda {\tau^*}^2 . 
\label{eqn:sigma_tau_fp}
\end{align}
So denoting
\begin{align}
G &\coloneqq \lambda (\lambda \mu^* \Xi^{-1} + I_n)^{-1} \in\bbR^{n\times n} , \qquad 
F \coloneqq \lambda (\lambda \nu^* \Sigma^{-1} + I_d)^{-1} \in\bbR^{d\times d} , \label{eqn:FG_bayes}
\end{align}
by the design of $ g_t^*, f_{t+1}^* $ in \Cref{eqn:g*_f*}, 
we have that $ \wt{u}^t, \wt{v}^{t+1} $ converge to 
\begin{align}
\wt{u} &= \mu^* \paren{ {\mu^*}^2 \Xi^{-1} + {\sigma^*}^2 I_n }^{-1} u = G u , \qquad 
\wt{v} = \nu^* \paren{ {\nu^*}^2 \Sigma^{-1} + {\tau^*}^2 I_d }^{-1} v = F v , \notag 
\end{align}
respectively, 
and the limiting Onsager coefficients $ b^*,c^* $ are given by: 
\begin{align}
\begin{split}
b^* &= \lim_{n\to\infty} \frac{1}{n} \tr(F \Sigma^{-1}) 
= \frac{1}{\delta} \expt{ \frac{\lambda}{\lambda \nu^* + \ol{\Sigma}} } , \quad
c^* = \lim_{n\to\infty} \frac{1}{n} \tr(G \Xi^{-1})
= \expt{ \frac{\lambda}{\lambda\mu^* + \ol{\Xi}} } . 
\end{split}
\label{eqn:bc}
\end{align} 

At the fixed point of \Cref{eqn:BAMP}, we have 
\begin{align}
\begin{split}
u &= \Xi^{-1} A \Sigma^{-1} \wt{v} - b^* \Xi^{-1} \wt{u}
= \Xi^{-1} A \Sigma^{-1} F v - b^* \Xi^{-1} G u , \\
v &= \Sigma^{-1} A^\top \Xi^{-1} \wt{u} - c^* \Sigma^{-1} \wt{v}
= \Sigma^{-1} A^\top \Xi^{-1} G u - c^* \Sigma^{-1} F v . 
\end{split}
\label{eqn:rearrange}
\end{align}
Upon rearrangement, \Cref{eqn:rearrange} is equivalent to 
\begin{align}
\wh{G} u &= \Xi^{-1} A \Sigma^{-1} F v , \qquad 
\wh{F} v = \Sigma^{-1} A^\top \Xi^{-1} G u , 
\label{eqn:rewrite}
\end{align}
where 
\begin{align}
\wh{G} &\coloneqq I_n + b^* \Xi^{-1} G \in\bbR^{n\times n} , \qquad 
\wh{F} \coloneqq I_d + c^* \Sigma^{-1} F \in\bbR^{d\times d} . \label{eqn:FG_hat} 
\end{align}
We further introduce the notation: 
\begin{align}
\wt{G} &\coloneqq \wh{G} G^{-1} \in\bbR^{n\times n} , \qquad 
\wt{F} \coloneqq \wh{F} F^{-1} \in\bbR^{d\times d} , \label{eqn:FG_tilde}
\end{align}
so that \Cref{eqn:rewrite} can be rewritten as 
\begin{align}
\wt{G} G u &= \Xi^{-1} A \Sigma^{-1} F v , \qquad 
\wt{F} F v = \Sigma^{-1} A^\top \Xi^{-1} G u , \notag 
\end{align}
or
\begin{align}
\wt{G}^{1/2} G u &= \wt{G}^{-1/2} \Xi^{-1} A \Sigma^{-1} \wt{F}^{-1/2} \cdot \wt{F}^{1/2} F v , \qquad 
\wt{F}^{1/2} F v = \wt{F}^{-1/2} \Sigma^{-1} A^\top \Xi^{-1} \wt{G}^{-1/2} \cdot \wt{G}^{1/2} G u . \notag 
\end{align}
The key observation is that this is a pair of singular vector equations for the matrix 
\begin{align}
A^* &\coloneqq \wt{G}^{-1/2} \Xi^{-1} A \Sigma^{-1} \wt{F}^{-1/2} \in\bbR^{n\times d} \label{eqn:A*}
\end{align}
with respect to left/right singular vectors (up to rescaling)
\begin{align}
\wt{G}^{1/2} G u \in\bbR^n , \qquad \wt{F}^{1/2} F v \in\bbR^d \notag 
\end{align}
and singular value $1$. 
Using the definitions \Cref{eqn:FG_bayes,eqn:FG_hat,eqn:FG_tilde}, we verify that the two expressions of $ A^* $ in \Cref{eqn:A*,eqn:A*_main} are equal. 

By the state evolution result (\Cref{prop:SE_bayes}), $ u, v $ behave (in the sense of \Cref{eqn:SE_result_uv}) as 
\begin{align}
    \mu^* \Xi^{-1} u^* + \sigma^* \Xi^{-1/2} W_U , \qquad 
    \nu^* \Sigma^{-1} v^* + \tau^* \Sigma^{-1/2} W_V, \notag 
\end{align}
for $ W_U \sim \cN(0_n, I_d), W_V \sim \cN(0_d, I_d) $ independent of each other and of $ u^*, v^* $. 
This suggests that
\begin{align}
    \Xi (\wt{G}^{1/2} G)^{-1} u_1(A^*) , \qquad 
    \Sigma (\wt{F}^{1/2} F)^{-1} v_1(A^*) \label{eqn:uv_hat_heu} 
\end{align}
are effective estimates of $ u^*, v^* $. 
Simple algebra reveals that the above vectors, upon suitable rescaling, are precisely $ \wh{u}, \wh{v} $ in \Cref{eqn:def_spec}. 

\subsection{Right edge of the bulk}
\label{sec:bulk}

Before proceeding with the proof of \Cref{thm:spec}, we provide a characterization of $\sigma_2(A^*)$, i.e., the right edge of the bulk of the spectrum of $ A^* $. 
This bulk is related to the spectrum of the non-spiked random matrix 
\begin{align}
    W^* \coloneqq 
    \lambda \paren{ \lambda(\mu^* + b^*) I_n + \Xi }^{-1/2} \wt{W} \paren{ \lambda(\nu^* + c^*) I_d + \Sigma }^{-1/2}
    . \label{eqn:W*}
\end{align}
We first present a characterization of $ \sigma_1(W^*) $ and then relate it to $ \sigma_2(A^*) $. 
%
Define random variables:
\begin{align}
    \ol{\Xi}^* &\coloneqq \frac{\lambda}{\lambda(\mu^* + b^*) + \ol{\Xi}}
    , \qquad 
    \ol{\Sigma}^* \coloneqq \frac{\lambda}{\lambda(\nu^* + c^*) + \ol{\Sigma}} . \label{eqn:Xi_Sigma_bar_star} 
\end{align}
Define functions $ c,s \colon (\sup\supp(\ol{\Xi}^*), \infty) \to (0,\infty) $ as 
\begin{align}
    c(\alpha) &= \expt{ \frac{\ol{\Xi}^*}{\alpha - \ol{\Xi}^*} } , \quad 
    s(\alpha) = \sup\supp(\ol{\Sigma}^*) c(\alpha) . \notag
\end{align}
Define the implicit function $ \beta\colon(\sup\supp(\ol{\Xi}^*), \infty) \to (0,\infty) $ as, for any $ \alpha\in(\sup\supp(\ol{\Xi}^*), \infty) $, the unique solution in $ (s(\alpha), \infty) $ to 
\begin{align}
    1 &= \frac{1}{\delta} \expt{ \frac{\ol{\Sigma}^*}{\beta - c(\alpha) \ol{\Sigma}^*} } . \notag 
\end{align}
(The existence and uniqueness of the solution is easy to see.)
Next, define $ \psi \colon (\sup\supp(\ol{\Xi}^*), \infty) \to (0,\infty) $ as $ \psi(\alpha) = \alpha \beta(\alpha) $. 
It is known that $\psi$ is differentiable and the set of its critical points is a nonempty finite set \cite{zhang2023spectral}. 
Let $ \alpha^\circ \in (\sup\supp(\ol{\Xi}^*), \infty) $ be the largest critical point, i.e., 
\begin{align}
    1 &= \frac{1}{\delta} \expt{ \frac{{\ol{\Sigma}^*}^2}{(\beta(\alpha) - c(\alpha) \ol{\Sigma}^*)^2} }
    \expt{ \frac{{\ol{\Xi}^*}^2}{(\alpha - \ol{\Xi}^*)^2} } \notag 
\end{align}
Finally, denote 
\begin{equation}\label{eq:sigma2}
\sigma_2^* \coloneqq \sqrt{\psi(\alpha^\circ)}.    
\end{equation}

The characterization of $ \sigma_1(W^*) $ requires an extra technical assumption on the random variables $ \ol{\Xi}, \ol{\Sigma} $, which is the same as in 
\cite{zhang2023spectral}. 

\begin{enumerate}[label=(A\arabic*)]
\setcounter{enumi}{\value{asmpctr}}

    \item \label[asmp]{asmp:edge}
    For any $c>0$, 
    \begin{align}
        \lim_{\beta\downarrow s} \expt{ \frac{\ol{\Sigma}^*}{\beta - c \ol{\Sigma}^*} }
        = \lim_{\beta\downarrow s} \expt{ \paren{\frac{\ol{\Sigma}^*}{ \beta - c \ol{\Sigma}^* }}^2 }
        = \infty . \notag 
    \end{align}
    where $ s \coloneqq c \cdot \sup\supp(\ol{\Sigma}^*) $. 
    Furthermore, 
    \begin{align}
        \lim_{\alpha\downarrow\sup\supp(\ol{\Xi}^*)} \expt{\frac{\ol{\Xi}^*}{\alpha - \ol{\Xi}^*}}
        = \infty . \notag 
    \end{align}
    
\setcounter{asmpctr}{\value{enumi}}
\end{enumerate}

\begin{lemma}
\label{lem:sigma1_W*}
Let \Cref{asmp:edge} hold. 
Consider $ W^* $ defined in \Cref{eqn:W*}. 
Then, we have
\begin{align}
\plim_{n\to\infty} \sigma_1(W^*) &= \sigma_2^* . \notag 
\end{align}
\end{lemma}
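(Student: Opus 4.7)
The plan is to reduce $W^*$ to the canonical bi-anisotropic i.i.d.\ Gaussian ensemble studied in \cite{zhang2023spectral} and then invoke its characterization of the right edge of the bulk spectrum. Writing $W^* = B \wt{W} C$ with
$B \coloneqq \sqrt{\lambda}\,(\lambda(\mu^* + b^*) I_n + \Xi)^{-1/2}$ and
$C \coloneqq \sqrt{\lambda}\,(\lambda(\nu^* + c^*) I_d + \Sigma)^{-1/2}$,
a direct functional-calculus argument combined with the weak convergence of $\ESD(\Xi), \ESD(\Sigma)$ postulated in \Cref{sec:prelim} shows that $\ESD(BB^\top)$ and $\ESD(CC^\top)$ converge weakly to the laws of $\ol{\Xi}^*$ and $\ol{\Sigma}^*$ defined in \Cref{eqn:Xi_Sigma_bar_star}. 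The strict positive-definiteness of $\Xi, \Sigma$ and the compactness of $\supp(\ol{\Xi}), \supp(\ol{\Sigma})$ ensure that $\supp(\ol{\Xi}^*), \supp(\ol{\Sigma}^*)$ are compact subsets of $(0,\infty)$, and that $\normtwo{B}, \normtwo{C}$ remain uniformly bounded in $n,d$.

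Next, I would invoke the right-edge theorem from \cite{zhang2023spectral} for matrices of the form $B \wt{W} C$ with i.i.d.\ Gaussian $\wt{W}$: under regularity conditions matching precisely our \Cref{asmp:edge}, the top singular value converges in probability to $\sqrt{\psi(\alpha^\circ)}$, where $\alpha^\circ$ is the largest critical point of $\psi(\alpha) = \alpha \beta(\alpha)$ and $\beta(\cdot), c(\cdot)$ are the subordination-type functions introduced just before \Cref{eq:sigma2}. By definition this limit is exactly $\sigma_2^*$. The two integrability conditions in \Cref{asmp:edge} are precisely the technical conditions ensuring that the densities of $\ol{\Sigma}^*$ and $\ol{\Xi}^*$ decay sufficiently slowly at the right edges of their supports, so that $\psi$ admits a nonempty, finite set of critical points and the claimed convergence is well-posed; these are exactly the hypotheses used in the edge analysis of \cite{zhang2023spectral}.

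The main obstacle is the bookkeeping needed to translate our notation and normalization, in particular the aspect ratio $\delta = n/d$ and the variance scaling of $\wt{W}$, into the conventions of \cite{zhang2023spectral}, together with the check that no spike enters $\sigma_1(W^*)$. The latter is essentially automatic by construction: $W^*$ is the pre-processed noise matrix with the rank-one signal $\frac{\lambda}{n} u^* {v^*}^\top$ removed, so no planted structure survives and no BBP-type outlier can arise; only the bulk edge contributes to $\sigma_1(W^*)$. Once this correspondence is set up and the absence-of-spike observation is recorded, the lemma follows directly from the cited edge theorem.
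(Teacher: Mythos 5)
Your proposal matches the paper's proof: the paper likewise observes that $W^*{W^*}^\top$ is a separable covariance matrix with population covariances whose limiting spectral distributions are the laws of $\ol{\Xi}^*,\ol{\Sigma}^*$, and then invokes the edge characterization of \cite{CouilletHachem} with the explicit formulas of \cite{zhang2023spectral} under \Cref{asmp:edge} to conclude $\plim\sigma_1(W^*)=\sigma_2^*$. Your extra remarks (uniform boundedness, compact supports bounded away from zero, and the absence of any spike since $W^*$ contains no signal) are just the routine verifications implicit in that reduction, so this is essentially the same argument.
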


\begin{proof}
Note that $ W^* {W^*}^\top $ is a separable covariance matrix. 
Its largest eigenvalue is characterized in \cite{CouilletHachem}. 
The explicit formulas we need are due to \cite{zhang2023spectral}. 
To apply their results, one simply observes that the covariances (as in the context of separable covariance matrices) of $ W^* $ are
\begin{align}
    \Xi^* &\coloneqq \sqrt{\lambda} \paren{ \lambda(\mu^* + b^*) I_n + \Xi }^{-1/2} , \qquad 
    \Sigma^* \coloneqq \sqrt{\lambda} \paren{ \lambda(\nu^* + c^*) I_d + \Sigma }^{-1/2}, \notag
\end{align}
whose limiting spectral distributions are given by the distributions of $ \ol{\Xi}^*, \ol{\Sigma}^* $ in \Cref{eqn:Xi_Sigma_bar_star}. 
\end{proof}

\begin{lemma}
\label{lem:sigma2}
Consider $ A^* $ defined in \Cref{eqn:A*_main}. Then 
\begin{align}
    \plim_{n\to\infty} \sigma_2(A^*) &= \sigma_2^* . \notag 
\end{align}
\end{lemma}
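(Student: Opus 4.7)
The plan is to realize $A^*$ as a rank-one perturbation of the non-spiked matrix $W^*$ of \Cref{eqn:W*}, sandwich $\sigma_2(A^*)$ between two singular values of $W^*$ via Weyl's inequality, and finally appeal to \Cref{lem:sigma1_W*}.

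First, I would substitute $A = \frac{\lambda}{n} u^* {v^*}^\top + \Xi^{1/2} \wt{W} \Sigma^{1/2}$ into the definition \Cref{eqn:A*_main} of $A^*$. In the noise contribution, the inner factors $\Xi^{-1/2}, \Sigma^{-1/2}$ cancel against $\Xi^{1/2}, \Sigma^{1/2}$ and produce exactly the matrix $W^*$ defined in \Cref{eqn:W*}. The signal contribution is the rank-one matrix
\begin{align*}
\Delta^* &\coloneqq \frac{\lambda^2}{n} \paren{\lambda(\mu^* + b^*) I_n + \Xi}^{-1/2} \Xi^{-1/2} u^* {v^*}^\top \Sigma^{-1/2} \paren{\lambda(\nu^* + c^*) I_d + \Sigma}^{-1/2},
\end{align*}
so that $A^* = W^* + \Delta^*$ with $\mathrm{rank}(\Delta^*) = 1$.

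Next, I would apply the Weyl-type singular-value bound $\sigma_{i+j-1}(M+N) \le \sigma_i(M) + \sigma_j(N)$ with $j = 2$. Since $\sigma_2(\Delta^*) = 0$, writing $A^* = W^* + \Delta^*$ gives $\sigma_2(A^*) \le \sigma_1(W^*)$, and writing $W^* = A^* + (-\Delta^*)$ gives $\sigma_3(W^*) \le \sigma_2(A^*)$. These combine to the sandwich
\begin{align*}
\sigma_3(W^*) \le \sigma_2(A^*) \le \sigma_1(W^*).
\end{align*}

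To close, \Cref{lem:sigma1_W*} already supplies $\plim_{n\to\infty} \sigma_1(W^*) = \sigma_2^*$. I would then strengthen this to $\plim_{n\to\infty} \sigma_k(W^*) = \sigma_2^*$ for every fixed $k \ge 1$, so that in particular $\sigma_3(W^*)$ converges in probability to $\sigma_2^*$; the sandwich then finishes the argument. This last strengthening is the only step not contained verbatim in \Cref{lem:sigma1_W*} and is the main obstacle, but it follows from the same separable-covariance machinery of \cite{CouilletHachem} invoked for \Cref{lem:sigma1_W*}: in the absence of a planted outlier, the number of eigenvalues of $W^* {W^*}^\top$ in any fixed neighborhood of the upper bulk edge $(\sigma_2^*)^2$ diverges with $n$ under \Cref{asmp:edge}, so every $\sigma_k(W^*)$ with $k$ fixed sticks to the same edge as $\sigma_1(W^*)$.
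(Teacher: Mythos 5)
Your proposal is correct and follows essentially the same route as the paper: the paper also writes $A^*$ as a rank-one perturbation of $W^*$, uses Weyl's inequality to sandwich the second singular value between $\sigma_3(W^*)$ and $\sigma_1(W^*)$, and then invokes \Cref{lem:sigma1_W*} together with the fact that, absent a spike, every $\sigma_k(W^*)$ with fixed $k$ converges to the same bulk edge (the paper justifies this last step via the almost sure weak convergence of the empirical spectral distribution of $W^*$, which is the same edge-sticking argument you sketch).
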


\begin{proof}
By Weyl's inequality, $ \sigma_3(W^*) \le \sigma_1(A^*) \le \sigma_1(W^*) $. 
We have already shown in \Cref{lem:sigma1_W*} that $ \sigma_1(W^*) $ converges to $ \sigma_2^* $. 
The almost sure weak convergence of the empirical spectral distribution of $ W^* $ \cite[Theorem 1.2.1]{Zhang_Thesis_RMT} implies that $ \sigma_3(W^*) $ (and indeed $ \sigma_k(W^*) $ for any constant $k$ relative to $n,d$) must also converge to the same limit $ \sigma_2^* $. 
\end{proof}


\subsection{Proof of \Cref{thm:spec}}
\label{app:pf_spec}

We suppose throughout the proof that the condition \Cref{eqn:thr_bayes} holds. 
Then, by \Cref{prop:bayes_fp_sol} and the change of variable \Cref{eqn:quv_to_mu_nu}, the fixed point equation \Cref{eqn:BAMP_SE_FP} admits a unique non-trivial solution $ (\mu^*, \nu^*)\in\bbR_{>0}^2 $. 
Construct matrices $ F, G $ as in \Cref{eqn:FG_bayes} using such $ \mu^*,\nu^* $. 
Define also the random variables
\begin{align}
    \ol{G} &\coloneqq \frac{\lambda}{\lambda\mu^* \ol{\Xi}^{-1} + 1} , \quad 
    \ol{F} \coloneqq \frac{\lambda}{\lambda\nu^* \ol{\Sigma}^{-1} + 1} \notag 
\end{align}
whose distributions are the limiting spectral distributions of $ G, F $, respectively. 

Now consider the denoising functions: 
\begin{align}
f_{t+1}(v^{t+1}) &= F v^{t+1} , \quad 
g_t(u^t) = G u^t . \notag 
\end{align}
With this choice, the AMP iteration \Cref{eqn:BAMP} becomes
\begin{align}
u^t &= \Xi^{-1} A \Sigma^{-1} F v^t - b \Xi^{-1} G u^{t-1} , \quad 
v^{t+1} = \Sigma^{-1} A^\top \Xi^{-1} G u^t - c \Sigma^{-1} F v^t
, \label{eqn:AMP_spec_bayes} 
\end{align}
where
\begin{align}
b &= \frac{1}{n} \tr(F \Sigma^{-1}) , \quad 
c = \frac{1}{n} \tr(G \Xi^{-1}) . \notag 
\end{align}
Note that as $n\to\infty$, $b,c$ converge to $ b^*,c^* $ in \Cref{eqn:bc}. 

Recall from \Cref{eqn:sigma_tau_fp} the definition of $ \sigma^*, \tau^* $. 
We initialize \Cref{eqn:AMP_spec_bayes} with 
\begin{align}
\wt{u}^{-1} &= 0_n , \quad 
\wt{v}^0 = F(\nu^* \Sigma^{-1} v^* + \tau^* \Sigma^{-1/2} w) , \notag 
\end{align}
where $ w\sim\cN(0_d, I_d) $ is independent of everything else. 
Accordingly, one can take $ f_0 $ in \Cref{asmp:init_bayes} to be $ f_0(v) = \nu^* F \Sigma^{-1} v $. 
Under the above AMP initializer, the state evolution initializers in \Cref{eqn:mu_nu,eqn:Phi_Psi} specialize to 
\begin{align}
\mu_0 &= \lim_{n\to\infty} \frac{\lambda}{n} \expt{ {V^*}^\top \Sigma^{-1} F \Sigma^{-1} V^* } \nu^*
= \frac{\lambda}{\delta} \expt{ \ol{F} \ol{\Sigma}^{-2} } \nu^*
= \mu^* , \notag \\ 
\sigma_0^2 &= \plim_{n\to\infty} \frac{{\nu^*}^2}{n} {v^*}^\top \Sigma^{-1} F \Sigma^{-1} F \Sigma^{-1} v^*
+ \frac{{\tau^*}^2}{n} w^\top \Sigma^{-1/2} F \Sigma^{-1} F \Sigma^{-1/2} w \notag \\
&= \frac{1}{\delta} \expt{ \ol{F}^2 \ol{\Sigma}^{-3} } {\nu^*}^2
+ \frac{1}{\delta} \expt{ \ol{F}^2 \ol{\Sigma}^{-2} } {\tau^*}^2
= {\sigma^*}^2 , \notag 
\end{align}
where the last equalities for both chains of computation are by \Cref{eqn:SE_bayes}. 
Since the parameters $ \mu_t, \sigma_t $ are initialized at the non-trivial fixed point $ (\mu_0, \sigma_0) = (\mu^*, \sigma^*) $, the state evolution recursion \Cref{eqn:SE_bayes} will stay at the fixed point $ (\mu_t, \sigma_t, \nu_{t+1}, \tau_{t+1}) = (\mu^*, \sigma^*, \nu^*, \tau^*) $ across all $t\ge0$.

\begin{lemma}
\label{lem:align}
Let
\begin{align}
\wh{u}^t &\coloneqq \wt{G}^{1/2} G u^t , \quad 
\wh{v}^{t+1} \coloneqq \wt{F}^{1/2} F v^{t+1} , \label{eqn:uv_hat_bayes}
\end{align}
where $ \wt{F}, \wt{G} $ are defined in \Cref{eqn:FG_tilde}. 
Suppose the condition \Cref{eqn:thr_bayes} holds. 
Then 
\begin{align}
\lim_{t\to\infty} \plim_{n\to\infty} \frac{\abs{\inprod{\wh{u}^t}{u_1(A^*)}}}{\normtwo{\wh{u}^t}} =
\lim_{t\to\infty} \plim_{n\to\infty} \frac{\abs{\inprod{\wh{v}^{t+1}}{v_1(A^*)}}}{\normtwo{\wh{v}^{t+1}}} = 1 \label{eqn:align_bayes}
\end{align}
and 
\begin{align}
\plim_{n\to\infty} \sigma_1(A^*) &= 1 , \label{eqn:sinval_bayes}
\end{align}
where $ A^* $ is defined in \Cref{eqn:A*}. 
\end{lemma}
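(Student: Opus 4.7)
The plan is to turn the heuristic at the end of \Cref{sec:AMP_spec} into a rigorous argument by combining state evolution for the linearized Bayes-AMP in \Cref{eqn:AMP_spec_bayes} with the spectral-gap estimate from \Cref{lem:sigma2}. First I rearrange the iteration algebraically: from $u^t = \Xi^{-1} A \Sigma^{-1} F v^t - b \Xi^{-1} G u^{t-1}$ and $\wh{G} u^t = u^t + b^* \Xi^{-1} G u^t$, I obtain
\begin{align*}
\wh{G} u^t &= \Xi^{-1} A \Sigma^{-1} F v^t + b^* \Xi^{-1} G (u^t - u^{t-1}) + (b^* - b) \Xi^{-1} G u^{t-1} .
\end{align*}
Multiplying by $\wt{G}^{-1/2}$ and using $A^* = \wt{G}^{-1/2} \Xi^{-1} A \Sigma^{-1} \wt{F}^{-1/2}$ together with $\wt{G}^{-1/2}\wh{G} = \wt{G}^{1/2} G$ yields $\wh{u}^t = A^* \wh{v}^t + r_u^t$, where $r_u^t$ collects the Onsager-type increment in $u^t - u^{t-1}$ and the deterministic-coefficient error $b - b^*$. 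A parallel manipulation on the $v$-update gives $\wh{v}^{t+1} = (A^*)^\top \wh{u}^t + r_v^{t+1}$.

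Next I use \Cref{prop:SE_bayes} to control the residuals. Since the denoisers $g_t(u) = G u$ and $f_{t+1}(v) = F v$ are linear and the initializer is chosen so that $(\mu_0, \sigma_0) = (\mu^*, \sigma^*)$, the state-evolution parameters remain at the non-trivial fixed point $(\mu^*, \sigma^*, \nu^*, \tau^*)$ for every $t$. Applying state evolution to the pseudo-Lipschitz test function $\frac{1}{n}\normtwo{u^t - u^{t-1}}^2$ reduces its $n\to\infty$ limit to an explicit expression in the off-diagonal covariance $\Phi_{t,t-1}$. Substituting the linear denoisers into \Cref{eqn:Phi_Psi} produces a scalar linear recursion $\Psi_{r+1,s+1} = A + B\, \Psi_{r,s}$ with
\begin{align*}
B &= \frac{\lambda^4}{\delta}\,\expt{(\lambda\mu^* + \ol{\Xi})^{-2}}\,\expt{(\lambda\nu^* + \ol{\Sigma})^{-2}} .
\end{align*}
Once $B < 1$ is established (see below), geometric convergence $\Psi_{t+1,t} \to (\tau^*)^2$ and $\Phi_{t,t-1} \to (\sigma^*)^2$ follows, forcing $\plim_{t\to\infty}\plim_{n\to\infty}\frac{1}{\sqrt{n}}\normtwo{r_u^t} = \plim_{t\to\infty}\plim_{d\to\infty}\frac{1}{\sqrt{d}}\normtwo{r_v^{t+1}} = 0$; the coefficient gaps $b - b^*$ and $c - c^*$ vanish directly from the convergence of the spectra of $\Xi, \Sigma$.

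The final step is a spectral perturbation argument. State evolution also yields the norm limits $\plim\frac{1}{\sqrt{n}}\normtwo{\wh{u}^t}$ and $\plim\frac{1}{\sqrt{d}}\normtwo{\wh{v}^{t+1}}$ as explicit positive constants. Composing the two approximate singular-vector identities gives $\wh{v}^{t+1} = (A^*)^\top A^* \wh{v}^t + \text{(small)}$; since $\normtwo{\wh{v}^{t+1} - \wh{v}^t}/\sqrt{d}$ also vanishes by the previous step, the unit vector $\bar{v}^t \coloneqq \wh{v}^t/\normtwo{\wh{v}^t}$ satisfies $\normtwo{(A^*)^\top A^* \bar{v}^t - \bar{v}^t} \to 0$. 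The deterministic bound $\normtwo{A^*} \le C\normtwo{A}$ keeps $\sigma_1(A^*)$ uniformly bounded, and expanding $\bar{v}^t$ in the eigenbasis of $(A^*)^\top A^*$ together with \Cref{lem:sigma2} (which gives $\sigma_2(A^*) \to \sigma_2^* < 1$) rules out all eigenvalues of $(A^*)^\top A^*$ except one near $1$; this forces $\sigma_1(A^*)^2 \to 1$ and $|\inprod{\bar{v}^t}{v_1(A^*)}| \to 1$. The symmetric argument on the left side yields the analogue for $\wh{u}^t$ and $u_1(A^*)$, proving \Cref{eqn:align_bayes,eqn:sinval_bayes}.

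The principal obstacle is verifying $B < 1$ under \Cref{eqn:thr_bayes}. This inequality is equivalent to the linear stability of the non-trivial fixed point $(q_u^*, q_v^*)$ of \Cref{eqn:fp_it}: differentiating those equations at $(q_u^*, q_v^*)$ produces precisely the coefficient $B$. Above the weak-recovery threshold, the trivial fixed point loses stability to the non-trivial one, which then inherits $B < 1$; a short computation using \Cref{eqn:fp_it} together with the relations $\mu^* = \lambda(\sigma^*)^2$ and $\nu^* = \lambda(\tau^*)^2$ from \Cref{eqn:SE_relation} makes this rigorous. A secondary subtlety is exchanging the limits $n\to\infty$ (state evolution) and $t\to\infty$ (AMP convergence to the putative eigenvector): this requires either a uniform-in-$t$ control of the approximation error for fixed $n$, or a diagonal extraction producing a sequence $t_n \to \infty$ along which both limits hold simultaneously.
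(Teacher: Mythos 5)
Your proposal is correct, and its first two stages (rearranging \Cref{eqn:AMP_spec_bayes} into the approximate singular-vector relations $\wh{u}^t = A^*\wh{v}^t + r_u^t$, $\wh{v}^{t+1} = (A^*)^\top\wh{u}^t + r_v^{t+1}$, and killing the residuals via state evolution run at the fixed point $(\mu^*,\sigma^*,\nu^*,\tau^*)$, including the affine recursion for the off-diagonal overlaps $\Phi_{t,t-1}$) coincide with the paper's argument. Where you genuinely diverge is the endgame: the paper unrolls the recursion into a power iteration $\wh{u}^{t+s} = (A^*{A^*}^\top)^s\wh{u}^t + \wh{e}_u^{t,s}$, splits along the projector onto $u_1(A^*)$ and its complement, and takes the triple sequential limit $n\to\infty$, $t\to\infty$, $s\to\infty$; you instead observe that $\|\wh{v}^{t+1}-\wh{v}^t\|/\sqrt{d}\to0$ (which follows from the same $\Phi_{t,t-1}$ computation), so the normalized iterate is an approximate eigenvector of $(A^*)^\top A^*$ with approximate eigenvalue $1$, and a standard residual/spectral-gap perturbation bound with $\sigma_2(A^*)\to\sigma_2^*<1$ from \Cref{lem:sigma2} delivers both $\sigma_1(A^*)\to1$ and the overlap in one stroke, avoiding the extra index $s$ entirely. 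Two remarks: (i) the limit-exchange worry you raise at the end is moot — the statement asks precisely for the order $\plim_{n\to\infty}$ followed by $\lim_{t\to\infty}$, and since $\sigma_1(A^*)$, $\sigma_2(A^*)$ do not depend on $t$, your fixed-$t$ perturbation bound followed by $t\to\infty$ needs no uniform-in-$t$ control; (ii) your insistence on verifying the contraction coefficient $B<1$ is a genuine strengthening of the write-up — the paper asserts convergence of the affine recursion by pointing to its unique fixed point, whereas convergence actually needs $|B|<1$; your identification of $B$ with $f'(q_v^*)$ for the scalar map $f$ of \Cref{app:pf_prop:bayes_fp_sol} (via $\lambda\nu^*=\gamma q_u^*$, $\lambda\mu^*=\alpha\gamma q_v^*$) and the strict concavity of $f$ with $f(0)=0$, $f'(0)>1$ indeed give $B=f'(q_v^*)<1$ under \Cref{eqn:thr_bayes}, so this is provable exactly as you sketch, not merely plausible.
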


\begin{proof}
Denoting 
\begin{align}
e_1^t &\coloneqq u^t - u^{t-1} , \quad 
e_2^{t+1} \coloneqq v^{t+1} - v^t ,\label{eqn:e12_bayes}
\end{align}
for any $t\ge1$ and using the notation $ \wh{F}, \wh{G} $ in \Cref{eqn:FG_hat}, we have from \Cref{eqn:AMP_spec_bayes} that 
\begin{align}
\wh{G} u^t &= \Xi^{-1} A \Sigma^{-1} F v^t + b^* \Xi^{-1} G e_1^t 
+ (b^* - b) \Xi^{-1} G u^{t-1} , \notag \\
\wh{F} v^{t+1} &= \Sigma^{-1} A^\top \Xi^{-1} G u^t + c^* \Sigma^{-1} F e_2^{t+1}
+ (c^* - c) \Sigma^{-1} F v^t . \notag 
\end{align}
Recalling the notation $ \wt{F}, \wt{G} $ from \Cref{eqn:FG_tilde} and multiplying $ \wt{G}^{-1/2} $ (resp.\ $ \wt{F}^{-1/2} $) on both sides of the first (resp.\ second) equation above, we arrive at
\begin{align}
\wt{G}^{1/2} G u^t &= A^* \cdot \wt{F}^{1/2} F v^t + b^* \wt{G}^{-1/2} \Xi^{-1} G e_1^t 
+ (b^* - b) \wt{G}^{-1/2} \Xi^{-1} G u^{t-1} , \notag \\ 
\wt{F}^{1/2} F v^{t+1} &= {A^*}^\top \cdot \wt{G}^{1/2} G u^t + c^* \wt{F}^{-1/2} \Sigma^{-1} F e_2^{t+1}
+ (c^* - c) \wt{F}^{-1/2} \Sigma^{-1} F v^t . \notag 
\end{align}
Using the definition of $ \wh{u}^t, \wh{v}^{t+1} $ in \Cref{eqn:uv_hat_bayes}, we rewrite the above as 
\begin{align}
\wh{u}^t &= A^* \wh{v}^t + e_u^t , \quad 
\wh{v}^{t+1} = {A^*}^\top \wh{u}^t + e_v^{t+1} , \label{eqn:uv+err}
\end{align}
where 
\begin{align}
\begin{split}
e_u^t &\coloneqq b \wt{G}^{-1/2} \Xi^{-1} G e_1^t
+ (b^* - b) \wt{G}^{-1/2} \Xi^{-1} G u^{t-1} , \\
e_v^{t+1} &\coloneqq c \wt{F}^{-1/2} \Sigma^{-1} F e_2^{t+1}
+ (c^* - c) \wt{F}^{-1/2} \Sigma^{-1} F v^t . 
\end{split}
\label{eqn:euv}
\end{align}
Let us focus on $ \wh{u}^t $ and only prove the first equality in \Cref{eqn:align_bayes}. 
The proof of the second one is similar and will be omitted. 
Eliminating $ \wh{v}^t $ from \Cref{eqn:uv+err} gives
\begin{align}
\wh{u}^t &= A^* {A^*}^\top \wh{u}^{t-1} + A^* e_v^t + e_u^t . \notag 
\end{align}
Unrolling this recursion for $s$ steps, we get:
\begin{align}
\wh{u}^{t+s} &= \paren{A^* {A^*}^\top}^s \wh{u}^t + \wh{e}_u^{t,s} , \label{eqn:pow_itr_bayes}
\end{align}
where 
\begin{align}
\wh{e}_u^{t,s} &\coloneqq \sum_{r = 1}^s \paren{A^* {A^*}^\top}^{s-r} (A^* e_v^{t+r} + e_u^{t+r}) . \label{eqn:eu_ts} 
\end{align}

Taking $ \frac{1}{n} \normtwo{\cdot}^2 $ on both sides of \Cref{eqn:pow_itr_bayes} and take the sequential limits of $ n\to\infty $, $ t\to\infty $, $ s\to\infty $, we have the left hand side:
\begin{align}
\lim_{s\to\infty} \lim_{t\to\infty} \plim_{n\to\infty} \frac{1}{n} \normtwo{\wh{u}^{t+s}}^2
&= \lim_{t\to\infty} \plim_{n\to\infty} \frac{1}{n} \normtwo{\wh{u}^t}^2 \notag \\
&= \lim_{t\to\infty} \plim_{n\to\infty} \frac{1}{n} \normtwo{ \wt{G}^{1/2} G u^t }^2 \notag \\
&= \lim_{t\to\infty} \plim_{n\to\infty} \frac{1}{n} \normtwo{ \wh{G}^{1/2} G^{1/2} u^t }^2 \notag \\
&= \lim_{t\to\infty} \mu_t^2 \expt{ \ol{\Xi}^{-2} (1 + b^* \ol{\Xi}^{-1} \ol{G}) \ol{G} } + \sigma_t^2 \expt{ \ol{\Xi}^{-1} (1 + b^* \ol{\Xi}^{-1} \ol{G}) \ol{G} } \notag \\
&= {\mu^*}^2 \expt{ \ol{\Xi}^{-2} (1 + b^* \ol{\Xi}^{-1} \ol{G}) \ol{G} } + \frac{\mu^*}{\lambda} \expt{ \ol{\Xi}^{-1} (1 + b^* \ol{\Xi}^{-1} \ol{G}) \ol{G} } \notag \\
&= \lambda {\mu^*}^2 \expt{ \frac{ \lambda (\mu^* + b^*) + \ol{\Xi} }{ \ol{\Xi} \paren{ \lambda \mu^* + \ol{\Xi} }^2 }  }
+ \mu^* \expt{ \frac{\lambda (\mu^* + b^*) + \ol{\Xi}}{\paren{ \lambda \mu^* + \ol{\Xi} }^2} } \in (0,\infty) . 
\label{eqn:norm_lim}
\end{align}
Next, we have that 
\begin{align}
\lim_{t\to\infty} \plim_{n\to\infty} \frac{1}{n} \normtwo{e_1^t}^2
&= \lim_{t\to\infty} \plim_{d\to\infty} \frac{1}{d} \normtwo{e_2^{t+1}}^2
= 0 . 
\label{eqn:e12=0} 
\end{align}
To prove the first statement on $ e_1^t $, the strategy is to write the LHS of \Cref{eqn:e12=0} in terms of the state evolution parameters and prove that the latter quantities converge. 
We start with
\begin{align}
\plim_{n\to\infty} \frac{1}{n} \normtwo{ u^t - u^{t-1} }^2
&= \paren{ \mu_t^2 \expt{\ol{\Sigma}^{-2}} + \sigma_t^2 \expt{\ol{\Sigma}^{-1}} }
+ \paren{ \mu_{t-1}^2 \expt{\ol{\Sigma}^{-2}} + \sigma_{t-1}^2 \expt{\ol{\Sigma}^{-1}} } \notag \\
&\quad - 2 \paren{ \mu_t \mu_{t-1} \expt{\ol{\Sigma}^{-2}} + \Phi_{t, t-1} \expt{\ol{\Sigma}^{-1}} } \notag \\
&= 2 {\sigma^*}^2 \expt{\ol{\Sigma}^{-1}} - 2 \Phi_{t,t-1} \expt{\ol{\Sigma}^{-1}} , \notag 
\end{align}
where the first equality is by \Cref{prop:quadratic_form_corr} and the joint distribution of $ W_{U,t}, W_{U,t-1} $ in \Cref{eqn:all_joint}, and the second one holds since the state evolution parameters stay at the fixed point upon initialization. 
So it remains to verify that $ \Phi_{t,t-1} \to {\sigma^*}^2 $ as $ t\to\infty $. 
To see this, note that according to the state evolution recursion \Cref{eqn:Phi_Psi}, 
\begin{align}
\Phi_{t,t-1} &= \lim_{n\to\infty} \frac{1}{n} \expt{ V_t^\top F^\top \Sigma^{-1} F V_{t-1} }
= \frac{{\nu^*}^2}{\delta} \expt{ \ol{F}^2 \ol{\Sigma}^{-3} } + \frac{\Psi_{t,t-1}}{\delta} \expt{ \ol{F}^2 \ol{\Sigma}^{-2} } , \notag \\
\Psi_{t+1,t} &= \lim_{n\to\infty} \frac{1}{n} \expt{ U_t^\top G^\top \Xi^{-1} G U_{t-1} }
= {\mu^*}^2 \expt{ \ol{G}^2 \ol{\Xi}^{-3} }
+ \Phi_{t,t-1} \expt{\ol{G}^2 \ol{\Xi}^{-2}} . \notag 
\end{align}
Eliminating $ \Psi_{t,t-1} $ from the first equation, we arrive at 
\begin{align}
\Phi_{t,t-1} &=
\frac{{\nu^*}^2}{\delta} \expt{ \ol{F}^2 \ol{\Sigma}^{-3} } + \frac{1}{\delta} \expt{ \ol{F}^2 \ol{\Sigma}^{-2} }
\paren{ {\mu^*}^2 \expt{ \ol{G}^2 \ol{\Xi}^{-3} }
+ \Phi_{t-1,t-2} \expt{\ol{G}^2 \ol{\Xi}^{-2}} } \notag \\
&= \frac{1}{\delta} \expt{ \frac{\lambda^2 \ol{\Sigma}^{-3}}{(\lambda\nu^* \ol{\Sigma}^{-1} + 1)^2} } {\nu^*}^2 + \frac{1}{\delta} \expt{ \frac{\lambda^2 \ol{\Sigma}^{-2}}{(\lambda\nu^* \ol{\Sigma}^{-1} + 1)^2} } \notag \\
&\qquad \times \paren{
\expt{ \frac{\lambda^2 \ol{\Xi}^{-3}}{(\lambda\mu^* \ol{\Xi}^{-1} + 1)^2} } {\mu^*}^2
+ \expt{ \frac{\lambda^2 \ol{\Xi}^{-2}}{(\lambda\mu^* \ol{\Xi}^{-1} + 1)^2} } \Phi_{t-1,t-2}
} , \notag 
\end{align}
whose only fixed point is $ {\sigma^*}^2 $ by the relations in \Cref{eqn:SE_bayes}. 
This concludes the proof of the first equality in \Cref{eqn:e12=0}. The proof of the second equality is analogous and, hence, omitted.

By using \Cref{eqn:e12=0} and the fact that $ b\to b^*, c\to c^* $ as $n\to\infty$ (see \Cref{eqn:bc}), we obtain
\begin{align}\label{eq:euev}
    \lim_{t\to\infty} \plim_{n\to\infty} \frac{1}{n} \normtwo{e_u^t}^2 &= 
    \lim_{t\to\infty} \plim_{d\to\infty} \frac{1}{d} \normtwo{e_v^{t+1}}^2 = 0 . 
\end{align}
Note that the operator norm of $ A^* $ is almost surely bounded uniformly in $n$ by Weyl's inequality, sub-multiplicativity of matrix norms and the Bai--Yin law \cite{BaiYin}. 
This together with the triangle inequality of the $\ell_2$-norm and \Cref{eq:euev} implies that
\begin{align}
\lim_{s\to\infty} \lim_{t\to\infty} \plim_{n\to\infty} \frac{1}{n} \normtwo{\wh{e}_u^{t,s}}^2 &= 0 , \label{eqn:err_bayes}
\end{align}
From this, it follows that the right hand side of \Cref{eqn:pow_itr_bayes} (upon taken the rescaled squared norm and the sequential limits) equals
\begin{align}
\lim_{s\to\infty} \lim_{t\to\infty} \plim_{n\to\infty} \frac{1}{n} \normtwo{ \paren{A^* {A^*}^\top}^s \wh{u}^t }^2 . \notag 
\end{align}

We then compute the above term by taking the SVD of $ A^* $. 
Define two spectral projectors that are orthogonal to each other:
\begin{align}
    \Pi &\coloneqq u_1(A^*) u_1(A^*)^\top , \quad 
    \Pi^\perp \coloneqq I_n - \Pi . \notag 
\end{align}
We have 
\begin{align}
    \frac{1}{n} \normtwo{ \paren{ A^* {A^*}^\top }^s \wh{u}^t }^2
    &= \frac{1}{n} \normtwo{ \paren{ A^* {A^*}^\top }^s \Pi \wh{u}^t }^2
    + \frac{1}{n} \normtwo{ \paren{ A^* {A^*}^\top }^s \Pi^\perp \wh{u}^t }^2 . \label{eqn:proj12}
\end{align}
Using the spectral decomposition 
\begin{align}
    \paren{ A^* {A^*}^\top }^s
    &= \sum_{i = 1}^n \sigma_i(A^*)^{2s} u_i(A^*) u_i(A^*)^\top , \notag 
\end{align}
we can write the first term in \Cref{eqn:proj12} as
\begin{align}
    \frac{1}{n} \normtwo{ \paren{ A^* {A^*}^\top }^s \Pi \wh{u}^t }^2
    &= \frac{1}{n} \normtwo{ \sum_{i = 1}^n \sigma_i(A^*)^{2s} u_i(A^*) u_i(A^*)^\top u_1(A^*) u_1(A^*)^\top \wh{u}^t }^2
    = \sigma_1(A^*)^{4s} \frac{\inprod{u_1(A^*)}{\wh{u}^t}^2}{n} . \label{eqn:proj1} 
\end{align}

For the second term in \Cref{eqn:proj12}, we have
\begin{align}
    \frac{1}{n} \normtwo{ \paren{ A^* {A^*}^\top }^s \Pi^\perp \wh{u}^t }^2
    &= \frac{1}{n} \normtwo{ \paren{ A^* {A^*}^\top \Pi^\perp }^s \wh{u}^t }^2 \notag \\
    &\le \frac{\normtwo{\wh{u}^t}^2}{n} \max_{u\in\bbS^{n-1}} \normtwo{ \paren{ A^* {A^*}^\top \Pi^\perp }^s u }^2 \notag \\
    &= \frac{\normtwo{\wh{u}^t}^2}{n} \sigma_1\paren{ \paren{ A^* {A^*}^\top \Pi^\perp }^s }^2 \notag \\
    &= \frac{\normtwo{\wh{u}^t}^2}{n} \sigma_1\paren{ A^* {A^*}^\top \Pi^\perp }^{2s} \notag \\
    &= \frac{\normtwo{\wh{u}^t}^2}{n} \sigma_2\paren{ A^* {A^*}^\top }^{2s} \notag \\
    &= \frac{\normtwo{\wh{u}^t}^2}{n} \sigma_2\paren{ A^* }^{4s} , \notag 
\end{align}
where penultimate line follows since
\begin{align}
    A^* {A^*}^\top \Pi^\perp
    &= \sum_{i = 2}^n \sigma_i(A^*)^2 u_i(A^*) u_i(A^*) ^\top . \notag 
\end{align}

From \Cref{lem:sigma2} and the assumption that $\sigma_2^*<1$, we know
\begin{align}
    \plim_{n\to\infty} \sigma_2(A^*) &= \sigma_2^* < 1 . \notag 
\end{align}
This implies: 
\begin{multline}
    \lim_{s\to\infty} \lim_{t\to\infty} \plimsup_{n\to\infty} \frac{1}{n} \normtwo{ \paren{ A^* {A^*}^\top }^s \Pi^\perp \wh{u}^t }^2
    \le \lim_{s\to\infty} \lim_{t\to\infty} \plimsup_{n\to\infty} \frac{\normtwo{\wh{u}^t}^2}{n} \sigma_2\paren{ A^* }^{4s} \\
    \le \paren{ \lim_{t\to\infty} \plimsup_{n\to\infty} \frac{\normtwo{\wh{u}^t}^2}{n} } 
    \paren{ \lim_{s\to\infty} \plimsup_{n\to\infty} \sigma_2\paren{ A^* }^{4s} }
    = 0 , \label{eqn:proj2} 
\end{multline}
where the last equality holds since the limit in the first parentheses is finite by \Cref{eqn:norm_lim}. 

Now \Cref{eqn:proj12,eqn:proj1,eqn:proj2} jointly show
\begin{align}
    \lim_{s\to\infty} \lim_{t\to\infty} \plim_{n\to\infty} \frac{1}{n} \normtwo{ \paren{A^* {A^*}^\top}^s \wh{u}^t }^2
    &= \lim_{s\to\infty} \lim_{t\to\infty} \plim_{n\to\infty}
    \sigma_1(A^*)^{4s} \frac{\inprod{u_1(A^*)}{\wh{u}^t}^2}{n} \notag \\
    &= \paren{ \lim_{s\to\infty} \plim_{n\to\infty} \sigma_1(A^*)^{4s} }
    \paren{ \lim_{t\to\infty} \plim_{n\to\infty} \frac{\inprod{u_1(A^*)}{\wh{u}^t}^2}{n} } . \notag 
\end{align}
Combining this with \Cref{eqn:norm_lim} brings us to the following identity:
\begin{align}
    1 &= \paren{ \lim_{s\to\infty} \plim_{n\to\infty} \sigma_1(A^*)^{4s} }
    \paren{ \lim_{t\to\infty} \plim_{n\to\infty} \frac{\inprod{u_1(A^*)}{\wh{u}^t}^2}{\normtwo{\wh{u}^t}^2} } , \notag 
\end{align}
which necessarily implies
\begin{align}
    \plim_{n\to\infty} \sigma_1(A^*) &= 1 , \qquad 
    \lim_{t\to\infty} \plim_{n\to\infty} \frac{\inprod{u_1(A^*)}{\wh{u}^t}^2}{\normtwo{\wh{u}^t}^2} = 1 , \notag 
\end{align}
as desired. 
\end{proof}

With \Cref{lem:align}, we can complete the proof of \Cref{thm:spec}. 

\begin{proof}[Proof of \Cref{thm:spec}]
The characterization \Cref{eqn:sing_char} of the top two singular values have been obtained in \Cref{lem:align,lem:sigma2}. 
It remains to compute the overlaps which can be done using \Cref{lem:align} and the state evolution (\Cref{prop:SE_bayes}). 
Recall the estimators $ \wh{u}, \wh{v} $ in \Cref{eqn:def_spec} and their heuristic derivation in \Cref{eqn:uv_hat_heu}. 
Then 
\begin{align}
    \plim_{n\to\infty} \frac{\inprod{\wh{u}}{u^*}^2}{\normtwo{\wh{u}}^2 \normtwo{u^*}^2}
    &= \plim_{t\to\infty} \plim_{n\to\infty} \frac{ \inprod{\Xi u^t}{u^*}^2 }{ \normtwo{\Xi u^t}^2 \normtwo{u^*}^2 } 
    = \frac{\lambda \mu^* }{\lambda \mu^* + 1} = \eta_u^2 , \notag 
\end{align}
establishing the first equality in \Cref{eqn:ol_char}. 
The second equality in \Cref{eqn:ol_char} and other quantities in \Cref{eqn:mmse_v_char,eqn:mmse_uv_char} can be similarly obtained. 
The proof is completed. 
\end{proof}



\section{Proof of \Cref{prop:SE_bayes}}
\label{app:pf_BAMP_SE}

Recall $ \wt{u}^*, \wt{v}^* $ from \Cref{eqn:uv*_tilde} and let
\begin{align}
    \wt{A} &\coloneqq \Xi^{-1/2} A \Sigma^{-1/2}
    = \frac{\lambda}{n} \wt{u}^* (\wt{v}^*)^\top + \wt{W} . 
    \label{eqn:A_tilde}
\end{align}


\subsection{Auxiliary AMP and its state evolution}
\label{sec:AMP}

For $ (\brv{g}_t\colon\bbR^n\to\bbR^n, \brv{f}_{t+1}\colon\bbR^d\to\bbR^d)_{t\ge0} $, the iterates of the auxiliary AMP, initialized at $ \mathring{u}^{-1} = 0_d $ and some $ \mathring{v}^0\in\bbR^d $, are updated according to the following rules for every $ t\ge0 $: 
\begin{align}
\begin{split}
\brv{u}^t &= \wt{A} \mathring{v}^t - \brv{b}_t \mathring{u}^{t-1} , \quad
\mathring{u}^t = \brv{g}_t(\brv{u}^t) , \quad 
\brv{c}_t = \frac{1}{n} \sum_{i = 1}^n \partial_i \brv{g}_t(\brv{u}^t)_i , \\
\brv{v}^{t+1} &= \wt{A}^\top \mathring{u}^t - \brv{c}_t \mathring{v}^t , \quad 
\mathring{v}^{t+1} = \brv{f}_{t+1}(\brv{v}^{t+1}) , \quad 
\brv{b}_{t+1} = \frac{1}{n} \sum_{i = 1}^d \partial_i \brv{f}_{t+1}(\brv{v}^{t+1})_i . 
\end{split}
\label{eqn:AMP}
\end{align}

The state evolution result associated with the above auxiliary AMP iteration asserts that the distributions of $ (\brv{u}^0, \brv{u}^1, \cdots, \brv{u}^t), (\brv{v}^1, \brv{v}^2, \cdots, \brv{v}^{t+1}) $ converge (in the sense of \Cref{eqn:SE_result}) respectively to the laws of the random vectors $ (\brv{U}_0, \brv{U}_1, \cdots, \brv{U}_t), (\brv{V}_1, \brv{V}_2, \cdots \brv{V}_{t+1}) $ defined below: 
\begin{align}
\brv{U}_t &= \brv{\mu}_t \wt{U}^* + \brv{\sigma}_{t} \brv{W}_{U,t} \in\bbR^n , \quad 
\brv{V}_{t+1} = \brv{\nu}_{t+1} \wt{V}^* + \brv{\tau}_{t+1} \brv{W}_{V,t+1} \in\bbR^d , \label{eqn:UV}
\end{align}
where 
\begin{align}
&&
\matrix{
    U^* \\ 
    \brv{\sigma}_{0} \brv{W}_{U,0} \\
    \brv{\sigma}_{1} \brv{W}_{U,1} \\
    \vdots \\
    \brv{\sigma}_{t} \brv{W}_{U,t}
} &\sim P^{\ot n} \ot \cN(0_{n(t+1)}, \brv{\Phi}_t \ot I_n) , &
\matrix{
    V^* \\ 
    \brv{\tau}_{1} \brv{W}_{V,1} \\ 
    \brv{\tau}_{2} \brv{W}_{V,2} \\
    \vdots \\
    \brv{\tau}_{t+1} \brv{W}_{V,t+1}
} &\sim Q^{\ot d} \ot \cN(0_{d(t+1)}, \brv{\Psi}_{t+1} \ot I_d) , && \label{eqn:joint_distr} \\
&&
\wt{U}^* &\coloneqq \Xi^{-1/2} U^*\in\bbR^n , &
\wt{V}^* &\coloneqq \Sigma^{-1/2} V^*\in\bbR^d . && \label{eqn:UV_tilde} 
\end{align}
The parameters $ \brv{\mu}_t, \brv{\nu}_{t+1} \in \bbR, \brv{\Phi}_t = (\brv{\Phi}_{r,s})_{0\le r,s\le t}, \brv{\Psi}_{t+1} = (\brv{\Psi}_{r+1,s+1})_{0\le r,s\le t} \in \bbR^{(t+1)\times (t+1)} $ are defined recursively through the following state evolution equations:
\begin{align}
\brv{\mu}_0 &= \lambda \lim_{d\to\infty} \frac{1}{n} \expt{\inprod{\wt{V}^*}{\brv{f}_0(\wt{V}^*)}} , \label{eqn:SE_mu0} \\
\brv{\mu}_t &= \lambda \lim_{d\to\infty} \frac{1}{n} \expt{\inprod{\wt{V}^*}{\brv{f}_t(\brv{V}_t)}} , \quad t\ge1 , \label{eqn:SE_mu} \\
\brv{\nu}_{t+1} &= \lambda \lim_{n\to\infty} \frac{1}{n} \expt{\inprod{\wt{U}^*}{\brv{g}_t(\brv{U}_t)}} , \quad t\ge0 ,\label{eqn:SE_nu} \\
\brv{\Phi}_{0,0} &= \plim_{n\to\infty} \frac{1}{n} \normtwo{\mathring{v}^0}^2 , \label{eqn:Phi_11} \\
\brv{\Phi}_{0,s} &= \lim_{n\to\infty} \frac{1}{n} \expt{ \inprod{\brv{f}_0(\wt{V}^*)}{\brv{f}_s(\brv{V}_s)} } , \quad 1\le s\le t , \label{eqn:Phi_1s} \\
\brv{\Phi}_{r, s} &= \lim_{n\to\infty} \frac{1}{n} \expt{\inprod{\brv{f}_r(\brv{V}_r)}{\brv{f}_s(\brv{V}_s)}} , \quad 1\le r,s\le t , \label{eqn:Phi_rs} \\
\brv{\Psi}_{r+1, s+1} &= \lim_{n\to\infty} \frac{1}{n} \expt{\inprod{\brv{g}_r(\brv{U}_r)}{\brv{g}_s(\brv{U}_s)}} , \quad 0\le r,s\le t , \label{eqn:Psi_rs}
\end{align}
where $ \brv{f}_0 $ is determined by $ \mathring{v}^0 $ through \Cref{asmp:init} below. 
In particular, 
\begin{align}
\brv{\sigma}_0^2 &= \plim_{n\to\infty} \frac{1}{n} \normtwo{\mathring{v}^0}^2 , \label{eqn:SE_sigma_init} \\
\brv{\sigma}_{t}^2 &= \lim_{n\to\infty} \frac{1}{n} \expt{\inprod{\brv{f}_t(\brv{V}_t)}{\brv{f}_t(\brv{V}_t)}} , \quad t \ge 1 ,\label{eqn:SE_sigma} \\
\brv{\tau}_{t+1}^2 &= \lim_{n\to\infty} \frac{1}{n} \expt{\inprod{\brv{g}_t(\brv{U}_t)}{\brv{g}_t(\brv{U}_t)}} , \quad t\ge0 . \label{eqn:SE_tau}
\end{align}

We require the following assumptions to guarantee the existence and finiteness of the state evolution parameters defined above. 

\begin{enumerate}[label=(A\arabic*)]
\setcounter{enumi}{\value{asmpctr}}

    \item \label[asmp]{asmp:init} $ \mathring{v}^0 $ is independent of $ \wt{W} $ but may depend on $ \wt{v}^* $. Assume that 
    \begin{align}
    \plim_{d\to\infty} \frac{1}{d} \normtwo{\mathring{v}^0}^2 \notag
    \end{align}
    exists and is finite. 
    There exists a uniformly pseudo-Lipschitz function $ \brv{f}_0\colon\bbR^d\to\bbR^d $ of order $1$ such that 
    \begin{align}
    \lim_{d\to\infty} \frac{1}{d} \expt{ \inprod{\brv{f}_0(\wt{V}^*)}{\brv{f}_0(\wt{V}^*)} } &\le \plim_{d\to\infty} \frac{1}{d} \normtwo{\mathring{v}^0}^2 \notag 
    \end{align}
    and for every uniformly pseudo-Lipschitz function $ \phi\colon\bbR^d\to\bbR^d $ of finite order, the following two limits exist, are finite and equal:
    \begin{align}
    \plim_{d\to\infty} \frac{1}{d} \inprod{\mathring{v}^0}{\phi(\wt{v}^*)}
    &= \lim_{d\to\infty} \frac{1}{d} \expt{ \inprod{\brv{f}_0(\wt{V}^*)}{\phi(\wt{V}^*)} } . \notag 
    \end{align}
    Let $ \wt{\nu}\in\bbR, \wt{\tau}\in\bbR_{\ge0} $. 
    For any $s\ge1$, 
    \begin{align}
    \lim_{d\to\infty} \frac{1}{d} \expt{ \brv{f}_0(\wt{V}^*)^\top \brv{f}_s(\wt{\nu} \wt{V}^* + \wt{\tau} W_V) } \notag 
    \end{align}
    exists and is finite, where $ W_V\sim\cN(0_d, I_d) $ is independent of $ \wt{V}^* $. 

    \item \label[asmp]{asmp:denois_fn}
    Let $ \wt{\nu}\in\bbR $, and $ T\in\bbR^{2\times2} $ be positive definite. 
    For any $ r,s\ge1 $, 
    \begin{align}
    &\lim_{n\to\infty} \frac{\lambda}{n} \expt{ \inprod{\wt{V}^*}{\brv{f}_r(\wt{\nu} \wt{V}^* + N)} } , \quad 
    \lim_{d\to\infty} \frac{1}{d} \expt{ \brv{f}_r(\wt{\nu} \wt{V}^* + N)^\top \brv{f}_s(\wt{\nu} \wt{V}^* + N') } \notag 
    \end{align}
    exist and are finite, where $ (N,N') \sim \cN(0_{2d}, T \ot I_d) $ is independent of $ \wt{v}^* $. 

    Let $ \wt{\mu}\in\bbR $, and $ S\in\bbR^{2\times2} $ be positive definite. 
    For any $ r,s\ge0 $, 
    \begin{align}
    &\lim_{n\to\infty} \frac{\lambda}{n} \expt{ \inprod{\wt{U}^*}{\brv{g}_r(\wt{\mu} \wt{U}^* + M)} } , \quad
    \lim_{d\to\infty} \frac{1}{d} \expt{ \brv{g}_r(\wt{\mu} \wt{U}^* + M)^\top \brv{g}_s(\wt{\mu} \wt{U}^* + M') } \notag 
    \end{align}
    exist and are finite, where $ (M,M') \sim \cN(0_{2n}, S \ot I_n) $ is independent of $ \wt{U}^* $. 

\setcounter{asmpctr}{\value{enumi}}
\end{enumerate}

\begin{proposition}[State evolution for auxiliary AMP \Cref{eqn:AMP}]
\label{prop:SE}
For every $ t\ge0 $, let $ (\brv{g}_t\colon\bbR^{n}\to\bbR^n)_{n\ge1} $ and $ (\brv{f}_{t+1}\colon\bbR^{d}\to\bbR^d)_{d\ge1} $ be uniformly pseudo-Lipschitz of finite order subject to \Cref{asmp:denois_fn}. 
Consider the auxiliary AMP iteration in \Cref{eqn:AMP} defined by $ (\brv{g}_t, \brv{f}_{t+1})_{t\ge0} $ and initialized at $ \mathring{u}^{-1} = 0_n $ and some $ \mathring{v}^0\in\bbR^d $ subject to \Cref{asmp:init}. 
For any fixed $ t\ge0 $, let $ (\phi\colon\bbR^{(t+2)n} \to \bbR)_{n\ge1} $ and $ (\psi\colon\bbR^{(t+2)d} \to \bbR)_{n\ge1} $ be uniformly pseudo-Lipschitz functions of finite order. 
Then, 
\begin{subequations}
\begin{align}
\begin{split}
\plim_{n\to\infty} \phi(\wt{u}^*, \brv{u}^0, \brv{u}^1, \cdots, \brv{u}^t) - \expt{\phi(\wt{U}^*, \brv{U}_0, \brv{U}_1, \cdots, \brv{U}_t)} &= 0 , 
\end{split} \label{eqn:SE_result_u} \\
\begin{split}
\plim_{d\to\infty} \psi(\wt{v}^*, \brv{v}^1, \brv{v}^2, \cdots, \brv{v}^{t+1}) - \expt{\psi(\wt{V}^*, \brv{V}_1, \brv{V}_2, \cdots, \brv{V}_{t+1})} &= 0 , 
\end{split} \label{eqn:SE_result_v}
\end{align} \label{eqn:SE_result}
\end{subequations}
where $ (\brv{U}_s, \brv{V}_{s+1})_{0\le s\le t} $ are defined in \Cref{eqn:UV}. 
\end{proposition}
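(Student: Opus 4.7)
The plan is to reduce the auxiliary AMP in \Cref{eqn:AMP}, which operates on the spiked whitened matrix $\wt{A} = \frac{\lambda}{n}\wt{u}^*(\wt{v}^*)^\top + \wt{W}$, to a standard AMP driven solely by the i.i.d.\ Gaussian matrix $\wt{W}$, and then invoke the state evolution result for non-separable AMP with i.i.d.\ Gaussian data from \cite{AMP_nonsep,GraphAMP}. The core observation is that substituting $\wt{A} = \frac{\lambda}{n}\wt{u}^*(\wt{v}^*)^\top + \wt{W}$ into \Cref{eqn:AMP} gives
\begin{align*}
\brv{u}^t &= \wt{W}\mathring{v}^t + \tfrac{\lambda}{n}\langle \wt{v}^*, \mathring{v}^t\rangle\, \wt{u}^* - \brv{b}_t \mathring{u}^{t-1}, \\
\brv{v}^{t+1} &= \wt{W}^\top \mathring{u}^t + \tfrac{\lambda}{n}\langle \wt{u}^*, \mathring{u}^t\rangle\, \wt{v}^* - \brv{c}_t \mathring{v}^t.
\end{align*}
The inner products are scalars that, by induction on $t$, concentrate to the deterministic quantities $\brv{\mu}_t$ and $\brv{\nu}_{t+1}$ defined in \Cref{eqn:SE_mu0,eqn:SE_mu,eqn:SE_nu}. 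This suggests viewing the rank-one contribution as a ``side-information shift'' that can be absorbed into the denoisers.

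Concretely, I would proceed by induction on $t$. Assume that up to iteration $t$ one has constructed an auxiliary \emph{standard} AMP on $\wt{W}$ with non-separable denoisers $\tilde{g}_0,\tilde{f}_1,\dots,\tilde{g}_{t-1},\tilde{f}_t$ (treating $\wt{u}^*,\wt{v}^*$ as side information independent of $\wt{W}$) whose iterates $(\brv{u}^s_\mathrm{std},\brv{v}^{s+1}_\mathrm{std})$ satisfy $\brv{u}^s = \brv{u}^s_\mathrm{std} + \brv{\mu}_s\wt{u}^* + o(\sqrt{n})$ and similarly for $\brv{v}^{s+1}$. Defining
\begin{equation*}
\tilde{g}_t(x;\wt{u}^*) \coloneqq \brv{g}_t\!\bigl(x + \brv{\mu}_t \wt{u}^*\bigr),\qquad
\tilde{f}_{t+1}(y;\wt{v}^*) \coloneqq \brv{f}_{t+1}\!\bigl(y + \brv{\nu}_{t+1} \wt{v}^*\bigr),
\end{equation*}
and verifying that the Onsager coefficients $\brv{b}_t,\brv{c}_t$ computed from $\brv{g}_t,\brv{f}_{t+1}$ coincide with those computed from $\tilde{g}_t,\tilde{f}_{t+1}$ (since adding a constant shift in the input leaves the Jacobian invariant), the recursion extends to the next step. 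By \Cref{asmp:init,asmp:denois_fn} the functions $\tilde{g}_t,\tilde{f}_{t+1}$ inherit uniform pseudo-Lipschitz regularity.

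With this reformulation in place, the standard AMP on $\wt{W}$ with uniformly pseudo-Lipschitz non-separable denoisers and side information $(\wt{u}^*,\wt{v}^*)$ independent of $\wt{W}$ has a state evolution whose validity is established in \cite{AMP_nonsep,GraphAMP}: for any uniformly pseudo-Lipschitz test function $\phi$ of finite order, $\phi(\wt{u}^*,\brv{u}^0_\mathrm{std},\ldots,\brv{u}^t_\mathrm{std})$ concentrates around the expectation under the Gaussian law with covariance matrix given by the natural recursion. Translating back via $\brv{u}^s = \brv{u}^s_\mathrm{std} + \brv{\mu}_s\wt{u}^*$ produces exactly the law \Cref{eqn:UV,eqn:joint_distr}, and the induction step also verifies that the covariance entries $\brv{\Phi}_{r,s},\brv{\Psi}_{r+1,s+1}$ satisfy \Cref{eqn:Phi_rs,eqn:Psi_rs} by applying the standard state evolution formula to $\tilde{f}_r,\tilde{g}_s$ and then rewriting in terms of $\brv{f}_r,\brv{g}_s$ evaluated at $\brv{V}_r,\brv{U}_s$.

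The main technical hurdle is the concentration of the ``spike projections'' $\frac{1}{n}\langle \wt{v}^*,\mathring{v}^t\rangle$ and $\frac{1}{n}\langle \wt{u}^*,\mathring{u}^t\rangle$ to their deterministic limits $\brv{\mu}_t/\lambda$ and $\brv{\nu}_{t+1}/\lambda$, since these quantities enter the next iterate multiplicatively as $\wt{u}^*,\wt{v}^*$ have unbounded-in-$n$ norms. This will require bounding the fluctuation of these scalars (via the inductive pseudo-Lipschitz test functions on $(\wt{v}^*,\mathring{v}^t)$) and showing that the resulting error $\bigl(\frac{1}{n}\langle\wt{v}^*,\mathring{v}^t\rangle - \brv{\mu}_t/\lambda\bigr)\wt{u}^*$ is $o_{\ell_2/\sqrt{n}}(1)$ in probability, using the compactly supported spectrum of $\Xi^{-1}$ (which bounds $\frac{1}{n}\|\wt{u}^*\|_2^2$). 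The base case $t=0$ uses \Cref{asmp:init}, which encodes $\mathring{v}^0$ as effectively $\brv{f}_0(\wt{v}^*)$ in the pseudo-Lipschitz sense and gives the initialization $\brv{\Phi}_{0,0},\brv{\mu}_0$. Given these concentration bounds, the inductive argument closes and \Cref{eqn:SE_result} follows.
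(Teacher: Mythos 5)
Your proposal is correct and follows essentially the same route as the paper: you absorb the rank-one spike into the denoisers as a deterministic shift $\brv{\mu}_t\wt{u}^*$, $\brv{\nu}_{t+1}\wt{v}^*$ with $(\wt{u}^*,\wt{v}^*)$ treated as side information, which is exactly the auxiliary iteration \Cref{eqn:AMP_pq} on $\wt{W}$ analyzed via the non-separable state evolution of \cite{AMP_nonsep,GraphAMP}, and you then close the argument by the same induction showing the spike projections concentrate and the normalized $\ell_2$ error between the shifted standard iterates and the true iterates vanishes. The technical hurdle you flag (controlling $(\tfrac{1}{n}\langle\wt{v}^*,\mathring{v}^t\rangle-\brv{\mu}_t/\lambda)\wt{u}^*$ and the Onsager discrepancies, using boundedness of $\tfrac{1}{n}\|\wt{u}^*\|_2^2$ and the operator norm of $\wt{W}$) is precisely what the paper's inductive step establishes.
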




\begin{proof}[Proof of \Cref{prop:SE}]
By definitions of the auxiliary AMP \Cref{eqn:AMP} and the matrix $ \wt{A} $ in \Cref{eqn:A_tilde}, we have that for every $ t\ge0 $, 
\begin{align}
\brv{u}^t &= \wt{A} \mathring{v}^t - \brv{b}_t \mathring{u}^{t-1} 
= \frac{\lambda}{n} \inprod{\wt{v}^*}{\mathring{v}^t} \, \wt{u}^* + \wt{W} \mathring{v}^t - \brv{b}_t \mathring{u}^{t-1}
, \notag \\
\brv{v}^{t+1} &= \wt{A}^\top \mathring{u}^t - \brv{c}_t \mathring{v}^t
= \frac{\lambda}{n} \inprod{\wt{u}^*}{\mathring{u}^t} \, \wt{v}^* + \wt{W}^\top \mathring{u}^t - \brv{c}_t \mathring{v}^t
. \notag 
\end{align}
For every $ t\ge0 $, let us consider a pair of related iterates $ p^t, q^{t+1} $ with initialization 
\begin{align}
\wt{p}^{-1} &= 0_n , \quad 
\wt{q}^0 = \mathring{v}^0 \label{eqn:pq_init}
\end{align}
and update rules: 
\begin{align}
\begin{split}
p^t &= \wt{W} \wt{q}^t - \ell_t \wt{p}^{t-1} , \quad  
\wt{p}^t = \brv{g}_t(p^t + \brv{\mu}_t \wt{u}^*) , \quad 
m_t = \frac{1}{n} \sum_{i = 1}^n \partial_i \brv{g}_t(p^t + \brv{\mu}_t \wt{u}^*)_i , 
 \\
q^{t+1} &= \wt{W}^\top \wt{p}^t - m_t \wt{q}^t , \quad 
\wt{q}^{t+1} = \brv{f}_{t+1}(q^{t+1} + \brv{\nu}_{t+1} \wt{v}^*) , \quad 
\ell_{t+1} = \frac{1}{n} \sum_{i = 1}^d \partial_i \brv{f}_{t+1}(q^{t+1} + \brv{\nu}_{t+1} \wt{v}^*)_i 
,  
\end{split}
\label{eqn:AMP_pq}
\end{align}
where $ \brv{\mu}_t, \brv{\nu}_{t+1} $ are as in \Cref{eqn:SE_mu0,eqn:SE_mu,eqn:SE_nu}. 

Informally, the above iterates are related to $ \brv{u}^t, \brv{v}^{t+1} $ via
\begin{align}
p^t \, \textnormal{`='} \, \brv{u}^t - \brv{\mu}_t \wt{u}^* , \quad 
q^{t+1} \, \textnormal{`='} \, \brv{v}^{t+1} - \brv{\nu}_{t+1} \wt{v}^* , 
\end{align}
where the `equalities' hold only in the large $n$ limit. 
These relations will be made formal in the rest of the proof.

The algorithm \Cref{eqn:AMP_pq} takes the form of a standard AMP iteration with non-separable denoising functions as in \cite{AMP_nonsep,GraphAMP} for which the following state evolution result applies. 
For any $ t\ge0 $ and uniformly pseudo-Lipschitz functions $ \phi, \psi $ of finite order, it holds that 
\begin{align}
\begin{split}
\plim_{n\to\infty} \phi(\wt{u}^*, p^0, \cdots, p^t) - \expt{ \phi(\wt{U}^*, \brv{\sigma}_0 \brv{W}_{U,0}, \cdots, \brv{\sigma}_t \brv{W}_{U,t}) } &= 0 , \\
\plim_{n\to\infty} \psi(\wt{v}^*, q^1, \cdots, q^{t+1}) - \expt{ \psi(\wt{V}^*, \brv{\tau}_1 \brv{W}_{V,1}, \cdots, \brv{\tau}_{t+1} \brv{W}_{V,t+1}) } &= 0 , 
\end{split}
\label{eqn:SE_red}
\end{align}
where $ (\wt{U}^*, \brv{\sigma}_0 \brv{W}_{U,0}, \cdots, \brv{\sigma}_t \brv{W}_{U,t}) $ and $ (\wt{V}^*, \brv{\tau}_1 \brv{W}_{V,1}, \cdots, \brv{\tau}_{t+1} \brv{W}_{V,t+1}) $ are defined in \Cref{eqn:joint_distr,eqn:UV_tilde}. 
Note that \cite{GraphAMP} allows additional randomness independent of $ \wt{W} $ that goes into the denoising functions. 
So the asymptotic guarantee in \Cref{eqn:SE_red} holds for the joint tuple involving $ \wt{U}^*, \wt{V}^* $ as well. 

\Cref{eqn:SE_red} immediately implies
\begin{align}
\begin{split}
\plim_{n\to\infty} \phi({u}^*, p^0 + \brv{\mu}_0 \wt{u}^*, \cdots, p^t + \brv{\mu}_t \wt{u}^*) - \expt{ \phi({U}^*, \brv{U}_0, \cdots, \brv{U}_t) } &= 0 , \\
\plim_{n\to\infty} \psi({v}^*, q^1 + \brv{\nu}_1 \wt{v}^*, \cdots, q^{t+1} + \brv{\nu}_{t+1} \wt{v}^*) - \expt{ \psi({V}^*, \brv{V}_1, \cdots, \brv{V}_{t+1}) } &= 0 , 
\end{split}
\label{eqn:SE_imply}
\end{align}
where we recall the definition of $ \brv{U}_t, \brv{V}_{t+1} $ in \Cref{eqn:UV}. 
We will show that 
\begin{align}
\begin{split}
\plim_{n\to\infty} \phi({u}^*, p^0 + \brv{\mu}_0 \wt{u}^*, \cdots, p^t + \brv{\mu}_t \wt{u}^*) - \phi({u}^*, \brv{u}^0, \cdots, \brv{u}^t) &= 0 , \\
\plim_{n\to\infty} \psi({v}^*, q^1 + \brv{\nu}_1 \wt{v}^*, \cdots, q^{t+1} + \brv{\nu}_{t+1} \wt{v}^*) - \psi({v}^*, \brv{v}^1, \cdots, \brv{v}^{t+1}) &= 0 , 
\end{split}
\label{eqn:SE_todo}
\end{align}
which, when combined with \Cref{eqn:SE_imply}, concludes the proof of \Cref{prop:SE}. 

To show \Cref{eqn:SE_todo}, suppose that $ \phi, \psi $ are uniformly $L$-pseudo-Lipschitz of order $k$. 
Then by the triangle inequality, 
\begin{align}
& \abs{ \phi({u}^*, p^0 + \brv{\mu}_0 \wt{u}^*, \cdots, p^t + \brv{\mu}_t \wt{u}^*) - \phi({u}^*, \brv{u}^0, \cdots, \brv{u}^t) } \notag \\
&\le L \paren{ \sum_{s = 0}^t \frac{1}{\sqrt{n}} \normtwo{p^s + \brv{\mu}_s \wt{u}^* - \brv{u}^s} }\notag\\
& \quad
\times \brack{ 1 + \paren{ \frac{1}{\sqrt{n}} \normtwo{{u}^*} + \sum_{s = 0}^t \frac{1}{\sqrt{n}} \normtwo{p^s + \brv{\mu}_s \wt{u}^*} }^{k-1} + \paren{ \frac{1}{\sqrt{n}} \normtwo{{u}^*} + \sum_{s = 0}^t \frac{1}{\sqrt{n}} \normtwo{\brv{u}^s} }^{k-1} } \notag \\
&\le L' \paren{ \sum_{s = 0}^t \frac{1}{\sqrt{n}} \normtwo{p^s + \brv{\mu}_s \wt{u}^* - \brv{u}^s} }\notag\\
& \quad
\times \brack{ 1 + \paren{ \frac{1}{\sqrt{n}} \normtwo{{u}^*} }^{k-1} + \sum_{s = 0}^t \paren{ \frac{1}{\sqrt{n}} \normtwo{p^s + \brv{\mu}_s \wt{u}^*} }^{k-1} + \sum_{s = 0}^t \paren{ \frac{1}{\sqrt{n}} \normtwo{\brv{u}^s} }^{k-1} } , \notag 
\end{align}
for some $L'$ depending only on $t,k,L$. 
Similar manipulation gives 
\begin{align}
& \abs{ \psi({v}^*, q^1 + \brv{\nu}_1 \wt{v}^*, \cdots, q^{t+1} + \brv{\nu}_{t+1} \wt{v}^*) - \psi({v}^*, \brv{v}^1, \cdots, \brv{v}^{t+1}) } \notag \\
&\le L' \paren{ \sum_{s = 1}^{t+1} \frac{1}{\sqrt{d}} \normtwo{q^s + \brv{\nu}_s \wt{v}^* - \brv{v}^s} }\notag\\
& \quad
\times \brack{ 1 + \paren{ \frac{1}{\sqrt{d}} \normtwo{{v}^*} }^{k-1} + \sum_{s = 1}^{t+1} \paren{ \frac{1}{\sqrt{d}} \normtwo{q^s + \brv{\nu}_s \wt{v}^*} }^{k-1} + \sum_{s = 1}^{t+1} \paren{ \frac{1}{\sqrt{d}} \normtwo{\brv{v}^s} }^{k-1} } . \notag 
\end{align}
Clearly, \Cref{eqn:SE_todo} holds if for every $ t\ge0 $, 
\begin{align}
\plim_{n\to\infty} \frac{1}{\sqrt{n}} \normtwo{p^t + \brv{\mu}_t \wt{u}^*} &< \infty , \label{eqn:finite_p} \\
\plim_{n\to\infty} \frac{1}{\sqrt{n}} \normtwo{\brv{u}^t} &< \infty , \label{eqn:finite_u} \\
\plim_{n\to\infty} \frac{1}{n} \normtwo{ \brv{u}^t - (p^t + \brv{\mu}_t \wt{u}^*) }^2 &= 0 , \label{eqn:ind_u} \\
\plim_{d\to\infty} \frac{1}{\sqrt{d}} \normtwo{q^{t+1} + \brv{\nu}_{t+1} \wt{v}^*} &< \infty , \label{eqn:finite_q} \\
\plim_{d\to\infty} \frac{1}{\sqrt{d}} \normtwo{\brv{v}^{t+1}} &< \infty , \label{eqn:finite_v} \\
\plim_{d\to\infty} \frac{1}{d} \normtwo{ \brv{v}^{t+1} - (q^{t+1} + \brv{\nu}_{t+1} \wt{v}^*) }^2 &= 0 , \label{eqn:ind_v} 
\end{align}
which, together with the following statements
\begin{align}
\brv{\mu}_t < \infty, \quad \brv{\sigma}_t &< \infty , 
\label{eqn:mu_sigma_finite} \\
\brv{\nu}_{t+1} < \infty , \quad 
\brv{\tau}_{t+1} &< \infty , 
\label{eqn:nu_tau_finite} 
\end{align}
will be shown in the sequel by induction on $ t\ge0 $. 

\paragraph{Base case.}
Consider $ t = 0 $. 
From \Cref{eqn:SE_imply}, 
\begin{align}
\plim_{n\to\infty} \frac{1}{n} \normtwo{p^0 + \brv{\mu}_0 \wt{u}^*}^2
&= \plim_{n\to\infty} \frac{1}{n} \expt{ \normtwo{\brv{U}_0}^2 }
= \brv{\sigma}_0^2 + \brv{\mu}_0^2  \expt{ \ol{\Xi}^{-1} } , \label{eqn:p0}
\end{align}
where the last equality is by \Cref{eqn:UV}. 
Due to \Cref{eqn:SE_sigma_init} and \Cref{asmp:init}, both $ \brv{\mu}_0 $ and $ \brv{\sigma}_0 $ are finite, so \Cref{eqn:mu_sigma_finite} holds for $t=0$. 
Consequently, \Cref{eqn:finite_p} also holds for $t=0$. 

Since by \Cref{eqn:pq_init}, 
\begin{align}
(p^0 + \brv{\mu}_0 \wt{u}^*) - \brv{u}^0
&= \brv{\mu}_0 \wt{u}^* - \frac{\lambda}{n} \inprod{\wt{v}^*}{\mathring{v}^0} \wt{u}^* , \notag 
\end{align}
therefore \Cref{eqn:ind_u} for $t=0$ follows from \Cref{eqn:SE_mu0} and \Cref{asmp:init}. 
This in turn implies, when combined with the finiteness of \Cref{eqn:p0}, that 
\begin{align}
\plim_{n\to\infty} \frac{1}{\sqrt{n}} \normtwo{\brv{u}^0} &< \infty , \label{eqn:u0} 
\end{align}
verifying \Cref{eqn:finite_u} for $t=0$. 
Since $ \brv{g}_0 $ is uniformly pseudo-Lipschitz of finite order, so is the function $ \frac{1}{n} \sum_{i = 1}^n \partial_i \brv{g}_0(\cdot)_i $. 
\Cref{eqn:ind_u,eqn:finite_p,eqn:finite_u} (for $t=0$) together imply
\begin{align}
\plim_{n\to\infty} \abs{ m_0 } &< \infty , \quad
\plim_{n\to\infty} \abs{ m_0 - \brv{c}_0 } = 0 . \label{eqn:m0_b0}
\end{align}

Using the the pseudo-Lipschitzness of $ \brv{g}_0 $ again, 
\begin{align}
\plim_{n\to\infty} \frac{1}{\sqrt{n}} \normtwo{ \wt{p}^0 - \mathring{u}^0 }
&= \plim_{n\to\infty} \frac{1}{\sqrt{n}} \normtwo{ \brv{g}_0(p^0 + \brv{\mu}_0\wt{u}^*) - \brv{g}_0(\brv{u}^0) } \notag \\
&\le \plim_{n\to\infty} L \frac{\normtwo{(p^0 + \brv{\mu}_0\wt{u}^*) - \brv{u}^0}}{\sqrt{n}} \brack{ 1 + \paren{\frac{\normtwo{p^0 + \brv{\mu}_0\wt{u}^*}}{\sqrt{n}}}^{k-1} + \paren{\frac{\normtwo{\brv{u}^0}}{\sqrt{n}}}^{k-1} } \notag \\
&= 0 . \label{eqn:pu0_tilde}
\end{align}
The last equality holds because of \Cref{eqn:ind_u} (for $t=0$) and the finiteness of \Cref{eqn:p0,eqn:u0}. 

To show \Cref{eqn:ind_v} for $t=0$, we use \Cref{eqn:AMP_pq,eqn:AMP,eqn:pq_init} to write
\begin{align}
(q^1 + \brv{\nu}_1 \wt{v}^*) - \brv{v}^1
&= \underbrace{\wt{W}^\top (\wt{p}^0 - \mathring{u}^0)}_{T_1} 
+ \underbrace{\paren{ \brv{\nu}_1 - \frac{\lambda}{n} \inprod{\mathring{u}^0}{\wt{u}^*} }}_{T_2} \wt{v}^*
- \underbrace{(m_0 - \brv{c}_0)}_{T_3} \mathring{v}^0 . \notag 
\end{align} 
By \Cref{eqn:pu0_tilde} and the Bai--Yin law \cite{BaiYin}, 
\begin{align}
\plim_{d\to\infty} \frac{1}{d} \normtwo{ T_1 }^2
&= 0 . \label{eqn:T1}
\end{align}
Using \Cref{eqn:pu0_tilde} again, 
\begin{align}
\plim_{n\to\infty} \frac{\lambda}{n} \inprod{\mathring{u}^0}{\wt{u}^*}
&= \plim_{n\to\infty} \frac{\lambda}{n} \inprod{\wt{p}^0}{\wt{u}^*}
= \plim_{n\to\infty} \frac{\lambda}{n} \inprod{\brv{g}_0(p^0 + \brv{\mu}_0 \wt{u}^*)}{\wt{u}^*} \notag \\
&= \lim_{n\to\infty} \frac{\lambda}{n} \expt{ \inprod{\brv{g}_0(\brv{U}_0)}{\wt{u}^*} }
= \brv{\nu}_1 , \label{eqn:ind_nu1}
\end{align}
where in the second line, the first equality is by \Cref{eqn:SE_todo} and the pseudo-Lipschitzness of $ \brv{g}_0 $, and the second equality is by the definition \Cref{eqn:SE_nu}. 
We further show the finiteness of $ \brv{\nu}_1 $. 
Note that
\begin{align}
\brv{\nu}_1 &\le \lim_{n\to\infty} \lambda  \expt{ \frac{1}{n} \normtwo{\brv{g}_0(\brv{U}_0)}^2 }^{1/2} \expt{ \frac{1}{n} \normtwo{\wt{U}^*}^2}^{1/2} . \notag 
\end{align}
The first term can be bounded as
\begin{align}
\plim_{n\to\infty} \frac{1}{n} \expt{ \normtwo{\brv{g}_0(\brv{U}_0)}^2 }
&\le \plim_{n\to\infty} L' \expt{ \paren{ 1 + \paren{\frac{1}{\sqrt{n}} \normtwo{\brv{U}_0}}^k }^2 }
\le \plim_{n\to\infty} 2L' \paren{
    1 + \expt{ \paren{ \frac{1}{n} \normtwo{\brv{U}_0}^2 }^k }
} , \label{eqn:g0_finite}
\end{align}
where the last step is the elementary inequality $ (a+b)^2 \le 2(a^2 + b^2) $. 
The RHS above is finite since  
\begin{align}
\frac{1}{n} \normtwo{\brv{U}_0}^2 &= \frac{\brv{\mu}_0^2}{n} \normtwo{\wt{U}^*}^2 + \frac{\brv{\sigma}_0^2}{n} \normtwo{\brv{W}_{V,0}}^2
\end{align}
whose all moments are finite by finiteness of $ \brv{\mu}_0, \brv{\sigma}_0 $. 
This shows the first bound in \Cref{eqn:nu_tau_finite} for $ t=0 $. 
Recalling \Cref{eqn:ind_nu1}, we then have 
\begin{align}
\plim_{n\to\infty} \abs{ T_2 } &= 0 . \label{eqn:T2}
\end{align}
By \Cref{eqn:m0_b0}, 
\begin{align}
\plim_{n\to\infty} \abs{ T_3 } &= 0 . \label{eqn:T3}
\end{align}
Therefore, \Cref{eqn:T1,eqn:T2,eqn:T3} altogether verify \Cref{eqn:ind_v} for $ t=0 $. 

We then show \Cref{eqn:finite_q} for $t=0$. 
Since $ \brv{\nu}_1<\infty $, it suffices to only consider $ q^1 $. 
According to \Cref{eqn:AMP_pq,eqn:pq_init}, 
\begin{align}
q^1 &= \wt{W}^\top \wt{p}^0 - m_0 \mathring{v}^0 . \notag 
\end{align}
Pseudo-Lipschitzness of $ \brv{g}_0 $, finiteness \Cref{eqn:finite_p} (for $t=0$) and finiteness \Cref{eqn:m0_b0} jointly imply 
\begin{align}
\plim_{n\to\infty} \frac{1}{\sqrt{n}} \normtwo{ q^1 } &< \infty , \notag 
\end{align}
from which \Cref{eqn:finite_q} follows. 
This combined with \Cref{eqn:ind_v} (for $t=0$) also implies \Cref{eqn:finite_v} (for $ t=0 $). 

Finally, we are left with the second inequality in \Cref{eqn:nu_tau_finite}. 
From the definition \Cref{eqn:SE_tau}, 
\begin{align}
\brv{\tau}_1^2 &= \lim_{n\to\infty} \frac{1}{n} \expt{ \normtwo{\brv{g}_0(\brv{U}_0)}^2 } \notag 
\end{align}
which has already been shown to be finite; see \Cref{eqn:g0_finite}. 
So the base case is finished. 

\paragraph{Induction step.}
Assume that \Cref{eqn:finite_p,eqn:finite_u,eqn:ind_u,eqn:finite_q,eqn:finite_v,eqn:ind_v} all hold up to the $t$-th step (for an arbitrary $ t\ge1 $). 
We now show that they hold for $t+1$. 
The idea is similar to the base case.
We briefly lay down the key steps for \Cref{eqn:finite_p,eqn:finite_u,eqn:ind_u,eqn:mu_sigma_finite}, and omit the verification of \Cref{eqn:finite_q,eqn:finite_v,eqn:ind_v,eqn:nu_tau_finite}. 

Using \Cref{eqn:SE_imply,eqn:UV}, 
\begin{align}
\plim_{n\to\infty} \frac{1}{n} \normtwo{p^{t+1} + \brv{\mu}_{t+1} \wt{u}^*}^2
&= \brv{\sigma}_{t+1}^2 + \brv{\mu}_{t+1}^2 \expt{\ol{\Xi}^{-1}} . \label{eqn:finite_p_ind}
\end{align}
Using the definition \Cref{eqn:SE_sigma,} and the pseudo-Lipschitzness of $ \brv{f}_{t+1} $, 
\begin{align}
\brv{\sigma}_{t+1}^2 &= \lim_{n\to\infty} \frac{1}{n} \expt{ \normtwo{\brv{f}_{t+1}(\brv{V}_{t+1})}^2 }
\le \lim_{n\to\infty} 2L' \paren{ 1 + \expt{ \paren{\frac{1}{n} \normtwo{\brv{V}_{t+1}}^2}^{k} } }
, \notag 
\end{align}
for some $L'$ depending only on $k,L$. 
The inequality is obtained in a similar way to \Cref{eqn:g0_finite}. 
By induction hypothesis \Cref{eqn:nu_tau_finite} and the compactness of $\supp(\ol{\Sigma})$, all moments of
\begin{align}
\frac{1}{n} \normtwo{\brv{V}_{t+1}}^2 &= \frac{\brv{\nu}_{t+1}^2}{n} \normtwo{\wt{V}^*}^2 + \frac{\brv{\tau}_{t+1}^2}{n} \normtwo{\brv{W}_{V,t+1}}^2 \notag 
\end{align}
are finite. 
Therefore $ \brv{\sigma}_{t+1}^2 < \infty $, giving the second bound in \Cref{eqn:mu_sigma_finite}. 
Similarly, using the definition \Cref{eqn:SE_mu} and Cauchy--Schwarz, 
\begin{align}
\brv{\mu}_{t+1} &= \lim_{d\to\infty} \frac{\lambda}{n} \expt{\inprod{\wt{V}^*}{\brv{f}_{t+1}(\brv{V}_{t+1})}}
\le \lim_{d\to\infty} \frac{L'\lambda}{\sqrt{n}} \expt{ \normtwo{\wt{V}^*} \paren{ 1 + \paren{ \frac{1}{\sqrt{n}} \normtwo{\brv{V}_{t+1}} }^k } } \notag \\
&\le \lim_{d\to\infty} L' \lambda \expt{ \frac{1}{n} \normtwo{\wt{V}^*}^2 }^{1/2} \paren{ 2 \expt{ 1 + \paren{ \frac{1}{n} \normtwo{\brv{V}_{t+1}}^2 }^k } }^{1/2}
, \notag 
\end{align}
which is again finite for the same reason as $ \brv{\sigma}_{t+1} $, giving the first bound in \Cref{eqn:mu_sigma_finite}. 
Therefore \Cref{eqn:finite_p_ind} is also finite, verifying \Cref{eqn:finite_p} for $t+1$. 

We then show \Cref{eqn:ind_u} for $t+1$. 
Using the recursions \Cref{eqn:AMP,eqn:AMP_pq}, 
\begin{align}
p^{t+1} + \brv{\mu}_{t+1} \wt{u}^* - \brv{u}^{t+1}
&= \underbrace{\wt{W} (\wt{q}^{t+1} - \mathring{v}^{t+1})}_{T_1'}
+ \underbrace{\paren{ \brv{\mu}_{t+1} - \frac{\lambda}{n} \inprod{\wt{v}^*}{\mathring{v}^{t+1}} }}_{T_2'} \wt{u}^*
- \underbrace{(\ell_{t+1} \wt{p}^t - \brv{b}_{t+1} \mathring{u}^t)}_{T_3'} . \notag 
\end{align}
Consider $ T_1' $. 
Since \Cref{eqn:ind_v,eqn:finite_q,eqn:finite_v} are assumed to hold, by pseudo-Lipschitzness of $ \brv{f}_{t+1} $, 
\begin{align}
\plim_{n\to\infty} \frac{1}{\sqrt{d}} \normtwo{ \wt{q}^{t+1} - \mathring{v}^{t+1} } &= 0 . \label{eqn:qv_tilde_t+1}
\end{align}
This with the Bai-Yin law \cite{BaiYin} gives
\begin{align}
\plim_{n\to\infty} \frac{1}{n} \normtwo{T_1'}^2 &= 0 , 
\label{eqn:T1'}
\end{align}
Consider $ T_2' $. 
Recall that $ \brv{\mu}_{t+1} < \infty $. 
Using \Cref{eqn:qv_tilde_t+1,eqn:SE_imply,eqn:SE_mu} and following the argument leading to \Cref{eqn:ind_nu1}, we have
\begin{align}
\plim_{n\to\infty} \abs{ T_2' } = 0 .
\label{eqn:T2'}
\end{align}
Consider $ T_3' $. \
By the triangle inequality,  
\begin{align}
\normtwo{ \ell_{t+1} \wt{p}^t - \brv{b}_{t+1} \mathring{u}^t }
&\le \abs{ \ell_{t+1} - \brv{b}_{t+1} } \normtwo{ \wt{p}^t }
+ \abs{ \brv{b}_{t+1} } \normtwo{ \wt{p}^t - \mathring{u}^t } . \notag 
\end{align}
Since \Cref{eqn:ind_v,eqn:finite_q,eqn:finite_v} are assumed to hold, by pseudo-Lipschitzness of $ \frac{1}{n} \sum_{i=1}^d \partial_i \brv{f}_{t+1}(\cdot)_i $, 
\begin{align}
\plim_{n\to\infty} \abs{\brv{b}_{t+1}} &< \infty , \quad 
\plim_{n\to\infty} \abs{ \ell_{t+1} - \brv{b}_{t+1} } = 0 . \label{eqn:T3_1}
\end{align}
Similarly, pseudo-Lipschitzness of $ \brv{g}_t $ and the hypothesis \Cref{eqn:finite_p,eqn:finite_u,eqn:ind_u} ensures
\begin{align}
\plim_{n\to\infty} \frac{1}{\sqrt{n}} \normtwo{\wt{p}^t} &< \infty , \quad 
\plim_{n\to\infty} \frac{1}{\sqrt{n}} \normtwo{ \wt{p}^t - \mathring{u}^t } = 0 , 
\label{eqn:T3_2}
\end{align}
So combining \Cref{eqn:T3_1,eqn:T3_2}, we have
\begin{align}
\plim_{n\to\infty} \frac{1}{n} \normtwo{ T_3' }^2 &= 0 . \label{eqn:T3'}
\end{align}
\Cref{eqn:T1',eqn:T2',eqn:T3'} altogether verify \Cref{eqn:ind_u} for $t+1$, and therefore also \Cref{eqn:finite_u} by \Cref{eqn:finite_p}. 

The verification of \Cref{eqn:nu_tau_finite,eqn:finite_q,eqn:finite_v,eqn:ind_v} for $ t+1 $ is completely analogous and we do not repeat similar arguments. 
The proof is finally completed. 
\end{proof}


\subsection{Proof of \Cref{prop:SE_bayes}}
\label{app:pf_SE_bayes}

We will prove \Cref{prop:SE_bayes} by reducing the AMP iteration \Cref{eqn:BAMP} (and its associated state evolution \Cref{eqn:mu_nu,eqn:Phi_Psi,eqn:sigma_tau,eqn:all_joint,eqn:UV_bayes}) to the auxiliary AMP \Cref{eqn:AMP} (and its associated state evolution \Cref{eqn:SE_mu0,eqn:SE_mu,eqn:SE_nu,eqn:Phi_11,eqn:Phi_1s,eqn:Phi_rs,eqn:Psi_rs,eqn:UV,eqn:joint_distr,eqn:UV_tilde,eqn:SE_sigma,eqn:SE_tau}). 

Under the following change of variables
\begin{align}
&&
u^t &\coloneqq \Xi^{-1/2} \brv{u}^t , & 
v^{t+1} &\coloneqq \Sigma^{-1/2} \brv{v}^{t+1} , && \label{eqn:change_uv} \\
&&
f_{t+1}(v^{t+1}) &\coloneqq \Sigma^{1/2} \brv{f}_{t+1}(\Sigma^{1/2} v^{t+1}) , & 
g_t(u^t) &\coloneqq \Xi^{1/2} \brv{g}_t(\Xi^{1/2} u^t) , && \label{eqn:change_fg} 
\end{align}
\Cref{eqn:AMP} becomes
\begin{align}
&&
u^t &= \Xi^{-1} A \Sigma^{-1} \wt{v}^t - b_t \Xi^{-1} \wt{u}^t , & 
\wt{u}^t &= g_t(u^t) , && \notag \\
&&
v^{t+1} &= \Sigma^{-1} A^\top \Xi^{-1} \wt{u}^t - c_b \Sigma^{-1} \wt{v}^t , & 
\wt{v}^{t+1} &= f_{t+1}(v^{t+1}) , && \notag 
\end{align}
where $ b_{t+1}, c_t $ are equal to $ \brv{b}_{t+1}, \brv{c}_t $, respectively, but are expressed using the derivatives of $ f_{t+1}, g_t $. 
Specifically, 
\begin{align}
c_t &= \frac{1}{n} \sum_{i = 1}^n \frac{\partial}{\partial \brv{u}^t_i} \brv{g}_t(\brv{u}^t)_i
= \frac{1}{n} \sum_{i = 1}^n \frac{\partial}{\partial \brv{u}^t_i} \paren{ \Xi^{-1/2} g_t(\Xi^{-1/2} \brv{u}^t) }_i \notag \\
&= \frac{1}{n} \sum_{i = 1}^n \sum_{j = 1}^n \sum_{k = 1}^n (\Xi^{-1/2})_{i,j} (\Xi^{-1/2})_{k,i} \frac{\partial g_t(u^t)_j}{\partial u^t_k}
= \frac{1}{n} \tr( (\nabla g_t) \Xi^{-1} ) . \notag 
\end{align}
The second equality follows since by \Cref{eqn:change_fg}, 
\begin{align}
\brv{g}_t(\brv{u}^t) &= \Xi^{-1/2} g_t(\Xi^{-1/2} \brv{u}^t) . \notag 
\end{align}
The third equality is by the chain rule for derivatives (\Cref{prop:chain_derivative}). A similar computation gives
\begin{align}
b_{t+1} &= \frac{1}{n} \tr( (\nabla f_{t+1}) \Sigma^{-1} ) . \notag 
\end{align}
We now see that under the change of variables \Cref{eqn:change_uv,eqn:change_fg}, the AMP iteration \Cref{eqn:BAMP} can be cast as \Cref{eqn:AMP}. 
Therefore, applying the same change of variables to the state evolution of \Cref{eqn:AMP} will produce the state evolution of \Cref{eqn:BAMP}. 
We describe the required modifications below. 

The state evolution result in \Cref{prop:SE} for the AMP in \Cref{eqn:AMP} says that the iterates $ v^*, \brv{v}^1, \brv{v}^2, \cdots, \brv{v}^{t+1} \in\bbR^d $ and $ u^*, \brv{u}^0, \brv{u}^1, \cdots ,\brv{u}^t \in\bbR^n $ converge (in the sense of \Cref{eqn:SE_result_u,eqn:SE_result_v}) respectively to $ V^*, \brv{V}_1, \brv{V}_2, \cdots, \brv{V}_{t+1} \in\bbR^d $ and $ U^*, \brv{U}_1, \brv{U}_2, \cdots, \brv{U}_t \in\bbR^n $. 
Recall that AMPs \Cref{eqn:AMP,eqn:BAMP} operate on the following matrices respectively: 
\begin{align}
\wt{A} 
&= \frac{\lambda}{n} (\Xi^{-1/2} u^*) (\Sigma^{-1/2} v^*)^\top + \wt{W} , \quad 
\Xi^{-1} A \Sigma^{-1} = \frac{\lambda}{n} (\Xi^{-1} u^*) (\Sigma^{-1} v^*)^\top + \Xi^{-1/2} \wt{W} \Sigma^{-1/2} . \notag
\end{align}
In view of \Cref{eqn:change_uv}, to obtain the analogous state evolution result for the AMP in \Cref{eqn:BAMP}, the definition \Cref{eqn:UV} of $ \brv{U}_t, \brv{V}_{t+1} $ 
should be multiplied by $ \Xi^{-1/2} , \Sigma^{-1/2} $ respectively. 
This gives the new definition of $ U_t, V_{t+1} $ in \Cref{eqn:UV_bayes}. 
By the relation \Cref{eqn:change_fg}, the parameters $ \brv{\mu}_t, \brv{\nu}_{t+1} $ in $ \brv{U}_t, \brv{V}_{t+1} $ should be modified as follows: replace $ \brv{f}_t(\brv{V}_t), \brv{g}_t(\brv{U}_t) $ in the recursive equations \Cref{eqn:SE_mu0,eqn:SE_mu,eqn:SE_nu} with $ \Sigma^{-1/2} f_t(V_t), \Xi^{-1/2} g_t(U_t) $. 
This gives the new definition of $ \mu_t, \nu_t $ in \Cref{eqn:mu_nu}. 
Similar operations map equations \Cref{eqn:Phi_11,eqn:Phi_1s,eqn:Phi_rs,eqn:Psi_rs,eqn:SE_sigma,eqn:SE_tau} to equations \Cref{eqn:Phi_Psi,eqn:sigma_tau}. 
Finally, under the new definition of $ U_t, V_{t+1} $, the convergence result \Cref{eqn:SE_result_u,eqn:SE_result_v} translates to \Cref{eqn:SE_result_u_bayes,eqn:SE_result_v_bayes}, which completes the proof.



\section{Auxiliary lemmas}
\label{sec:aux}

\begin{proposition}[Gaussian integral]
\label{prop:gauss_int}
Let $ A\in\bbR^{d\times d} $ be a positive-definite matrix and $ b\in\bbR^d $. 
Then
\begin{align}
\int_{\bbR^d} \exp\paren{ -\frac{1}{2} x^\top A x + b^\top x } \diff x = \sqrt{\frac{(2\pi)^d}{\det(A)}} \exp\paren{ \frac{1}{2} b^\top A^{-1} b } . \notag 
\end{align}
\end{proposition}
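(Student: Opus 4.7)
The plan is to reduce the integral to a product of one-dimensional Gaussian integrals via completing the square and then diagonalizing $A$. First I would rewrite the exponent by completing the square: since $A$ is positive-definite it is invertible, so
\[
-\frac{1}{2} x^\top A x + b^\top x = -\frac{1}{2} (x - A^{-1} b)^\top A (x - A^{-1} b) + \frac{1}{2} b^\top A^{-1} b .
\]
The second term is a constant in $x$ and pulls out of the integral, yielding exactly the exponential factor on the right-hand side. It remains to show that
\[
\int_{\bbR^d} \exp\paren{ -\frac{1}{2} y^\top A y } \diff y = \sqrt{(2\pi)^d / \det(A)} ,
\]
after the translation $y = x - A^{-1} b$, which has unit Jacobian.

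Next I would diagonalize $A$. Since $A$ is symmetric positive-definite, the spectral theorem gives $A = Q \Lambda Q^\top$ with $Q$ orthogonal and $\Lambda = \diag(\lambda_1, \ldots, \lambda_d)$, $\lambda_i > 0$. The orthogonal change of variables $z = Q^\top y$ has Jacobian determinant $\pm 1$, so
\[
\int_{\bbR^d} \exp\paren{ -\frac{1}{2} y^\top A y } \diff y = \int_{\bbR^d} \exp\paren{ -\frac{1}{2} \sum_{i=1}^d \lambda_i z_i^2 } \diff z = \prod_{i=1}^d \int_{\bbR} \exp\paren{ -\frac{\lambda_i}{2} z_i^2 } \diff z_i .
\]
Using the standard one-dimensional identity $\int_{\bbR} e^{-\lambda z^2 / 2} \diff z = \sqrt{2\pi / \lambda}$ for each factor, and noting that $\det(A) = \prod_i \lambda_i$, the product evaluates to $\sqrt{(2\pi)^d / \det(A)}$, completing the proof.

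There is no real obstacle here: the argument is entirely routine linear algebra plus the scalar Gaussian integral (which itself follows from the classical polar-coordinate trick applied to $\paren{\int e^{-z^2/2} dz}^2$). The only thing to be careful about is that positive-definiteness of $A$ is used twice, namely for invertibility (to complete the square) and for positivity of all eigenvalues (so each one-dimensional integral converges and $\det(A) > 0$).
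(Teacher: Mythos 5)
Your proof is correct and is the standard complete-the-square plus spectral-decomposition argument; the paper itself states this proposition as a classical fact without giving any proof, so there is nothing to diverge from. The only small point worth making explicit is that your completion of the square and use of the spectral theorem tacitly assume $A$ is symmetric, which is the intended reading here (the paper only applies the identity to symmetric positive-definite matrices such as $\snr\Sigma^{-1}+I_n$).
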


\begin{proposition}
\label{prop:quadratic_form}
Let $ V \sim Q^{\ot d} $ where $ Q $ is a fixed distribution on $\bbR$ with mean $0$. 
Let $ B\in\bbR^{d\times d} $ denote a sequence (indexed by $d$) of deterministic matrices such that the empirical spectral distribution of $ \frac{1}{d} B $ converges to the law of a random variable $ \ol{B} $. 
Then 
\begin{align}
\lim_{d\to\infty} \frac{1}{d} \expt{V^\top B V} &= \expt{ \ol{V}^2 } \expt{\ol{B}} \notag 
\end{align}
where $ \ol{V} \sim Q $. 
\end{proposition}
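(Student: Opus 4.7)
The claim reduces to a direct computation followed by an application of the assumed convergence of the empirical spectral distribution. Since $V \sim Q^{\otimes d}$ has i.i.d.\ entries with mean $0$ and second moment $\mathbb{E}[\bar{V}^2]$, the cross terms vanish:
\[
\mathbb{E}[V^\top B V] = \sum_{i,j=1}^d B_{i,j} \mathbb{E}[V_i V_j] = \mathbb{E}[\bar{V}^2] \sum_{i=1}^d B_{i,i} = \mathbb{E}[\bar{V}^2] \operatorname{tr}(B).
\]
Dividing by $d$, it remains to show that $\frac{1}{d}\operatorname{tr}(B) \to \mathbb{E}[\bar{B}]$. By definition, $\frac{1}{d}\operatorname{tr}(B) = \int x\, d\mu_d(x)$, where $\mu_d$ denotes the empirical spectral distribution of $\frac{1}{d} B$ (viewed as a probability measure placing mass $1/d$ at each of the $d$ eigenvalues). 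The hypothesis gives $\mu_d \Rightarrow \mathcal{L}(\bar{B})$ weakly, and under the operator-norm boundedness implicit in the settings where this proposition is invoked (cf.\ the compact-support assumption \Cref{eqn:supp}), $\mu_d$ is supported in a fixed compact interval eventually, so $x\mapsto x$ is bounded on a common set containing the supports. This upgrades weak convergence to convergence in the first moment, giving $\int x\, d\mu_d(x) \to \mathbb{E}[\bar{B}]$.

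The plan is therefore: first, I would perform the elementary expansion above to reduce the claim to a trace statement; second, I would invoke the convergence of the ESD together with uniform integrability (guaranteed by a uniform bound on $\|B\|_{\mathrm{op}}$, which is present in every application of this proposition in the paper) to conclude $\frac{1}{d}\operatorname{tr}(B) \to \mathbb{E}[\bar{B}]$.

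The only real subtlety is the passage from weak convergence to convergence of the trace, i.e., testing against the unbounded function $x \mapsto x$. If one wishes to state the proposition in full generality without a boundedness assumption, a uniform integrability or second-moment condition on $\mu_d$ needs to be added; otherwise the result can fail (e.g., if $B$ has an outlier eigenvalue of size $\Theta(d)$ that does not register in the weak limit). I expect this to be the only genuine obstacle, and it is painless in context because the matrices $B$ arising in the paper ($\Xi$, $\Sigma$, their inverses, polynomials thereof) all satisfy uniformly bounded operator norm by the standing assumptions of \Cref{sec:prelim}.
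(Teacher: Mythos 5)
Your argument is correct, and there is nothing in the paper to diverge from: \Cref{prop:quadratic_form} is stated in the auxiliary-lemmas appendix without proof, and the intended justification is exactly the routine computation you give. The expansion $\expt{V^\top B V}=\expt{\ol{V}^2}\tr(B)$ follows from independence and mean zero, and your remark that weak convergence of the spectral law must be supplemented by uniform integrability to test against $x\mapsto x$ is the only genuine content of the statement; in every application in the paper ($B=\Xi^{-1}$, $\Sigma^{-1}$, resolvents and rational functions thereof) this is supplied by the uniform operator-norm bound and the support condition \Cref{eqn:supp} of \Cref{sec:prelim}, so your appeal to those standing assumptions is legitimate rather than a gap.

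One point of precision: as literally written (in the proposition and echoed in your proof), ``the empirical spectral distribution of $\frac{1}{d}B$'' would give $\int x\,\diff\mu_d(x)=\frac{1}{d}\tr\bigl(\frac{1}{d}B\bigr)=\frac{1}{d^2}\tr(B)$, which is off by a factor of $d$ from the identity $\frac{1}{d}\tr(B)=\int x\,\diff\mu_d(x)$ that you use. The normalization consistent with how the proposition is invoked (e.g., in \Cref{eq:trivial}, where $B=\Xi^{-1}$ and the limit is $\expt{\ol{\Xi}^{-1}}$, and with the convention of \Cref{sec:prelim} that the ESDs of $\Xi,\Sigma$ themselves converge to $\ol{\Xi},\ol{\Sigma}$) is that the ESD of $B$, not of $\frac{1}{d}B$, converges to the law of $\ol{B}$. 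Your proof is correct under that reading; the ``$\frac{1}{d}$'' in the hypothesis is best regarded as a typo in the statement, but it would be worth saying so explicitly rather than reproducing it.
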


\begin{proposition}
\label{prop:quadratic_form_corr}
Let 
\begin{align}
    (W_1, W_2) &\sim \cN\paren{ \matrix{ 0_d \\ 0_d } , \matrix{
    \sigma_1^2 & \rho \\
    \rho & \sigma_2^2
    } \ot I_d } . \notag 
\end{align}
Let $ B\in\bbR^{d\times d} $ denote a sequence (indexed by $d$) of deterministic matrices such that the empirical spectral distribution of $ \frac{1}{d} B $ converges to the law of a random variable $ \ol{B} $.
Then
\begin{align}
    \lim_{d\to\infty} \frac{1}{d} \expt{ W_1^\top B W_2 }
    &= \rho \expt{ \ol{B} } . \notag 
\end{align}
\end{proposition}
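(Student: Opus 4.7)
The plan is to reduce the quadratic form to the trace of $B$ by exploiting the Kronecker structure of the covariance, and then invoke the assumed convergence of the empirical spectral distribution to identify the limit.

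First, I would unpack the covariance structure. The Kronecker product $\begin{pmatrix}\sigma_1^2 & \rho \\ \rho & \sigma_2^2\end{pmatrix} \otimes I_d$ says precisely that for all $i,j \in [d]$, the cross-covariances are $\mathbb{E}[(W_1)_i (W_2)_j] = \rho \, \mathbbm{1}\{i = j\}$ (and the within-vector covariances $\mathbb{E}[(W_1)_i (W_1)_j] = \sigma_1^2 \mathbbm{1}\{i=j\}$, $\mathbb{E}[(W_2)_i (W_2)_j] = \sigma_2^2 \mathbbm{1}\{i=j\}$, though only the cross term enters here).

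Next, I would expand the quadratic form entrywise and take expectations:
\begin{align*}
\mathbb{E}[W_1^\top B W_2] &= \sum_{i,j=1}^d B_{i,j} \, \mathbb{E}[(W_1)_i (W_2)_j] = \rho \sum_{i=1}^d B_{i,i} = \rho \, \mathrm{tr}(B).
\end{align*}
Dividing by $d$ yields $\frac{1}{d} \mathbb{E}[W_1^\top B W_2] = \rho \cdot \frac{1}{d} \mathrm{tr}(B)$.

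Finally, the hypothesis that the empirical spectral distribution of $B$ converges weakly to the law of $\overline{B}$ (with the mild moment/support control implicit in the paper's setup ensuring uniform integrability of the identity function against the ESDs) gives $\frac{1}{d} \mathrm{tr}(B) = \frac{1}{d}\sum_{i=1}^d \lambda_i(B) \to \mathbb{E}[\overline{B}]$, from which the claim follows.

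There is really no obstacle here: the result is a direct two-line computation together with the definition of ESD convergence. The only mild subtlety worth flagging is that weak convergence of the ESD only implies trace convergence under an additional integrability assumption on the identity function, but this is guaranteed by the uniform boundedness conditions imposed in \Cref{sec:prelim} (analogous to how \Cref{prop:quadratic_form} is used earlier in the paper), so no extra work is needed.
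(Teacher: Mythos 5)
Your proof is correct, and it is exactly the standard computation this auxiliary proposition rests on: the paper itself states \Cref{prop:quadratic_form_corr} (like \Cref{prop:quadratic_form}) without proof, so there is no alternative argument to compare against. Your identification of the cross-covariance $\expt{(W_1)_i (W_2)_j} = \rho\,\indicator{i=j}$ from the Kronecker structure, the reduction to $\rho\,\tr(B)/d$, and the caveat that weak ESD convergence yields $\tr(B)/d \to \expt{\ol{B}}$ only together with the uniform boundedness of the spectra assumed in \Cref{sec:prelim} are all on point (note also that the ``$\frac{1}{d}B$'' in the statement should be read as the normalized eigenvalue counting measure of $B$, which is how you and the paper's applications, e.g.\ with $B = \Sigma^{-1}$, use it).
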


\begin{proposition}[Nishimori identity]
\label{prop:nishimori}
Let $ (X, Y) $ be two random variables. 
Let $ k\ge1 $ and $ X_1, \cdots, X_k $ be $k$ i.i.d.\ samples (given $Y$) from the distribution $ \law(X \mid Y) $. 
Denote by $ \bracket{\cdot} , \expt{\cdot} $ the expectations with respect to $ \law(X \mid Y) $ and $ \law(X, Y) $, respectively. 
Then for all continuous bounded function $f$, it holds that 
\begin{align}
\expt{\bracket{ f(Y, X_1, \cdots, X_k) }} &= \expt{ \bracket{f(Y, X_1, \cdots, X_{k-1}, X)} } . \notag 
\end{align}
\end{proposition}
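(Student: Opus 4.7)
\textbf{Proof proposal for \Cref{prop:nishimori}.} The plan is to exploit the observation that, conditionally on $Y$, the ground-truth signal $X$ and each of the posterior samples $X_1, \dots, X_k$ all share the same distribution $\law(X \mid Y)$, and hence the tuple $(X_1, \dots, X_k, X)$ is exchangeable given $Y$. This exchangeability is the whole content of the identity.

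More concretely, I would first unpack the two sides of the claimed equality. On the left, the inner $\bracket{\cdot}$ averages $f(Y, X_1, \dots, X_k)$ over $k$ independent draws from $\law(X \mid Y)$, and the outer $\expt{\cdot}$ averages over $Y$; by the tower property this equals $\expt{f(Y, X_1, \dots, X_k)}$, where the joint law of $(Y, X_1, \dots, X_k)$ is specified by $Y$ having its marginal law and $X_1, \dots, X_k$ being conditionally i.i.d.\ with law $\law(X \mid Y)$ given $Y$. On the right, the inner $\bracket{\cdot}$ only integrates out the first $k-1$ replicas and leaves $X$ (the ground truth) untouched; the outer $\expt{\cdot}$ then averages over $(X, Y)$, yielding $\expt{f(Y, X_1, \dots, X_{k-1}, X)}$, where now $(Y, X_1, \dots, X_{k-1}, X)$ has the law obtained by drawing $(X, Y)$ from their joint distribution and, conditionally on $Y$, drawing $X_1, \dots, X_{k-1}$ i.i.d.\ from $\law(X \mid Y)$.

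The second step is the key identification: by definition of the conditional distribution, the conditional law of $X$ given $Y$ is exactly $\law(X \mid Y)$, so conditionally on $Y$ the variable $X$ is just another independent sample from $\law(X \mid Y)$, independent of the previously drawn $X_1, \dots, X_{k-1}$ by construction. Consequently
\begin{align*}
(Y, X_1, \dots, X_{k-1}, X_k) \stackrel{d}{=} (Y, X_1, \dots, X_{k-1}, X),
\end{align*}
and applying $f$ (bounded continuous, so the expectations are well defined) and taking expectations yields the claim. There is essentially no obstacle here beyond keeping the two measures straight; the only care needed is a standard measure-theoretic justification that such a regular conditional distribution $\law(X \mid Y)$ exists, which holds for example whenever the underlying spaces are Polish, and that Fubini applies, which follows from boundedness of $f$. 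This is why the statement is presented as a proposition rather than a theorem in this paper: it is a one-line reformulation of the defining property of conditional distributions, used repeatedly throughout the interpolation argument in \Cref{app:pf}.
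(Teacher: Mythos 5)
Your argument is correct: conditioning on $Y$, the ground truth $X$ is distributed according to $\law(X\mid Y)$ and is independent of the replicas $X_1,\dots,X_{k-1}$, so $(Y,X_1,\dots,X_{k-1},X_k)$ and $(Y,X_1,\dots,X_{k-1},X)$ have the same joint law, and the identity follows by the tower property. This is exactly the standard proof; the paper itself states the proposition without proof as a known fact, so there is nothing to compare beyond noting that your write-up supplies the canonical argument it implicitly relies on.
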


\begin{proposition}[Conditional distribution of Gaussians]
\label{lem:cond-distr-gauss}
Let $d\ge2$ and $ 1\le p\le d-1 $ be integers. 
Let 
\begin{align}
    \begin{bmatrix} G_1 \\ G_2 \end{bmatrix} &\sim \cN\paren{
        \begin{bmatrix} \mu_1 \\ \mu_2 \end{bmatrix}, 
        \begin{bmatrix} \Sigma_{1,1} & \Sigma_{1,2} \\ 
        \Sigma_{1,2}^\top & \Sigma_{2,2} \end{bmatrix}
    } \notag 
\end{align}
be a $d$-dimensional Gaussian random vector, where $ G_1\in\bbR^p, G_2\in\bbR^{d-p}, \mu_1\in\bbR^p, \mu_2\in\bbR^{d-p}, \Sigma_{1,1}\in\bbR^{p\times p}, \Sigma_{1,2}\in\bbR^{p\times(d-p)}, \Sigma_{2,2}\in\bbR^{(d-p)\times(d-p)} $. 
Then for any $ g_2\in\bbR^{d-p} $, the distribution of $ G_1 $ conditioned on $G_2 = g_2$ is given by $ G_1 \mid \{G_2 = g_2\} \sim \cN(\mu_1', \Sigma_1') $ where 
\begin{align}
    \mu_1' &= \mu_1 + \Sigma_{1,2} \Sigma_{2,2}^{-1} (g_2 - \mu_2) \in\bbR^p , \quad 
    \Sigma_1' = \Sigma_{1,1} - \Sigma_{1,2} \Sigma_{2,2}^{-1} \Sigma_{1,2}^\top \in\bbR^{p\times p} \notag 
\end{align}
and $\Sigma_{2,2}^{-1}$ denotes the generalized inverse of $\Sigma_{2,2}$. 
\end{proposition}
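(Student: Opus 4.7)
The plan is to prove this via the classical ``Gaussian regression'' trick: decompose $G_1$ into a component that is a linear function of $G_2$ plus an independent Gaussian remainder, and read off the conditional distribution from that decomposition.

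Concretely, first I would define the candidate residual
\begin{align}
R &\coloneqq G_1 - \mu_1 - \Sigma_{1,2} \Sigma_{2,2}^{-1} (G_2 - \mu_2) \in \bbR^p,\notag
\end{align}
and verify two things: (i) jointly, $(R, G_2)$ is Gaussian, since it is an affine transformation of the Gaussian vector $(G_1, G_2)$; and (ii) $R$ and $G_2$ are uncorrelated, by the direct computation
\begin{align}
\operatorname{Cov}(R, G_2)
&= \Sigma_{1,2} - \Sigma_{1,2} \Sigma_{2,2}^{-1} \Sigma_{2,2} = 0.\notag
\end{align}
For jointly Gaussian vectors, zero covariance implies independence, so $R$ is independent of $G_2$. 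A parallel covariance computation yields
\begin{align}
\operatorname{Cov}(R) &= \Sigma_{1,1} - \Sigma_{1,2} \Sigma_{2,2}^{-1} \Sigma_{1,2}^\top = \Sigma_1',\notag
\end{align}
and clearly $\expt{R} = 0$, so $R \sim \cN(0_p, \Sigma_1')$ unconditionally.

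Next, I would rewrite
\begin{align}
G_1 &= \mu_1 + \Sigma_{1,2} \Sigma_{2,2}^{-1} (G_2 - \mu_2) + R,\notag
\end{align}
so that conditioning on $\{G_2 = g_2\}$ fixes the first two terms to $\mu_1'$ and leaves $R$, whose conditional distribution equals its unconditional one by the independence established above. This gives $G_1 \mid \{G_2 = g_2\} \sim \cN(\mu_1', \Sigma_1')$, as claimed.

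The only subtlety is that $\Sigma_{2,2}$ need not be invertible, in which case $\Sigma_{2,2}^{-1}$ denotes the Moore--Penrose pseudoinverse. The main obstacle, such as it is, will be checking that the argument still goes through: one needs $\Sigma_{1,2} \Sigma_{2,2}^{-1} \Sigma_{2,2} = \Sigma_{1,2}$, which follows from the fact that the row space of $\Sigma_{1,2}$ is contained in the column space of $\Sigma_{2,2}$ (a consequence of positive semidefiniteness of the full covariance), together with the identity $\Sigma_{2,2} \Sigma_{2,2}^{-1} \Sigma_{2,2} = \Sigma_{2,2}$ for the pseudoinverse. Once this is in place, the covariance cancellation still yields $\operatorname{Cov}(R, G_2) = 0$ and the rest of the proof proceeds unchanged. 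The conditioning event $\{G_2 = g_2\}$ must be interpreted as lying in the affine support of $G_2$, but this is the standard convention.
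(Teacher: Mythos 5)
Your proof is correct. The paper states this proposition without proof, as a standard auxiliary fact about Gaussian conditioning, so there is no paper argument to compare against; your regression decomposition $G_1 = \mu_1 + \Sigma_{1,2}\Sigma_{2,2}^{-1}(G_2 - \mu_2) + R$ with $R$ independent of $G_2$ is the classical route, and you handle the only genuine subtlety---the generalized inverse---correctly, by noting that positive semidefiniteness of the joint covariance gives $\mathrm{range}(\Sigma_{1,2}^\top)\subseteq\mathrm{range}(\Sigma_{2,2})$, hence $\Sigma_{1,2}\Sigma_{2,2}^{-1}\Sigma_{2,2}=\Sigma_{1,2}$, so the covariance cancellations and the independence argument go through, with the conditioning event interpreted on the affine support of $G_2$.
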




\begin{proposition}[Chain rule of derivatives]
\label{prop:chain_derivative}
Let $ A\in\bbR^{n\times n} $ and $ f\colon\bbR^n\to\bbR^n $. 
Let $ x\in\bbR^n $ and $ \wt{x} \coloneqq Ax $. 
Then for any $ i, j\in[n] $,
\begin{align}
\frac{\partial}{\partial x_i} (A f(A x))_i
&= \sum_{j = 1}^n \sum_{k = 1}^n A_{i,j} A_{k,i} \frac{\partial f(\wt{x})_j}{\partial\wt{x}_k} , \notag 
\end{align}
where $ f(\wt{x})_j\in\bbR $ denotes the $j$-th ($ j\in[n] $) output of $ f(\wt{x})\in\bbR^n $. 
\end{proposition}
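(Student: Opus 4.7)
The plan is to unpack the left-hand side by the product/linearity rule and then apply the standard multivariate chain rule. First I would write $(Af(Ax))_i = \sum_{j=1}^n A_{i,j} \, f(Ax)_j$ using the definition of matrix-vector multiplication; since $A_{i,j}$ does not depend on $x$, differentiating in $x_i$ gives
\begin{align*}
\frac{\partial}{\partial x_i}(Af(Ax))_i &= \sum_{j=1}^n A_{i,j} \, \frac{\partial}{\partial x_i} f(Ax)_j.
\end{align*}

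Next I would handle $\partial f(\tilde x)_j / \partial x_i$ via the chain rule, viewing $f(\tilde x)_j$ as a composition of $f_j : \mathbb{R}^n \to \mathbb{R}$ with the linear map $x \mapsto \tilde x = Ax$. Since $\tilde x_k = \sum_{\ell=1}^n A_{k,\ell} x_\ell$, we have $\partial \tilde x_k / \partial x_i = A_{k,i}$, and therefore
\begin{align*}
\frac{\partial}{\partial x_i} f(Ax)_j &= \sum_{k=1}^n \frac{\partial f(\tilde x)_j}{\partial \tilde x_k} \cdot \frac{\partial \tilde x_k}{\partial x_i} = \sum_{k=1}^n A_{k,i} \, \frac{\partial f(\tilde x)_j}{\partial \tilde x_k}.
\end{align*}

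Substituting this back into the previous display and reordering the scalars $A_{i,j}$ and $A_{k,i}$ yields
\begin{align*}
\frac{\partial}{\partial x_i}(Af(Ax))_i &= \sum_{j=1}^n \sum_{k=1}^n A_{i,j} A_{k,i} \, \frac{\partial f(\tilde x)_j}{\partial \tilde x_k},
\end{align*}
which is exactly the claimed identity. There is no real obstacle here; the only things to keep track of are the correct index placements (the outer index $i$ appears as the row index of $A$ in $A_{i,j}$ coming from the outer multiplication by $A$, and as the column index of $A$ in $A_{k,i}$ coming from the Jacobian of $\tilde x$ with respect to $x$), and the implicit $C^1$ regularity of $f$ needed to apply the chain rule componentwise. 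The result holds pointwise wherever $f$ is differentiable at $\tilde x = Ax$.
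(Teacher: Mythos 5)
Your proposal is correct and matches the paper's own argument: both expand $(Af(Ax))_i=\sum_j A_{i,j} f_j(Ax)$ and apply the componentwise chain rule through $\tilde{x}=Ax$, using $\partial \tilde{x}_k/\partial x_i = A_{k,i}$. The only difference is the order in which the two elementary steps are presented, which is immaterial.
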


\begin{proof}
The proof follows from elementary applications of the chain rule for derivatives. 
Writing $ A = \matrix{ a_1 & \cdots & a_n }^\top $ and $ f = \matrix{ f_1 & \cdots & f_n }^\top $, we have 
\begin{align}
\frac{\partial}{\partial x_i} f_j(Ax)
&= \frac{\partial}{\partial x_i} f_j\paren{ \inprod{a_1}{x}, \cdots, \inprod{a_n}{x} }
= \sum_{k = 1}^n \partial_k f_j(Ax) \frac{\partial \inprod{a_k}{x}}{\partial x_i} 
= \sum_{k = 1}^n A_{k,i} \partial_k f_j(\wt{x}) , \notag 
\end{align}
where $ \partial_k f_j $ denotes the partial derivative of $ f_j\colon\bbR^n\to\bbR $ with respect to its $k$-th argument. 
Then, 
\begin{align}
\frac{\partial}{\partial x_i} (A f(Ax))_i
&= \sum_{j = 1}^n A_{i,j} \frac{\partial}{\partial x_i} f_j(Ax)
= \sum_{j = 1}^n \sum_{k = 1}^n A_{i,j} A_{k,i} \partial_k f_j(\wt{x}) , \notag 
\end{align}
as claimed. 
\end{proof}

\begin{proposition}[Stein's lemma \cite{stein_proc}]
\label{prop:stein}
Let $ W\sim\cN(0,\sigma^2) $ and let $ f\colon\bbR\to\bbR $ be such that both expectations below exist. 
Then $ \expt{W f(W)} = \sigma^2 \expt{f'(W)} $. 
\end{proposition}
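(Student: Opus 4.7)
The plan is to prove Stein's lemma by integration by parts against the Gaussian density. Let $\varphi_\sigma(w) := \frac{1}{\sqrt{2\pi\sigma^2}} e^{-w^2/(2\sigma^2)}$ denote the density of $W$. The starting point is the elementary identity
\begin{align*}
\varphi_\sigma'(w) = -\frac{w}{\sigma^2} \varphi_\sigma(w),
\end{align*}
equivalently $w\, \varphi_\sigma(w) = -\sigma^2 \varphi_\sigma'(w)$. Substituting this into the definition of the expectation gives
\begin{align*}
\expt{W f(W)} = \int_{-\infty}^\infty w f(w) \varphi_\sigma(w) \, dw = -\sigma^2 \int_{-\infty}^\infty f(w) \varphi_\sigma'(w) \, dw,
\end{align*}
and a formal integration by parts converts the right-hand side into $\sigma^2 \int_{-\infty}^\infty f'(w) \varphi_\sigma(w) \, dw = \sigma^2 \expt{f'(W)}$, which is the desired identity.

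The main task is to justify the integration by parts under the mild hypothesis that both $\expt{W f(W)}$ and $\expt{f'(W)}$ exist. I would assume $f$ is absolutely continuous (the natural reading of the hypothesis, so that $f'$ exists a.e.\ and $f(b)-f(a) = \int_a^b f'(w)\,dw$). On a truncated interval $[-R, R]$ the integration by parts is unambiguous and produces
\begin{align*}
\int_{-R}^R w f(w) \varphi_\sigma(w) \, dw = -\sigma^2 [f(w) \varphi_\sigma(w)]_{-R}^{R} + \sigma^2 \int_{-R}^R f'(w) \varphi_\sigma(w) \, dw.
\end{align*}
The right-hand integral converges to $\sigma^2 \expt{f'(W)}$ as $R\to\infty$ by dominated convergence, using integrability of $|f'|\varphi_\sigma$. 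The plan for the boundary term is to take $R\to\infty$ along a subsequence $R_k$ on which $f(R_k) \varphi_\sigma(R_k) \to 0$: if no such subsequence existed, the super-polynomial decay of $\varphi_\sigma$ would force $|f|$ to grow faster than any polynomial on arbitrarily large scales, contradicting integrability of $w f(w) \varphi_\sigma(w)$.

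The hard part will be packaging the integrability conditions cleanly, since the statement only posits that "both expectations exist." An alternative, arguably cleaner approach is a smoothing argument: convolve $f$ with a Gaussian mollifier to obtain a smooth $f_\eps$ for which integration by parts is unproblematic, verify the identity $\expt{W f_\eps(W)} = \sigma^2 \expt{f_\eps'(W)}$ directly, and then pass $\eps\downarrow 0$ using that convolution commutes with differentiation and that $f_\eps \to f$, $f_\eps' \to f'$ in an appropriate sense controlled by the hypothesized integrability. Either route reduces the proof to a one-line calculation once the analytic bookkeeping is in place.
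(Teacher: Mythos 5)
The paper does not prove this proposition at all: it is stated as a known auxiliary result with a citation to Stein's original work, so there is nothing internal to compare against. Your integration-by-parts argument is the standard textbook proof and is essentially correct. A few bookkeeping points to tighten it: (i) the absolute-continuity assumption you flag is genuinely needed (the identity fails for, e.g., a step function, so it should be stated as part of the hypothesis rather than a "natural reading"); (ii) treat the two endpoints separately by integrating over $[-S,R]$ and choosing subsequences $R_k\to\infty$ and $S_k\to\infty$ independently — your non-existence argument (if $|f(w)\varphi_\sigma(w)|$ stayed bounded away from $0$ for all large $|w|$, then $|wf(w)\varphi_\sigma(w)|$ would dominate $c|w|\varphi_\sigma(w)/\varphi_\sigma(w)$-free growth, i.e.\ $c|w|$ times nothing integrable, contradicting $\expt{|Wf(W)|}<\infty$) works at each end, and then the identity follows by passing to the limit along those subsequences, since the other two integrals converge along the full sequence; (iii) "both expectations exist" should be read as absolute integrability of $Wf(W)$ and $f'(W)$ against $\varphi_\sigma$, which is what both the dominated-convergence step and the subsequence step use. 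The mollification route you sketch as an alternative also works and is the one usually invoked to extend from smooth compactly supported $f$ to general absolutely continuous $f$; either way the content is the one-line identity $w\,\varphi_\sigma(w)=-\sigma^2\varphi_\sigma'(w)$, which matches the classical proof the paper's citation points to.
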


\begin{proposition}[Gaussian \poincare inequality {\cite[Theorem 3.20]{BLM_book}}]
\label{prop:gauss_poincare}
Let $ X\sim\cN(0_n, I_n) $ and $ f\colon\bbR^n\to\bbR $ a differentiable function. 
Then 
\begin{align}
    \var{ f(X) } &\le \expt{ \normtwo{ \nabla f(X) }^2 } . \notag 
\end{align}
\end{proposition}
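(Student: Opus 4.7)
The plan is to prove the inequality via the Ornstein--Uhlenbeck semigroup, which naturally yields the sharp constant $1$. Define for $t\ge0$ the operator $P_t f(x) = \expt{ f(e^{-t} x + \sqrt{1 - e^{-2t}}\, Y) }$ with $Y\sim\cN(0_n, I_n)$ independent of $X$, so that $P_0 f = f$ and $P_\infty f \equiv \expt{f(X)}$. Two structural facts drive the argument: (i) the commutation identity $\nabla P_t f = e^{-t} P_t(\nabla f)$, which follows from differentiating under the integral sign in the definition of $P_t$; (ii) self-adjointness of $P_t$ under $\cN(0_n, I_n)$, whose infinitesimal version is the integration-by-parts relation $\expt{g(X)\, L f(X)} = -\expt{\nabla f(X)\cdot \nabla g(X)}$ for the generator $Lf = \Delta f - x\cdot\nabla f$.

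First, I would express the variance as a time integral. Writing $\tfrac{d}{dt}\expt{(P_t f(X))^2} = 2\expt{P_t f(X)\, L P_t f(X)} = -2 \expt{\normtwo{\nabla P_t f(X)}^2}$ and using $\expt{(P_0 f(X))^2} = \expt{f(X)^2}$, $\expt{(P_\infty f(X))^2} = \expt{f(X)}^2$, integration in $t\in[0,\infty)$ yields
\begin{align}
\var{f(X)} &= 2\int_0^\infty \expt{\normtwo{\nabla P_t f(X)}^2}\, dt. \notag
\end{align}
Next, by the commutation identity and Jensen's inequality applied componentwise (since $(P_t h)^2 \le P_t(h^2)$ for every scalar $h$, as $P_t$ is an averaging operator), one has $\normtwo{\nabla P_t f(x)}^2 = e^{-2t} \sum_i (P_t \partial_i f(x))^2 \le e^{-2t}\, P_t(\normtwo{\nabla f}^2)(x)$. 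Taking expectation under $X\sim\cN(0_n,I_n)$, invariance of the Gaussian measure under $P_t$ gives $\expt{\normtwo{\nabla P_t f(X)}^2} \le e^{-2t} \expt{\normtwo{\nabla f(X)}^2}$, so the time integral evaluates to the desired bound $\expt{\normtwo{\nabla f(X)}^2}$.

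The main obstacle is justifying the differentiation under the integral and the integration by parts for a merely differentiable $f$ with no integrability hypotheses stated. The standard fix is to first establish the inequality for $f$ in the Schwartz class (where every manipulation is rigorous), and then extend by a density/truncation argument: if $\expt{\normtwo{\nabla f(X)}^2} = \infty$ there is nothing to prove, otherwise one approximates $f$ by smooth, compactly supported truncations $f_k$ for which $\var{f_k(X)} \to \var{f(X)}$ and $\expt{\normtwo{\nabla f_k(X)}^2} \to \expt{\normtwo{\nabla f(X)}^2}$. Alternative routes include expanding $f$ in the multivariate Hermite basis $f = \sum_\alpha c_\alpha H_\alpha$, for which $\var{f} = \sum_{\alpha\ne 0}c_\alpha^2$ and $\expt{\normtwo{\nabla f}^2} = \sum_\alpha |\alpha|\, c_\alpha^2$, so the inequality reduces to $|\alpha|\ge 1$ for $\alpha \ne 0$; or a tensorization argument that reduces the $n$-dimensional claim to the one-dimensional Poincaré inequality using independence of coordinates.
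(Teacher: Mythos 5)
Your argument is correct. Note, however, that the paper does not prove this proposition at all: it is stated as an auxiliary fact and attributed to Boucheron--Lugosi--Massart (Theorem 3.20 there), so there is no in-paper proof to match. Your Ornstein--Uhlenbeck semigroup interpolation is a standard and complete route to the sharp constant $1$: the variance representation as $2\int_0^\infty \mathbb{E}\,\lVert\nabla P_t f(X)\rVert_2^2\,\mathrm{d}t$, the commutation $\nabla P_t f = e^{-t}P_t(\nabla f)$, Jensen applied to the averaging operator $P_t$, and invariance of the Gaussian measure together give exactly the claimed bound, and your handling of regularity (trivial case when the right-hand side is infinite, otherwise approximation by smooth compactly supported truncations) closes the only real gap for a merely differentiable $f$. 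For comparison, the cited textbook proof proceeds differently: it tensorizes the one-dimensional inequality (using the product structure of the standard Gaussian and the Efron--Stein inequality), obtaining the one-dimensional case by applying Efron--Stein to normalized sums of Rademacher variables and passing to the Gaussian limit via the central limit theorem; that route is more elementary (no semigroup machinery) but less transparent about why the constant is exactly $1$, whereas your semigroup (or, as you note, Hermite expansion) argument makes the spectral-gap interpretation explicit. Either approach is acceptable for this auxiliary result.
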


\begin{proposition}[Bounded difference inequality {\cite[Corollary 3.2]{BLM_book}}]
\label{prop:bdd_diff}
Let $ \cU\subset\bbR $ and $ f\colon\cU^n\to\bbR $ a function such that there exist $ c = (c_1, \cdots, c_n) \in \bbR_{\ge0}^n $ satisfying for all $i\in[n]$, 
\begin{align}
    \sup_{(x_1, \cdots, x_n, x_i')\in\cU^{n+1}} \abs{ f(x_1, \cdots, x_{i-1}, x_i, x_{i+1}, \cdots, x_n) - f(x_1, \cdots, x_{i-1}, x_i', x_{i+1}, \cdots, x_n) }
    &\le c_i . \notag 
\end{align}
Then if $ X\in\cU^n $ is a random vector consisting of independent elements, we have $ \var{ f(X) } \le \normtwo{c}^2 / 4 $. 
\end{proposition}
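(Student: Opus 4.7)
The plan is to deduce the variance bound by combining the Efron--Stein inequality with Popoviciu's bound on the variance of a bounded random variable. Both ingredients are standard, so the argument will be short; the only real decision is which form of Efron--Stein to invoke, and the cleanest choice here is the \emph{jackknife} form that expresses the variance of $f(X)$ as a sum of conditional variances.

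Write $X_{-i} \coloneqq (X_1, \dots, X_{i-1}, X_{i+1}, \dots, X_n)$ and, for each $i$, let $\var_i(f(X))$ denote the variance of $f(X)$ with respect to $X_i$ alone, holding $X_{-i}$ fixed. The first step is to invoke the Efron--Stein inequality in the form
\begin{align}
    \var(f(X)) \le \sum_{i=1}^n \expt{\var_i(f(X))}, \notag
\end{align}
which is valid because $X_1, \dots, X_n$ are independent. This reduces the global bound to controlling each one-coordinate conditional variance.

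The second step is to exploit the bounded differences hypothesis coordinate by coordinate. For any fixed realization of $X_{-i} = x_{-i}$, the map $x_i \mapsto f(x_1, \dots, x_n)$ takes values in an interval of length at most $c_i$, since for any two choices $x_i, x_i' \in \cU$ the hypothesis forces $|f(\cdots, x_i, \cdots) - f(\cdots, x_i', \cdots)| \le c_i$. Popoviciu's inequality (an elementary fact: a random variable supported in an interval of length $w$ has variance at most $w^2/4$, saturated by the symmetric two-point distribution on the endpoints) then yields $\var_i(f(X)) \le c_i^2 / 4$ almost surely.

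Combining the two steps gives
\begin{align}
    \var(f(X)) \le \sum_{i=1}^n \expt{\var_i(f(X))} \le \sum_{i=1}^n \frac{c_i^2}{4} = \frac{\normtwo{c}^2}{4}, \notag
\end{align}
which is the desired inequality. There is no genuine obstacle here: both Efron--Stein and Popoviciu are classical (see \cite{BLM_book}), and the hypothesis plugs directly into the latter with no additional work. If one wanted a self-contained derivation of the conditional-variance form of Efron--Stein from the symmetric form $\var(f(X)) \le \tfrac{1}{2} \sum_i \expt{(f(X) - f(X^{(i)}))^2}$, a single application of the conditional Jensen inequality on the inner square suffices.
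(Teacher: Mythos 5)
Your argument is correct: the paper does not prove this proposition at all, citing it directly as Corollary 3.2 of Boucheron--Lugosi--Massart, and the proof in that reference is precisely the route you take — the Efron--Stein inequality in its conditional-variance form, followed by the elementary fact that a random variable supported on an interval of length $c_i$ has variance at most $c_i^2/4$. Nothing is missing; your remark on the bounded-differences hypothesis forcing the one-coordinate section of $f$ into an interval of length $c_i$ is exactly the point that makes the coordinatewise bound work.
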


\begin{proposition}[{\cite[Lemma 3.2]{Panchenko_book}}]
\label{prop:panchenko}
If $ f,g $ are differentiable convex functions, then for any $a\in\bbR$ and $a'>0$, 
\begin{align}
    \abs{ f'(a) - g'(a) }
    &\le g'(a + a') - g'(a - a') + B / a' , \notag 
\end{align}
where 
\begin{align}
    B &\coloneqq \abs{ f(a + a') - g(a + a') }
    + \abs{ f(a - a') - g(a - a') }
    + \abs{ f(a) - g(a) } . \notag 
\end{align}
\end{proposition}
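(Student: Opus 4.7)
The plan is to exploit convexity twice: first to sandwich $f'(a)$ between the backward and forward difference quotients of $f$ at $a$ with step $a'$, and then to compare those difference quotients to the analogous ones for $g$ (which are themselves controlled by $g'(a \pm a')$), paying the price $B/a'$ for replacing $f$-values by $g$-values.

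Concretely, I would start from the standard convexity inequalities
\[
\frac{f(a) - f(a-a')}{a'} \;\le\; f'(a) \;\le\; \frac{f(a+a') - f(a)}{a'},
\]
which hold because $f'$ is non-decreasing. For the upper bound on $f'(a) - g'(a)$, I would rewrite
\[
\frac{f(a+a') - f(a)}{a'} = \frac{g(a+a') - g(a)}{a'} + \frac{[f(a+a') - g(a+a')] - [f(a) - g(a)]}{a'},
\]
bound the second fraction in absolute value by $(|f(a+a') - g(a+a')| + |f(a) - g(a)|)/a'$, and bound the first fraction by $g'(a+a')$ using convexity of $g$. This gives
\[
f'(a) - g'(a) \;\le\; g'(a+a') - g'(a) + \tfrac{1}{a'}\bigl(|f(a+a') - g(a+a')| + |f(a) - g(a)|\bigr).
\]
The mirror-image computation, starting from the backward bound on $f'(a)$ and the lower bound $g'(a-a') \le [g(a) - g(a-a')]/a'$, will yield
\[
f'(a) - g'(a) \;\ge\; g'(a-a') - g'(a) - \tfrac{1}{a'}\bigl(|f(a-a') - g(a-a')| + |f(a) - g(a)|\bigr).
\]

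To conclude, I would take absolute values and combine. On the derivative side, both $g'(a+a') - g'(a)$ and $g'(a) - g'(a-a')$ are bounded above by $g'(a+a') - g'(a-a')$ since $g'$ is monotone. On the remainder side, the two possible numerators are each at most $|f(a+a') - g(a+a')| + |f(a-a') - g(a-a')| + |f(a) - g(a)| = B$. This yields exactly the claimed inequality
\[
|f'(a) - g'(a)| \;\le\; g'(a+a') - g'(a-a') + B/a'.
\]

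There is no real obstacle here — the whole proof is an elementary two-sided sandwich built from the two standard one-step convexity inequalities. The only point requiring any care is ensuring that the same constant $|f(a) - g(a)|$ (and not $2|f(a)-g(a)|$) appears in the final $B$, which is handled automatically by treating the upper and lower bounds as separate inequalities and then bounding the maximum of the two error numerators by their sum.
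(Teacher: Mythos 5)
Your argument is correct: the two one-step convexity inequalities sandwiching $f'(a)$, the replacement of $f$-increments by $g$-increments at the cost of the absolute differences, and the monotonicity of $g'$ to enlarge $g'(a+a')-g'(a)$ and $g'(a)-g'(a-a')$ to $g'(a+a')-g'(a-a')$ give exactly the stated bound with the claimed constant $B$. The paper does not prove this proposition itself but cites it from Panchenko's book, and your proof is essentially the standard argument given there, so there is nothing to add.
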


\begin{definition}[Monotone conjugate]
\label{def:conjugate}
Let $ f\colon\bbR_{\ge0}\to\bbR $ be a non-decreasing convex function.
Its monotone conjugate $ f^* $ is defined as
\begin{align}
    f^*(x) &= \sup_{y\ge0} xy - f(y) . \notag 
\end{align}
\end{definition}

\begin{proposition}[{\cite[Proposition C.1]{miolane_thesis}}]
\label{prop:deriv}
Let $I\subset\bbR$ be an interval and $ (f_n\colon I\to\bbR)_n $ be a sequence of convex functions converging pointwise to $f$. 
Then for all $t\in I$ such that the following quantities exist, 
\begin{align}
    \lim_{s\uparrow t} f'(s) &\le \liminf_{n\to\infty} \lim_{s\uparrow t} f_n'(s) \le\limsup_{n\to\infty} \lim_{s\downarrow t} f_n'(s) \le \lim_{s\downarrow t} f'(s) . \notag 
\end{align}
\end{proposition}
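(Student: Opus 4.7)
\textbf{Proposal for Proof of \Cref{prop:deriv}.}

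The plan is to exploit the classical ``slope lemma'' for convex functions: for a convex $g \colon I \to \bbR$ and $s < t < u$ in $I$, the difference quotients satisfy
\begin{align}
\frac{g(t)-g(s)}{t-s} \;\le\; g'_-(t) \;\le\; g'_+(t) \;\le\; \frac{g(u)-g(t)}{u-t}, \notag
\end{align}
and moreover $g'_-(t)=\lim_{r\uparrow t} g'(r)$ and $g'_+(t)=\lim_{r\downarrow t} g'(r)$ at every interior $t$ where these limits exist (which holds off a countable set). I would first apply this to each $f_n$ at the point $t$ to obtain, for any $s<t<u$ in $I$,
\begin{align}
\frac{f_n(t)-f_n(s)}{t-s} \;\le\; \lim_{r\uparrow t} f_n'(r) \;\le\; \lim_{r\downarrow t} f_n'(r) \;\le\; \frac{f_n(u)-f_n(t)}{u-t}. \notag
\end{align}
The middle inequality here already yields the central inequality $\liminf \le \limsup$ in the statement after taking limits in $n$.

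Next I would pass to the limit $n\to\infty$ using only the \emph{pointwise} convergence $f_n(s)\to f(s)$, $f_n(t)\to f(t)$, $f_n(u)\to f(u)$, so the outer bounds become
\begin{align}
\frac{f(t)-f(s)}{t-s} \;\le\; \liminf_{n\to\infty}\lim_{r\uparrow t} f_n'(r), \qquad \limsup_{n\to\infty} \lim_{r\downarrow t} f_n'(r) \;\le\; \frac{f(u)-f(t)}{u-t}. \notag
\end{align}
The pointwise limit $f$ is convex, so its one-sided difference quotients are monotone in $s$ and $u$. Letting $s \uparrow t$ in the left inequality, the quotient $(f(t)-f(s))/(t-s)$ increases to $f'_-(t)$, which by assumption equals $\lim_{s\uparrow t} f'(s)$; letting $u\downarrow t$ in the right inequality, $(f(u)-f(t))/(u-t)$ decreases to $f'_+(t)=\lim_{s\downarrow t} f'(s)$. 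Chaining the three inequalities produces exactly the claim.

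The only real subtleties are bookkeeping. I need to be sure that $t$ is an interior point of $I$ so that left/right derivatives exist, that the limits $\lim_{s\uparrow t}f'(s)$ and $\lim_{s\downarrow t}f'(s)$ of the hypothesis coincide with the one-sided derivatives $f'_-(t)$ and $f'_+(t)$ (this is automatic for convex functions since $f'$ is monotone on its set of definition, so the one-sided limits exist and agree with the one-sided derivatives), and analogously for each $f_n$. The ``hard'' part — which is really just a notational nuisance rather than a genuine obstacle — is keeping track of which approximating quantity is left/right and which side of the inequality uses $s<t$ versus $u>t$; no quantitative estimate or compactness argument is required beyond pointwise convergence and monotonicity of slopes.
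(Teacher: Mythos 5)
Your proof is correct. The paper itself gives no argument for this proposition—it is quoted verbatim from Miolane's thesis (Proposition C.1)—and your argument via the monotone difference quotients of convex functions, passing to the limit in $n$ using only pointwise convergence and then letting $s\uparrow t$, $u\downarrow t$, is essentially the standard proof given in that reference, including the identification of $\lim_{s\uparrow t}f'(s)$ and $\lim_{s\downarrow t}f'(s)$ with the one-sided derivatives $f'_-(t)$ and $f'_+(t)$.
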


\begin{proposition}[{\cite[Proposition C.6]{miolane_thesis}}]
\label{prop:dual}
Let $ f,g\colon\bbR_{\ge0}\to\bbR $ be strictly convex differentiable functions and 
\begin{align}
    \cC &\coloneqq \brace{ (q_1, q_2)\in\bbR_{\ge0}^2 \colon q_2 = f'(q_1) , q_1 = g'(q_2) } . \notag 
\end{align}
Then
\begin{align}
    \sup_{(q_1, q_2)\in\cC} f(q_1) + g(q_2) - q_1 q_2
    = \sup_{q_1, q_2\ge0} q_1 q_2 - f^*(q_2) - g^*(q_1)
    = \sup_{q_1\ge0} \inf_{q_2\ge0} f(q_1) + g(q_2) - q_1 q_2 
    , \notag 
\end{align}
and $ \sup_{(q_1, q_2)\in\cC} $ and $ \sup_{q_1, q_2\ge0} $ are achieved at the same $ (q_1^*, q_2^*) $. 
\end{proposition}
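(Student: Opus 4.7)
}

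The plan is to prove the chain of identities by establishing the right-most equality via standard conjugate-function calculus and then matching the left-most one through a critical-point analysis that uses strict convexity. I will write $F(q_1,q_2)\coloneqq q_1q_2-f^*(q_2)-g^*(q_1)$ throughout, and I will tacitly use that $f,g$ are non-decreasing (which is the regime of use in the paper; otherwise one replaces them by their non-decreasing envelopes on $\bbR_{\ge0}$, which is harmless since the monotone conjugate only sees values of the argument in $\bbR_{\ge0}$).

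For the second equality, note that by the very definition of the monotone conjugate (\Cref{def:conjugate}),
$\inf_{q_2\ge0}\{g(q_2)-q_1q_2\}=-g^*(q_1)$ for every $q_1\ge0$, so
$\sup_{q_1\ge0}\inf_{q_2\ge0}\{f(q_1)+g(q_2)-q_1q_2\}=\sup_{q_1\ge0}\{f(q_1)-g^*(q_1)\}.$
On the other hand, since $f$ is convex, lower semi-continuous, and non-decreasing on $\bbR_{\ge0}$, the monotone biconjugate theorem \cite[\S12]{Rockafellar_book} gives $f^{**}=f$, i.e., $f(q_1)=\sup_{q_2\ge0}\{q_1q_2-f^*(q_2)\}$. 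Plugging this in and swapping the two $\sup$'s yields $\sup_{q_1\ge0}\{f(q_1)-g^*(q_1)\}=\sup_{q_1,q_2\ge0}F(q_1,q_2)$, which is exactly the claim.

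For the first equality, I will show that the stationary-point set of $F$ coincides with $\cC$, and that $F$ and $f(q_1)+g(q_2)-q_1q_2$ take the same values on $\cC$. Since $f$ is strictly convex and differentiable on $\bbR_{\ge0}$, its derivative $f'$ is a strictly increasing continuous bijection from $\bbR_{\ge0}$ onto its range, and $(f^*)'=(f')^{-1}$ on the interior of the domain; the same holds for $g$. Setting $\partial_1F=q_2-(g^*)'(q_1)=0$ and $\partial_2F=q_1-(f^*)'(q_2)=0$ is therefore equivalent to the pair $q_1=g'(q_2)$, $q_2=f'(q_1)$, which is precisely the defining condition for $\cC$. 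Moreover, at any $(q_1^*,q_2^*)\in\cC$ the Fenchel-Young inequality holds with equality by the tangency relation $q_2^*=f'(q_1^*)$, so $f^*(q_2^*)=q_1^*q_2^*-f(q_1^*)$, and analogously $g^*(q_1^*)=q_1^*q_2^*-g(q_2^*)$. Substituting both identities into $F(q_1^*,q_2^*)$ yields $F(q_1^*,q_2^*)=f(q_1^*)+g(q_2^*)-q_1^*q_2^*$, so the two objectives agree pointwise on $\cC$.

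To close the argument I need the supremum of $F$ over $\bbR_{\ge0}^2$ to be attained at a point of $\cC$. Any maximizer $(q_1^*,q_2^*)$ of $F$ satisfies the first-order stationarity conditions, hence lies in $\cC$, provided the maximizer is interior and finite. The sandwich $\sup_\cC\{f+g-q_1q_2\}=\sup_\cC F\le\sup_{\bbR_{\ge0}^2}F=F(q_1^*,q_2^*)=f(q_1^*)+g(q_2^*)-q_1^*q_2^*\le\sup_\cC\{f+g-q_1q_2\}$ then forces equality and shows that the two suprema are achieved at the common point $(q_1^*,q_2^*)$. The main obstacle is precisely the existence of this interior maximizer: in general $F$ need not be concave (the bilinear term $q_1q_2$ ruins joint concavity), and the supremum could in principle escape to infinity or sit on the boundary. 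I would dispose of this by observing that the reduction in the second equality turns the problem into $\sup_{q_1\ge0}\{f(q_1)-g^*(q_1)\}$, which is the supremum of a one-dimensional function whose optimality condition is exactly $f'(q_1)=(g^*)'(q_1)$, i.e., a $\cC$-equation; any interior maximizer of the reduced problem produces, via $q_2^*=f'(q_1^*)$, a point in $\cC$ achieving both suprema. Boundary maximizers can be handled by a separate verification using monotonicity of $f,g$ at $0$, and the case where no finite maximizer exists is excluded by the standing hypotheses under which \Cref{prop:dual} is invoked in the paper (where $\psi_{\ol{\Xi}}^*,\psi_{\ol{\Sigma}}^*$ grow superlinearly at infinity, forcing $F\to-\infty$).
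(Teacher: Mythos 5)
First, a point of comparison: the paper does not prove \Cref{prop:dual} at all — it is imported verbatim from \cite[Proposition C.6]{miolane_thesis} — so there is no in-paper argument to measure your proof against, and I am judging it on its own merits. The parts of your argument that are sound are the second equality (monotone biconjugation $f^{**}=f$, valid for non-decreasing lsc convex functions as in \Cref{def:conjugate} and \cite[\S 12]{Rockafellar_book}, reduces both $\sup_{q_1,q_2\ge0}\{q_1q_2-f^*(q_2)-g^*(q_1)\}$ and $\sup_{q_1}\inf_{q_2}$ to $\sup_{q_1\ge0}\{f(q_1)-g^*(q_1)\}$), the Fenchel--Young tightness on $\cC$ showing that $F(q_1,q_2)\coloneqq q_1q_2-f^*(q_2)-g^*(q_1)$ agrees with $f(q_1)+g(q_2)-q_1q_2$ there, and the identification of interior stationary points of $F$ with $\cC$ via $(f^*)'=(f')^{-1}$ (with the caveat that this inverse formula holds only on the range of $f'$, $f^*$ being flat below $f'(0)$).

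The genuine gap is exactly the step you flag and then defer: you need $\sup_{\bbR_{\ge0}^2}F$ to be finite and attained at a stationary point belonging to $\cC$, and you dispose of this by appealing to ``the standing hypotheses under which the proposition is invoked.'' That is not available when proving the proposition as a standalone statement, and under the stated hypotheses alone (strict convexity and differentiability, even adding monotonicity) the first equality is false: take $f(q)=g(q)=q^2$, so $\cC=\{(0,0)\}$ and the left-hand side equals $0$, while $f^*(x)=x^2/4$ and $F(t,t)=t^2/2\to\infty$, so the middle and right-hand quantities are $+\infty$. A complete proof must import the hypotheses of the cited result — in Miolane's version the functions are additionally non-decreasing and Lipschitz, which is what holds in the paper's application since $\psi_{\ol{\Xi}}',\psi_{\ol{\Sigma}}'$ are bounded — because Lipschitzness makes $f^*,g^*$ equal to $+\infty$ outside bounded intervals, so $F$ is upper semicontinuous with compact effective domain and its supremum is attained; and the boundary/flat-region cases (a maximizer with $q_1=0$ or $q_2=0$, or sitting where a conjugate is constant so that stationarity does not read as $q_2=f'(q_1)$, $q_1=g'(q_2)$) must be argued explicitly rather than asserted. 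As written, your argument establishes $\sup_{\cC}\le\sup F$ and the pointwise agreement of the two objectives on $\cC$, but not the reverse inequality, which is the substance of the first equality and of the claim that both suprema are achieved at the same $(q_1^*,q_2^*)$.
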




\end{document}